\definecolor{ColBlack}{RGB}{0,0,0} 
\definecolor{ColWhite}{RGB}{255,255,255} 
\definecolor{ColA}{RGB}{146,194,113} 
\definecolor{ColB}{RGB}{64,98,53} 
\definecolor{ColC}{RGB}{237,174,119} 
\definecolor{ColD}{RGB}{140,116,96} 
\numberwithin{equation}{subsection}
\def\l@section{\@tocline{1}{3pt}{1pc}{5pc}{}}
\def\l@subsection{\@tocline{2}{2pt}{2pc}{5pc}{}}
\newtheorem{Theorem}{Theorem}[subsection]
\newtheorem{Proposition}[Theorem]{Proposition}
\newtheorem{Lemma}[Theorem]{Lemma}
\renewcommand{\leq}{\leqslant}
\renewcommand{\geq}{\geqslant}
\newcommand{\ColA}[1]{\textcolor{ColA}{#1}}
\newcommand{\Hide}[1]{\ColA{\tt HIDEN}}
\newcommand{\Def}[1]{\ColA{\em #1}}
\newcommand{\Par}[1]{\mleft(#1\mright)}
\newcommand{\Bra}[1]{\mleft\{#1\mright\}}
\newcommand{\Han}[1]{\mleft[#1\mright]}
\newcommand{\Brr}[1]{\mleft|#1\mright|}
\newcommand{\Angle}[1]{\mleft\langle#1\mright\rangle}
\newcommand{\AAngle}[1]{\Angle{\Angle{#1}}}
\newcommand{\BPar}[1]{\boldsymbol{\mleft(\mright.}#1\boldsymbol{\mleft.\mright)}}
\newcommand{\OEIS}[1]{\href{http://oeis.org/#1}{{\bf #1}}}
\tikzstyle{Centering}=[{baseline={([yshift=-0.5ex]current bounding box.center)}}]
\tikzstyle{MarkA}=[draw=ColA!80,fill=ColA!8]
\tikzstyle{MarkB}=[draw=ColB!80,fill=ColB!8]
\tikzstyle{MarkC}=[draw=ColC!80,fill=ColC!8]
\tikzstyle{MarkD}=[draw=ColD!80,fill=ColD!8]
\tikzstyle{Node}=[circle,MarkA,inner sep=1pt,minimum size=2mm,thick,font=\scriptsize]
\tikzstyle{Edge}=[MarkC,cap=round,thick,rounded corners=2.5pt]
\tikzstyle{Leaf}=[rectangle,MarkD,inner sep=0pt,minimum size=1mm,thick]
\tikzstyle{NodeST}=[font=\scriptsize]
\tikzstyle{EdgeGraph}=[MarkA,cap=round,thick]
\tikzstyle{EdgeLabel}=[midway,inner sep=1pt,fill=ColWhite!0,font=\tiny]
\tikzstyle{Grid}=[color=ColBlack!30]
\tikzstyle{PathNode}=[circle,MarkB,thick,inner sep=0pt,minimum size=2.0mm]
\tikzstyle{PathStep}=[MarkC,thick]
\tikzstyle{Injection}=[ColBlack,draw,{>[scale=1.5,length=4,width=5]}-{>[scale=1.5,length=4,
\tikzstyle{Surjection}=[ColBlack,draw,-{>[scale=1.5,length=4,width=5]>[scale=1.5,length=4,
\tikzstyle{Map}=[ColBlack,draw,-{>[scale=1.5,length=4,width=5]}]
\tikzstyle{Box}=[rectangle,MarkD,rounded corners=2pt,inner sep=1pt,minimum size=.25cm,thick]
\newcommand{\N}{\mathbb{N}}
\newcommand{\Z}{\mathbb{Z}}
\newcommand{\K}{\mathbb{K}}
\newcommand{\GenA}{\mathtt{a}}
\newcommand{\GenB}{\mathtt{b}}
\newcommand{\GenC}{\mathtt{c}}
\newcommand{\GenE}{\mathtt{e}}
\newcommand{\TreeS}{\mathfrak{s}}
\newcommand{\TreeR}{\mathfrak{r}}
\newcommand{\TreeT}{\mathfrak{t}}
\newcommand{\TreeU}{\mathfrak{u}}
\newcommand{\TreeV}{\mathfrak{v}}
\newcommand{\PosetP}{\mathcal{P}}
\newcommand{\PosetQ}{\mathcal{Q}}
\newcommand{\LatticeL}{\mathcal{L}}
\newcommand{\Operad}{\mathcal{O}}
\newcommand{\SeriesF}{\mathbf{f}}
\DeclareMathOperator{\Leq}{\preccurlyeq}
\DeclareMathOperator{\OrderPrefixes}{\Leq_{\mathrm{p}}}
\DeclareMathOperator{\JJoin}{\vee}
\DeclareMathOperator{\Meet}{\wedge}
\DeclareMathOperator{\Hadamard}{\boxtimes}
\newcommand{\Unit}{\mathbf{1}}
\newcommand{\GeneratingSet}{\mathfrak{G}}
\newcommand{\InternalNode}{\bullet}
\newcommand{\SyntaxTrees}{\mathbf{S}}
\newcommand{\SyntaxTreesLeaf}{\SyntaxTrees_\Leaf}
\newcommand{\SyntaxTreesInternalNode}{\SyntaxTrees_\InternalNode}
\newcommand{\Deg}{\mathrm{deg}}
\newcommand{\RankSeries}{\mathcal{R}}
\newcommand{\IntervalSeries}{\mathcal{I}}
\newcommand{\Congr}{\equiv}
\newcommand{\Eval}{\mathrm{ev}}
\newcommand{\Up}{\mathbf{U}}
\newcommand{\Vp}{\mathbf{V}}
\newcommand{\Dual}{\star}
\newcommand{\Nodes}{\mathcal{N}}
\newcommand{\InternalNodes}{\Nodes_\InternalNode}
\newcommand{\Leaves}{\Nodes_\Leaf}
\newcommand{\LeavesNonFirst}{\Leaves^{\mathrm{nf}}}
\newcommand{\InternalNodesMax}{\InternalNodes^{\mathrm{m}}}
\newcommand{\InternalNodesQuasiMax}{\InternalNodes^{\mathrm{qm}}}
\newcommand{\Shadow}{\mathrm{sh}}
\newcommand{\Load}{\mathrm{ld}}
\newcommand{\Rank}{\mathrm{rk}}
\newcommand{\Product}{\star}
\newcommand{\YoungLattice}{\mathbb{Y}}
\newcommand{\Support}[1]{\mathrm{Supp}\Par{#1}}
\newcommand{\CharacteristicSeries}[1]{\mathrm{ch}\Par{#1}}
\newcommand{\NonFirstLeaves}[1]{\mathrm{nfl}\Par{#1}}
\newcommand{\GeneratingSeries}{\mathcal{G}}
\newcommand{\HilbertSeries}{\mathcal{H}}
\newcommand{\Corolla}{\mathfrak{c}}
\newcommand{\Covered}{\lessdot}
\newcommand{\Weight}[2]{\omega_{#1}\Par{#2}}
\newcommand{\Hook}[1]{\mathrm{h}\Par{#1}}
\newcommand{\TwistedHook}[1]{\mathrm{h}'\Par{#1}}
\newcommand{\Zero}{\mathbf{0}}
\newcommand{\Paths}[1]{\mathcal{P}_{#1}}
\newcommand{\ShadowPoset}[1]{\mathbb{P}\Par{#1}}
\newcommand{\JLattice}[1]{\mathbb{J}\Par{#1}}
\newcommand{\TreePoset}[1]{\mathbb{P}\Par{#1}}
\newcommand{\TwistedTreePoset}[1]{\mathbb{P}'\Par{#1}}
\newcommand{\HookSeries}[1]{\mathbf{h}_{#1}}
\newcommand{\Trace}{\mathrm{tr}}
\newcommand{\InitialPathsSeries}[1]{\mathbf{ip}_{#1}}
\newcommand{\ReturningHookSeries}[2]{\mathbf{rh}_{#1, #2}}
\newcommand{\ReturningInitialPathsSeries}[2]{\mathbf{rip}_{#1, #2}}
\newcommand{\DelNode}[2]{\mathrm{del}_{#2}\Par{#1}}
\newcommand{\ContractNode}[2]{\mathrm{con}_{#2}\Par{#1}}
\newcommand{\TreeLikeExpressions}{\mathcal{T}}
\newcommand{\OperadDegree}{\Operad_\InternalNode}
\newcommand{\As}{\mathbf{As}}
\newcommand{\Dias}{\mathbf{Dias}}
\newcommand{\Comp}{\mathbf{Comp}}
\newcommand{\Motz}{\mathbf{Motz}}
\newcommand{\FCat}[1]{\mathbf{FCat}_{#1}}
\newcommand{\TwoAs}{\mathbf{2As}}
\newcommand{\AsTwo}{\mathbf{As}_2}
\newcommand{\Dup}{\mathbf{Dup}}
\newcommand{\Dip}{\mathbf{Dip}}
\newcommand{\Leaf}{{
\begin{tikzpicture}[Centering,xscale=.2,yscale=.22]
    \draw[Edge,thick](0,0)--(0,-1);
\end{tikzpicture}
}}
\newcommand{\CorollaOne}[1]{{
\begin{tikzpicture}[Centering,xscale=0.2,yscale=0.35]
    \node(1)at(0.00,-1.00){};
    \node[NodeST](0)at(0.00,0.00){$#1$};
    \draw[Edge](1)--(0);
    \node(r)at(0.00,1.0){};
    \draw[Edge](r)--(0);
\end{tikzpicture}}}
\newcommand{\CorollaTwo}[1]{{
\begin{tikzpicture}[Centering,xscale=0.16,yscale=0.24]
    \node(0)at(0.00,-1.50){};
    \node(2)at(2.00,-1.50){};
    \node[NodeST](1)at(1.00,0.00){$#1$};
    \draw[Edge](0)--(1);
    \draw[Edge](2)--(1);
    \node(r)at(1.00,1.5){};
    \draw[Edge](r)--(1);
\end{tikzpicture}
}}
\newcommand{\CorollaThree}[1]{{
\begin{tikzpicture}[Centering,xscale=0.19,yscale=0.19]
    \node(0)at(0.00,-2.00){};
    \node(2)at(1.00,-2.00){};
    \node(3)at(2.00,-2.00){};
    \node[NodeST](1)at(1.00,0.00){$#1$};
    \draw[Edge](0)--(1);
    \draw[Edge](2)--(1);
    \draw[Edge](3)--(1);
    \node(r)at(1.00,1.75){};
    \draw[Edge](r)--(1);
\end{tikzpicture}
}}
\title[Graph duality for operads]{Duality of graded graphs through operads}
\keywords{Poset; Lattice; Dual graded graph; Tree; Nonsymmetric operad.}
\subjclass[2010]{05C05, 06A07, 05C25, 18D50.}
\date{\today}
\author{Samuele Giraudo}
\address{\scriptsize LIGM, Université Gustave Eiffel, CNRS, ESIEE Paris, F-$77454$
Marne-la-Vallée, France.}
\email{samuele.giraudo@u-pem.fr}
\begin{document}

\begin{abstract}
    Pairs of graded graphs, together with the Fomin property of graded graph duality, are
    rich combinatorial structures providing among other a framework for enumeration. The
    prototypical example is the one of the Young graded graph of integer partitions,
    allowing us to connect number of standard Young tableaux and numbers of permutations.
    Here, we use operads, that algebraic devices abstracting the notion of composition of
    combinatorial objects, to build pairs of graded graphs. For this, we first construct a
    pair of graded graphs where vertices are syntax trees, the elements of free nonsymmetric
    operads. This pair of graphs is dual for a new notion of duality called $\phi$-diagonal
    duality, similar to the ones introduced by Fomin. We also provide a general way to build
    pairs of graded graphs from operads, wherein underlying posets are analogous to the
    Young lattice. Some examples of operads leading to new pairs of graded graphs involving
    integer compositions, Motzkin paths, and $m$-trees are considered.
\end{abstract}

\maketitle

\begin{small}
\tableofcontents
\end{small}

\section*{Introduction}
The well-known formula
\begin{equation} \label{equ:sum_square_Young_lattice}
    \sum_{\lambda \vdash n} {f_\lambda}^2 = n!,
\end{equation}
relating the numbers $f_\lambda$ of standard Young tableaux of shape $\lambda$ and number of
permutations is one of the most fascinating identities appearing in algebraic combinatorics.
This formula, admitting a lot of different proofs~\cite{Sag01}, arises in the context of
representations of symmetric groups and the Robinson–Schensted correspondence. One of its
proofs is surprisingly beautiful and uses the Young lattice $\YoungLattice$ on integer
partitions and its structure of a differential poset~\cite{Sta88}. Such a poset satisfies
the relation
\begin{equation} \label{equ:differential_posets_duality}
    \Up^\Dual \Up - \Up \Up^\Dual = I
\end{equation}
where $I$ is the identity map, and $\Up$ (resp. $\Up^\Dual$) is the linear map sending each
element of the poset to the formal sum of its coverings (resp. of the elements it covers).
One can interpret~\eqref{equ:sum_square_Young_lattice} as an identity between Hasse walks of
length $n$ in $\YoungLattice$ starting from the empty integer partition to an integer
partition of rank $n$ and returning to the empty integer partition.
\smallbreak

A natural question concerns the generalization of this concept of differential posets, in
order to obtain new combinatorial proofs similar to the previous formula or to discover new
ones. In this context, the notion of graded graph duality~\cite{Fom94} makes sense. Here, we
work not only with posets but with multigraphs wherein analogs
of~\eqref{equ:differential_posets_duality} hold between two different graphs, called pairs
of dual graphs. All this maintains close connections with algebra since, from the origins,
the Hasse diagram of $\YoungLattice$ is in fact the Bratteli diagram (or multiplication
graph) of the algebra of the symmetric functions on the basis of Schur functions
$s_\lambda$, where edges encode the multiplication by $s_1$. A striking and nice fact is
that one can construct similar graphs for other algebraic structures~\cite{Nze06,LS07} like
the algebra of the noncommutative symmetric functions or the Hopf algebra of planar binary
trees~\cite{HNT05}.
\smallbreak

The starting motivation of this work was to link this theory of duality of graded graphs
with the theory of operads, with the aim to construct new pairs of dual graded graphs and
explore the combinatorial properties they offer. Operads~\cite{Men15,LV12,Gir18} are
algebraic structures wherein elements are themselves operations, and can be composed. From a
combinatorial point of view, operads allow to {\em insert} combinatorial objects inside
other ones to form bigger ones~\cite{Gir15}. Since operads enclose a rich combinatorics, we
can expect that these structures are good combinatorial sources to build interesting graphs.
\smallbreak

We begin by constructing a pair $\Par{\SyntaxTreesInternalNode(\GeneratingSet), \Up, \Vp}$
of graphs where vertices are the elements of free nonsymmetric operads, that are planar
rooted trees decorated on an alphabet $\GeneratingSet$. The graphs
$\Par{\SyntaxTreesInternalNode(\GeneratingSet), \Up}$ and
$\Par{\SyntaxTreesInternalNode(\GeneratingSet), \Vp}$ are dual with respect to a new notion
of duality called $\phi$-diagonal duality, generalizing in a certain way some of the
previous ones. Also, the poset for which $\Par{\SyntaxTreesInternalNode(\GeneratingSet),
\Up}$ is the Hasse diagram has some combinatorial properties like to be a meet-semilattice
and to have all intervals that are distributive lattices. Then, given an operad
$\Operad$ satisfying some not so restrictive properties, we extend the previous construction
to build a pair of graded graphs $\Par{\OperadDegree, \Up, \Vp}$, potentially
$\phi$-diagonal dual. We consider four examples: the pair corresponding with the associative
operad (which is called the {\em chain} in~\cite{Fom94}), with the diassociative
operad~\cite{Lod01} (which is not $\phi$-diagonal dual but is $\phi$-diagonal self-dual),
with the operad of integer compositions~\cite{Gir15} (which leads to the {\em composition
poset} introduced in~\cite{BS05}), and with the operad of Motzkin paths~\cite{Gir15} (which
is $\phi$-diagonal dual).
\smallbreak

This paper is organized as follows. Section~\ref{sec:preliminaries} contains the preliminary
definitions used in the rest of the document including graded graphs, formal power series on
combinatorial objects, syntax trees, and nonsymmetric operads. We also set here our
definition of $\phi$-diagonal duality. We then introduce in
Section~\ref{sec:graded_graphs_syntax_trees} two graded graphs of syntax trees: the prefix
graded graph $\Par{\SyntaxTreesInternalNode(\GeneratingSet), \Up}$ and the twisted prefix
graded graph $\Par{\SyntaxTreesInternalNode(\GeneratingSet), \Vp}$. We prove here in
particular that the pair of graded graphs $\Par{\SyntaxTreesInternalNode(\GeneratingSet),
\Up, \Vp}$ is $\phi$-diagonal dual for a certain linear map $\phi$. Some combinatorial
properties of these graphs are established: the numbers of Hasse walks in the prefix graded
graphs are related with the hook-length formula of trees~\cite{Knu98} and the numbers of
Hasse walks in the twisted ones are related with a variation of this formula.  In
Section~\ref{sec:tree_prefix_posets}, we study the posets associated with the prefix graded
graphs. In particular, we describe the structure of the intervals of these posets.  Finally,
Section~\ref{sec:operad_prefix_graphs} is devoted to generalize the previous constructions
of graded graphs in order to obtain pairs of graded graphs from nonsymmetric operads
subjected to some conditions. We apply these constructions to some operads introduced
in~\cite{Gir15}, involving integer compositions, Motzkin paths, and $m$-trees.
\medbreak

\subsubsection*{General notations and conventions}
All the considered vector spaces are defined over a ground field $\K$ of characteristic
zero. For any integers $i$ and $j$, $[i, j]$ denotes the set $\{i, i + 1, \dots, j\}$. For
any integer $i$, $[i]$ denotes the set $[1, i]$. The empty word is denoted by~$\epsilon$.
\medbreak

\section{Graded graphs, trees, and operads} \label{sec:preliminaries}
We start by setting up our context by providing definitions about graded sets, series,
graded graphs, and nonsymmetric operads. We introduce also the notion of $\phi$-diagonal
duality.
\medbreak

\subsection{Graded graphs and diagonal duality}
The aim of this section is to make some recalls about graded graphs and their associated
formal power series, about graded graph duality, and to introduce $\phi$-diagonal duality.
\medbreak

\subsubsection{Graded sets}
A \Def{graded set} is a set expressed as a disjoint union
\begin{equation}
    G := \bigsqcup_{d \in \N} G(d)
\end{equation}
such that all $G(d)$, $d \in \N$, are sets. The \Def{rank} $\Rank(x)$ of an $x \in G$ is the
unique integer $d$ such that $x \in G(d)$. A graded set is \Def{combinatorial} if all
$G(d)$, $d \in \N$, are finite.  In this case, the \Def{generating series}
$\RankSeries_G(t)$ of $G$ is defined by
\begin{equation}
    \RankSeries_G(t) := \sum_{x \in G} t^{\Rank(x)}
\end{equation}
and counts the elements of $G$ with respect to their ranks.  If $G_1$ and $G_2$ are two
graded sets, a map $\psi : G_1 \to G_2$ is a \Def{graded set morphism} if for any $x \in
G_1$, $\Rank\Par{\psi(x)} = \Rank(x)$. Besides $G_2$ is a \Def{graded subset} of $G_1$ if
for any $d \in \N$, $G_2(d) \subseteq G_1(d)$.
\medbreak

\subsubsection{Polynomials and series}
We shall consider in the sequel linear spans of graded sets $G$, denoted by $\K \Angle{G}$.
The dual space $\K \AAngle{G}$ of $\K \Angle{G}$ is by definition the space of the maps
$\SeriesF : G \to \K$, called \Def{$G$-series}.  Let $\SeriesF_1$ and $\SeriesF_2$ be two
$G$-series.  The \Def{scalar product} of $\SeriesF_1$ and $\SeriesF_2$ is the element
\begin{equation}
    \Angle{\SeriesF_1, \SeriesF_2} 
    := \sum_{x \in G} \SeriesF_1(x) \SeriesF_2(x) 
\end{equation}
of $\K$. Note that the scalar product may be not defined for some $G$-series. For any subset
$X$ of $G$,  the \Def{characteristic series} of $X$ is the $G$-series
$\CharacteristicSeries{X}$ satisfying, for any $x \in G$, $\CharacteristicSeries{X}(x) =
\Han{x \in X}$, where $\Han{-}$ is the Iverson bracket. By a slight abuse of notation, we
denote simply by $x$ the $G$-series $\CharacteristicSeries{\{x\}}$.  Let $\SeriesF \in \K
\AAngle{G}$.  Observe that for any $x \in G$, $\Angle{x, \SeriesF} = \SeriesF(x)$.  The
\Def{support} of $\SeriesF$ is the set
\begin{math}
    \Support{\SeriesF} := \Bra{x \in G : \Angle{x, \SeriesF} \ne 0}.
\end{math}
An element $x$ of $G$ \Def{appears} in $\SeriesF$ if $x \in \Support{\SeriesF}$. By a slight
abuse of notation, this property is denoted by $x \in \SeriesF$. By exploiting the vector
space structure of $\K \AAngle{G}$, any $G$-series $\SeriesF$ expresses as
\begin{equation} \label{equ:definition_series_as_sums}
    \SeriesF = \sum_{x \in G} \Angle{x, \SeriesF} x.
\end{equation}
This notation using potentially infinite sums of elements of $G$ accompanied with
coefficients of $\K$ is common in the context of formal power series. In the sequel, we
shall define and handle some $G$-series using the
notation~\eqref{equ:definition_series_as_sums}.  A $G$-series having a finite support is a
\Def{$G$-polynomial}. The space $\K \Angle{G}$ can be seen as the subspace of $\K
\AAngle{G}$ consisting in all $G$-polynomials.  The \Def{Hadamard product} of two $G$-series
$\SeriesF_1$ and $\SeriesF_2$ is the series $\SeriesF_1 \Hadamard \SeriesF_2$ defined, for
any $x \in G$, by
\begin{math}
    \Angle{x, \SeriesF_1 \Hadamard \SeriesF_2}
    := \Angle{x, \SeriesF_1} \Angle{x, \SeriesF_2}.
\end{math}
\medbreak

The space of all generating series on one formal parameter $t$ is denoted by $\K
\AAngle{t}$. The \Def{trace} of a $G$-series $\SeriesF$ is the generating series
$\Trace(\SeriesF)$ of $\K \AAngle{t}$ defined by
\begin{equation}
    \Trace(\SeriesF) :=
    \sum_{x \in G} \Angle{x, \SeriesF} t^{\Rank(x)}.
\end{equation}
This series might be ill-defined when $G$ is not combinatorial. Observe that if $\SeriesF$
is the characteristic series of $G$, then $\Trace(\SeriesF)$ is the generating series
of~$G$.
\medbreak

\subsubsection{Graded graphs}
A \Def{graded graph}~\cite{Fom94} is a pair $(G, \Up)$ where $G$ is a combinatorial graded
set of \Def{vertices} and $\Up : \K \Angle{G} \to \K \Angle{G}$ is a linear map such that
$\Up(x) \in \K \Angle{G(d + 1)}$ for any $x \in G(d)$. In the sequel, $I$ is the identity
map on $\K \Angle{G}$.
\medbreak

Given a pair $(x, y) \in G^2$, let us set $\Weight{\Up}{x, y} := \Angle{y, \Up(x)}$. We say
that $(x, y)$ is an \Def{edge} of $(G, \Up)$ if $\Weight{\Up}{x, y} \ne 0$. In this case the
\Def{weight} of this edge is $\Weight{\Up}{x, y}$. A \Def{path} from $x_1 \in G$ to $x_\ell
\in G$ is a sequence $\Par{x_1, \dots, x_\ell}$, $\ell \geq 1$, of vertices of $G$ such that
for any $i \in [\ell - 1]$, $\Par{x_i, x_{i + 1}}$ is an edge of $(G, \Up)$. The
\Def{length} of $\Par{x_1, \dots, x_\ell}$ is $\ell - 1$ and its \Def{weight} is
\begin{equation}
    \Weight{\Up}{x_1, \dots, x_\ell} :=
    \prod_{i \in [\ell - 1]} \Weight{\Up}{x_i, x_{i + 1}}.
\end{equation}
As a particular case, the weight of any path of length $0$ is~$1$. The set of all paths of
$(G, \Up)$ from $x$ to $y$ is denoted by $\Paths{\Up}(x, y)$.  When for all $(x, y) \in
G^2$, the coefficients $\Weight{\Up}{x, y}$ are nonnegative integers, $(G, \Up)$ is
\Def{natural}. In this case, one can interpret any edge $(x, y) \in G^2$ as a bunch of
$\Weight{\Up}{x, y}$ multi-edges from $x$ to $y$.  Hence, for any $x, y \in G$, the sum of
the weights of all paths from $x$ to $y$ can be interpreted as the number of multipaths from
$x$ to $y$.  When moreover all coefficients $\Weight{\Up}{x, y}$ belong to $\{0, 1\}$, $(G,
\Up)$ is \Def{simple}.  Besides, when there is an element $\Zero$ of $G$ such that for any
$x \in G$, there is a path from $\Zero$ to $x$, $(G, \Up)$ is \Def{rooted} and $\Zero$ is
the \Def{root} of the graded graph.  Observe that if $(G, \Up)$ is rooted, its root is
unique.  In this case, for any $x \in G$, an \Def{initial path} to $x$ is a path from
$\Zero$ to~$x$ in~$(G, \Up)$.
\medbreak

The \Def{poset} of $(G, \Up)$ is the poset $(G, \Leq)$ wherein $x \Leq y$ if there is a path
in $(G, \Up)$ from the vertex $x$ to the vertex $y$ of~$G$. The covering relation of this
poset is denoted by $\Covered_\Up$ and it satisfies, for any $x, y \in G$,  $x \Covered_\Up
y$ if and only if $y$ appears in~$\Up(x)$.
\medbreak

We shall draw graded graphs where edges are implicitly oriented from top to bottom. The
weight of an edge is written onto it, with the convention that undecorated edges have
weight~$1$. For instance, Figure~\ref{fig:young_graded_graph} shows the Young graded graph
$\YoungLattice$.
\begin{figure}[ht]
    \centering
    \begin{equation*}
        \begin{tikzpicture}[Centering,xscale=1.1,yscale=1.1]
            \node(Empty)at(0,0){$\Zero$};
            \node(1)at(0,-1){
                \begin{tikzpicture}[Centering]
                    \node[Box](1){};
                \end{tikzpicture}};
            \node(11)at(-1,-2){
                \begin{tikzpicture}[Centering]
                    \node[Box](11_1)at(0,0){};
                    \node[Box,right of=11_1,node distance=0.25cm](11_2){};
                \end{tikzpicture}};
            \node(2)at(1,-2){
                \begin{tikzpicture}[Centering]
                    \node[Box](2_1)at(0,0){};
                    \node[Box,below of=2_1,node distance=0.25cm](2_2){};
                \end{tikzpicture}};
            \node(111)at(-2,-3){
                \begin{tikzpicture}[Centering]
                    \node[Box](111_1)at(0,0){};
                    \node[Box,right of=111_1,node distance=0.25cm](111_2){};
                    \node[Box,right of=111_2,node distance=0.25cm](111_3){};
                \end{tikzpicture}};
            \node(21)at(0,-3){
                \begin{tikzpicture}[Centering]
                    \node[Box](21_1)at(0,0){};
                    \node[Box,right of=21_1,node distance=0.25cm](21_2){};
                    \node[Box,below of=21_1,node distance=0.25cm](21_3){};
                \end{tikzpicture}};
            \node(3)at(2,-3){
                \begin{tikzpicture}[Centering]
                    \node[Box](3_1)at(0,0){};
                    \node[Box,below of=3_1,node distance=0.25cm](3_2){};
                    \node[Box,below of=3_2,node distance=0.25cm](3_3){};
                \end{tikzpicture}};
            \node(1111)at(-3,-4){
                \begin{tikzpicture}[Centering]
                    \node[Box](1111_1)at(0,0){};
                    \node[Box,right of=1111_1,node distance=0.25cm](1111_2){};
                    \node[Box,right of=1111_2,node distance=0.25cm](1111_3){};
                    \node[Box,right of=1111_3,node distance=0.25cm](1111_4){};
                \end{tikzpicture}};
            \node(211)at(-1,-4){
                \begin{tikzpicture}[Centering]
                    \node[Box](211_1)at(0,0){};
                    \node[Box,right of=211_1,node distance=0.25cm](211_2){};
                    \node[Box,right of=211_2,node distance=0.25cm](211_3){};
                    \node[Box,below of=211_1,node distance=0.25cm](211_4){};
                \end{tikzpicture}};
            \node(22)at(0,-4){
                \begin{tikzpicture}[Centering]
                    \node[Box](22_1)at(0,0){};
                    \node[Box,right of=22_1,node distance=0.25cm](22_2){};
                    \node[Box,below of=22_1,node distance=0.25cm](22_3){};
                    \node[Box,below of=22_2,node distance=0.25cm](22_4){};
                \end{tikzpicture}};
            \node(31)at(1,-4){
                \begin{tikzpicture}[Centering]
                    \node[Box](31_1)at(0,0){};
                    \node[Box,right of=31_1,node distance=0.25cm](31_2){};
                    \node[Box,below of=31_1,node distance=0.25cm](31_3){};
                    \node[Box,below of=31_3,node distance=0.25cm](31_4){};
                \end{tikzpicture}};
            \node(4)at(3,-4){
                \begin{tikzpicture}[Centering]
                    \node[Box](4_1)at(0,0){};
                    \node[Box,below of=4_1,node distance=0.25cm](4_2){};
                    \node[Box,below of=4_2,node distance=0.25cm](4_3){};
                    \node[Box,below of=4_3,node distance=0.25cm](4_4){};
                \end{tikzpicture}};
            \draw[EdgeGraph](Empty)--(1);
            \draw[EdgeGraph](1)--(11);
            \draw[EdgeGraph](1)--(2);
            \draw[EdgeGraph](11)--(111);
            \draw[EdgeGraph](11)--(21);
            \draw[EdgeGraph](2)--(21);
            \draw[EdgeGraph](2)--(3);
            \draw[EdgeGraph](111)--(1111);
            \draw[EdgeGraph](111)--(211);
            \draw[EdgeGraph](21)--(211);
            \draw[EdgeGraph](21)--(22);
            \draw[EdgeGraph](21)--(31);
            \draw[EdgeGraph](3)--(31);
            \draw[EdgeGraph](3)--(4);
        \end{tikzpicture}
    \end{equation*}
    \caption{The Young graded graph up to integer partitions of size $4$.}
    \label{fig:young_graded_graph}
\end{figure}
The poset of $\YoungLattice$ is the Young lattice~\cite{Sta88}. Recall that in
$\YoungLattice$, vertices are integer partitions (represented as Young diagrams) and
$\Up(\lambda)$ is the sum of all partitions that can be obtained by adding one box to the
integer partition~$\lambda$.
\medbreak

Let
\begin{math}
    \Up^\Dual : \K \Angle{G}^\Dual \to \K \Angle{G}^\Dual
\end{math}
be the adjoint map of $\Up$. Due to the fact that $G$ is combinatorial and $\K \Angle{G}$ is
a graded space decomposing as
\begin{equation}
    \K \Angle{G} = \bigoplus_{d \in \N} \K \Angle{G(d)}
\end{equation}
with finite dimensional homogeneous components $\K \Angle{G(d)}$, $d \geq 0$, the space $\K
\Angle{G}$ can be identified with its graded dual $\K \Angle{G}^\Dual$. Therefore, for any
$y \in G$,
\begin{equation}
    \Up^\Dual(y)
    = \sum_{x \in G} \Angle{x, \Up(y)} x
    = \sum_{x \in G} \Weight{\Up}{x, y} x.
\end{equation}
\medbreak

In the case where $(G, \Up)$ is rooted, the \Def{hook series} of $(G, \Up)$ is the
$G$-series $\HookSeries{\Up}$ defined by the functional equation
\begin{equation}
    \Angle{x, \HookSeries{\Up}}
    = \Han{x = \Zero} + \Angle{\Up^\Dual(x), \HookSeries{\Up}}.
\end{equation}
For any $x \in G$, $\Angle{x, \HookSeries{\Up}}$ is the \Def{hook coefficient} of $x$ in
$(G, \Up)$.
\medbreak

\begin{Proposition} \label{prop:hook_series}
    Let $(G, \Up)$ be a rooted graded graph. For any $x \in G$,
    \begin{equation} \label{equ:hook_series}
        \Angle{x, \HookSeries{\Up}}
        = \sum_{p \in \Paths{\Up}(\Zero, x)} \Weight{\Up}{p}.
    \end{equation}
    Moreover, $\HookSeries{\Up} = (I - \Up)^{-1}(\Zero)$.
\end{Proposition}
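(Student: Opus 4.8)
The plan is to establish the two assertions separately, both resting on the single structural fact that $\Up$ raises the rank by exactly one. This forces every path of $\Par{G, \Up}$ to increase the rank step by step, so that each initial path to a vertex $x$ has length exactly $\Rank(x) - \Rank(\Zero)$; in particular $\Zero$ is of minimal rank in $G$ and is the target of no edge.

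For the first identity, I would prove $\Angle{x, \HookSeries{\Up}} = \sum_{p \in \Paths{\Up}(\Zero, x)} \Weight{\Up}{p}$ by induction on $\Rank(x)$, reading the defining functional equation of $\HookSeries{\Up}$ from left to right. When $\Rank(x)$ is minimal, no $z \in G$ satisfies $\Weight{\Up}{z, x} \ne 0$ (such a $z$ would have strictly smaller rank), so $\Angle{\Up^\Dual(x), \HookSeries{\Up}} = 0$ and the functional equation gives $\Angle{x, \HookSeries{\Up}} = \Han{x = \Zero}$; on the other side the only candidate initial path to $x$ is the empty one, present exactly when $x = \Zero$, whence the right-hand side also equals $\Han{x = \Zero}$. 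For the inductive step with $x \ne \Zero$, I would expand $\Angle{\Up^\Dual(x), \HookSeries{\Up}} = \sum_{z \in G} \Weight{\Up}{z, x} \Angle{z, \HookSeries{\Up}}$, a finite sum since $\Up^\Dual(x)$ is supported on the finite set $G(\Rank(x) - 1)$, substitute the induction hypothesis for each $z$, and identify the outcome with the desired sum through the weight-preserving bijection that splits an initial path to $x$ into an initial path to its penultimate vertex $z$ followed by the edge $(z, x)$.

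For the \emph{moreover} part, I would globalize the functional equation. By definition of the adjoint one has $\Angle{\Up^\Dual(x), \HookSeries{\Up}} = \Angle{x, \Up(\HookSeries{\Up})}$ for every $x \in G$, so the functional equation is equivalent to the single identity $\HookSeries{\Up} = \Zero + \Up(\HookSeries{\Up})$, that is $(I - \Up)(\HookSeries{\Up}) = \Zero$. Because $\Up$ strictly increases the rank, it extends to an operator on $\K \AAngle{G}$ and the series $\sum_{k \geq 0} \Up^k$ is well defined there—on each homogeneous component only finitely many summands contribute—and telescoping shows it to be a two-sided inverse of $I - \Up$. Applying it to $\Zero$ yields $\HookSeries{\Up} = (I - \Up)^{-1}(\Zero)$; incidentally, expanding $\sum_{k \geq 0} \Up^k(\Zero)$ recovers the first identity by collecting initial paths according to their length.

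The computations here are routine; the only points requiring care are those forced by working with series rather than polynomials. Concretely, I must check that every scalar product written down is a finite sum—this holds because $\Up^\Dual(x)$ and each $\Up^k(\Zero)$ are homogeneous of a fixed rank while $G$ is combinatorial—and that $I - \Up$ is invertible in the completed space $\K \AAngle{G}$. Both reduce to the rank-raising property of $\Up$ together with $\Zero$ being of minimal rank, so I expect no genuine obstacle beyond this bookkeeping.
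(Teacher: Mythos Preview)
Your proposal is correct and follows essentially the same route as the paper: both establish the first identity by induction on the rank of $x$, decomposing initial paths according to their last edge. For the second assertion you globalize the functional equation to $(I - \Up)\HookSeries{\Up} = \Zero$ and invert, whereas the paper runs a second short induction to show $\Angle{x, \Up^{\Rank(x)}(\Zero)} = \Angle{x, \HookSeries{\Up}}$ before invoking $(I - \Up)^{-1} = \sum_{d \geq 0} \Up^d$; these are equivalent reorganizations of the same computation.
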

\begin{proof}
    We proceed by induction on the rank $d$ of $x$. When $d = 0$, since $(G, \Up)$ is
    rooted, we necessarily have $x = \Zero$. Therefore, since $\Han{\Zero = \Zero} = 1$ and
    $\Up^\Dual(\Zero) = 0$, the property is satisfied in this case. Otherwise, $x \ne \Zero$
    and we have by definition of hook series,
    \begin{equation} \begin{split} \label{equ:hook_series_proof}
        \Angle{x, \HookSeries{\Up}} 
        & =
        \Angle{\Up^\Dual(x), \HookSeries{\Up}} \\
        & =
        \Angle{\sum_{y \in G} \Angle{x, \Up(y)} y, \HookSeries{\Up}} \\
        & =
        \sum_{y \in G} \Angle{x, \Up(y)} \Angle{y, \HookSeries{\Up}}.
    \end{split} \end{equation}
    Now, for any $y \in G$, if $x$ appears in $\Up(y)$, then the rank of $y$ is $d - 1$, and
    by induction hypothesis, $\Angle{y, \HookSeries{\Up}}$
    satisfies~\eqref{equ:hook_series}. Since all paths from $\Zero$ to $x$ in $(G, \Up)$
    decompose as paths from $\Zero$ to elements $y$ of rank $d - 1$ followed by edges from
    $y$ to $x$, the statement of the proposition follows.
    \smallbreak

    Let us establish the second part of the statement. For this, we prove that for any $x
    \in G$,
    \begin{equation}
        \Angle{x, \Up^{\Rank(x)}(\Zero)} = \Angle{x, \HookSeries{\Up}}
    \end{equation}
    by induction on the rank $d$ of $x$. If $d = 0$, since $(G, \Up)$ is rooted, $x =
    \Zero$, and since $\Angle{\Zero, \Up^0(\Zero)} = 1$ and $\Angle{\Zero, \HookSeries{\Up}}
    = 1$, the property is satisfied. Assume that $d \geq 1$.
    By~\eqref{equ:hook_series_proof} and by induction hypothesis,
    \begin{equation} \begin{split}
        \Angle{x, \HookSeries{\Up}}
        & =
        \sum_{y \in G} \Angle{x, \Up(y)} \Angle{y, \HookSeries{\Up}} \\
        & =
        \sum_{y \in G} \Angle{x, \Up(y)} \Angle{y, \Up^{\Rank(y)}(\Zero)} \\
        & =
        \Angle{x, \Up^{\Rank(x)}(\Zero)}.
    \end{split} \end{equation}
    Since finally $(I - \Up)^{-1} = \sum_{d \in \N} \Up^d$, the stated expression for
    $\HookSeries{\Up}$ follows.
\end{proof}
\medbreak

When $(G, \Up)$ is moreover natural, Proposition~\ref{prop:hook_series} says that the hook
coefficient of any $x \in G$ can be interpreted as the number of initial multipaths to $x$
in $(G, \Up)$.  These coefficients define a statistics on the elements of $G$ which can be
of independent combinatorial interest. For instance, the hook coefficient of a partition
$\lambda$ in $\YoungLattice$ is given by the hook-length formula~\cite{FRT54}, is also the
number of initial paths to $\lambda$, and is also the number of standard Young tableaux of
shape $\lambda$. Therefore, a standard Young tableau of shape $\lambda$ is to the integer
partition $\lambda$ what an initial path to $x \in G$ is to $x$ in the case where $(G, \Up)$
is a natural rooted graded graph.  Moreover, the \Def{initial paths series} of $(G, \Up)$ is
the generating series $\InitialPathsSeries{\Up} := \Trace\Par{\HookSeries{\Up}}$.  By
definition, for any $\ell \in \N$, the coefficient $\Angle{t^\ell,
\InitialPathsSeries{\Up}}$ can be interpreted as the number of initial multipaths of $(G,
\Up)$ of length $\ell$. In the case of $\YoungLattice$, we obtain the generating series
counting the standard Young tableaux as initial path series (see Sequence~\OEIS{A000085}
of~\cite{Slo} for its coefficients).
\medbreak

\subsubsection{Pairs of graded graphs}
A \Def{pair of graded graphs} is a triple $(G, \Up, \Vp)$ such that both $(G, \Up)$ and $(G,
\Vp)$ are graded graphs. When $(G, \Up)$ and $(G, \Vp)$ are both natural, $(G, \Up, \Vp)$ is
\Def{natural}.  When $(G, \Up)$ and $(G, \Vp)$ are both rooted and share the same root
$\Zero$, $(G, \Up, \Vp)$ is \Def{rooted} and $\Zero$ is the \Def{root} of  $(G, \Up, \Vp)$.
A \Def{returning path} from $x \in G$ to $y \in G$ is a pair $\Par{p, p'}$ such that $p$ is
a path from $x$ to $y$ in $(G, \Up)$ and $p'$ is a path from $x$ to $y$ in $(G, \Vp)$.
The \Def{length} of $\Par{p, p'}$ is the length of $p$ (or equivalently, of $p'$) and its
\Def{weight} is
\begin{equation}
    \Weight{\Up, \Vp}{p, p'} := \Weight{\Up}{p} \Weight{\Vp}{p'}.
\end{equation}
When $(G, \Up, \Vp)$ is rooted, a \Def{returning initial path} to $x$ is a returning path
from $\Zero$ to~$x$ in~$(G, \Up, \Vp)$ and the \Def{returning hook series} of $(G, \Up,
\Vp)$ is the $G$-series $\ReturningHookSeries{\Up}{\Vp}$ defined by
\begin{equation}
    \ReturningHookSeries{\Up}{\Vp}
    :=
    \HookSeries{\Up} \Hadamard \HookSeries{\Vp}.
\end{equation}
For any $x \in G$, $\Angle{x, \ReturningHookSeries{\Up}{\Vp}}$ is the \Def{returning hook
coefficient} of $x$ in $(G, \Up, \Vp)$.
\medbreak

\begin{Proposition} \label{prop:returning_hook_series}
    Let $(G, \Up, \Vp)$ be a rooted pair of graded graphs. For any $x \in G$,
    \begin{equation}
        \Angle{x, \ReturningHookSeries{\Up}{\Vp}}
        =
        \sum_{\substack{
            p \in \Paths{\Up}(\Zero, x) \\
            p' \in \Paths{\Vp}(\Zero, x) \\
        }}
        \Weight{\Up, \Vp}{p, p'}.
    \end{equation}
\end{Proposition}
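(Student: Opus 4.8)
The plan is to observe that the statement is a direct consequence of the definition of the returning hook series together with Proposition~\ref{prop:hook_series}, so the proof amounts to unfolding the definitions in the right order. No induction or genuinely new argument is needed; the whole content of the result is that the Hadamard product of the two hook series realizes, coefficient by coefficient, the factorization of a returning initial path into its two constituent paths, one in $(G, \Up)$ and one in $(G, \Vp)$.

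First, I would expand the left-hand side using the definition $\ReturningHookSeries{\Up}{\Vp} := \HookSeries{\Up} \Hadamard \HookSeries{\Vp}$ and the defining property of the Hadamard product, which gives
\[
    \Angle{x, \ReturningHookSeries{\Up}{\Vp}}
    = \Angle{x, \HookSeries{\Up}} \, \Angle{x, \HookSeries{\Vp}}.
\]
Next, I would apply Proposition~\ref{prop:hook_series} to each of the two factors, rewriting $\Angle{x, \HookSeries{\Up}}$ as $\sum_{p \in \Paths{\Up}(\Zero, x)} \Weight{\Up}{p}$ and $\Angle{x, \HookSeries{\Vp}}$ as $\sum_{p' \in \Paths{\Vp}(\Zero, x)} \Weight{\Vp}{p'}$. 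Finally, I would distribute the product of these two sums into a single sum over pairs $\Par{p, p'}$ and recognize each summand $\Weight{\Up}{p} \, \Weight{\Vp}{p'}$ as the weight $\Weight{\Up, \Vp}{p, p'}$ of the corresponding returning path, which yields exactly the right-hand side.

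The only point requiring a word of justification---and the closest thing to an obstacle in an otherwise routine computation---is the legitimacy of distributing the product of the two sums, that is, that we are not manipulating ill-defined series. This is guaranteed by the grading: since every edge of a graded graph raises the rank by one, each path in $\Paths{\Up}(\Zero, x)$ has length exactly $\Rank(x)$, and because $G$ is combinatorial each graded component $G(d)$ is finite; hence there are only finitely many such paths and both sums are finite. Consequently the product of the two finite sums distributes without any convergence issue, and the displayed chain of equalities is valid, which completes the proof.
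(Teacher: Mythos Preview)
Your proof is correct and follows essentially the same approach as the paper: unfold the definition of the returning hook series via the Hadamard product, then apply Proposition~\ref{prop:hook_series} to each factor. Your additional remark on the finiteness of the sums is a welcome clarification that the paper omits.
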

\begin{proof}
    By definition of returning hook series,
    \begin{equation}
        \Angle{x, \ReturningHookSeries{\Up}{\Vp}}
        = \Angle{x, \HookSeries{\Up} \Hadamard \HookSeries{\Vp}}
        = \Angle{x, \HookSeries{\Up}} \Angle{x, \HookSeries{\Vp}}.
    \end{equation}
    By Proposition~\ref{prop:hook_series}, the statement of the proposition follows.
\end{proof}
\medbreak

When $(G, \Up, \Vp)$ is moreover natural, Proposition~\ref{prop:returning_hook_series} says
that the returning hook coefficient of any $x \in G$ can be interpreted as the number of
returning initial paths to $x$ in $(G, \Up, \Vp)$. These coefficients define a statistics on
the elements of $G$ which can be of independent combinatorial interest.  For instance, by
seeing $\YoungLattice$ as a pair of graded graphs with $\Vp = \Up$, the returning hook
coefficient of a partition $\lambda$ in $\YoungLattice$ is given by the square of the
hook-length formula.  Therefore, a pair of standard Young tableaux of the same shape
$\lambda$ is to the integer partition $\lambda$  what a returning initial path to $x \in G$
is to $x$ in the case where $(G, \Up, \Vp)$ is a natural rooted pair of graded graphs.
Moreover, the \Def{returning initial paths series} of $(G, \Up, \Vp)$ is the generating
series
\begin{math}
    \ReturningInitialPathsSeries{\Up}{\Vp} := \Trace\Par{\ReturningHookSeries{\Up}{\Vp}}.
\end{math}
By definition, for any $\ell \in \N$, the coefficient of $\Angle{t^\ell,
\ReturningInitialPathsSeries{\Up}{\Vp}}$ can be interpreted as the number of returning
initial multipaths of $(G, \Up, \Vp)$ of length $\ell$.  In the case of $\YoungLattice$, we
obtain the generating series counting the permutations as returning initial paths series
(see~\eqref{equ:sum_square_Young_lattice} and~\cite{Sag01}).
\medbreak

\subsubsection{Dual graded graphs}
Let $(G, \Up, \Vp)$ be a pair of graded graphs.  One says that $(G, \Up, \Vp)$ is
\Def{$\phi$-diagonal dual} if $\phi : \K \Angle{G} \to \K \Angle{G}$ is a diagonal linear
map that is, for any $x \in G$, $\phi(x) = \lambda_x x$ where $\lambda_x \in \K$, and
\begin{equation} \label{equ:phi_diagonal_duality}
    \Vp^\Dual \Up - \Up \Vp^\Dual = \phi.
\end{equation}
This notion is a generalization of $r_d$-duality, and hence, of $r$-duality and duality of
graded graphs (see~\cite{Fom94}). Indeed, in the case where $\phi(x) = r_{\Rank(x)} \; x$
for any $x \in G$, one recovers $r_d$-duality. Duality of graded graphs is very closely
connected with the theory of $r$-differential posets~\cite{Sta88}. In the case where
$(G, \Up, \Up)$ is $\phi$-diagonal dual, we say that $(G, \Up)$ is \Def{$\phi$-diagonal
self-dual}.
\medbreak

\begin{Proposition} \label{prop:relation_U_V_phi_diagonal_duality}
    Let $(G, \Up, \Vp)$ be a pair of $\phi$-diagonal dual graded graphs.  For any $n \geq
    0$,
    \begin{equation} \label{equ:relation_U_V_phi_diagonal_duality}
        \Vp^\Dual \Up^n
        = \Up^n \Vp^\Dual
        + \sum_{\substack{k_1, k_2 \geq 0 \\ k_1 + k_2 = n - 1}} \Up^{k_1} \phi \Up^{k_2}.
    \end{equation}
\end{Proposition}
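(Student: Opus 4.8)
The plan is to argue by induction on $n$, using the defining relation \eqref{equ:phi_diagonal_duality} as the mechanism that lets me transport $\Vp^\Dual$ past one copy of $\Up$ at a time. For the base case $n = 0$, both sides reduce to $\Vp^\Dual$: the right-hand sum is empty since no pair of nonnegative integers sums to $-1$, and $\Up^0 = I$. For $n = 1$ the claimed identity is exactly \eqref{equ:phi_diagonal_duality}, again after noting $\Up^0 = I$, which is reassuring as a sanity check of the index conventions.

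For the inductive step I would write $\Vp^\Dual \Up^{n+1} = \Par{\Vp^\Dual \Up^n} \Up$ and substitute the induction hypothesis. The leading term $\Up^n \Vp^\Dual \Up$ is rewritten using \eqref{equ:phi_diagonal_duality} in the form $\Vp^\Dual \Up = \Up \Vp^\Dual + \phi$, producing $\Up^{n+1} \Vp^\Dual + \Up^n \phi$. The remaining piece, the sum $\sum_{k_1 + k_2 = n - 1} \Up^{k_1} \phi \Up^{k_2}$ multiplied on the right by $\Up$, becomes after the shift $k_2 \mapsto k_2 + 1$ the sum over all $k_1 + k_2 = n$ with $k_2 \geq 1$. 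The stray term $\Up^n \phi = \Up^n \phi \Up^0$ supplies precisely the missing $k_2 = 0$ summand, so the two contributions combine into $\sum_{k_1 + k_2 = n} \Up^{k_1} \phi \Up^{k_2}$, which is the statement at rank $n + 1$.

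The only delicate point is the index bookkeeping in this last merge: I must verify that absorbing the boundary term $\Up^n \phi$ into the shifted sum reconstitutes the full range $k_1, k_2 \geq 0$ with $k_1 + k_2 = n$, with no summand omitted or counted twice. This is routine but is where a careless shift would introduce an error.

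A cleaner alternative that sidesteps the induction is to invoke the telescoping identity $\Vp^\Dual \Up^n - \Up^n \Vp^\Dual = \sum_{j = 0}^{n - 1} \Up^j \Par{\Vp^\Dual \Up - \Up \Vp^\Dual} \Up^{n - 1 - j}$, which holds for any two linear operators and follows from the collapse of the consecutive differences $\Up^j \Vp^\Dual \Up^{n - j} - \Up^{j + 1} \Vp^\Dual \Up^{n - 1 - j}$. Substituting $\Vp^\Dual \Up - \Up \Vp^\Dual = \phi$ and reindexing via $k_1 = j$ and $k_2 = n - 1 - j$ yields \eqref{equ:relation_U_V_phi_diagonal_duality} directly, with the nonnegativity constraints and the range $k_1 + k_2 = n - 1$ appearing automatically.
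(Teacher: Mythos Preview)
Your inductive argument is correct and matches the paper's own proof essentially line for line: same base case, same factoring $\Vp^\Dual \Up^{n+1} = \Par{\Vp^\Dual \Up^n}\Up$, same use of \eqref{equ:phi_diagonal_duality} on $\Up^n \Vp^\Dual \Up$, and the same absorption of $\Up^n \phi$ as the missing $k_2 = 0$ summand. The telescoping alternative you sketch is a nice direct route that the paper does not take, but it is of course equivalent.
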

\begin{proof}
    We proceed by induction on $n$. When $n = 0$, the property holds since the left-hand
    side of~\eqref{equ:relation_U_V_phi_diagonal_duality} is
    \begin{math}
        \Vp^\Dual \Up^0 = \Vp^\Dual I = \Vp^\Dual
    \end{math}
    while its right-hand side is
    \begin{math}
        \Up^0 \Vp^\Dual = I \Vp^\Dual = \Vp^\Dual.
    \end{math}
    Assume that the property holds for a $n \geq 0$. Hence, by induction hypothesis and by
    using Relation~\eqref{equ:phi_diagonal_duality} implied by of $\phi$-diagonal duality of
    $(G, \Up, \Vp)$, we have
    \begin{equation} \begin{split}
        \Vp^\Dual \Up^{n + 1}
        & =
        \Vp^\Dual \Up^n \Up \\
        & =
        \Up^n \Vp^\Dual \Up +
        \sum_{\substack{k_1, k_2 \geq 0 \\ k_1 + k_2 = n - 1}}
        \Up^{k_1} \phi \Up^{k_2 + 1} \\
        & =
        \Up^n \Par{\phi +  \Up \Vp^\Dual} +
        \sum_{\substack{k_1, k_2 \geq 0 \\ k_1 + k_2 = n - 1}}
        \Up^{k_1} \phi \Up^{k_2 + 1} \\
        & =
        \Up^{n + 1} \Vp^\Dual + \Up^n \phi +
        \sum_{\substack{k_1, k_2 \geq 0 \\ k_1 + k_2 = n - 1}}
        \Up^{k_1} \phi \Up^{k_2 + 1} \\
        & =
        \Up^{n + 1} \Vp^\Dual +
        \sum_{\substack{k_1, k_2 \geq 0 \\ k_1 + k_2 = n}}
        \Up^{k_1} \phi \Up^{k_2},
    \end{split} \end{equation}
    establishing~\eqref{equ:relation_U_V_phi_diagonal_duality}.
\end{proof}
\medbreak

Observe that when $(G, \Up, \Vp)$ is a pair of $\phi$-diagonal dual graded graphs such that
$\phi$ commutes with $\Up$, there exists an $r \in \K$ such that the map $\phi$ satisfies
$\phi(x) = r x$ for any $x \in G$.  This implies that in this case, $(G, \Up, \Vp)$ is a
pair of $r$-dual graphs and Proposition~\ref{prop:relation_U_V_phi_diagonal_duality} brings
us the well-known identity~\cite{Sta88}
\begin{equation}
    \Vp^\Dual \Up^n = \Up^n \Vp^\Dual + n r \Up^{n - 1}
\end{equation}
holding for any $n \geq 0$.
\medbreak

\subsection{Syntax trees}
We set here elementary definitions and notations about syntax trees and composition
operations on syntax trees. Most of these notions can be found in~\cite[Chapter 3.]{Gir18}.
\medbreak

\subsubsection{Elementary definitions} \label{subsubsec:elementary_definitions_syntax_trees}
An \Def{alphabet} is a graded set $\GeneratingSet$ such that $\GeneratingSet(0) =
\emptyset$. The elements of $\GeneratingSet$ are \Def{letters}. The \Def{arity} $|\GenA|$ of
a letter $\GenA \in \GeneratingSet$ is its rank.  A \Def{$\GeneratingSet$-tree} (also called
\Def{$\GeneratingSet$-syntax tree}) is a planar rooted tree such that its internal nodes of
arity $k$ are decorated by letters of arity $k$ of $\GeneratingSet$. More precisely, a
$\GeneratingSet$-tree is either the \Def{leaf} $\Leaf$ or a pair $\Par{\GenA, \Par{\TreeT_1,
\dots, \TreeT_{|\GenA|}}}$ where $\GenA \in \GeneratingSet$ and $\TreeT_1$, \dots,
$\TreeT_{|\GenA|]}$ are $\GeneratingSet$-trees.  Unless otherwise specified, we use in the
sequel the standard terminology (such as \Def{node}, \Def{internal node}, \Def{leaf},
\Def{edge}, \Def{root}, \Def{child}, \Def{ancestor}, {\em etc.}) about planar rooted
trees~\cite{Knu97} (see also~\cite{Gir18}). Let us set here the most important definitions
employed in this work.
\medbreak

Let $\TreeT = \Par{\GenA, \Par{\TreeT_1, \dots, \TreeT_{|\GenA|}}}$ be a
$\GeneratingSet$-tree. For any word $u$ of positive integers, let $u \mapsto \TreeT(u)$ be
the partial map defined recursively as follows.
\begin{enumerate}[fullwidth,label={\em (\roman*)}]
    \item If $u = \epsilon$, then $\TreeT(u) := \TreeT$;
    \item If $u = u_1 u_2 \dots u_k$ with $k \geq 1$ and $u_1 \in [|\GenA|]$, then
    $\TreeT(u) := \TreeT_{u_1}\Par{u_2 \dots u_k}$;
    \item Otherwise, $\TreeT(u)$ is not defined.
\end{enumerate}
A \Def{node} of $\TreeT$ is any word $u$ of positive integers such that $\TreeT(u)$ is
well-defined. In this case, $\TreeT(u)$ is the \Def{$u$-suffix subtree} of $\TreeT$.
Moreover, for any $i \in [k]$ where $k$ is the arity of the node $u$ in $\TreeT$,
$\TreeT(ui)$ is the \Def{$i$-th subtree} of $u$ in $\TreeT$.  A node $u$ of $\TreeT$ is
\Def{internal} if $u$ is a proper prefix of an other node of $\TreeT$. A \Def{leaf} of
$\TreeT$ is a node of $\TreeT$ which is not internal. We denote by $\Nodes(\TreeT)$ (resp.
$\InternalNodes(\TreeT)$, $\Leaves(\TreeT)$) the set of all nodes (resp. internal nodes,
leaves) of $\TreeT$.
\medbreak

The \Def{degree} $\Deg(\TreeT)$ (resp. \Def{arity} $|\TreeT|$) of $\TreeT$ is its number of
internal nodes (resp. leaves). The only $\GeneratingSet$-tree of degree $0$ and arity $1$ is
the \Def{leaf} and is denoted by $\Leaf$.  For any $\GenA \in \GeneratingSet(k)$, the
\Def{corolla} decorated by $\GenA$ is the tree $\Corolla(\GenA)$ consisting in one internal
node decorated by $\GenA$ having as children $k$ leaves. The leaves of $\TreeT$ are totally
ordered by their position in $\Leaves(\TreeT)$ with respect to the lexicographic order. They
are thus implicitly indexed from $1$ to $|\TreeT|$.
\medbreak

For instance, if
\begin{math}
    \GeneratingSet :=  \GeneratingSet(2) \sqcup \GeneratingSet(3)
\end{math}
with $\GeneratingSet(2) := \{\GenA, \GenB\}$ and $\GeneratingSet(3) := \{\GenC\}$,
\begin{equation}
    \TreeT :=
    \begin{tikzpicture}[Centering,xscale=.26,yscale=.15]
        \node(0)at(0.00,-6.50){};
        \node(10)at(8.00,-9.75){};
        \node(12)at(10.00,-9.75){};
        \node(2)at(2.00,-6.50){};
        \node(4)at(3.00,-3.25){};
        \node(5)at(4.00,-9.75){};
        \node(7)at(5.00,-9.75){};
        \node(8)at(6.00,-9.75){};
        \node[NodeST](1)at(1.00,-3.25){$\GenB$};
        \node[NodeST](11)at(9.00,-6.50){$\GenA$};
        \node[NodeST](3)at(3.00,0.00){$\GenC$};
        \node[NodeST](6)at(5.00,-6.50){$\GenC$};
        \node[NodeST](9)at(7.00,-3.25){$\GenA$};
        \node(r)at(3.00,2.75){};
        \draw[Edge](0)--(1);
        \draw[Edge](1)--(3);
        \draw[Edge](10)--(11);
        \draw[Edge](11)--(9);
        \draw[Edge](12)--(11);
        \draw[Edge](2)--(1);
        \draw[Edge](4)--(3);
        \draw[Edge](5)--(6);
        \draw[Edge](6)--(9);
        \draw[Edge](7)--(6);
        \draw[Edge](8)--(6);
        \draw[Edge](9)--(3);
        \draw[Edge](r)--(3);
    \end{tikzpicture}
\end{equation}
is a $\GeneratingSet$-tree of degree $5$ and arity $8$. Its root is decorated by $\GenC$ and
has arity $3$. Moreover, we have
\begin{equation}
    \TreeT(1) =
    \begin{tikzpicture}[Centering,xscale=.25,yscale=.29]
        \node(0)at(0.00,-1.50){};
        \node(2)at(2.00,-1.50){};
        \node[NodeST](1)at(1.00,0.00){$\GenB$};
        \draw[Edge](0)--(1);
        \draw[Edge](2)--(1);
        \node(r)at(1.00,1.5){};
        \draw[Edge](r)--(1);
    \end{tikzpicture}
    = \Corolla(\GenB),
    \qquad
    \TreeT(2) = \Leaf,
    \qquad
    \TreeT(3) =
    \begin{tikzpicture}[Centering,xscale=.24,yscale=.17]
        \node(0)at(0.00,-5.33){};
        \node(2)at(1.00,-5.33){};
        \node(3)at(2.00,-5.33){};
        \node(5)at(4.00,-5.33){};
        \node(7)at(6.00,-5.33){};
        \node[NodeST](1)at(1.00,-2.67){$\GenC$};
        \node[NodeST](4)at(3.00,0.00){$\GenA$};
        \node[NodeST](6)at(5.00,-2.67){$\GenA$};
        \draw[Edge](0)--(1);
        \draw[Edge](1)--(4);
        \draw[Edge](2)--(1);
        \draw[Edge](3)--(1);
        \draw[Edge](5)--(6);
        \draw[Edge](6)--(4);
        \draw[Edge](7)--(6);
        \node(r)at(3.00,2.50){};
        \draw[Edge](r)--(4);
    \end{tikzpicture},
    \qquad
    \TreeT(32) =
    \begin{tikzpicture}[Centering,xscale=.25,yscale=.29]
        \node(0)at(0.00,-1.50){};
        \node(2)at(2.00,-1.50){};
        \node[NodeST](1)at(1.00,0.00){$\GenA$};
        \draw[Edge](0)--(1);
        \draw[Edge](2)--(1);
        \node(r)at(1.00,1.5){};
        \draw[Edge](r)--(1);
    \end{tikzpicture}
    = \Corolla(\GenA),
\end{equation}
and
\begin{math}
    \Nodes(\TreeT) = \{\epsilon, 1, 11, 12, 2, 3, 31, 311, 312, 313, 32, 321, 322\},
\end{math}
\begin{math}
    \InternalNodes(\TreeT) = \{\epsilon, 1, 3, 31, 32\},
\end{math}
and
\begin{math}
    \Leaves(\TreeT) = \{11, 12, 2, 311, 312, 313, 321, 322\}.
\end{math}
\medbreak

\subsubsection{Graded sets of syntax trees and partial compositions}
Given an alphabet $\GeneratingSet$, we denote by $\SyntaxTreesLeaf(\GeneratingSet)$ (resp.
$\SyntaxTreesInternalNode(\GeneratingSet)$) the graded set of all the $\GeneratingSet$-trees
where the rank of a tree is its arity (resp. its degree).  When $\GeneratingSet$ is finite,
the graded set $\SyntaxTreesInternalNode(\GeneratingSet)$ is combinatorial and its
generating series $\RankSeries_{\SyntaxTreesInternalNode(\GeneratingSet)}(t)$, counting its
elements with respect to their degrees, satisfies
\begin{equation}
    \RankSeries_{\SyntaxTreesInternalNode(\GeneratingSet)}(t)
    = 1 + t \RankSeries_{\GeneratingSet}\Par{
        \RankSeries_{\SyntaxTreesInternalNode(\GeneratingSet)}(t)}.
\end{equation}
\medbreak

Given $\TreeT, \TreeS \in \SyntaxTreesInternalNode(\GeneratingSet)$ and $i \in [|\TreeT|]$,
the \Def{partial composition} $\TreeT \circ_i \TreeS$ is the $\GeneratingSet$-tree obtained
by grafting the root of $\TreeS$ onto the $i$-th leaf of $\TreeT$. For instance, by
considering the previous graded set $\GeneratingSet$ of
Section~\ref{subsubsec:elementary_definitions_syntax_trees}, one has
\begin{equation} \label{equ:example_partial_composition_trees}
    \begin{tikzpicture}[Centering,xscale=.2,yscale=.19]
        \node(0)at(0.00,-5.00){};
        \node(2)at(2.00,-7.50){};
        \node(4)at(4.00,-7.50){};
        \node(6)at(6.00,-5.00){};
        \node(8)at(7.00,-5.00){};
        \node(9)at(8.00,-5.00){};
        \node[NodeST](1)at(1.00,-2.50){$\GenB$};
        \node[NodeST](3)at(3.00,-5.00){$\GenA$};
        \node[NodeST](5)at(5.00,0.00){$\GenA$};
        \node[NodeST](7)at(7.00,-2.50){$\GenC$};
        \draw[Edge](0)--(1);
        \draw[Edge](1)--(5);
        \draw[Edge](2)--(3);
        \draw[Edge](3)--(1);
        \draw[Edge](4)--(3);
        \draw[Edge](6)--(7);
        \draw[Edge](7)--(5);
        \draw[Edge](8)--(7);
        \draw[Edge](9)--(7);
        \node(r)at(5.00,2){};
        \draw[Edge](r)--(5);
    \end{tikzpicture}
    \enspace \circ_5 \enspace
    \begin{tikzpicture}[Centering,xscale=.25,yscale=.21]
        \node(0)at(0.00,-2.00){};
        \node(2)at(1.00,-2.00){};
        \node(3)at(2.00,-4.00){};
        \node(5)at(4.00,-4.00){};
        \node[NodeST](1)at(1.00,0.00){$\GenC$};
        \node[NodeST](4)at(3.00,-2.00){$\GenB$};
        \draw[Edge](0)--(1);
        \draw[Edge](2)--(1);
        \draw[Edge](3)--(4);
        \draw[Edge](4)--(1);
        \draw[Edge](5)--(4);
        \node(r)at(1.00,2){};
        \draw[Edge](r)--(1);
    \end{tikzpicture}
    \enspace = \enspace
    \begin{tikzpicture}[Centering,xscale=.22,yscale=.16]
        \node(0)at(0.00,-6.00){};
        \node(10)at(8.00,-9.00){};
        \node(11)at(9.00,-12.00){};
        \node(13)at(11.00,-12.00){};
        \node(14)at(12.00,-6.00){};
        \node(2)at(2.00,-9.00){};
        \node(4)at(4.00,-9.00){};
        \node(6)at(6.00,-6.00){};
        \node(8)at(7.00,-9.00){};
        \node[NodeST](1)at(1.00,-3.00){$\GenB$};
        \node[NodeST](12)at(10.00,-9.00){$\GenB$};
        \node[NodeST](3)at(3.00,-6.00){$\GenA$};
        \node[NodeST](5)at(5.00,0.00){$\GenA$};
        \node[NodeST](7)at(9.00,-3.00){$\GenC$};
        \node[NodeST](9)at(8.00,-6.00){$\GenC$};
        \draw[Edge](0)--(1);
        \draw[Edge](1)--(5);
        \draw[Edge](10)--(9);
        \draw[Edge](11)--(12);
        \draw[Edge](12)--(9);
        \draw[Edge](13)--(12);
        \draw[Edge](14)--(7);
        \draw[Edge](2)--(3);
        \draw[Edge](3)--(1);
        \draw[Edge](4)--(3);
        \draw[Edge](6)--(7);
        \draw[Edge](7)--(5);
        \draw[Edge](8)--(9);
        \draw[Edge](9)--(7);
        \node(r)at(5.00,2.5){};
        \draw[Edge](r)--(5);
    \end{tikzpicture}.
\end{equation}
Moreover, for any $u \in \Leaves(\TreeT)$, we shall denote by $\TreeT \circ^u \TreeS$ the
$\GeneratingSet$-tree obtained by grafting the root of $\TreeS$ into the leaf $u$ of
$\TreeT$. For instance, the partial composition shown
in~\eqref{equ:example_partial_composition_trees} is the same as the one obtained by
composing the two considered trees through $\circ^{22}$ since the $5$-th leaf of the first
tree is~$22$.
\medbreak

By a slight but convenient abuse of notation, given $\GenA, \GenB \in \GeneratingSet$ and $i
\in [|\GenA|]$, we shall in some cases simply write $\GenA \circ_i \GenB$ instead of
$\Corolla(\GenA) \circ_i \Corolla(\GenB)$.  Moreover, when the context is clear, we shall
even write $\GenA$ for~$\Corolla(\GenA)$.  In addition, given some $\GeneratingSet$-trees
$\TreeS_1$, \dots, $\TreeS_{|\GenA|}$, we shall write $\GenA \BPar{\TreeS_1, \dots,
\TreeS_{|\GenA|}}$ instead of $\Par{\GenA, \Par{\TreeS_1, \dots, \TreeS_{|\GenA|}}}$.
\medbreak

\subsection{Nonsymmetric operads}
We set here elementary definitions and notations about nonsymmetric operads, free
nonsymmetric operads, and presentations by generators and relations. Most of these notions
can be found in~\cite[Chapter~5.]{Gir18} or in~\cite{Men15}.
\medbreak

\subsubsection{Elementary definitions}
A \Def{nonsymmetric operad in the category of sets}, or a \Def{nonsymmetric operad} for
short, is a graded set $\Operad$ together with maps
\begin{equation}
    \circ_i : \Operad(n) \times \Operad(m) \to \Operad(n + m - 1),
    \qquad 1 \leq i \leq n, \enspace 1 \leq m,
\end{equation}
called \Def{partial compositions}, and a distinguished element $\Unit \in \Operad(1)$, the
\Def{unit} of $\Operad$. This data has to satisfy, for any $x, y, z \in \Operad$, the three
relations
\begin{subequations}
\begin{equation} \label{equ:operad_axiom_1}
    \Par{x \circ_i y} \circ_{i + j - 1} z = x \circ_i \Par{y \circ_j z},
    \qquad i \in [|x|], \enspace j \in [|y|],
\end{equation}
\begin{equation} \label{equ:operad_axiom_2}
    \Par{x \circ_i y} \circ_{j + |y| - 1} z = \Par{x \circ_j z} \circ_i y,
    \qquad i, j \in [|x|], \enspace i < j,
\end{equation}
\begin{equation} \label{equ:operad_axiom_3}
    \Unit \circ_1 x = x = x \circ_i \Unit,
    \qquad i \in [|x|].
\end{equation}
\end{subequations}
Since we consider in this work only nonsymmetric operads, we shall call these simply
\Def{operads}. The \Def{arity} $|x|$ of any $x \in \Operad$ is its rank.  An operad
$\Operad$ is \Def{combinatorial} if $\Operad$ is combinatorial as a graded set.
\medbreak

Given an operad $\Operad$, one defines the \Def{full composition} maps of $\Operad$ as the
maps
\begin{equation}
    \circ :
    \Operad(n) \times \Operad\Par{m_1} \times \dots \times \Operad\Par{m_n}
    \to \Operad\Par{m_1 + \dots + m_n},
    \qquad 1 \leq n, \enspace 1 \leq m_1, \dots, 1 \leq m_n,
\end{equation}
defined, for any $x \in \Operad(n)$ and $y_1, \dots, y_n \in \Operad$, by
\begin{equation} \label{equ:full_composition_maps}
    x \circ \Han{y_1, \dots, y_n}
    := \Par{\dots \Par{\Par{x \circ_n y_n} \circ_{n - 1} y_{n - 1}} \dots} \circ_1 y_1.
\end{equation}
\medbreak

When $\Operad$ is combinatorial, the \Def{Hilbert series} $\HilbertSeries_{\Operad}(t)$ of
$\Operad$ is the generating series $\GeneratingSeries_{\Operad}(t)$. If $\Operad_1$ and
$\Operad_2$ are two operads, a graded set morphism $\psi : \Operad_1 \to \Operad_2$ is an
\Def{operad morphism} if it sends the unit of $\Operad_1$ to the unit of $\Operad_2$ and
commutes with partial composition maps. We say that $\Operad_2$ is a \Def{suboperad} of
$\Operad_1$ if $\Operad_2$ is a graded subset of $\Operad_1$, $\Operad_1$ and $\Operad_2$
have the same unit, and the partial compositions of $\Operad_2$ are the ones of $\Operad_1$
restricted on $\Operad_2$. For any subset $\GeneratingSet$ of $\Operad$, the \Def{operad
generated} by $\GeneratingSet$ is the smallest suboperad $\Operad^{\GeneratingSet}$ of
$\Operad$ containing $\GeneratingSet$.  When $\Operad^{\GeneratingSet} = \Operad$ and
$\GeneratingSet$ is minimal with respect to the inclusion among the subsets of
$\GeneratingSet$ satisfying this property, $\GeneratingSet$ is a \Def{minimal generating
set} of $\Operad$ and its elements are \Def{generators} of $\Operad$. An \Def{operad
congruence} of $\Operad$ is an equivalence relation $\Congr$ respecting the arities and such
that, for any $x, y, x', y' \in \Operad$, $x \Congr x'$ and $y \Congr y'$ implies $x \circ_i
y \Congr x' \circ_i y'$ for any $i \in [|x|]$.  The $\Congr$-equivalence class of any $x \in
\Operad$ is denoted by $[x]_{\Congr}$. Given an operad congruence $\Congr$, the
\Def{quotient operad} $\Operad/_{\Congr}$ is the operad on the set of all
$\Congr$-equivalence classes and defined in the usual way.
\medbreak

\subsubsection{Free operads} \label{subsubsec:free_operads}
Let $\GeneratingSet$ be an alphabet. The \Def{free operad} on $\GeneratingSet$ is the operad
defined on the graded set $\SyntaxTreesLeaf(\GeneratingSet)$ wherein the partial
compositions $\circ_i$ are the partial compositions of $\GeneratingSet$-trees.  By
considering the previous graded set $\GeneratingSet$ of
Section~\ref{subsubsec:elementary_definitions_syntax_trees}, one has, as an example of a
full composition in $\SyntaxTreesLeaf(\GeneratingSet)$,
\begin{equation}
    \begin{tikzpicture}[Centering,xscale=.25,yscale=.26]
        \node(0)at(0.00,-1.67){};
        \node(2)at(2.00,-3.33){};
        \node(4)at(4.00,-3.33){};
        \node[NodeST](1)at(1.00,0.00){$\GenB$};
        \node[NodeST](3)at(3.00,-1.67){$\GenA$};
        \draw[Edge](0)--(1);
        \draw[Edge](2)--(3);
        \draw[Edge](3)--(1);
        \draw[Edge](4)--(3);
        \node(r)at(1.00,1.75){};
        \draw[Edge](r)--(1);
    \end{tikzpicture}
    \enspace \circ \enspace
    \Han{
    \begin{tikzpicture}[Centering,xscale=.21,yscale=.18]
        \node(0)at(0.00,-4.67){};
        \node(2)at(2.00,-4.67){};
        \node(4)at(4.00,-4.67){};
        \node(6)at(6.00,-4.67){};
        \node[NodeST](1)at(1.00,-2.33){$\GenA$};
        \node[NodeST](3)at(3.00,0.00){$\GenA$};
        \node[NodeST](5)at(5.00,-2.33){$\GenB$};
        \draw[Edge](0)--(1);
        \draw[Edge](1)--(3);
        \draw[Edge](2)--(1);
        \draw[Edge](4)--(5);
        \draw[Edge](5)--(3);
        \draw[Edge](6)--(5);
        \node(r)at(3.00,2){};
        \draw[Edge](r)--(3);
    \end{tikzpicture},
    \Leaf,
    \begin{tikzpicture}[Centering,xscale=.24,yscale=.23]
        \node(0)at(0.00,-2.00){};
        \node(2)at(1.00,-2.00){};
        \node(3)at(2.00,-2.00){};
        \node[NodeST](1)at(1.00,0.00){$\GenC$};
        \draw[Edge](0)--(1);
        \draw[Edge](2)--(1);
        \draw[Edge](3)--(1);
        \node(r)at(1.00,1.75){};
        \draw[Edge](r)--(1);
    \end{tikzpicture}}
    \enspace = \enspace
    \begin{tikzpicture}[Centering,xscale=.2,yscale=.14]
        \node(0)at(0.00,-10.50){};
        \node(10)at(10.00,-10.50){};
        \node(12)at(11.00,-10.50){};
        \node(13)at(12.00,-10.50){};
        \node(2)at(2.00,-10.50){};
        \node(4)at(4.00,-10.50){};
        \node(6)at(6.00,-10.50){};
        \node(8)at(8.00,-7.00){};
        \node[NodeST](1)at(1.00,-7.00){$\GenA$};
        \node[NodeST](11)at(11.00,-7.00){$\GenC$};
        \node[NodeST](3)at(3.00,-3.50){$\GenA$};
        \node[NodeST](5)at(5.00,-7.00){$\GenB$};
        \node[NodeST](7)at(7.00,0.00){$\GenB$};
        \node[NodeST](9)at(9.00,-3.50){$\GenA$};
        \draw[Edge](0)--(1);
        \draw[Edge](1)--(3);
        \draw[Edge](10)--(11);
        \draw[Edge](11)--(9);
        \draw[Edge](12)--(11);
        \draw[Edge](13)--(11);
        \draw[Edge](2)--(1);
        \draw[Edge](3)--(7);
        \draw[Edge](4)--(5);
        \draw[Edge](5)--(3);
        \draw[Edge](6)--(5);
        \draw[Edge](8)--(9);
        \draw[Edge](9)--(7);
        \node(r)at(7.00,3){};
        \draw[Edge](r)--(7);
    \end{tikzpicture}.
\end{equation}
When $\GeneratingSet$ is combinatorial and satisfies $\GeneratingSet(1) = \emptyset$, the
Hilbert series $\HilbertSeries_{\SyntaxTreesLeaf(\GeneratingSet)}(t)$ satisfies
\begin{equation}
    \HilbertSeries_{\SyntaxTreesLeaf(\GeneratingSet)}(t)
    = t
    + \RankSeries_\GeneratingSet\Par{\HilbertSeries_{\SyntaxTreesLeaf(\GeneratingSet)}(t)}.
\end{equation}
\medbreak

Free operads satisfy the following universality property. The free operad
$\SyntaxTreesLeaf(\GeneratingSet)$ is the unique operad (up to isomorphism) such that for
any operad $\Operad$ and any graded set morphism  $f : \GeneratingSet \to \Operad$, there
exists a unique operad morphism $\psi : \SyntaxTreesLeaf(\GeneratingSet) \to \Operad$ such
that the factorization $f = \psi \circ \Corolla$ holds. In other terms, the diagram
\begin{equation}
    \begin{tikzpicture}[Centering,xscale=1.3,yscale=1.1]
        \node(G)at(0,0){\begin{math}\GeneratingSet\end{math}};
        \node(O)at(2,0){$\Operad$};
        \node(AG)at(0,-2){$\SyntaxTreesLeaf(\GeneratingSet)$};
        \draw[Map](G)--(O)node[midway,above]{$f$};
        \draw[Injection](G)--(AG)node[midway,left]{$\Corolla$};
        \draw[Map,dashed](AG)--(O)node[midway,right]{$\psi$};
    \end{tikzpicture}
\end{equation}
commutes.
\medbreak

\subsubsection{Presentations and treelike expressions}
A \Def{presentation} of an operad $\Operad$ is a pair $(\GeneratingSet, \Congr)$ such that
$\GeneratingSet$ is an alphabet, $\Congr$ is an operad congruence of
$\SyntaxTreesLeaf(\GeneratingSet)$, and $\Operad$ is isomorphic to
$\SyntaxTreesLeaf(\GeneratingSet)/_{\Congr}$. In most of the practical cases,
$\GeneratingSet$ is a subset of $\Operad$ such that $\GeneratingSet$ is a minimal generating
set of~$\Operad$.
\medbreak

When $\Operad$ satisfies $\Operad(0) = \emptyset$, $\Operad$ is in particular an alphabet.
For this reason, $\SyntaxTreesLeaf(\Operad)$ is a well-defined free operad. The
\Def{evaluation map} of $\Operad$ is the map $\Eval : \SyntaxTreesLeaf(\Operad) \to \Operad$
defined as the unique surjective operad morphism satisfying, for any $x \in \Operad$,
$\Eval(\Corolla(x)) = x$. Given $S$ a subset of $\Operad$, a \Def{treelike expression} on
$S$ of an element $x$ of $\Operad$ is a $\Operad$-tree $\TreeT$ belonging to the fiber
$\Eval^{-1}(x)$ and such that all internal nodes of $\TreeT$ are decorated by elements of~$S$.
The set of all treelike expressions on $S$ of $x$ is denoted by~$\TreeLikeExpressions_S(x)$.
\medbreak

\subsubsection{Linear operads} \label{subsubsec:linear_operads}
The partial and the full composition operations of an operad $\Operad$ extend by linearity
on the space $\K \Angle{\Operad}$. This fact will be used implicitly in the sequel.
Moreover, it is convenient in what follows, when $x \in \Operad(n)$ and $y \in \Operad$, to
set $x \circ_i y := 0$ whenever $i \notin [n]$. This convention will be used also implicitly
in the sequel.  Besides, when $\Operad$ is in particular the free operad on
$\GeneratingSet$, by a slight abuse of notation, for any $\GenA \in \GeneratingSet$ and
$\SyntaxTreesLeaf(\GeneratingSet)$-polynomials $f_1$, \dots, $f_{|\GenA|}$, we shall write
$\GenA \BPar{f_1, \dots, f_{|\GenA|}}$ for $\Corolla(\GenA) \circ \Han{f_1, \dots,
f_{|\GenA|}}$.
\medbreak

\section{Graded graphs of syntax trees} \label{sec:graded_graphs_syntax_trees}
The objective of this section is to introduce two graded graphs of syntax trees which are
$\phi$-diagonal dual for a certain map $\phi$. These graphs will be used as raw material in
the next sections in order to associate pairs of graded graphs with operads.
\medbreak

\subsection{Prefix graded graphs} \label{subsec:prefix_graded_graphs}
We begin by introducing prefix graded graphs and present some of their combinatorial
properties.
\medbreak

\subsubsection{First definitions and properties}
\label{subsubsec:definitions_prefix_graded_graphs}
For any finite alphabet $\GeneratingSet$, let
$\Par{\SyntaxTreesInternalNode(\GeneratingSet), \Up}$ be the graded graph wherein, for any
$\TreeT \in \SyntaxTreesInternalNode(\GeneratingSet)$,
\begin{equation} \label{equ:definition_up_trees}
    \Up(\TreeT) :=
    \sum_{\substack{
        \GenA \in \GeneratingSet \\
        i \in [|\TreeT|]
    }}
    \TreeT \circ_i \GenA.
\end{equation}
We call $\Par{\SyntaxTreesInternalNode(\GeneratingSet), \Up}$ the
\Def{$\GeneratingSet$-prefix graph}. Since $\GeneratingSet$ is finite, $\Up(\TreeT)$ is a
$\SyntaxTreesInternalNode(\GeneratingSet)$-polynomial. Moreover, since all trees appearing
in $\Up(\TreeT)$ are of rank $\Deg(\TreeT) + 1$, the $\GeneratingSet$-prefix graph is a
well-defined graded graph. Observe that this graded graph is simple and that it admits
$\Leaf$ as root.  Figure~\ref{fig:examples_tree_graphs} shows examples of such graphs.
\begin{figure}[ht]
    \centering
    \subfloat[][For
    \begin{math}
        \GeneratingSet = \Bra{\GenA}
    \end{math}
    with $|\GenA| = 2$ up to degree $3$.]{
    \centering
    \begin{tikzpicture}[Centering,xscale=1.7,yscale=1.45]
        \node(Unit)at(0,0){$\Leaf$};
        \node(a00)at(0,-.75){$\CorollaTwo{\GenA}$};
        \node(aa000)at(-1,-1.5){
        \begin{tikzpicture}[Centering,xscale=0.17,yscale=0.21]
            \node(0)at(0.00,-3.33){};
            \node(2)at(2.00,-3.33){};
            \node(4)at(4.00,-1.67){};
            \node[NodeST](1)at(1.00,-1.67){$\GenA$};
            \node[NodeST](3)at(3.00,0.00){$\GenA$};
            \draw[Edge](0)--(1);
            \draw[Edge](1)--(3);
            \draw[Edge](2)--(1);
            \draw[Edge](4)--(3);
            \node(r)at(3.00,1.75){};
            \draw[Edge](r)--(3);
        \end{tikzpicture}};
        \node(a0a00)at(1,-1.5){
            \begin{tikzpicture}[Centering,xscale=0.17,yscale=0.21]
            \node(0)at(0.00,-1.67){};
            \node(2)at(2.00,-3.33){};
            \node(4)at(4.00,-3.33){};
            \node[NodeST](1)at(1.00,0.00){$\GenA$};
            \node[NodeST](3)at(3.00,-1.67){$\GenA$};
            \draw[Edge](0)--(1);
            \draw[Edge](2)--(3);
            \draw[Edge](3)--(1);
            \draw[Edge](4)--(3);
            \node(r)at(1.00,1.75){};
            \draw[Edge](r)--(1);
        \end{tikzpicture}};
        \node(aaa0000)at(-2,-2.75){
        \begin{tikzpicture}[Centering,xscale=0.14,yscale=0.2]
            \node(0)at(0.00,-5.25){};
            \node(2)at(2.00,-5.25){};
            \node(4)at(4.00,-3.50){};
            \node(6)at(6.00,-1.75){};
            \node[NodeST](1)at(1.00,-3.50){$\GenA$};
            \node[NodeST](3)at(3.00,-1.75){$\GenA$};
            \node[NodeST](5)at(5.00,0.00){$\GenA$};
            \draw[Edge](0)--(1);
            \draw[Edge](1)--(3);
            \draw[Edge](2)--(1);
            \draw[Edge](3)--(5);
            \draw[Edge](4)--(3);
            \draw[Edge](6)--(5);
            \node(r)at(5.00,1.75){};
            \draw[Edge](r)--(5);
        \end{tikzpicture}};
        \node(aa0a000)at(-1,-2.75){
        \begin{tikzpicture}[Centering,xscale=0.14,yscale=0.2]
            \node(0)at(0.00,-3.50){};
            \node(2)at(2.00,-5.25){};
            \node(4)at(4.00,-5.25){};
            \node(6)at(6.00,-1.75){};
            \node[NodeST](1)at(1.00,-1.75){$\GenA$};
            \node[NodeST](3)at(3.00,-3.50){$\GenA$};
            \node[NodeST](5)at(5.00,0.00){$\GenA$};
            \draw[Edge](0)--(1);
            \draw[Edge](1)--(5);
            \draw[Edge](2)--(3);
            \draw[Edge](3)--(1);
            \draw[Edge](4)--(3);
            \draw[Edge](6)--(5);
            \node(r)at(5.00,1.75){};
            \draw[Edge](r)--(5);
        \end{tikzpicture}};
        \node(aa00a00)at(0,-2.75){
            \begin{tikzpicture}[Centering,xscale=0.14,yscale=0.17]
            \node(0)at(0.00,-4.67){};
            \node(2)at(2.00,-4.67){};
            \node(4)at(4.00,-4.67){};
            \node(6)at(6.00,-4.67){};
            \node[NodeST](1)at(1.00,-2.33){$\GenA$};
            \node[NodeST](3)at(3.00,0.00){$\GenA$};
            \node[NodeST](5)at(5.00,-2.33){$\GenA$};
            \draw[Edge](0)--(1);
            \draw[Edge](1)--(3);
            \draw[Edge](2)--(1);
            \draw[Edge](4)--(5);
            \draw[Edge](5)--(3);
            \draw[Edge](6)--(5);
            \node(r)at(3.00,2){};
            \draw[Edge](r)--(3);
        \end{tikzpicture}};
        \node(a0aa000)at(1,-2.75){
            \begin{tikzpicture}[Centering,xscale=0.14,yscale=0.2]
            \node(0)at(0.00,-1.75){};
            \node(2)at(2.00,-5.25){};
            \node(4)at(4.00,-5.25){};
            \node(6)at(6.00,-3.50){};
            \node[NodeST](1)at(1.00,0.00){$\GenA$};
            \node[NodeST](3)at(3.00,-3.50){$\GenA$};
            \node[NodeST](5)at(5.00,-1.75){$\GenA$};
            \draw[Edge](0)--(1);
            \draw[Edge](2)--(3);
            \draw[Edge](3)--(5);
            \draw[Edge](4)--(3);
            \draw[Edge](5)--(1);
            \draw[Edge](6)--(5);
            \node(r)at(1.00,1.75){};
            \draw[Edge](r)--(1);
        \end{tikzpicture}};
        \node(a0a0a00)at(2,-2.75){
        \begin{tikzpicture}[Centering,xscale=0.14,yscale=0.2]
            \node(0)at(0.00,-1.75){};
            \node(2)at(2.00,-3.50){};
            \node(4)at(4.00,-5.25){};
            \node(6)at(6.00,-5.25){};
            \node[NodeST](1)at(1.00,0.00){$\GenA$};
            \node[NodeST](3)at(3.00,-1.75){$\GenA$};
            \node[NodeST](5)at(5.00,-3.50){$\GenA$};
            \draw[Edge](0)--(1);
            \draw[Edge](2)--(3);
            \draw[Edge](3)--(1);
            \draw[Edge](4)--(5);
            \draw[Edge](5)--(3);
            \draw[Edge](6)--(5);
            \node(r)at(1.00,1.75){};
            \draw[Edge](r)--(1);
        \end{tikzpicture}};
        \draw[EdgeGraph](Unit)--(a00);
        \draw[EdgeGraph](a00)--(aa000);
        \draw[EdgeGraph](a00)--(a0a00);
        \draw[EdgeGraph](aa000)--(aaa0000);
        \draw[EdgeGraph](aa000)--(aa0a000);
        \draw[EdgeGraph](aa000)--(aa00a00);
        \draw[EdgeGraph](a0a00)--(aa00a00);
        \draw[EdgeGraph](a0a00)--(a0aa000);
        \draw[EdgeGraph](a0a00)--(a0a0a00);
    \end{tikzpicture}
    \label{subfig:tree_graph_a2}}
    \medbreak
    \subfloat[][For
    \begin{math}
        \GeneratingSet = \Bra{\GenE, \GenC}
    \end{math}
    with $|\GenE| = 1$ and $|\GenC| = 3$ up to degree $2$ and with some trees of
    degree~$3$.]{
    \centering
    \begin{tikzpicture}[Centering,xscale=1.5,yscale=1.5]
        \node(Unit)at(.5,0){$\Leaf$};
        \node(e0)at(-1.5, -.5){$\CorollaOne{\GenE}$};
        \node(c000)at(2.5, -.5){$\CorollaThree{\GenC}$};
        \node(ee0)at(-2, -1.6){
        \begin{tikzpicture}[Centering,xscale=0.17,yscale=0.29]
            \node(2)at(0.00,-2.25){};
            \node[NodeST](0)at(0.00,0.00){$\GenE$};
            \node[NodeST](1)at(0.00,-1.00){$\GenE$};
            \draw[Edge](1)--(0);
            \draw[Edge](2)--(1);
            \node(r)at(0.00,1.25){};
            \draw[Edge](r)--(0);
        \end{tikzpicture}};
        \node(ec000)at(-1, -1.6){
        \begin{tikzpicture}[Centering,xscale=0.18,yscale=0.24]
            \node(1)at(0.00,-3.33){};
            \node(3)at(1.00,-3.33){};
            \node(4)at(2.00,-3.33){};
            \node[NodeST](0)at(1.00,0.00){$\GenE$};
            \node[NodeST](2)at(1.00,-1.67){$\GenC$};
            \draw[Edge](1)--(2);
            \draw[Edge](2)--(0);
            \draw[Edge](3)--(2);
            \draw[Edge](4)--(2);
            \node(r)at(1.00,1.5){};
            \draw[Edge](r)--(0);
        \end{tikzpicture}};
        \node(ce000)at(0, -1.6){
        \begin{tikzpicture}[Centering,xscale=0.29,yscale=0.24]
            \node(1)at(0.00,-3.33){};
            \node(3)at(1.00,-1.67){};
            \node(4)at(2.00,-1.67){};
            \node[NodeST](0)at(0.00,-1.67){$\GenE$};
            \node[NodeST](2)at(1.00,0.00){$\GenC$};
            \draw[Edge](0)--(2);
            \draw[Edge](1)--(0);
            \draw[Edge](3)--(2);
            \draw[Edge](4)--(2);
            \node(r)at(1.00,1.75){};
            \draw[Edge](r)--(2);
        \end{tikzpicture}};
        \node(c0e00)at(1, -1.6){
        \begin{tikzpicture}[Centering,xscale=0.29,yscale=0.24]
            \node(0)at(0.00,-1.67){};
            \node(3)at(1.00,-3.33){};
            \node(4)at(2.00,-1.67){};
            \node[NodeST](1)at(1.00,0.00){$\GenC$};
            \node[NodeST](2)at(1.00,-1.67){$\GenE$};
            \draw[Edge](0)--(1);
            \draw[Edge](2)--(1);
            \draw[Edge](3)--(2);
            \draw[Edge](4)--(1);
            \node(r)at(1.00,1.75){};
            \draw[Edge](r)--(1);
        \end{tikzpicture}};
        \node(c00e0)at(2, -1.6){
        \begin{tikzpicture}[Centering,xscale=0.29,yscale=0.24]
            \node(0)at(0.00,-1.67){};
            \node(2)at(1.00,-1.67){};
            \node(4)at(2.00,-3.33){};
            \node[NodeST](1)at(1.00,0.00){$\GenC$};
            \node[NodeST](3)at(2.00,-1.67){$\GenE$};
            \draw[Edge](0)--(1);
            \draw[Edge](2)--(1);
            \draw[Edge](3)--(1);
            \draw[Edge](4)--(3);
            \node(r)at(1.00,1.75){};
            \draw[Edge](r)--(1);
        \end{tikzpicture}};
        \node(cc00000)at(3, -1.6){
        \begin{tikzpicture}[Centering,xscale=0.19,yscale=0.17]
            \node(0)at(0.00,-4.67){};
            \node(2)at(1.00,-4.67){};
            \node(3)at(2.00,-4.67){};
            \node(5)at(3.00,-2.33){};
            \node(6)at(4.00,-2.33){};
            \node[NodeST](1)at(1.00,-2.33){$\GenC$};
            \node[NodeST](4)at(3.00,0.00){$\GenC$};
            \draw[Edge](0)--(1);
            \draw[Edge](1)--(4);
            \draw[Edge](2)--(1);
            \draw[Edge](3)--(1);
            \draw[Edge](5)--(4);
            \draw[Edge](6)--(4);
            \node(r)at(3.00,2.25){};
            \draw[Edge](r)--(4);
        \end{tikzpicture}};
        \node(c0c0000)at(4, -1.6){
        \begin{tikzpicture}[Centering,xscale=0.19,yscale=0.17]
            \node(0)at(0.00,-2.33){};
            \node(2)at(1.00,-4.67){};
            \node(4)at(2.00,-4.67){};
            \node(5)at(3.00,-4.67){};
            \node(6)at(4.00,-2.33){};
            \node[NodeST](1)at(2.00,0.00){$\GenC$};
            \node[NodeST](3)at(2.00,-2.33){$\GenC$};
            \draw[Edge](0)--(1);
            \draw[Edge](2)--(3);
            \draw[Edge](3)--(1);
            \draw[Edge](4)--(3);
            \draw[Edge](5)--(3);
            \draw[Edge](6)--(1);
            \node(r)at(2.00,2.25){};
            \draw[Edge](r)--(1);
        \end{tikzpicture}};
        \node(c00c000)at(5, -1.6){
        \begin{tikzpicture}[Centering,xscale=0.19,yscale=0.17]
            \node(0)at(0.00,-2.33){};
            \node(2)at(1.00,-2.33){};
            \node(3)at(2.00,-4.67){};
            \node(5)at(3.00,-4.67){};
            \node(6)at(4.00,-4.67){};
            \node[NodeST](1)at(1.00,0.00){$\GenC$};
            \node[NodeST](4)at(3.00,-2.33){$\GenC$};
            \draw[Edge](0)--(1);
            \draw[Edge](2)--(1);
            \draw[Edge](3)--(4);
            \draw[Edge](4)--(1);
            \draw[Edge](5)--(4);
            \draw[Edge](6)--(4);
            \node(r)at(1.00,2.25){};
            \draw[Edge](r)--(1);
        \end{tikzpicture}};
        \node(ce0e00)at(0, -2.8){
        \begin{tikzpicture}[Centering,xscale=0.36,yscale=0.2]
            \node(1)at(0.00,-4.00){};
            \node(4)at(1.00,-4.00){};
            \node(5)at(2.00,-2.00){};
            \node[NodeST](0)at(0.00,-2.00){$\GenE$};
            \node[NodeST](2)at(1.00,0.00){$\GenC$};
            \node[NodeST](3)at(1.00,-2.00){$\GenE$};
            \draw[Edge](0)--(2);
            \draw[Edge](1)--(0);
            \draw[Edge](3)--(2);
            \draw[Edge](4)--(3);
            \draw[Edge](5)--(2);
            \node(r)at(1.00,2){};
            \draw[Edge](r)--(2);
        \end{tikzpicture}};
        \node(ce00e0)at(1, -2.8){
        \begin{tikzpicture}[Centering,xscale=0.24,yscale=0.18]
            \node(1)at(0.00,-4.00){};
            \node(3)at(1.00,-2.00){};
            \node(5)at(2.00,-4.00){};
            \node[NodeST](0)at(0.00,-2.00){$\GenE$};
            \node[NodeST](2)at(1.00,0.00){$\GenC$};
            \node[NodeST](4)at(2.00,-2.00){$\GenE$};
            \draw[Edge](0)--(2);
            \draw[Edge](1)--(0);
            \draw[Edge](3)--(2);
            \draw[Edge](4)--(2);
            \draw[Edge](5)--(4);
            \node(r)at(1.00,2){};
            \draw[Edge](r)--(2);
        \end{tikzpicture}};
        \node(c0e0e0)at(2, -2.8){
        \begin{tikzpicture}[Centering,xscale=0.36,yscale=0.2]
            \node(0)at(0.00,-2.00){};
            \node(3)at(1.00,-4.00){};
            \node(5)at(2.00,-4.00){};
            \node[NodeST](1)at(1.00,0.00){$\GenC$};
            \node[NodeST](2)at(1.00,-2.00){$\GenE$};
            \node[NodeST](4)at(2.00,-2.00){$\GenE$};
            \draw[Edge](0)--(1);
            \draw[Edge](2)--(1);
            \draw[Edge](3)--(2);
            \draw[Edge](4)--(1);
            \draw[Edge](5)--(4);
            \node(r)at(1.00,2){};
            \draw[Edge](r)--(1);
        \end{tikzpicture}};
        \draw[EdgeGraph](Unit)--(e0);
        \draw[EdgeGraph](Unit)--(c000);
        \draw[EdgeGraph](e0)--(ee0);
        \draw[EdgeGraph](e0)--(ec000);
        \draw[EdgeGraph](c000)--(ce000);
        \draw[EdgeGraph](c000)--(c0e00);
        \draw[EdgeGraph](c000)--(c00e0);
        \draw[EdgeGraph](c000)--(cc00000);
        \draw[EdgeGraph](c000)--(c0c0000);
        \draw[EdgeGraph](c000)--(c00c000);
        \draw[EdgeGraph](ce000)--(ce0e00);
        \draw[EdgeGraph](ce000)--(ce00e0);
        \draw[EdgeGraph](c0e00)--(ce0e00);
        \draw[EdgeGraph](c0e00)--(c0e0e0);
        \draw[EdgeGraph](c00e0)--(ce00e0);
        \draw[EdgeGraph](c00e0)--(c0e0e0);
    \end{tikzpicture}
    \label{subfig:tree_graph_e1c3}}
    \caption{Two graded graphs $\Par{\SyntaxTreesInternalNode(\GeneratingSet), \Up}$.}
    \label{fig:examples_tree_graphs}
\end{figure}
\medbreak

An internal node $u$ of a $\GeneratingSet$-tree $\TreeT$ is \Def{maximal} if $u$ has only
leaves as children. The set of all maximal nodes of $\TreeT$ is denoted by
$\InternalNodesMax(\TreeT)$. For any $u \in \InternalNodesMax(\TreeT)$, the \Def{deletion}
of $u$ in $\TreeT$ is the $\GeneratingSet$-tree $\DelNode{\TreeT}{u}$ obtained by replacing
the node $u$ of $\TreeT$ by a leaf. By relying on these definitions, the adjoint map of
$\Up$ satisfies, for any $\TreeT \in \SyntaxTreesInternalNode(\GeneratingSet)$,
\begin{equation} \label{equ:adjoint_up_trees}
    \Up^\Dual(\TreeT) = \sum_{u \in \InternalNodesMax(\TreeT)} \DelNode{\TreeT}{u}.
\end{equation}
\medbreak

\subsubsection{Diagonal self-duality}
We give here a necessary and sufficient condition on the alphabet $\GeneratingSet$ for the
fact that $\Par{\SyntaxTreesInternalNode(\GeneratingSet), \Up}$ is $\phi$-diagonal
self-dual.
\medbreak

\begin{Proposition} \label{prop:prefix_tree_graphs_self_dual}
    The graded graph $\Par{\SyntaxTreesInternalNode(\GeneratingSet), \Up}$ is
    $\phi$-diagonal self-dual if and only if $\GeneratingSet$ is the empty alphabet or a
    singleton alphabet. When $\GeneratingSet$ is a singleton,
    \begin{math}
        \phi : \K \Angle{\SyntaxTreesInternalNode(\GeneratingSet)}
        \to \K \Angle{\SyntaxTreesInternalNode(\GeneratingSet)}
    \end{math}
    satisfies
    \begin{equation}
        \phi(\TreeT) = \Par{|\TreeT| - \# \InternalNodesMax(\TreeT)} \, \TreeT
    \end{equation}
    for any $\GeneratingSet$-tree $\TreeT$.
\end{Proposition}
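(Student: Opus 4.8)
The plan is to compute the two composites $\Up^\Dual \Up$ and $\Up \Up^\Dual$ explicitly on an arbitrary $\GeneratingSet$-tree $\TreeT$, using the formulas~\eqref{equ:definition_up_trees} and~\eqref{equ:adjoint_up_trees}, and then to read off the commutator and decide when it is diagonal. First I would expand $\Up^\Dual \Up(\TreeT)$. Grafting a corolla $\Corolla(\GenA)$ at a leaf $i$ of $\TreeT$ turns leaf $i$ into an internal node, so the maximal internal nodes of $\TreeT \circ_i \GenA$ are exactly the newly created node together with those $w \in \InternalNodesMax(\TreeT)$ for which leaf $i$ is \emph{not} a child of $w$. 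Deleting the new node returns $\TreeT$, which after summing over all $\GenA$ and $i$ yields $\# \GeneratingSet \cdot |\TreeT|$ copies of $\TreeT$; deleting a surviving old maximal node $w$ yields $\DelNode{\TreeT}{w} \circ_{i'} \GenA$, where $i'$ is the position that leaf $i$ occupies in $\DelNode{\TreeT}{w}$. These last terms constitute the off-diagonal part $A$ of $\Up^\Dual \Up(\TreeT)$.

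Next I would expand $\Up \Up^\Dual(\TreeT)$ by first deleting a maximal node $v$ and then grafting a corolla at a leaf $j$ of $\DelNode{\TreeT}{v}$. The key manoeuvre is to split according to whether $j$ is the ``hole'' leaf $j_0$ left behind by the deletion of $v$, or a different leaf. Grafting away from the hole produces $\DelNode{\TreeT}{v} \circ_j \GenB$ with $j \ne j_0$; under the evident bijection $(\GenA, i, w) \leftrightarrow (\GenB, j, v)$ (with $w = v$, $\GenA = \GenB$, and the leaves of $\TreeT$ not children of $w$ matched to the leaves of $\DelNode{\TreeT}{w}$ other than $j_0$), these reproduce $A$ term-by-term, so they cancel in the commutator. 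Grafting at the hole with $\GenB$ equal to the letter $\GenA_v$ decorating $v$ recovers $\TreeT$, contributing $\# \InternalNodesMax(\TreeT)$ copies of $\TreeT$, whereas grafting at the hole with any other letter $\GenB$ produces the tree $\TreeT_{v \to \GenB}$ obtained from $\TreeT$ by replacing the corolla at $v$ with $\Corolla(\GenB)$.

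Combining these expansions, I expect the exact formal identity
\[
    \Up^\Dual \Up(\TreeT) - \Up \Up^\Dual(\TreeT)
    = \Par{\# \GeneratingSet \cdot |\TreeT| - \# \InternalNodesMax(\TreeT)} \TreeT
    - \sum_{v \in \InternalNodesMax(\TreeT)} \; \sum_{\GenB \ne \GenA_v} \TreeT_{v \to \GenB}.
\]
Since $\TreeT_{v \to \GenB} \ne \TreeT$ whenever $\GenB \ne \GenA_v$ and the residual trees appear with positive coefficients, they cannot be absorbed into the diagonal term; hence the residual sum vanishes for every $\TreeT$ precisely when there is no letter distinct from $\GenA_v$, that is, when $\# \GeneratingSet \le 1$. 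This settles the equivalence: if $\GeneratingSet$ is empty the commutator is $0$; if $\GeneratingSet$ is a singleton it equals $\Par{|\TreeT| - \# \InternalNodesMax(\TreeT)} \TreeT$ (using $\# \GeneratingSet = 1$), which is the announced diagonal map $\phi$; and if $\# \GeneratingSet \ge 2$, evaluating on any corolla $\Corolla(\GenA)$ leaves a nonzero term $-\Corolla(\GenB)$ with $\GenB \ne \GenA$, so the commutator is not diagonal. The hard part will be the bookkeeping in the middle step: one must track the leaf-index shifts caused by grafting and by deletion carefully enough to verify that $A$ is literally the same formal sum on both sides. The pivotal subtlety is exactly that a maximal node $w$ of $\TreeT$ ceases to be maximal in $\TreeT \circ_i \GenA$ precisely when leaf $i$ is one of its children, which both removes those configurations on the $\Up^\Dual \Up$ side and excludes the hole leaf $j_0$ on the $\Up \Up^\Dual$ side, making the cancellation exact and isolating the redecoration terms as the sole obstruction to diagonality.
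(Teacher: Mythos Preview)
Your proof is correct and rests on the same idea as the paper's --- analyzing $\Up^\Dual\Up$ and $\Up\Up^\Dual$ as ``add a corolla then delete a maximal node'' versus ``delete then add'' and matching the cross terms --- but you carry it out more uniformly. The paper treats the two directions separately: for a singleton alphabet it argues somewhat informally that every tree other than $\TreeT$ appears with coefficient $1$ in both composites and hence cancels, and for $\#\GeneratingSet\ge 2$ it simply evaluates the commutator on a single corolla $\Corolla(\GenA)$ and observes that $\Corolla(\GenB)$ survives. You instead derive, for arbitrary $\GeneratingSet$, the closed identity
\[
\Up^\Dual\Up(\TreeT)-\Up\Up^\Dual(\TreeT)
=\Par{\#\GeneratingSet\cdot|\TreeT|-\#\InternalNodesMax(\TreeT)}\,\TreeT
-\sum_{v\in\InternalNodesMax(\TreeT)}\;\sum_{\GenB\ne\GenA_v}\TreeT_{v\to\GenB},
\]
from which both directions drop out at once. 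Your explicit bijection $(\GenA,i,w)\leftrightarrow(\GenB,j,v)$ between old-maximal deletions on the $\Up^\Dual\Up$ side and away-from-hole graftings on the $\Up\Up^\Dual$ side makes rigorous exactly the cancellation the paper only asserts, and the residual redecoration terms $\TreeT_{v\to\GenB}$ give a clean description of the obstruction to diagonality that the paper's corolla computation merely exemplifies. The trade-off is that the paper's argument is shorter, while yours yields strictly more information (the full commutator formula) at the cost of the index bookkeeping you flag.
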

\begin{proof}
    First of all, when $\GeneratingSet$ is empty,
    $\Par{\SyntaxTreesInternalNode(\GeneratingSet), \Up}$ is immediately $\phi$-diagonal
    self-dual for the zero map $\phi$.  Assume that $\GeneratingSet$ is the singleton
    $\{\GenA\}$.  When $\TreeT = \Leaf$, since $\Par{\Up^\Dual \Up - \Up
    \Up^\Dual}\Par{\Leaf} = \Leaf$, the property is  satisfied. Assume now that $\TreeT$ has
    at least one internal node.  All terms of $\Par{\Up^\Dual \Up}(\TreeT)$ are obtained by
    changing a leaf of $\TreeT$ into an internal node decorated by $\GenA$, and then by
    suppressing a maximal internal node of the obtained tree.  Then, in particular when the
    suppressed internal node is the one which has been just added, $\TreeT$ occurs in
    $\Par{\Up^\Dual \Up}(\TreeT)$.  For this reason, the coefficient of the term $\TreeT$ is
    $|\TreeT|$.  Moreover, all terms of $\Par{\Up \Up^\Dual}(\TreeT)$ are obtained by
    suppressing a maximal internal node of $\TreeT$, and then by changing a leaf into an
    internal node decorated by $\GenA$ of the obtained tree. For this reason, the coefficient
    of the term $\TreeT$ is $\InternalNodesMax(\TreeT)$. Finally, since all trees different
    from $\TreeT$ appearing $\Par{\Up^\Dual \Up}(\TreeT)$ or in $\Par{\Up
    \Up^\Dual}(\TreeT)$ are the same and have all $1$ as coefficient, the statement of the
    proposition follows.
    \smallbreak

    Conversely, assume that $\GeneratingSet$ is not empty neither a singleton. Hence, there
    exist $\GenA, \GenB \in \GeneratingSet$ with $\GenA \ne \GenB$, and we have in
    particular
    \begin{equation} \label{equ:prefix_tree_graphs_self_dual}
        \Par{\Up^\Dual \Up - \Up \Up^\Dual}\Par{\GenA}
        =
        \Up^\Dual\Par{\sum_{\substack{
            \GenC \in \GeneratingSet \\
            i \in [|\GenA|]
        }}
        \GenA \circ_i \GenC}
        - \Up\Par{\Leaf}
        =
        (\# \GeneratingSet) |\GenA| \; \GenA - \sum_{\GenC \in \GeneratingSet} \GenC.
    \end{equation}
    Since $\GenB$ appears in~\eqref{equ:prefix_tree_graphs_self_dual}, this shows that
    $\Par{\SyntaxTreesInternalNode(\GeneratingSet), \Up}$ is not $\phi$-diagonal self-dual.
\end{proof}
\medbreak

\subsubsection{Path enumeration} \label{subsubsec:prefix_graph_path_enumeration}
Recall that if $(\PosetP, \Leq)$ is a finite poset, a \Def{linear extension} of $\PosetP$ is
a bijective map $\sigma : \PosetP \to [\# \PosetP]$ such that for any $x, y \in \PosetP$, $x
\Leq y$ implies $\sigma(x) \leq \sigma(y)$, where $\leq$ is the natural total order on the
set of natural numbers $[\# \PosetP]$. The linear extension $\sigma$ can be encoded by the
permutation
\begin{equation} \label{equ:linear_extension_permutation}
    \Par{x_1, \dots, x_{\# \PosetP}}
\end{equation}
of elements of $\PosetP$, wherein for any $x \in \PosetP$, $\sigma(x)$ is the position of
$x$ in the word~\eqref{equ:linear_extension_permutation}. Observe that if $\PosetP$ is the
disjoint union of some posets $\PosetP_1$, \dots, $\PosetP_k$, $k \geq 0$, each permutation
encoding a linear extension $\sigma$ of $\PosetP$ is obtained by shuffling the permutations
encoding respectively linear extensions $\sigma_1$, \dots, $\sigma_k$ of $\PosetP_1$, \dots,
$\PosetP_k$. Therefore, if each $\PosetP_i$, $i \in [k]$, has $a_i$ linear extensions, then
$\PosetP$ has the multinomial
\begin{equation} \label{equ:shuffle_linear_extensions}
    \lbag a_1, \dots, a_k \rbag ! := \frac{\Par{a_1 + \dots + a_k}!}{a_1! \dots a_k!}
\end{equation}
as number of linear extensions.
\medbreak

The \Def{poset induced} by a $\GeneratingSet$-tree $\TreeT$ is the poset $\TreePoset{\TreeT}
:= \Par{\InternalNodes(\TreeT), \Leq}$ wherein for any $u, v \in \InternalNodes(\TreeT)$, $u
\Leq v$ if $u$ is an ancestor of $v$. The number $\Hook{\TreeT}$ of all linear extensions of
$\TreePoset{\TreeT}$ is given by the hook-length formula for rooted planar
trees~\cite{Knu98} and satisfies
\begin{equation}
    \Hook{\TreeT} =
    \frac{\Deg(\TreeT)!}{\prod\limits_{u \in \InternalNodes(\TreeT)}
    \Deg\Par{\TreeT(u)}}.
\end{equation}
By elementary computations involving multinomial coefficients, and by structural induction
on $\GeneratingSet$-trees, one can show that these numbers satisfy the recurrence relation
\begin{subequations}
\begin{equation}
    \Hook{\Leaf} = 1,
\end{equation}
\begin{equation}
    \Hook{\GenA \BPar{\TreeS_1, \dots, \TreeS_{|\GenA|}}}
    = \lbag \Deg\Par{\TreeS_1}, \dots, \Deg\Par{\TreeS_{|\GenA|}}\rbag !
    \prod_{i \in [|\GenA|]} \Hook{\TreeS_i}
\end{equation}
\end{subequations}
for any $\GenA \in \GeneratingSet$ and any $\GeneratingSet$-trees $\TreeS_1$, \dots,
$\TreeS_{|\GenA|}$.
\medbreak

\begin{Proposition} \label{prop:prefix_tree_graph_hook_series}
    For any finite alphabet $\GeneratingSet$, the hook series of
    $\Par{\SyntaxTreesInternalNode(\GeneratingSet), \Up}$ satisfies
    \begin{math}
        \Angle{\TreeT, \HookSeries{\Up}} = \Hook{\TreeT}
    \end{math}
    for any $\GeneratingSet$-tree~$\TreeT$.
\end{Proposition}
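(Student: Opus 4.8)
The plan is to proceed by induction on the degree $\Deg(\TreeT)$, using the functional equation that defines the hook series together with the description~\eqref{equ:adjoint_up_trees} of the adjoint $\Up^\Dual$ and the classical recurrence for counting linear extensions of a finite poset. The key insight is that the hook coefficient $\Angle{\TreeT, \HookSeries{\Up}}$, which by Proposition~\ref{prop:hook_series} counts initial paths to $\TreeT$, satisfies the same ``peel a maximal element'' recurrence as the number $\Hook{\TreeT}$ of linear extensions of $\TreePoset{\TreeT}$.

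For the base case $\TreeT = \Leaf$, the defining functional equation of $\HookSeries{\Up}$ gives $\Angle{\Leaf, \HookSeries{\Up}} = \Han{\Leaf = \Leaf} + \Angle{\Up^\Dual(\Leaf), \HookSeries{\Up}} = 1$, since $\Up^\Dual(\Leaf) = 0$, which matches $\Hook{\Leaf} = 1$. Now assume $\TreeT \ne \Leaf$ and that the claim holds for all $\GeneratingSet$-trees of degree $\Deg(\TreeT) - 1$. Combining the functional equation with~\eqref{equ:adjoint_up_trees} and the bilinearity of the scalar product gives
\begin{equation}
\Angle{\TreeT, \HookSeries{\Up}} = \Angle{\Up^\Dual(\TreeT), \HookSeries{\Up}} = \sum_{u \in \InternalNodesMax(\TreeT)} \Angle{\DelNode{\TreeT}{u}, \HookSeries{\Up}}.
\end{equation}
Each tree $\DelNode{\TreeT}{u}$ has degree $\Deg(\TreeT) - 1$, so the induction hypothesis rewrites the right-hand side as $\sum_{u \in \InternalNodesMax(\TreeT)} \Hook{\DelNode{\TreeT}{u}}$, and it remains to prove the identity $\Hook{\TreeT} = \sum_{u \in \InternalNodesMax(\TreeT)} \Hook{\DelNode{\TreeT}{u}}$.

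To obtain this last identity I would first observe that the maximal elements of the poset $\TreePoset{\TreeT}$ are precisely the maximal internal nodes of $\TreeT$, since an internal node is maximal for the ancestor order exactly when all of its children are leaves. Deleting such a node $u$ removes $u$ from $\InternalNodes(\TreeT)$ and leaves every ancestor relation among the remaining internal nodes intact, so $\TreePoset{\DelNode{\TreeT}{u}}$ is exactly the subposet of $\TreePoset{\TreeT}$ obtained by erasing the maximal element $u$; this structural verification is the only delicate point, everything else being a direct use of the tools already set up. The identity is then the classical recurrence for linear extensions: the last element of any linear extension of a finite poset is one of its maximal elements, erasing it yields a linear extension of the poset with that element removed, and this correspondence is a bijection once the terminal maximal element is fixed. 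Summing over $u \in \InternalNodesMax(\TreeT)$ gives $\Hook{\TreeT} = \sum_{u} \Hook{\DelNode{\TreeT}{u}}$ and closes the induction. Alternatively, one may bypass the induction by invoking Proposition~\ref{prop:hook_series}: since the $\GeneratingSet$-prefix graph is simple and rooted at $\Leaf$, the coefficient $\Angle{\TreeT, \HookSeries{\Up}}$ counts the initial paths to $\TreeT$, each of which builds $\TreeT$ by adding its internal nodes one at a time in an order respecting the ancestor relation, hence is nothing but a linear extension of $\TreePoset{\TreeT}$.
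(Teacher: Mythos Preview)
Your proof is correct and follows essentially the same approach as the paper: induction on the degree, the functional equation for $\HookSeries{\Up}$ combined with~\eqref{equ:adjoint_up_trees}, and then the ``peel a maximal element'' bijection between linear extensions of $\TreePoset{\TreeT}$ and pairs $(u, \sigma')$ with $u \in \InternalNodesMax(\TreeT)$ and $\sigma'$ a linear extension of $\TreePoset{\DelNode{\TreeT}{u}}$. Your alternative one-line argument via Proposition~\ref{prop:hook_series} and the direct bijection between initial paths and linear extensions is also valid and slightly more conceptual than the paper's induction.
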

\begin{proof}
    Let us proceed by induction on the degree of the $\GeneratingSet$-trees.  By definition
    of $\HookSeries{\Up}$, $\Angle{\Leaf, \HookSeries{\Up}} = 1$, and since $\Hook{\Leaf} =
    1$, the property holds. Now, let $\TreeT$ be a $\GeneratingSet$-tree of degree $d \geq
    1$.  We have, by definition of $\HookSeries{\Up}$, by~\eqref{equ:adjoint_up_trees}, and
    by induction hypothesis,
    \begin{equation} \begin{split} 
        \label{equ:prefix_tree_graph_hook_series}
        \Angle{\TreeT, \HookSeries{\Up}}
        & =
        \Angle{\Up^\Dual(\TreeT), \HookSeries{\Up}}
        \\
        & =
        \Angle{\sum_{u \in \InternalNodesMax(\TreeT)} \DelNode{\TreeT}{u}, \HookSeries{\Up}}
        \\
        & =
        \sum_{u \in \InternalNodesMax(\TreeT)} \Angle{\DelNode{\TreeT}{u}, \HookSeries{\Up}}
        \\
        & =
        \sum_{u \in \InternalNodesMax(\TreeT)} \Hook{\DelNode{\TreeT}{u}}.
    \end{split} \end{equation}
    From the definition of $\TreePoset{\TreeT}$, it follows that any permutation encoding a
    linear extension of $\TreePoset{\TreeT}$ writes as
    \begin{equation}
        \Par{\epsilon, v^{(1)}, \dots, v^{(d - 2)}, u}
    \end{equation}
    where the permutation $\Par{\epsilon, v^{(1)}, \dots, v^{(d - 2)}}$ encodes a linear
    extension of $\TreePoset{\DelNode{\TreeT}{u}}$ and $u$ is a maximal node of $\TreeT$.
    This leads, by~\eqref{equ:prefix_tree_graph_hook_series}, to $\Angle{\TreeT,
    \HookSeries{\Up}} = \Hook{\TreeT}$.
\end{proof}
\medbreak

For any alphabet $\GeneratingSet$, let $m_\GeneratingSet$ be the arity of a letter having a
maximal arity in~$\GeneratingSet$.
\medbreak

\begin{Proposition} \label{prop:prefix_tree_graph_initial_paths_series}
    For any finite alphabet $\GeneratingSet$, the initial path series of
    $\Par{\SyntaxTreesInternalNode(\GeneratingSet), \Up}$ satisfies
    \begin{equation} \label{equ:multipaths_graph_U_free_operad}
        \Angle{t^d, \InitialPathsSeries{\Up}}
        = \sum_{n \in \Han{1 + \Par{m_\GeneratingSet - 1}d}} \theta(d, n)
    \end{equation}
    for any $d \geq 0$, where $\theta(d, n)$ satisfies the recurrence
    $\theta(d, n) = 0$ for any $n \leq 0$ and $d \in \Z$, $\theta(0, 1) = 1$, and
    \begin{equation}
        \theta(d, n) =
        \sum_{\substack{
            \GenA \in \GeneratingSet \\
            |\GenA| \leq n
        }}
        \Par{n + 1 - |\GenA|} \; \theta(d - 1, n + 1 - |\GenA|)
    \end{equation}
    for any $d \geq 1$ and $n \geq 1$.
\end{Proposition}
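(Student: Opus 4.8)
The plan is to prove, by induction on $d$, the refined identity
\[
\theta(d, n) = \sum_{\substack{\TreeT \in \SyntaxTreesInternalNode(\GeneratingSet) \\ \Deg(\TreeT) = d, \; |\TreeT| = n}} \Hook{\TreeT},
\]
where $\theta$ is the sequence defined in the statement by its recurrence, and then to sum over $n$. Indeed, granting this identity, since $\InitialPathsSeries{\Up} = \Trace\Par{\HookSeries{\Up}}$ and the rank of a tree in $\SyntaxTreesInternalNode(\GeneratingSet)$ is its degree, Proposition~\ref{prop:prefix_tree_graph_hook_series} gives $\Angle{t^d, \InitialPathsSeries{\Up}} = \sum_{\Deg(\TreeT) = d} \Hook{\TreeT} = \sum_{n \geq 1} \theta(d, n)$. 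To see that this last sum runs over the stated range, I would first record that any $\GeneratingSet$-tree of degree $d$ has arity at most $1 + \Par{m_\GeneratingSet - 1} d$, since the leaf has arity $1$ and each grafting of a corolla decorated by a letter $\GenA$ raises the arity by $|\GenA| - 1 \leq m_\GeneratingSet - 1$. Hence the refined identity forces $\theta(d, n) = 0$ for $n > 1 + \Par{m_\GeneratingSet - 1} d$, and the sum truncates to $n \in \Han{1 + \Par{m_\GeneratingSet - 1} d}$.

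For the base case $d = 0$, the only $\GeneratingSet$-tree is $\Leaf$, of arity $1$, with $\Hook{\Leaf} = 1$; this matches $\theta(0, 1) = 1$ and $\theta(0, n) = 0$ for $n \neq 1$, the vanishing for $n \leq 0$ being immediate on both sides. For the inductive step, fix $d \geq 1$ and $n \geq 1$. Combining the functional equation defining $\HookSeries{\Up}$ with the expression~\eqref{equ:adjoint_up_trees} for $\Up^\Dual$, and using Proposition~\ref{prop:prefix_tree_graph_hook_series}, I get for every $\TreeT$ of degree $d$ (so $\TreeT \neq \Leaf$)
\[
\Hook{\TreeT} = \Angle{\Up^\Dual(\TreeT), \HookSeries{\Up}} = \sum_{u \in \InternalNodesMax(\TreeT)} \Hook{\DelNode{\TreeT}{u}}.
\]
Summing over all $\TreeT$ of degree $d$ and arity $n$ turns $\sum \Hook{\TreeT}$ into a sum over pairs $\Par{\TreeT, u}$, with $u$ a maximal internal node of $\TreeT$, of the quantities $\Hook{\DelNode{\TreeT}{u}}$.

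The crux is to reindex this sum over pairs by the smaller trees $\TreeS := \DelNode{\TreeT}{u}$, of degree $d - 1$. Deletion of a maximal node and grafting of a corolla are mutually inverse: a pair $\Par{\TreeT, u}$ with $\Deg(\TreeT) = d$, $|\TreeT| = n$, and $u \in \InternalNodesMax(\TreeT)$ corresponds bijectively to a triple $\Par{\TreeS, j, \GenA}$, where $\TreeS = \DelNode{\TreeT}{u}$, the letter $\GenA \in \GeneratingSet$ decorates $u$, and $j \in [|\TreeS|]$ is the position in $\TreeS$ of the leaf carrying $u$, the original tree being reconstructed as $\TreeT = \TreeS \circ_j \GenA$. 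Since grafting adds $|\GenA| - 1$ leaves, the constraint $|\TreeT| = n$ forces $|\TreeS| = n + 1 - |\GenA|$, hence $|\GenA| \leq n$; and for fixed $\TreeS$ and $\GenA$ there are exactly $|\TreeS| = n + 1 - |\GenA|$ admissible positions $j$, each contributing the same value $\Hook{\TreeS}$. Collecting terms and applying the induction hypothesis to $\TreeS$,
\[
\sum_{\substack{\Deg(\TreeT) = d \\ |\TreeT| = n}} \Hook{\TreeT} = \sum_{\substack{\GenA \in \GeneratingSet \\ |\GenA| \leq n}} \Par{n + 1 - |\GenA|} \sum_{\substack{\Deg(\TreeS) = d - 1 \\ |\TreeS| = n + 1 - |\GenA|}} \Hook{\TreeS} = \sum_{\substack{\GenA \in \GeneratingSet \\ |\GenA| \leq n}} \Par{n + 1 - |\GenA|} \, \theta(d - 1, n + 1 - |\GenA|),
\]
which is precisely the recurrence defining $\theta(d, n)$. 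The main obstacle I anticipate is this reindexing step: one must check that deletion and grafting are genuinely inverse to one another and that distinct triples $\Par{\TreeS, j, \GenA}$ yield distinct pairs $\Par{\TreeT, u}$, so that nothing is counted twice, and then read off the multiplicity $n + 1 - |\GenA|$ as the number of leaves of $\TreeS$ at which the last corolla may be grafted.
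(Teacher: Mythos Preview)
Your proof is correct and follows essentially the same approach as the paper: both establish by induction on $d$ that $\theta(d,n)$ equals the total weight of initial paths ending at trees of degree $d$ and arity $n$, via the bijection between pairs $(\TreeT,u)$ with $u$ maximal and triples $(\TreeS,j,\GenA)$ with $\TreeT=\TreeS\circ_j\GenA$. The only cosmetic difference is that you phrase the induction in terms of the hook numbers $\Hook{\TreeT}$ and invoke Proposition~\ref{prop:prefix_tree_graph_hook_series}, whereas the paper speaks directly of counting initial paths; since the graph is simple these are the same quantity by Proposition~\ref{prop:hook_series}.
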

\begin{proof}
    Let us prove that $\theta(d, n)$ is the number of initial paths in
    $\Par{\SyntaxTreesInternalNode(\GeneratingSet), \Up}$ to elements of degree $d$ and
    arity $n$ by induction on $d \geq 0$. First, since there are no syntax trees of arity
    $0$ or less, $\theta(d, n) = 0$ for any $n \leq 0$ and $d \in \Z$.  Moreover, since
    $\theta(0, 1) = 1$, the property is satisfied because $\Leaf$ is the unique tree of
    degree $0$ and arity $1$, and there is exactly one initial path to~$\Leaf$. Assume now
    that $d \geq 1$. Any initial path to a tree $\TreeT$ of degree $d$ and arity $n$
    decomposes as an initial path to a tree $\TreeT'$ of degree $d - 1$ such that $\TreeT$
    appears in $\Up\Par{\TreeT'}$. Hence, there is an $i \in \Han{\left|\TreeT'\right|}$ and
    an $\GenA \in \GeneratingSet$ such that $\TreeT = \TreeT' \circ_i \GenA$. This implies
    that
    \begin{math}
        \left|\TreeT'\right| = n + 1 - |\GenA|
    \end{math}
    and $|\GenA| \leq n$.  By induction hypothesis, there are
    \begin{math}
        \theta\Par{d - 1, \left|\TreeT'\right|}
    \end{math}
    initial paths to trees of degree $d - 1$ and arity $\left|\TreeT'\right|$. Therefore,
    due to the fact that initial paths to trees of degree $d$ decompose as explained before,
    $\theta(d, n)$ satisfies the claimed property. Finally,
    \eqref{equ:multipaths_graph_U_free_operad} is a consequence of the fact that a
    $\GeneratingSet$-tree of degree $d$ has $1$ as minimal arity and $1 +
    \Par{m_\GeneratingSet - 1} d$ as maximal arity. Indeed, this maximal arity is reached
    for trees consisting only in internal nodes of a maximal arity~$m_\GeneratingSet$.
\end{proof}
\medbreak

Here are the sequences of the first coefficients of some initial paths series of
$\Par{\SyntaxTreesInternalNode(\GeneratingSet), \Up}$:
\begin{subequations}
\begin{equation}
    1, 1, 2, 6, 24, 120, 720, 5040,
    \qquad \mbox{ for } \GeneratingSet = \{\GenA\} \mbox{ with } |\GenA| = 2,
\end{equation}
\begin{equation}
    1, 1, 3, 15, 105, 945, 10395, 135135,
    \qquad \mbox{ for } \GeneratingSet = \{\GenC\} \mbox{ with } |\GenC| = 3,
\end{equation}
\begin{equation}
    1, 2, 8, 48, 384, 3840, 46080, 645120,
    \qquad \mbox{ for } \GeneratingSet = \{\GenA, \GenB\}
    \mbox{ with } |\GenA| = |\GenB| = 2,
\end{equation}
\begin{equation}
    1, 2, 10, 82, 938, 13778, 247210, 5240338,
    \qquad \mbox{ for } \GeneratingSet = \{\GenA, \GenC\}
    \mbox{ with } |\GenA| = 2, |\GenC| = 3.
\end{equation}
\end{subequations}
These are respectively Sequences~\OEIS{A000142}, \OEIS{A001147}, \OEIS{A000165},
and~\OEIS{A112487} of~\cite{Slo}.
\medbreak

\subsection{Twisted prefix graded graphs}
We now introduce a second sort of graded graphs and present some of their combinatorial
properties. The aim is to study this graded graph in order to show in the next section
that, together with the first kind of graded graphs, this forms
a pair of $\phi$-diagonal dual graded graphs.
\medbreak

\subsubsection{First definitions and properties} \label{subsubsec:twisted_prefix_graphs}
For any finite alphabet $\GeneratingSet$, let
\begin{math}
    \Vp^\Dual : \K \Angle{\SyntaxTreesInternalNode(\GeneratingSet)}
    \to \K \Angle{\SyntaxTreesInternalNode(\GeneratingSet)}
\end{math}
be the linear map satisfying the recurrence
\begin{subequations}
\begin{equation} \label{equ:twisted_prefix_tree_adjoint_recursive_1}
    \Vp^\Dual(\Leaf) := 0,
\end{equation}
\begin{equation}\label{equ:twisted_prefix_tree_adjoint_recursive_2}
    \Vp^\Dual\Par{\GenA \BPar{\TreeS, \Leaf, \dots, \Leaf}} := \TreeS,
\end{equation}
\begin{equation}\label{equ:twisted_prefix_tree_adjoint_recursive_3}
    \Vp^\Dual\Par{\GenA \BPar{\TreeS_1, \dots, \TreeS_{|\GenA|}}}
    :=
    \sum_{j \in [2, |\GenA|]}
    \GenA \BPar{\TreeS_1, \dots, \TreeS_{j - 1},
    \Vp^\Dual\Par{\TreeS_j}, \TreeS_{j + 1}, \dots, \TreeS_{|\GenA|}},
\end{equation}
\end{subequations}
for any $\GenA \in \GeneratingSet$ and any $\GeneratingSet$-trees $\TreeS$, $\TreeS_1$,
\dots, $\TreeS_{|\GenA|}$ such that there is at least a $j \in [2, |\GenA|]$ such that
$\TreeS_j \ne \Leaf$. For instance, for
\begin{math}
    \GeneratingSet = \Bra{\GenE, \GenA, \GenC}
\end{math}
where $|\GenE| = 1$, $|\GenA| = 2$, and $|\GenC| = 3$, one has
\begin{equation}
    \Vp^\Dual\Par{
    \begin{tikzpicture}[Centering,xscale=0.18,yscale=0.1]
        \node(0)at(0.00,-9.00){};
        \node(11)at(7.00,-13.50){};
        \node(13)at(9.00,-13.50){};
        \node(14)at(10.00,-13.50){};
        \node(16)at(11.00,-13.50){};
        \node(17)at(12.00,-13.50){};
        \node(2)at(2.00,-9.00){};
        \node(5)at(3.00,-13.50){};
        \node(7)at(4.00,-9.00){};
        \node(8)at(5.00,-9.00){};
        \node(9)at(6.00,-9.00){};
        \node[NodeST](1)at(1.00,-4.50){$\GenA$};
        \node[NodeST](10)at(8.00,-4.50){$\GenC$};
        \node[NodeST](12)at(8.00,-9.00){$\GenA$};
        \node[NodeST](15)at(11.00,-9.00){$\GenC$};
        \node[NodeST](3)at(4.00,0.00){$\GenC$};
        \node[NodeST](4)at(3.00,-9.00){$\GenE$};
        \node[NodeST](6)at(4.00,-4.50){$\GenC$};
        \draw[Edge](0)--(1);
        \draw[Edge](1)--(3);
        \draw[Edge](10)--(3);
        \draw[Edge](11)--(12);
        \draw[Edge](12)--(10);
        \draw[Edge](13)--(12);
        \draw[Edge](14)--(15);
        \draw[Edge](15)--(10);
        \draw[Edge](16)--(15);
        \draw[Edge](17)--(15);
        \draw[Edge](2)--(1);
        \draw[Edge](4)--(6);
        \draw[Edge](5)--(4);
        \draw[Edge](6)--(3);
        \draw[Edge](7)--(6);
        \draw[Edge](8)--(6);
        \draw[Edge](9)--(10);
        \node(r)at(4.00,4){};
        \draw[Edge](r)--(3);
    \end{tikzpicture}}
    =
    \begin{tikzpicture}[Centering,xscale=0.17,yscale=0.11]
        \node(0)at(0.00,-7.50){};
        \node(10)at(7.00,-11.25){};
        \node(11)at(8.00,-11.25){};
        \node(13)at(9.00,-11.25){};
        \node(14)at(10.00,-11.25){};
        \node(2)at(2.00,-7.50){};
        \node(5)at(3.00,-7.50){};
        \node(6)at(4.00,-7.50){};
        \node(8)at(5.00,-11.25){};
        \node[NodeST](1)at(1.00,-3.75){$\GenA$};
        \node[NodeST](12)at(9.00,-7.50){$\GenC$};
        \node[NodeST](3)at(3.00,0.00){$\GenC$};
        \node[NodeST](4)at(3.00,-3.75){$\GenE$};
        \node[NodeST](7)at(6.00,-3.75){$\GenC$};
        \node[NodeST](9)at(6.00,-7.50){$\GenA$};
        \draw[Edge](0)--(1);
        \draw[Edge](1)--(3);
        \draw[Edge](10)--(9);
        \draw[Edge](11)--(12);
        \draw[Edge](12)--(7);
        \draw[Edge](13)--(12);
        \draw[Edge](14)--(12);
        \draw[Edge](2)--(1);
        \draw[Edge](4)--(3);
        \draw[Edge](5)--(4);
        \draw[Edge](6)--(7);
        \draw[Edge](7)--(3);
        \draw[Edge](8)--(9);
        \draw[Edge](9)--(7);
        \node(r)at(3.00,3.5){};
        \draw[Edge](r)--(3);
    \end{tikzpicture}
    +
    \begin{tikzpicture}[Centering,xscale=0.17,yscale=0.11]
        \node(0)at(0.00,-7.50){};
        \node(11)at(7.00,-11.25){};
        \node(13)at(9.00,-11.25){};
        \node(14)at(10.00,-7.50){};
        \node(2)at(2.00,-7.50){};
        \node(5)at(3.00,-11.25){};
        \node(7)at(4.00,-7.50){};
        \node(8)at(5.00,-7.50){};
        \node(9)at(6.00,-7.50){};
        \node[NodeST](1)at(1.00,-3.75){$\GenA$};
        \node[NodeST](10)at(8.00,-3.75){$\GenC$};
        \node[NodeST](12)at(8.00,-7.50){$\GenA$};
        \node[NodeST](3)at(4.00,0.00){$\GenC$};
        \node[NodeST](4)at(3.00,-7.50){$\GenE$};
        \node[NodeST](6)at(4.00,-3.75){$\GenC$};
        \draw[Edge](0)--(1);
        \draw[Edge](1)--(3);
        \draw[Edge](10)--(3);
        \draw[Edge](11)--(12);
        \draw[Edge](12)--(10);
        \draw[Edge](13)--(12);
        \draw[Edge](14)--(10);
        \draw[Edge](2)--(1);
        \draw[Edge](4)--(6);
        \draw[Edge](5)--(4);
        \draw[Edge](6)--(3);
        \draw[Edge](7)--(6);
        \draw[Edge](8)--(6);
        \draw[Edge](9)--(10);
        \node(r)at(4.00,3.5){};
        \draw[Edge](r)--(3);
    \end{tikzpicture}
    +
    \begin{tikzpicture}[Centering,xscale=0.18,yscale=0.1]
        \node(0)at(0.00,-8.00){};
        \node(11)at(7.00,-8.00){};
        \node(12)at(8.00,-12.00){};
        \node(14)at(9.00,-12.00){};
        \node(15)at(10.00,-12.00){};
        \node(2)at(2.00,-8.00){};
        \node(5)at(3.00,-12.00){};
        \node(7)at(4.00,-8.00){};
        \node(8)at(5.00,-8.00){};
        \node(9)at(6.00,-8.00){};
        \node[NodeST](1)at(1.00,-4.00){$\GenA$};
        \node[NodeST](10)at(7.00,-4.00){$\GenC$};
        \node[NodeST](13)at(9.00,-8.00){$\GenC$};
        \node[NodeST](3)at(4.00,0.00){$\GenC$};
        \node[NodeST](4)at(3.00,-8.00){$\GenE$};
        \node[NodeST](6)at(4.00,-4.00){$\GenC$};
        \draw[Edge](0)--(1);
        \draw[Edge](1)--(3);
        \draw[Edge](10)--(3);
        \draw[Edge](11)--(10);
        \draw[Edge](12)--(13);
        \draw[Edge](13)--(10);
        \draw[Edge](14)--(13);
        \draw[Edge](15)--(13);
        \draw[Edge](2)--(1);
        \draw[Edge](4)--(6);
        \draw[Edge](5)--(4);
        \draw[Edge](6)--(3);
        \draw[Edge](7)--(6);
        \draw[Edge](8)--(6);
        \draw[Edge](9)--(10);
        \node(r)at(4.00,4){};
        \draw[Edge](r)--(3);
    \end{tikzpicture}.
\end{equation}
\medbreak

This recursive definition for $\Vp^\Dual$ is convenient to set up proofs by induction of
properties involving this map. Nevertheless, we shall consider also a non-recursive
description relying on the following definitions. Let $\TreeT$ be a $\GeneratingSet$-tree
decomposing as $\TreeT = \TreeS \circ^u \Par{\GenA \circ_i \TreeS'}$ where $\TreeS$ and
$\TreeS'$ are $\GeneratingSet$-trees, $u \in \Leaves(\TreeS)$, $i \in [|\GenA|]$, and $\GenA
\in \GeneratingSet$. The \Def{contraction} of the internal node $u$ of $\TreeT$ is the
$\GeneratingSet$-tree $\ContractNode{\TreeT}{u} := \TreeS \circ^u \TreeS'$. For instance,
by considering the same alphabet $\GeneratingSet$ as in the previous example, the
contraction of the node $2$ of the $\GeneratingSet$-tree
\begin{equation} \label{equ:example_contraction}
    \TreeT :=
    \begin{tikzpicture}[Centering,xscale=0.17,yscale=0.15]
        \node(0)at(0.00,-6.00){};
        \node(11)at(7.00,-12.00){};
        \node(13)at(9.00,-12.00){};
        \node(14)at(8.00,-3.00){};
        \node(2)at(2.00,-6.00){};
        \node(4)at(3.00,-6.00){};
        \node(6)at(4.00,-6.00){};
        \node(7)at(3.00,-12.00){};
        \node(9)at(5.00,-12.00){};
        \node[NodeST](1)at(1.00,-3.00){$\GenA$};
        \node[NodeST](10)at(6.00,-6.00){$\GenA$};
        \node[NodeST](12)at(8.00,-9.00){$\GenA$};
        \node[NodeST](3)at(4.00,0.00){$\GenC$};
        \node[NodeST](5)at(4.00,-3.00){$\GenC$};
        \node[NodeST](8)at(4.00,-9.00){$\GenA$};
        \draw[Edge](0)--(1);
        \draw[Edge](1)--(3);
        \draw[Edge](10)--(5);
        \draw[Edge](11)--(12);
        \draw[Edge](12)--(10);
        \draw[Edge](13)--(12);
        \draw[Edge](14)--(3);
        \draw[Edge](2)--(1);
        \draw[Edge](4)--(5);
        \draw[Edge](5)--(3);
        \draw[Edge](6)--(5);
        \draw[Edge](7)--(8);
        \draw[Edge](8)--(10);
        \draw[Edge](9)--(8);
        \node(r)at(4.00,2.75){};
        \draw[Edge](r)--(3);
    \end{tikzpicture}
    =
    \begin{tikzpicture}[Centering,xscale=0.19,yscale=0.21]
        \node(0)at(0.00,-4.00){};
        \node(2)at(2.00,-4.00){};
        \node(4)at(3.00,-2.00){};
        \node(5)at(4.00,-2.00){};
        \node[NodeST](1)at(1.00,-2.00){$\GenA$};
        \node[NodeST](3)at(3.00,0.00){$\GenC$};
        \draw[Edge](0)--(1);
        \draw[Edge](1)--(3);
        \draw[Edge](2)--(1);
        \draw[Edge](4)--(3);
        \draw[Edge](5)--(3);
        \node(r)at(3.00,2){};
        \draw[Edge](r)--(3);
    \end{tikzpicture}
    \circ^2
    \Par{
    \CorollaThree{\GenC}
    \circ_3
    \begin{tikzpicture}[Centering,xscale=0.17,yscale=0.19]
        \node(0)at(0.00,-4.67){};
        \node(2)at(2.00,-4.67){};
        \node(4)at(4.00,-4.67){};
        \node(6)at(6.00,-4.67){};
        \node[NodeST](1)at(1.00,-2.33){$\GenA$};
        \node[NodeST](3)at(3.00,0.00){$\GenA$};
        \node[NodeST](5)at(5.00,-2.33){$\GenA$};
        \draw[Edge](0)--(1);
        \draw[Edge](1)--(3);
        \draw[Edge](2)--(1);
        \draw[Edge](4)--(5);
        \draw[Edge](5)--(3);
        \draw[Edge](6)--(5);
        \node(r)at(3.00,2.25){};
        \draw[Edge](r)--(3);
    \end{tikzpicture}},
\end{equation}
is
\begin{equation}
    \ContractNode{\TreeT}{2} =
    \begin{tikzpicture}[Centering,xscale=0.19,yscale=0.21]
        \node(0)at(0.00,-4.00){};
        \node(2)at(2.00,-4.00){};
        \node(4)at(3.00,-2.00){};
        \node(5)at(4.00,-2.00){};
        \node[NodeST](1)at(1.00,-2.00){$\GenA$};
        \node[NodeST](3)at(3.00,0.00){$\GenC$};
        \draw[Edge](0)--(1);
        \draw[Edge](1)--(3);
        \draw[Edge](2)--(1);
        \draw[Edge](4)--(3);
        \draw[Edge](5)--(3);
        \node(r)at(3.00,2){};
        \draw[Edge](r)--(3);
    \end{tikzpicture}
    \circ^2
    \begin{tikzpicture}[Centering,xscale=0.17,yscale=0.19]
        \node(0)at(0.00,-4.67){};
        \node(2)at(2.00,-4.67){};
        \node(4)at(4.00,-4.67){};
        \node(6)at(6.00,-4.67){};
        \node[NodeST](1)at(1.00,-2.33){$\GenA$};
        \node[NodeST](3)at(3.00,0.00){$\GenA$};
        \node[NodeST](5)at(5.00,-2.33){$\GenA$};
        \draw[Edge](0)--(1);
        \draw[Edge](1)--(3);
        \draw[Edge](2)--(1);
        \draw[Edge](4)--(5);
        \draw[Edge](5)--(3);
        \draw[Edge](6)--(5);
        \node(r)at(3.00,2.25){};
        \draw[Edge](r)--(3);
    \end{tikzpicture}
    =
    \begin{tikzpicture}[Centering,xscale=0.15,yscale=0.15]
        \node(0)at(0.00,-6.00){};
        \node(10)at(9.00,-9.00){};
        \node(11)at(10.00,-3.00){};
        \node(2)at(2.00,-6.00){};
        \node(4)at(3.00,-9.00){};
        \node(6)at(5.00,-9.00){};
        \node(8)at(7.00,-9.00){};
        \node[NodeST](1)at(1.00,-3.00){$\GenA$};
        \node[NodeST](3)at(6.00,0.00){$\GenC$};
        \node[NodeST](5)at(4.00,-6.00){$\GenA$};
        \node[NodeST](7)at(6.00,-3.00){$\GenA$};
        \node[NodeST](9)at(8.00,-6.00){$\GenA$};
        \draw[Edge](0)--(1);
        \draw[Edge](1)--(3);
        \draw[Edge](10)--(9);
        \draw[Edge](11)--(3);
        \draw[Edge](2)--(1);
        \draw[Edge](4)--(5);
        \draw[Edge](5)--(7);
        \draw[Edge](6)--(5);
        \draw[Edge](7)--(3);
        \draw[Edge](8)--(9);
        \draw[Edge](9)--(7);
        \node(r)at(6.00,2.5){};
        \draw[Edge](r)--(3);
    \end{tikzpicture}.
\end{equation}
Observe that when $u$ is a maximal internal node of $\TreeT$, one has
$\ContractNode{\TreeT}{u} = \DelNode{\TreeT}{u}$.
\medbreak

Besides, an internal node $u$ of $\TreeT$ is \Def{quasi-maximal} if $u$ admits no occurrence
of the letter $1$ and all $uj$ are leaves for all $j \in [2, k]$ where $k$ is the arity of
$u$ in $\TreeT$. In other words, the path connecting the root of $\TreeT$ with $u$ does
never go through a first edge of an internal node and $u$ has only leaves as children except
possibly at first position.  We denote by $\InternalNodesQuasiMax(\TreeT)$ the set of all
quasi-maximal nodes of $\TreeT$. For instance, by considering the same alphabet
$\GeneratingSet$ as in the previous examples, here is a $\GeneratingSet$-tree wherein its
quasi-maximal nodes are framed:
\begin{equation}
    \begin{tikzpicture}[Centering,xscale=0.18,yscale=0.1]
        \node(0)at(0.00,-9.00){};
        \node(11)at(7.00,-13.50){};
        \node(13)at(9.00,-13.50){};
        \node(14)at(10.00,-13.50){};
        \node(16)at(11.00,-13.50){};
        \node(17)at(12.00,-13.50){};
        \node(2)at(2.00,-9.00){};
        \node(5)at(3.00,-13.50){};
        \node(7)at(4.00,-9.00){};
        \node(8)at(5.00,-9.00){};
        \node(9)at(6.00,-9.00){};
        \node[NodeST](1)at(1.00,-4.50){$\GenA$};
        \node[NodeST](10)at(8.00,-4.50){$\GenC$};
        \node[NodeST](12)at(8.00,-9.00){$\GenA$};
        \node[NodeST](15)at(11.00,-9.00){$\GenC$};
        \node[NodeST](3)at(4.00,0.00){$\GenC$};
        \node[NodeST](4)at(3.00,-9.00){$\GenE$};
        \node[NodeST](6)at(4.00,-4.50){$\GenC$};
        \draw[Edge](0)--(1);
        \draw[Edge](1)--(3);
        \draw[Edge](10)--(3);
        \draw[Edge](11)--(12);
        \draw[Edge](12)--(10);
        \draw[Edge](13)--(12);
        \draw[Edge](14)--(15);
        \draw[Edge](15)--(10);
        \draw[Edge](16)--(15);
        \draw[Edge](17)--(15);
        \draw[Edge](2)--(1);
        \draw[Edge](4)--(6);
        \draw[Edge](5)--(4);
        \draw[Edge](6)--(3);
        \draw[Edge](7)--(6);
        \draw[Edge](8)--(6);
        \draw[Edge](9)--(10);
        \node(r)at(4.00,4){};
        \draw[Edge](r)--(3);
        \node[fit=(6),Box,inner sep=0pt,opacity=.4]{};
        \node[fit=(12),Box,inner sep=0pt,opacity=.4]{};
        \node[fit=(15),Box,inner sep=0pt,opacity=.4]{};
    \end{tikzpicture}.
\end{equation}
Observe that for any $u \in \InternalNodesQuasiMax(\TreeT)$, one has $\TreeT = \TreeS
\circ^u \Par{\GenA \circ_1 \TreeS'}$ where $\TreeS$ and $\TreeS'$ are
$\GeneratingSet$-trees, possibly leaves. Therefore, $\ContractNode{\TreeT}{u}$ is
well-defined.
\medbreak

By relying on these definitions, the map $\Vp^\Dual$ rephrases, in a non-recursive way, as
follows.
\medbreak

\begin{Proposition} \label{prop:twisted_prefix_tree_graph_adjoint_direct}
    For any finite alphabet $\GeneratingSet$ and any $\GeneratingSet$-tree $\TreeT$, the map
    $\Vp^\Dual$ satisfies
    \begin{equation}
        \label{equ:twisted_prefix_tree_graph_adjoint_direct}
        \Vp^\Dual(\TreeT)
        = \sum_{u \in \InternalNodesQuasiMax(\TreeT)}\ContractNode{\TreeT}{u}.
    \end{equation}
\end{Proposition}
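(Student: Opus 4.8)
The plan is to proceed by structural induction on the $\GeneratingSet$-tree $\TreeT$, following exactly the case split used in the recursive definition \eqref{equ:twisted_prefix_tree_adjoint_recursive_1}--\eqref{equ:twisted_prefix_tree_adjoint_recursive_3} of $\Vp^\Dual$. The base case $\TreeT = \Leaf$ is immediate: the left-hand side of \eqref{equ:twisted_prefix_tree_graph_adjoint_direct} is $0$ by \eqref{equ:twisted_prefix_tree_adjoint_recursive_1}, and since a leaf has no internal node, $\InternalNodesQuasiMax(\Leaf) = \emptyset$, so the right-hand side is the empty sum, equal to $0$.

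For the inductive step I would write $\TreeT = \GenA \BPar{\TreeS_1, \dots, \TreeS_{|\GenA|}}$ with $\GenA \in \GeneratingSet$, and first record a structural description of the quasi-maximal nodes of $\TreeT$ in terms of those of the subtrees $\TreeS_j$. By definition a quasi-maximal node contains no occurrence of the letter $1$, so it is either the root $\epsilon$ or starts with some $j \in [2, |\GenA|]$; in the latter case it reads $u = jv$ where $v$ is a node of $\TreeS_j$. Since the two defining conditions of quasi-maximality (no letter $1$, all non-first children are leaves) are local and invariant under prepending a letter $j \geq 2$, one checks directly that $jv \in \InternalNodesQuasiMax(\TreeT)$ if and only if $v \in \InternalNodesQuasiMax(\TreeS_j)$. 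Moreover $\epsilon$ is quasi-maximal exactly when $\TreeS_j = \Leaf$ for all $j \in [2, |\GenA|]$.

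I would then treat the two nontrivial recursion cases. If $\TreeS_j = \Leaf$ for every $j \in [2, |\GenA|]$, then \eqref{equ:twisted_prefix_tree_adjoint_recursive_2} gives $\Vp^\Dual(\TreeT) = \TreeS_1$. By the description above, the only quasi-maximal node of $\TreeT$ is $\epsilon$ (any $jv$ with $j \geq 2$ would live in the leaf $\TreeS_j$, which has no internal node, and any node starting with $1$ is excluded), and $\ContractNode{\TreeT}{\epsilon} = \TreeS_1$ since contracting the root grafts its first subtree in its place; hence both sides equal $\TreeS_1$. Otherwise some $\TreeS_j$ with $j \in [2, |\GenA|]$ is not a leaf, and \eqref{equ:twisted_prefix_tree_adjoint_recursive_3} together with the induction hypothesis gives
\[
    \Vp^\Dual(\TreeT)
    = \sum_{j \in [2, |\GenA|]} \; \sum_{v \in \InternalNodesQuasiMax(\TreeS_j)}
    \GenA \BPar{\TreeS_1, \dots, \ContractNode{\TreeS_j}{v}, \dots, \TreeS_{|\GenA|}},
\]
the terms with $\TreeS_j = \Leaf$ vanishing because $\Vp^\Dual(\Leaf) = 0$ and full composition is multilinear.

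The last step, and the only point I expect to require a genuine verification, is that contraction is a purely local operation: for $u = jv$ with $v \in \InternalNodesQuasiMax(\TreeS_j)$, one has $\ContractNode{\TreeT}{jv} = \GenA \BPar{\TreeS_1, \dots, \ContractNode{\TreeS_j}{v}, \dots, \TreeS_{|\GenA|}}$. I would prove this by unfolding the definition of contraction: writing $\TreeS_j = \TreeR \circ^v \Par{\GenB \circ_1 \TreeR'}$, which is possible since $v$ is quasi-maximal in $\TreeS_j$, the node $jv$ of $\TreeT$ has the same suffix subtree $\GenB \circ_1 \TreeR'$, so contracting it replaces $\TreeS_j$ by $\TreeR \circ^v \TreeR' = \ContractNode{\TreeS_j}{v}$ while leaving the other subtrees untouched. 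Combining this identity with the indexing of $\InternalNodesQuasiMax(\TreeT)$ established above (noting that here $\epsilon$ is not quasi-maximal, consistent with the absence of a $j = 1$ term) rewrites the displayed sum as $\sum_{u \in \InternalNodesQuasiMax(\TreeT)} \ContractNode{\TreeT}{u}$, closing the induction. The main obstacle is thus the bookkeeping of the correspondence between the quasi-maximal nodes of $\TreeT$ and those of its non-first subtrees, together with the locality of contraction; once these are in place, the computation matches the recursion term by term.
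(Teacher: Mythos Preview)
Your proposal is correct and follows essentially the same approach as the paper: a structural induction matching the three cases of the recursive definition of $\Vp^\Dual$, together with the identification $\InternalNodesQuasiMax(\TreeT) = \bigcup_{j \in [2, |\GenA|]} \{jv : v \in \InternalNodesQuasiMax(\TreeS_j)\}$ in the third case. You are slightly more explicit than the paper in verifying the locality of contraction, $\ContractNode{\TreeT}{jv} = \GenA \BPar{\TreeS_1, \dots, \ContractNode{\TreeS_j}{v}, \dots, \TreeS_{|\GenA|}}$, which the paper uses implicitly.
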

\begin{proof}
    We proceed by induction on the degree $d$ of $\TreeT$. If $d = 0$, then $\TreeT = \Leaf$
    and we have $\Vp^\Dual(\Leaf) = 0$. Moreover, since $\Leaf$ has no internal node, the
    right-hand side of~\eqref{equ:twisted_prefix_tree_graph_adjoint_direct} is equal to $0$.
    Hence, the property holds here. Assume now that $d \geq 1$. If $\TreeT$ decomposes as
    $\TreeT = \GenA \BPar{\TreeS, \Leaf, \dots, \Leaf}$ where $\GenA \in \GeneratingSet$ and
    $\TreeS$ is a $\GeneratingSet$-tree, we have $\Vp^\Dual(\TreeT) = \TreeS$ and, since the
    root $\epsilon$ of $\TreeT$ is the only quasi-maximal node of $\TreeT$, the right-hand
    side of~\eqref{equ:twisted_prefix_tree_graph_adjoint_direct} is equal to
    $\ContractNode{\TreeT}{\epsilon} = \TreeS$. Hence, the property holds here also. It
    remains to explore the case where $\TreeT$ decomposes as $\TreeT = \GenA \BPar{\TreeS_1,
    \dots, \TreeS_{|\GenA|}}$ where $\GenA \in \GeneratingSet$ and $\TreeS_1$, \dots,
    $\TreeS_{|\GenA|}$ are $\GeneratingSet$-trees such that there is a $j \in [2, |\GenA|]$
    such that $\TreeS_j \ne \Leaf$. We have, by definition of $\Vp^\Dual$ and by induction
    hypothesis,
    \begin{equation} \begin{split}
        \label{equ:twisted_prefix_tree_graph_adjoint_direct_2}
        \Vp^\Dual(\TreeT) 
        & =
        \sum_{j \in [2, |\GenA|]}
        \GenA \BPar{\TreeS_1, \dots, \TreeS_{j - 1},
            \Vp^\Dual\Par{\TreeS_j}, \TreeS_{j + 1}, \dots, 
            \TreeS_{|\GenA|}} \\
        & =
        \sum_{j \in [2, |\GenA|]}
        \GenA \BPar{\TreeS_1, \dots, \TreeS_{j - 1},
            \sum_{u \in \InternalNodesQuasiMax\Par{\TreeS_j}}
            \ContractNode{\TreeS_j}{u}, \TreeS_{j + 1}, \dots,
            \TreeS_{|\GenA|}}.
    \end{split} \end{equation}
    Now, the last member of~\eqref{equ:twisted_prefix_tree_graph_adjoint_direct_2} is equal
    to the right-hand side of~\eqref{equ:twisted_prefix_tree_graph_adjoint_direct} since
    \begin{equation}
        \InternalNodesQuasiMax(\TreeT)
        = \bigcup_{j \in [2, |\GenA|]}
        \Bra{j u : u \in \InternalNodesQuasiMax\Par{\TreeS_j}}.
    \end{equation}
    This says that the quasi-maximal nodes of $\TreeT$ come from the quasi-maximal nodes of
    the $\TreeS_j$, $j \in [2, |\GenA|]$.
    Whence~\eqref{equ:twisted_prefix_tree_graph_adjoint_direct} is established.
\end{proof}
\medbreak

It follows from Proposition~\ref{prop:twisted_prefix_tree_graph_adjoint_direct} that for any
$\GeneratingSet$-tree $\TreeT$, all the trees appearing in $\Vp^\Dual(\TreeT)$ have
$\Deg(\TreeT) - 1$ as degree. For this reason, the graph
$\Par{\SyntaxTreesInternalNode(\GeneratingSet), \Vp}$ is graded and the rank of a
$\GeneratingSet$-tree is its degree.  We call
$\Par{\SyntaxTreesInternalNode(\GeneratingSet), \Vp}$ the \Def{$\GeneratingSet$-twisted
prefix graph}. Besides, again by
Proposition~\ref{prop:twisted_prefix_tree_graph_adjoint_direct}, for any
$\GeneratingSet$-tree $\TreeT$, the trees appearing in $\Vp^\Dual(\TreeT)$ have trivial
coefficients.  For this reason, the graph $\Par{\SyntaxTreesInternalNode(\GeneratingSet),
\Vp}$ is simple. Moreover, one can prove by structural induction on $\GeneratingSet$-trees
that any $\GeneratingSet$-tree $\TreeT$ different from the leaf admits at least one internal
node which is quasi-maximal. For this reason, if $\TreeT$ is a $\GeneratingSet$-tree
different from the leaf, $\Vp^\Dual(\TreeT) \ne 0$, implying that
$\Par{\SyntaxTreesInternalNode(\GeneratingSet), \Vp}$ admits $\Leaf$ as root.  Observe also
that when $\GeneratingSet$ contains only unary letters,
$\Par{\SyntaxTreesInternalNode(\GeneratingSet), \Vp}$ is the line.
\medbreak

Since $\Vp$ is the adjoint map of $\Vp^\Dual$, we can provide a recursive description of
$\Vp$ from the recursive definition of $\Vp^\Dual$
of~\eqref{equ:twisted_prefix_tree_adjoint_recursive_1},
\eqref{equ:twisted_prefix_tree_adjoint_recursive_2},
and~\eqref{equ:twisted_prefix_tree_adjoint_recursive_3}. Indeed, it is possible to show by
induction on the degrees of the $\GeneratingSet$-trees that $\Vp$ satisfies
\begin{subequations}
\begin{equation}
    \Vp(\Leaf) = \sum_{\GenA \in \GeneratingSet} \GenA,
\end{equation}
\begin{equation} \begin{split}
    \Vp\Par{\GenB \BPar{\TreeS_1, \dots, \TreeS_{|\GenB|}}}
    & =
    \Par{
    \sum_{\GenA \in \GeneratingSet}
    \GenA \circ_1 \GenB \BPar{\TreeS_1, \dots, \TreeS_{|\GenB|}}} \\
    & \quad +
    \Par{
    \sum_{j \in [2, |\GenB|]}
    \GenB \BPar{\TreeS_1, \dots, \TreeS_{j - 1}, \Vp\Par{\TreeS_j},
        \TreeS_{j + 1}, \dots, \TreeS_{|\GenB|}}},
\end{split} \end{equation}
\end{subequations}
for any $\GenB \in \GeneratingSet$ and any $\GeneratingSet$-trees $\TreeS_1$, \dots,
$\TreeS_{|\GenB|}$. For instance, by considering the same alphabet $\GeneratingSet$ as in
the previous example, one has for instance
\begin{subequations}
\begin{equation} \begin{split}
    \Vp\Par{\begin{tikzpicture}[Centering,xscale=0.19,yscale=0.24]
        \node(0)at(0.00,-1.67){};
        \node(2)at(2.00,-3.33){};
        \node(4)at(4.00,-3.33){};
        \node[NodeST](1)at(1.00,0.00){$\GenA$};
        \node[NodeST](3)at(3.00,-1.67){$\GenA$};
        \draw[Edge](0)--(1);
        \draw[Edge](2)--(3);
        \draw[Edge](3)--(1);
        \draw[Edge](4)--(3);
        \node(r)at(1.00,1.75){};
        \draw[Edge](r)--(1);
    \end{tikzpicture}}
    & =
    \begin{tikzpicture}[Centering,xscale=0.2,yscale=0.22]
        \node(0)at(0.00,-1.75){};
        \node(2)at(2.00,-3.50){};
        \node(4)at(4.00,-5.25){};
        \node(6)at(6.00,-5.25){};
        \node[NodeST](1)at(1.00,0.00){$\GenA$};
        \node[NodeST](3)at(3.00,-1.75){$\GenA$};
        \node[NodeST](5)at(5.00,-3.50){$\GenA$};
        \draw[Edge](0)--(1);
        \draw[Edge](2)--(3);
        \draw[Edge](3)--(1);
        \draw[Edge](4)--(5);
        \draw[Edge](5)--(3);
        \draw[Edge](6)--(5);
        \node(r)at(1.00,1.75){};
        \draw[Edge](r)--(1);
    \end{tikzpicture}
    +
    \begin{tikzpicture}[Centering,xscale=0.17,yscale=0.2]
        \node(0)at(0.00,-2.00){};
        \node(2)at(2.00,-4.00){};
        \node(4)at(4.00,-6.00){};
        \node(6)at(5.00,-6.00){};
        \node(7)at(6.00,-6.00){};
        \node[NodeST](1)at(1.00,0.00){$\GenA$};
        \node[NodeST](3)at(3.00,-2.00){$\GenA$};
        \node[NodeST](5)at(5.00,-4.00){$\GenC$};
        \draw[Edge](0)--(1);
        \draw[Edge](2)--(3);
        \draw[Edge](3)--(1);
        \draw[Edge](4)--(5);
        \draw[Edge](5)--(3);
        \draw[Edge](6)--(5);
        \draw[Edge](7)--(5);
        \node(r)at(1.00,1.75){};
        \draw[Edge](r)--(1);
    \end{tikzpicture}
    +
    \begin{tikzpicture}[Centering,xscale=0.21,yscale=0.26]
        \node(0)at(0.00,-1.50){};
        \node(2)at(2.00,-3.00){};
        \node(5)at(4.00,-4.50){};
        \node[NodeST](1)at(1.00,0.00){$\GenA$};
        \node[NodeST](3)at(3.00,-1.50){$\GenA$};
        \node[NodeST](4)at(4.00,-3.00){$\GenE$};
        \draw[Edge](0)--(1);
        \draw[Edge](2)--(3);
        \draw[Edge](3)--(1);
        \draw[Edge](4)--(3);
        \draw[Edge](5)--(4);
        \node(r)at(1.00,1.5){};
        \draw[Edge](r)--(1);
    \end{tikzpicture}
    +
    \begin{tikzpicture}[Centering,xscale=0.17,yscale=0.2]
        \node(0)at(0.00,-1.75){};
        \node(2)at(2.00,-5.25){};
        \node(4)at(4.00,-5.25){};
        \node(6)at(6.00,-3.50){};
        \node[NodeST](1)at(1.00,0.00){$\GenA$};
        \node[NodeST](3)at(3.00,-3.50){$\GenA$};
        \node[NodeST](5)at(5.00,-1.75){$\GenA$};
        \draw[Edge](0)--(1);
        \draw[Edge](2)--(3);
        \draw[Edge](3)--(5);
        \draw[Edge](4)--(3);
        \draw[Edge](5)--(1);
        \draw[Edge](6)--(5);
        \node(r)at(1.00,2){};
        \draw[Edge](r)--(1);
    \end{tikzpicture}
    +
    \begin{tikzpicture}[Centering,xscale=0.15,yscale=0.18]
        \node(0)at(0.00,-2.00){};
        \node(2)at(2.00,-6.00){};
        \node(4)at(4.00,-6.00){};
        \node(6)at(5.00,-4.00){};
        \node(7)at(6.00,-4.00){};
        \node[NodeST](1)at(1.00,0.00){$\GenA$};
        \node[NodeST](3)at(3.00,-4.00){$\GenA$};
        \node[NodeST](5)at(5.00,-2.00){$\GenC$};
        \draw[Edge](0)--(1);
        \draw[Edge](2)--(3);
        \draw[Edge](3)--(5);
        \draw[Edge](4)--(3);
        \draw[Edge](5)--(1);
        \draw[Edge](6)--(5);
        \draw[Edge](7)--(5);
        \node(r)at(1.00,2){};
        \draw[Edge](r)--(1);
    \end{tikzpicture}
    +
    \begin{tikzpicture}[Centering,xscale=0.21,yscale=0.26]
        \node(0)at(0.00,-1.50){};
        \node(3)at(2.00,-4.50){};
        \node(5)at(4.00,-4.50){};
        \node[NodeST](1)at(1.00,0.00){$\GenA$};
        \node[NodeST](2)at(3.00,-1.50){$\GenE$};
        \node[NodeST](4)at(3.00,-3.00){$\GenA$};
        \draw[Edge](0)--(1);
        \draw[Edge](2)--(1);
        \draw[Edge](3)--(4);
        \draw[Edge](4)--(2);
        \draw[Edge](5)--(4);
        \node(r)at(1.00,1.5){};
        \draw[Edge](r)--(1);
    \end{tikzpicture}
    \\
    & \quad +
    \begin{tikzpicture}[Centering,xscale=0.17,yscale=0.2]
        \node(0)at(0.00,-3.50){};
        \node(2)at(2.00,-5.25){};
        \node(4)at(4.00,-5.25){};
        \node(6)at(6.00,-1.75){};
        \node[NodeST](1)at(1.00,-1.75){$\GenA$};
        \node[NodeST](3)at(3.00,-3.50){$\GenA$};
        \node[NodeST](5)at(5.00,0.00){$\GenA$};
        \draw[Edge](0)--(1);
        \draw[Edge](1)--(5);
        \draw[Edge](2)--(3);
        \draw[Edge](3)--(1);
        \draw[Edge](4)--(3);
        \draw[Edge](6)--(5);
        \node(r)at(5.00,2){};
        \draw[Edge](r)--(5);
    \end{tikzpicture}
    +
    \begin{tikzpicture}[Centering,xscale=0.15,yscale=0.18]
        \node(0)at(0.00,-4.00){};
        \node(2)at(2.00,-6.00){};
        \node(4)at(4.00,-6.00){};
        \node(6)at(5.00,-2.00){};
        \node(7)at(6.00,-2.00){};
        \node[NodeST](1)at(1.00,-2.00){$\GenA$};
        \node[NodeST](3)at(3.00,-4.00){$\GenA$};
        \node[NodeST](5)at(5.00,0.00){$\GenC$};
        \draw[Edge](0)--(1);
        \draw[Edge](1)--(5);
        \draw[Edge](2)--(3);
        \draw[Edge](3)--(1);
        \draw[Edge](4)--(3);
        \draw[Edge](6)--(5);
        \draw[Edge](7)--(5);
        \node(r)at(5.00,2){};
        \draw[Edge](r)--(5);
    \end{tikzpicture}
    +
    \begin{tikzpicture}[Centering,xscale=0.21,yscale=0.26]
        \node(1)at(0.00,-3.00){};
        \node(3)at(2.00,-4.50){};
        \node(5)at(4.00,-4.50){};
        \node[NodeST](0)at(1.00,0.00){$\GenE$};
        \node[NodeST](2)at(1.00,-1.50){$\GenA$};
        \node[NodeST](4)at(3.00,-3.00){$\GenA$};
        \draw[Edge](1)--(2);
        \draw[Edge](2)--(0);
        \draw[Edge](3)--(4);
        \draw[Edge](4)--(2);
        \draw[Edge](5)--(4);
        \node(r)at(1.00,1.5){};
        \draw[Edge](r)--(0);
    \end{tikzpicture},
\end{split} \end{equation}
\begin{equation} \begin{split}
    \Vp\Par{
    \begin{tikzpicture}[Centering,xscale=0.2,yscale=0.18]
        \node(0)at(0.00,-4.67){};
        \node(2)at(2.00,-4.67){};
        \node(4)at(3.00,-2.33){};
        \node(6)at(4.00,-4.67){};
        \node[NodeST](1)at(1.00,-2.33){$\GenA$};
        \node[NodeST](3)at(3.00,0.00){$\GenC$};
        \node[NodeST](5)at(4.00,-2.33){$\GenE$};
        \draw[Edge](0)--(1);
        \draw[Edge](1)--(3);
        \draw[Edge](2)--(1);
        \draw[Edge](4)--(3);
        \draw[Edge](5)--(3);
        \draw[Edge](6)--(5);
        \node(r)at(3.00,2){};
        \draw[Edge](r)--(3);
    \end{tikzpicture}}
    & =
    \begin{tikzpicture}[Centering,xscale=0.2,yscale=0.18]
        \node(0)at(0.00,-6.75){};
        \node(2)at(2.00,-6.75){};
        \node(4)at(3.00,-4.50){};
        \node(6)at(4.00,-6.75){};
        \node(8)at(6.00,-2.25){};
        \node[NodeST](1)at(1.00,-4.50){$\GenA$};
        \node[NodeST](3)at(3.00,-2.25){$\GenC$};
        \node[NodeST](5)at(4.00,-4.50){$\GenE$};
        \node[NodeST](7)at(5.00,0.00){$\GenA$};
        \draw[Edge](0)--(1);
        \draw[Edge](1)--(3);
        \draw[Edge](2)--(1);
        \draw[Edge](3)--(7);
        \draw[Edge](4)--(3);
        \draw[Edge](5)--(3);
        \draw[Edge](6)--(5);
        \draw[Edge](8)--(7);
        \node(r)at(5.00,2){};
        \draw[Edge](r)--(7);
    \end{tikzpicture}
    +
    \begin{tikzpicture}[Centering,xscale=0.2,yscale=0.18]
        \node(0)at(0.00,-4.50){};
        \node(2)at(2.00,-4.50){};
        \node(4)at(3.00,-2.25){};
        \node(6)at(4.00,-6.75){};
        \node(8)at(6.00,-4.50){};
        \node[NodeST](1)at(1.00,-2.25){$\GenA$};
        \node[NodeST](3)at(3.00,0.00){$\GenC$};
        \node[NodeST](5)at(4.00,-4.50){$\GenE$};
        \node[NodeST](7)at(5.00,-2.25){$\GenA$};
        \draw[Edge](0)--(1);
        \draw[Edge](1)--(3);
        \draw[Edge](2)--(1);
        \draw[Edge](4)--(3);
        \draw[Edge](5)--(7);
        \draw[Edge](6)--(5);
        \draw[Edge](7)--(3);
        \draw[Edge](8)--(7);
        \node(r)at(3.00,2){};
        \draw[Edge](r)--(3);
    \end{tikzpicture}
    +
    \begin{tikzpicture}[Centering,xscale=0.18,yscale=0.17]
        \node(0)at(0.00,-5.00){};
        \node(2)at(2.00,-5.00){};
        \node(4)at(3.00,-2.50){};
        \node(6)at(4.00,-7.50){};
        \node(8)at(5.00,-5.00){};
        \node(9)at(6.00,-5.00){};
        \node[NodeST](1)at(1.00,-2.50){$\GenA$};
        \node[NodeST](3)at(3.00,0.00){$\GenC$};
        \node[NodeST](5)at(4.00,-5.00){$\GenE$};
        \node[NodeST](7)at(5.00,-2.50){$\GenC$};
        \draw[Edge](0)--(1);
        \draw[Edge](1)--(3);
        \draw[Edge](2)--(1);
        \draw[Edge](4)--(3);
        \draw[Edge](5)--(7);
        \draw[Edge](6)--(5);
        \draw[Edge](7)--(3);
        \draw[Edge](8)--(7);
        \draw[Edge](9)--(7);
        \node(r)at(3.00,2.25){};
        \draw[Edge](r)--(3);
    \end{tikzpicture}
    +
    \begin{tikzpicture}[Centering,xscale=0.19,yscale=0.2]
        \node(0)at(0.00,-4.00){};
        \node(2)at(2.00,-4.00){};
        \node(4)at(3.00,-2.00){};
        \node(7)at(4.00,-6.00){};
        \node[NodeST](1)at(1.00,-2.00){$\GenA$};
        \node[NodeST](3)at(3.00,0.00){$\GenC$};
        \node[NodeST](5)at(4.00,-2.00){$\GenE$};
        \node[NodeST](6)at(4.00,-4.00){$\GenE$};
        \draw[Edge](0)--(1);
        \draw[Edge](1)--(3);
        \draw[Edge](2)--(1);
        \draw[Edge](4)--(3);
        \draw[Edge](5)--(3);
        \draw[Edge](6)--(5);
        \draw[Edge](7)--(6);
        \node(r)at(3.00,2){};
        \draw[Edge](r)--(3);
    \end{tikzpicture}
    +
    \begin{tikzpicture}[Centering,xscale=0.17,yscale=0.14]
        \node(0)at(0.00,-6.00){};
        \node(2)at(2.00,-6.00){};
        \node(4)at(3.00,-6.00){};
        \node(6)at(5.00,-6.00){};
        \node(8)at(6.00,-6.00){};
        \node[NodeST](1)at(1.00,-3.00){$\GenA$};
        \node[NodeST](3)at(4.00,0.00){$\GenC$};
        \node[NodeST](5)at(4.00,-3.00){$\GenA$};
        \node[NodeST](7)at(6.00,-3.00){$\GenE$};
        \draw[Edge](0)--(1);
        \draw[Edge](1)--(3);
        \draw[Edge](2)--(1);
        \draw[Edge](4)--(5);
        \draw[Edge](5)--(3);
        \draw[Edge](6)--(5);
        \draw[Edge](7)--(3);
        \draw[Edge](8)--(7);
        \node(r)at(4.00,3){};
        \draw[Edge](r)--(3);
    \end{tikzpicture}
    +
    \begin{tikzpicture}[Centering,xscale=0.17,yscale=0.13]
        \node(0)at(0.00,-6.67){};
        \node(2)at(2.00,-6.67){};
        \node(4)at(3.00,-6.67){};
        \node(6)at(4.00,-6.67){};
        \node(7)at(5.00,-6.67){};
        \node(9)at(6.00,-6.67){};
        \node[NodeST](1)at(1.00,-3.33){$\GenA$};
        \node[NodeST](3)at(4.00,0.00){$\GenC$};
        \node[NodeST](5)at(4.00,-3.33){$\GenC$};
        \node[NodeST](8)at(6.00,-3.33){$\GenE$};
        \draw[Edge](0)--(1);
        \draw[Edge](1)--(3);
        \draw[Edge](2)--(1);
        \draw[Edge](4)--(5);
        \draw[Edge](5)--(3);
        \draw[Edge](6)--(5);
        \draw[Edge](7)--(5);
        \draw[Edge](8)--(3);
        \draw[Edge](9)--(8);
        \node(r)at(4.00,3){};
        \draw[Edge](r)--(3);
    \end{tikzpicture}
    \\
    & \quad +
    \begin{tikzpicture}[Centering,xscale=0.21,yscale=0.17]
        \node(0)at(0.00,-5.33){};
        \node(2)at(2.00,-5.33){};
        \node(5)at(3.00,-5.33){};
        \node(7)at(4.00,-5.33){};
        \node[NodeST](1)at(1.00,-2.67){$\GenA$};
        \node[NodeST](3)at(3.00,0.00){$\GenC$};
        \node[NodeST](4)at(3.00,-2.67){$\GenE$};
        \node[NodeST](6)at(4.00,-2.67){$\GenE$};
        \draw[Edge](0)--(1);
        \draw[Edge](1)--(3);
        \draw[Edge](2)--(1);
        \draw[Edge](4)--(3);
        \draw[Edge](5)--(4);
        \draw[Edge](6)--(3);
        \draw[Edge](7)--(6);
        \node(r)at(3.00,2.25){};
        \draw[Edge](r)--(3);
    \end{tikzpicture}
    +
    \begin{tikzpicture}[Centering,xscale=0.18,yscale=0.17]
        \node(0)at(0.00,-7.50){};
        \node(2)at(2.00,-7.50){};
        \node(4)at(3.00,-5.00){};
        \node(6)at(4.00,-7.50){};
        \node(8)at(5.00,-2.50){};
        \node(9)at(6.00,-2.50){};
        \node[NodeST](1)at(1.00,-5.00){$\GenA$};
        \node[NodeST](3)at(3.00,-2.50){$\GenC$};
        \node[NodeST](5)at(4.00,-5.00){$\GenE$};
        \node[NodeST](7)at(5.00,0.00){$\GenC$};
        \draw[Edge](0)--(1);
        \draw[Edge](1)--(3);
        \draw[Edge](2)--(1);
        \draw[Edge](3)--(7);
        \draw[Edge](4)--(3);
        \draw[Edge](5)--(3);
        \draw[Edge](6)--(5);
        \draw[Edge](8)--(7);
        \draw[Edge](9)--(7);
        \node(r)at(5.00,2.25){};
        \draw[Edge](r)--(7);
    \end{tikzpicture}
    +
    \begin{tikzpicture}[Centering,xscale=0.19,yscale=0.22]
        \node(1)at(0.00,-6.00){};
        \node(3)at(2.00,-6.00){};
        \node(5)at(3.00,-4.00){};
        \node(7)at(4.00,-6.00){};
        \node[NodeST](0)at(2.00,0.00){$\GenE$};
        \node[NodeST](2)at(1.00,-4.00){$\GenA$};
        \node[NodeST](4)at(3.00,-2.00){$\GenA$};
        \node[NodeST](6)at(4.00,-4.00){$\GenE$};
        \draw[Edge](1)--(2);
        \draw[Edge](2)--(4);
        \draw[Edge](3)--(2);
        \draw[Edge](4)--(0);
        \draw[Edge](5)--(4);
        \draw[Edge](6)--(4);
        \draw[Edge](7)--(6);
        \node(r)at(2.00,1.8){};
        \draw[Edge](r)--(0);
    \end{tikzpicture}.
\end{split} \end{equation}
\end{subequations}
\medbreak

Observe that when $\GeneratingSet$ contains at least one letter $\GenA$ such that $|\GenA|
\geq 2$, $\Par{\SyntaxTreesInternalNode(\GeneratingSet), \Vp}$ is not $\phi$-diagonal
self-dual. Indeed,
\begin{equation} \begin{split}
    \Par{\Vp^\Dual \Vp - \Vp \Vp^\Dual}\Par{\GenA \circ_1 \GenA}
    & =
    \Vp^\Dual\Par{
    \Par{\sum_{\GenB \in \GeneratingSet}
        \GenB \circ_1 \Par{\GenA \circ_1 \GenA}}
        +
    \Par{\sum_{\substack{
        j \in [2, |\GenA|] \\
        \GenB \in \GeneratingSet}} 
    \Par{\GenA \circ_j \GenB} \circ_1 \GenA}} - \Vp(\GenA)
    \\
    & =
    (\# \GeneratingSet) \; \GenA \circ_1 \GenA
    + (\# \GeneratingSet) (|\GenA| - 1) \; \GenA \circ_1 \GenA
    - \Par{\sum_{\GenB \in \GeneratingSet} \GenB \circ_1 \GenA}
    - \Par{\sum_{\substack{
        j \in [2, |\GenA|] \\
        \GenB \in \GeneratingSet}} 
    \GenA \circ_j \GenB} \\
    & =
    (\# \GeneratingSet) |\GenA| \; \GenA \circ_1 \GenA
    - \Par{\sum_{\GenB \in \GeneratingSet} \GenB \circ_1 \GenA}
    - \Par{\sum_{\substack{
        j \in [2, |\GenA|] \\
        \GenB \in \GeneratingSet}} 
    \GenA \circ_j \GenB}.
\end{split} \end{equation}
\medbreak

Besides, in the particular case where
\begin{math}
    \GeneratingSet = \GeneratingSet(2) = \Bra{\GenA},
\end{math}
the map $\Vp^\Dual$ satisfies the recurrence
\begin{subequations}
\begin{equation}
    \Vp^\Dual(\Leaf) = 0,
\end{equation}
\begin{equation}
    \Vp^\Dual\Par{\GenA \BPar{\TreeS, \Leaf}} = \TreeS,
\end{equation}
\begin{equation}
    \Vp^\Dual\Par{\GenA \BPar{\TreeS_1, \TreeS_2}}
    = \GenA \BPar{\TreeS_1, \Vp^\Dual\Par{\TreeS_2}},
\end{equation}
\end{subequations}
for any $\GeneratingSet$-trees $\TreeS$, $\TreeS_1$, and $\TreeS_2$ such that $\TreeS_2 \ne
\Leaf$. It follows by induction on the degrees of the $\GeneratingSet$-trees that for any
$\GeneratingSet$-tree $\TreeT$, there is at most one term appearing in $\Vp^\Dual(\TreeT)$.
For this reason, the graph $(\SyntaxTrees(\GeneratingSet), \Vp)$ is a tree.
\medbreak

\subsubsection{Path enumeration}
The \Def{twisted poset induced} by a $\GeneratingSet$-tree $\TreeT$ is the poset
$\TwistedTreePoset{\TreeT} := \Par{\InternalNodes(\TreeT), \Leq'}$ wherein for any $u, v \in
\InternalNodes(\TreeT)$, one has $u \Leq' v$ if $u = v$, or $v = u i v'$ where $i \geq 2$
and $v'$ is any word of integers, or $u = v 1 u'$ where $u'$ is any word of integers. In
other words, this says that one has $u \Leq' v$ if $u = v$, or $u$ is an ancestor of $v$ but
$v$ is not in the first subtree of $u$, or $u$ is in the first subtree of $v$. For
instance, by considering the same alphabet $\GeneratingSet$ as in the previous examples, in
the twisted poset induced by the $\GeneratingSet$-tree
\begin{equation} \label{equ:example_tree_twisted_hook}
    \TreeT :=
    \begin{tikzpicture}[Centering,xscale=0.21,yscale=0.14]
        \node(0)at(0.00,-10.80){};
        \node(10)at(6.00,-7.20){};
        \node(11)at(7.00,-7.20){};
        \node(13)at(8.00,-14.40){};
        \node(15)at(10.00,-14.40){};
        \node(17)at(12.00,-10.80){};
        \node(2)at(1.00,-10.80){};
        \node(3)at(2.00,-10.80){};
        \node(6)at(4.00,-10.80){};
        \node(8)at(5.00,-7.20){};
        \node[NodeST](1)at(1.00,-7.20){$\GenC$};
        \node[NodeST](12)at(10.00,-3.60){$\GenE$};
        \node[NodeST](14)at(9.00,-10.80){$\GenA$};
        \node[NodeST](16)at(11.00,-7.20){$\GenA$};
        \node[NodeST](4)at(3.00,-3.60){$\GenA$};
        \node[NodeST](5)at(4.00,-7.20){$\GenE$};
        \node[NodeST](7)at(6.00,0.00){$\GenC$};
        \node[NodeST](9)at(6.00,-3.60){$\GenC$};
        \draw[Edge](0)--(1);
        \draw[Edge](1)--(4);
        \draw[Edge](10)--(9);
        \draw[Edge](11)--(9);
        \draw[Edge](12)--(7);
        \draw[Edge](13)--(14);
        \draw[Edge](14)--(16);
        \draw[Edge](15)--(14);
        \draw[Edge](16)--(12);
        \draw[Edge](17)--(16);
        \draw[Edge](2)--(1);
        \draw[Edge](3)--(1);
        \draw[Edge](4)--(7);
        \draw[Edge](5)--(4);
        \draw[Edge](6)--(5);
        \draw[Edge](8)--(9);
        \draw[Edge](9)--(7);
        \node(r)at(6.00,2.70){};
        \draw[Edge](r)--(7);
    \end{tikzpicture}
\end{equation}
we have $11 \Leq ' 1 \Leq' \epsilon$, $12 \Leq' \epsilon$, $1 \Leq' 12$, $\epsilon \Leq' 2$,
$\epsilon \Leq' 3$, $\epsilon \Leq' 31$, $\epsilon \Leq ' 311$, and $311 \Leq ' 31 \Leq' 3$.
\medbreak

For any $\GeneratingSet$ tree $\TreeT$, let the statistics $\TreeT \mapsto
\TwistedHook{\TreeT}$ satisfying the recurrence relation
\begin{subequations}
\begin{equation}
    \TwistedHook{\Leaf} = 1,
\end{equation}
\begin{equation} \label{equ:twisted_hook_2}
    \TwistedHook{\GenA \BPar{\TreeS_1, \dots, \TreeS_{|\GenA|}}}
    = \lbag \Deg\Par{\TreeS_2}, \dots, \Deg\Par{\TreeS_{|\GenA|}} \rbag !
    \prod_{i \in [|\GenA|]} \TwistedHook{\TreeS_i},
\end{equation}
\end{subequations}
for any $\GenA \in \GeneratingSet$ and any $\GeneratingSet$-trees $\TreeS_1$, \dots,
$\TreeS_{|\GenA|}$.  For instance, by considering the $\GeneratingSet$-tree $\TreeT$ defined
in~\eqref{equ:example_tree_twisted_hook}, one has $\TwistedHook{\TreeT} = 4$.
\medbreak

\begin{Lemma} \label{lem:number_linear_extensions_twisted_posets}
    For any alphabet $\GeneratingSet$ and any $\GeneratingSet$-tree $\TreeT$, the number of
    linear extensions of the poset $\TwistedTreePoset{\TreeT}$ is $\TwistedHook{\TreeT}$.
\end{Lemma}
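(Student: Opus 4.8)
The plan is to prove the statement by structural induction on the $\GeneratingSet$-tree $\TreeT$, mirroring the computation of $\Hook{\TreeT}$ carried out earlier for the untwisted poset $\TreePoset{\TreeT}$, but accounting for the asymmetric role that the first subtree plays in the order $\Leq'$. For the base case $\TreeT = \Leaf$, the set $\InternalNodes(\Leaf)$ is empty, so $\TwistedTreePoset{\Leaf}$ has exactly one (empty) linear extension, which matches $\TwistedHook{\Leaf} = 1$.

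For the inductive step, I would write $\TreeT = \GenA\BPar{\TreeS_1, \dots, \TreeS_{|\GenA|}}$ and describe $\InternalNodes(\TreeT)$ as the root $\epsilon$ together with the blocks $\Bra{i u : u \in \InternalNodes(\TreeS_i)}$ for $i \in [|\GenA|]$. The heart of the argument is a careful reading of the definition of $\Leq'$, which yields three structural facts: (i) for each $i$, the map $u \mapsto i u$ is a poset isomorphism from $\TwistedTreePoset{\TreeS_i}$ onto the subposet of $\TwistedTreePoset{\TreeT}$ induced on block $i$; (ii) the root $\epsilon$ lies above every internal node of the first subtree, since such a node has address $\epsilon 1 u'$, and below every internal node of the remaining subtrees, since such a node has address $\epsilon i v'$ with $i \geq 2$; and (iii) no order relation holds between internal nodes belonging to two distinct subtrees, because none of the three defining clauses of $\Leq'$ can relate two addresses beginning with different letters.

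Granting these facts, the enumeration becomes routine. Any linear extension of $\TwistedTreePoset{\TreeT}$ must place every internal node of $\TreeS_1$ strictly before $\epsilon$ and every internal node of $\TreeS_2, \dots, \TreeS_{|\GenA|}$ strictly after $\epsilon$, so the position of $\epsilon$ forces a clean split into a before-part and an after-part. The nodes before $\epsilon$ form a linear extension of $\TwistedTreePoset{\TreeS_1}$, contributing $\TwistedHook{\TreeS_1}$ choices by induction. The nodes after $\epsilon$ form a linear extension of the disjoint union $\TwistedTreePoset{\TreeS_2} \sqcup \dots \sqcup \TwistedTreePoset{\TreeS_{|\GenA|}}$; by the shuffle principle recalled in~\eqref{equ:shuffle_linear_extensions} together with the induction hypothesis, these contribute $\lbag \Deg\Par{\TreeS_2}, \dots, \Deg\Par{\TreeS_{|\GenA|}} \rbag! \prod_{i \in [2, |\GenA|]} \TwistedHook{\TreeS_i}$ choices. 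Since the two parts are independent, multiplying and comparing with the recurrence~\eqref{equ:twisted_hook_2} gives exactly $\TwistedHook{\TreeT}$, closing the induction.

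I expect the main obstacle to lie in items (ii) and (iii): verifying directly, from the somewhat unusual definition of $\Leq'$, that the first subtree sits entirely below the root while all other subtrees sit entirely above it, with no relations leaking across distinct subtrees. This is precisely what produces a multinomial on $\Deg\Par{\TreeS_2}, \dots, \Deg\Par{\TreeS_{|\GenA|}}$ rather than on all the $\Deg\Par{\TreeS_i}$, and hence is what distinguishes $\TwistedHook{\TreeT}$ from the ordinary hook statistic $\Hook{\TreeT}$. Once this poset decomposition is pinned down, the enumerative step is the same multinomial and shuffle computation used for $\Hook{\TreeT}$.
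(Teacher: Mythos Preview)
Your proposal is correct and follows essentially the same approach as the paper's proof: both proceed by structural induction, identify that the root $\epsilon$ separates the nodes of $\TreeS_1$ (all below $\epsilon$) from those of $\TreeS_2, \dots, \TreeS_{|\GenA|}$ (all above $\epsilon$ and mutually incomparable across distinct subtrees), and then apply the shuffle formula~\eqref{equ:shuffle_linear_extensions} together with the induction hypothesis to recover the recurrence~\eqref{equ:twisted_hook_2}. The only cosmetic difference is that the paper packages the ``after-$\epsilon$'' part as a linear extension of $\TwistedTreePoset{\GenA \BPar{\Leaf, \TreeS_2, \dots, \TreeS_{|\GenA|}}}$ rather than of the disjoint union, which amounts to the same count.
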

\begin{proof}
    We proceed by induction on the degree $d$ of $\TreeT$. If $d = 0$, then $\TreeT =
    \Leaf$, and since $\TwistedTreePoset{\Leaf}$ has exactly one linear extension which is
    the empty one, and $\TwistedHook{\TreeT} = 1$, the property is satisfied. If $d \geq 1$,
    then $\TreeT = \GenA \BPar{\TreeS_1, \dots, \TreeS_{|\GenA|}}$ for an $\GenA \in
    \GeneratingSet$ and $\GeneratingSet$-trees $\TreeS_1$, \dots, $\TreeS_{|\GenA|}$.  From
    the definition of $\TwistedTreePoset{\TreeT}$, it follows that any permutation encoding
    a linear extension of $\TwistedTreePoset{\TreeT}$ writes as
    \begin{equation}
        \Par{1 u^{(1)}, \dots, 1 u^{(k)}, \epsilon, i_1 v^{(1)}, \dots, i_\ell v^{(\ell)}}
    \end{equation}
    where the permutation $\Par{u^{(1)}, \dots, u^{(k)}}$ encodes a linear extension of
    $\TwistedTreePoset{\TreeS_1}$ and the permutation $\Par{\epsilon, i_1 v^{(1)}, \dots,
    i_\ell v^{(\ell)}}$ encodes a linear extension of $\TwistedTreePoset{\GenA
    \BPar{\Leaf,\TreeS_2, \dots, \TreeS_{|\GenA|}}}$.  Indeed, by definition of the order
    relation $\Leq'$ of $\TwistedTreePoset{\TreeT}$, all the nodes $u^{(1)}$, \dots,
    $u^{(k)}$ of $\TreeS_1$ are smaller than the root $\epsilon$ of $\TreeT$, and the root
    of $\TreeT$ is itself smaller than all the nodes $v^{(1)}$, \dots, $v^{(\ell)}$ of,
    respectively, $\TwistedTreePoset{\TreeS_2}$, \dots,
    $\TwistedTreePoset{\TreeS_{|\GenA|}}$. Moreover, again by definition of $\Leq'$, for any
    $j, j' \in [2, |\GenA|]$, $v \in \InternalNodes\Par{\TreeS_j}$, and $v' \in
    \InternalNodes\Par{\TreeS_{j'}}$, if $j \ne j'$ then the nodes $jv$ and $j'v'$ of
    $\TreeT$ are incomparable in $\TwistedTreePoset{\TreeT}$. Thus,
    by~\eqref{equ:shuffle_linear_extensions}, $\TwistedTreePoset{\GenA \BPar{\Leaf,\TreeS_2,
    \dots, \TreeS_{|\GenA|}}}$ has
    \begin{math}
        \lbag \Deg\Par{\TreeS_2}, \dots, \Deg\Par{\TreeS_{|\GenA|}} \rbag !
    \end{math}
    linear extensions.  Finally, since each $\TwistedTreePoset{\TreeS_i}$, $i \in
    [|\GenA|]$, has by induction hypothesis $\TwistedHook{\TreeS_i}$ linear extensions, it
    follows from~\eqref{equ:twisted_hook_2} that $\TwistedTreePoset{\TreeT}$ admits
    $\TwistedHook{\TreeT}$ linear extensions.
\end{proof}
\medbreak

By Lemma~\ref{lem:number_linear_extensions_twisted_posets}, and in pursuit of the previous
example, the $\GeneratingSet$-tree defined in~\eqref{equ:example_tree_twisted_hook} has four
linear extensions. The four permutations encoding these are
\begin{subequations}
\begin{equation}
    \Par{11, 1, 12, \epsilon, 2, 311, 31, 3},
\end{equation}
\begin{equation}
    \Par{11, 1, 12, \epsilon, 311, 2, 31, 3},
\end{equation}
\begin{equation}
    \Par{11, 1, 12, \epsilon, 311, 31, 2, 3},
\end{equation}
\begin{equation}
    \Par{11, 1, 12, \epsilon, 311, 31, 3, 2}.
\end{equation}
\end{subequations}
\medbreak

\begin{Proposition} \label{prop:twisted_prefix_tree_graph_hook_series}
    For any finite alphabet $\GeneratingSet$, the hook series of
    $\Par{\SyntaxTreesInternalNode(\GeneratingSet), \Vp}$ satisfies $\Angle{\TreeT,
    \HookSeries{\Vp}} = \TwistedHook{\TreeT}$ for any $\GeneratingSet$-tree $\TreeT$.
\end{Proposition}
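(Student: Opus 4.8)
The plan is to argue by induction on the degree $d$ of $\TreeT$, following the same strategy as for Proposition~\ref{prop:prefix_tree_graph_hook_series} but with the map $\Up^\Dual$ replaced by $\Vp^\Dual$ and deletions replaced by contractions. For the base case $d = 0$ we have $\TreeT = \Leaf = \Zero$, so the functional equation defining $\HookSeries{\Vp}$ together with $\Vp^\Dual(\Leaf) = 0$ gives $\Angle{\Leaf, \HookSeries{\Vp}} = \Han{\Leaf = \Zero} = 1 = \TwistedHook{\Leaf}$. For $d \geq 1$, since $\TreeT \ne \Zero$, the functional equation reduces to $\Angle{\TreeT, \HookSeries{\Vp}} = \Angle{\Vp^\Dual(\TreeT), \HookSeries{\Vp}}$. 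Using the non-recursive description of $\Vp^\Dual$ from Proposition~\ref{prop:twisted_prefix_tree_graph_adjoint_direct}, namely $\Vp^\Dual(\TreeT) = \sum_{u \in \InternalNodesQuasiMax(\TreeT)} \ContractNode{\TreeT}{u}$, and the fact that each $\ContractNode{\TreeT}{u}$ has degree $d - 1$, the induction hypothesis yields
\[
    \Angle{\TreeT, \HookSeries{\Vp}}
    = \sum_{u \in \InternalNodesQuasiMax(\TreeT)} \TwistedHook{\ContractNode{\TreeT}{u}}.
\]

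It therefore remains to prove the purely combinatorial identity $\TwistedHook{\TreeT} = \sum_{u \in \InternalNodesQuasiMax(\TreeT)} \TwistedHook{\ContractNode{\TreeT}{u}}$. By Lemma~\ref{lem:number_linear_extensions_twisted_posets}, both sides count linear extensions: the left-hand side counts those of $\TwistedTreePoset{\TreeT}$, and each summand on the right counts those of $\TwistedTreePoset{\ContractNode{\TreeT}{u}}$. I would exploit the elementary principle that the number of linear extensions of any finite poset equals the sum, over its maximal elements $m$, of the number of linear extensions of the poset obtained by deleting $m$, since each linear extension, read as a permutation, terminates at a unique maximal element. Applying this to $\TwistedTreePoset{\TreeT}$ reduces the identity to two claims: first, that the maximal elements of $\TwistedTreePoset{\TreeT}$ are exactly the quasi-maximal nodes of $\TreeT$; and second, that for each such node $u$ the subposet of $\TwistedTreePoset{\TreeT}$ induced on $\InternalNodes(\TreeT) \setminus \{u\}$ is isomorphic to $\TwistedTreePoset{\ContractNode{\TreeT}{u}}$.

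The first claim is a direct unfolding of the definition of $\Leq'$: a node $v$ fails to be maximal precisely when some $vj$ with $j \geq 2$ is internal (an internal node lies below $v$ through a non-first child) or when the address of $v$ contains the letter $1$ (so $v$ sits in the first subtree of one of its ancestors), and the negation of both conditions is exactly the definition of a quasi-maximal node. The second claim is where the main work lies and is the step I expect to be the chief obstacle. Writing a quasi-maximal $u$ as $\TreeT = \TreeS \circ^u \Par{\GenA \circ_1 \TreeS'}$, so that $\ContractNode{\TreeT}{u} = \TreeS \circ^u \TreeS'$, I would use the address-relabeling bijection fixing every internal node of $\TreeS$ and sending each internal node $u1w$ of the first subtree $\TreeS'$ to $uw$. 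The delicate point is that contraction shifts the addresses of the nodes of $\TreeS'$, so one must check that $\Leq'$ is preserved in every case; the verification splits according to whether the two compared nodes both lie in $\TreeS$, both lie in $\TreeS'$, or one lies in each. Comparisons internal to $\TreeS$ or internal to $\TreeS'$ are unaffected, since $\Leq'$ depends only on the mutual ancestor and first-subtree relations, which are invariant under prepending or stripping the common prefix $u1$; the mixed comparisons use crucially that $u$, being quasi-maximal, has no $1$ in its address, so that every node of $\TreeS'$ relates to any ancestor $v$ of $u$ as a non-first descendant both before and after contraction, preserving the comparison.
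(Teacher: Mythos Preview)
Your proposal is correct and follows essentially the same approach as the paper: both reduce via the functional equation and Proposition~\ref{prop:twisted_prefix_tree_graph_adjoint_direct} to the identity $\TwistedHook{\TreeT} = \sum_{u \in \InternalNodesQuasiMax(\TreeT)} \TwistedHook{\ContractNode{\TreeT}{u}}$, and then establish it via Lemma~\ref{lem:number_linear_extensions_twisted_posets} by observing that every linear extension of $\TwistedTreePoset{\TreeT}$ ends at a quasi-maximal node and that deleting this node yields $\TwistedTreePoset{\ContractNode{\TreeT}{u}}$. The paper states this last decomposition in a single sentence, whereas you spell out the two underlying claims (maximal in $\Leq'$ equals quasi-maximal, and the induced subposet on $\InternalNodes(\TreeT)\setminus\{u\}$ is isomorphic to $\TwistedTreePoset{\ContractNode{\TreeT}{u}}$) and verify them carefully; this added rigor is welcome but does not constitute a different route.
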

\begin{proof}
    Let us proceed by induction on the degree of the $\GeneratingSet$-trees. By definition
    of $\HookSeries{\Vp}$, $\Angle{\Leaf, \HookSeries{\Vp}} = 1$, and since
    $\TwistedHook{\Leaf} = 1$, the property holds. Now, let $\TreeT$ be a
    $\GeneratingSet$-tree of degree $d \geq 1$. We have, by definition of
    $\HookSeries{\Vp}$, by Proposition~\ref{prop:twisted_prefix_tree_graph_adjoint_direct},
    and by induction hypothesis,
    \begin{equation} \begin{split}
        \Angle{\TreeT, \HookSeries{\Vp}}
        & =
        \Angle{\Vp^\Dual(\TreeT), \HookSeries{\Vp}} \\
        & =
        \Angle{
            \sum_{u \in \InternalNodesQuasiMax(\TreeT)} \ContractNode{\TreeT}{u},
            \HookSeries{\Vp}}
        \\
        & =
        \sum_{u \in \InternalNodesQuasiMax(\TreeT)}
        \Angle{\ContractNode{\TreeT}{u}, \HookSeries{\Vp}}
        \\
        & =
        \sum_{u \in \InternalNodesQuasiMax(\TreeT)} \TwistedHook{\ContractNode{\TreeT}{u}}.
    \end{split} \end{equation}
    From the definition of $\TwistedTreePoset{\TreeT}$, it follows that any permutation
    encoding a linear extension of $\TwistedTreePoset{\TreeT}$ writes as
    \begin{equation}
        \Par{v^{(1)}, \dots, v^{(d - 1)}, u}
    \end{equation}
    where the permutation $\Par{v^{(1)}, \dots, v^{(d - 1)}}$ encodes a linear extension of
    $\TwistedTreePoset{\ContractNode{\TreeT}{u}}$ and $u$ is a quasi-maximal node of
    $\TreeT$.  This leads, by using Lemma~\ref{lem:number_linear_extensions_twisted_posets},
    to $\Angle{\TreeT, \HookSeries{\Vp}} = \TwistedHook{\TreeT}$.
\end{proof}
\medbreak

Here are the sequences of the first coefficients of some initial paths series of
$\Par{\SyntaxTreesInternalNode(\GeneratingSet), \Vp}$:
\begin{subequations}
\begin{equation}
    1, 1, 2, 5, 14, 42, 132, 429,
    \qquad \mbox{ for } \GeneratingSet = \{\GenA\} \mbox{ with } |\GenA| = 2,
\end{equation}
\begin{equation}
    1, 1, 3, 13, 71, 465, 3563, 31429,
    \qquad \mbox{ for } \GeneratingSet = \{\GenC\} \mbox{ with } |\GenC| = 3,
\end{equation}
\begin{equation}
    1, 2, 8, 40, 224, 1344, 8448, 54912,
    \qquad \mbox{ for } \GeneratingSet = \{\GenA, \GenB\} \mbox{ with }
    |\GenA| = |\GenB| = 2,
\end{equation}
\begin{equation}
    1, 2, 10, 70, 606, 6210, 73842, 1006318,
    \qquad \mbox{ for } \GeneratingSet = \{\GenA, \GenC\} \mbox{ with } |\GenA| = 2,
    |\GenC| = 3.
\end{equation}
\end{subequations}
The first and third ones are respectively Sequences~\OEIS{A000108} and~\OEIS{A151374}
of~\cite{Slo}. The two other ones do not appear for the time being in~\cite{Slo}.
\medbreak

\subsection{Diagonal duality}
We prove here that the pair of graded graphs consisting in the $\GeneratingSet$-prefix graph
and the $\GeneratingSet$-twisted prefix graph is $\phi$-diagonal dual. The description of
the map $\phi$ requires the use of a particular statistics on $\GeneratingSet$-trees
introduced here.
\medbreak

\subsubsection{Non-first leaves statistics}
Let $\GeneratingSet$ be an alphabet and $\TreeT$ be a $\GeneratingSet$-tree.  A leaf $u$ of
$\TreeT$ is \Def{non-first} if $u$ admits no occurrence of the letter $1$.  In other words,
the path connecting the root of $\TreeT$ with $u$ does never go through a first edge of an
internal node. We denote by $\LeavesNonFirst(\TreeT)$ the set of all non-first leaves of
$\TreeT$. For instance, by considering the same alphabet $\GeneratingSet$ as in the previous
examples, here is a $\GeneratingSet$-tree wherein its non-first leaves are framed:
\begin{equation}
    \begin{tikzpicture}[Centering,xscale=0.30,yscale=0.12]
        \node(0)at(0.00,-9.00){};
        \node(11)at(7.00,-13.50){};
        \node(13)at(9.00,-13.50){};
        \node(14)at(10.00,-13.50){};
        \node(16)at(11.00,-13.50){};
        \node(17)at(12.00,-13.50){};
        \node(2)at(2.00,-9.00){};
        \node(5)at(3.00,-13.50){};
        \node(7)at(4.00,-9.00){};
        \node(8)at(5.00,-9.00){};
        \node(9)at(6.00,-9.00){};
        \node[NodeST](1)at(1.00,-4.50){$\GenA$};
        \node[NodeST](10)at(8.00,-4.50){$\GenC$};
        \node[NodeST](12)at(8.00,-9.00){$\GenA$};
        \node[NodeST](15)at(11.00,-9.00){$\GenC$};
        \node[NodeST](3)at(4.00,0.00){$\GenC$};
        \node[NodeST](4)at(3.00,-9.00){$\GenE$};
        \node[NodeST](6)at(4.00,-4.50){$\GenC$};
        \draw[Edge](0)--(1);
        \draw[Edge](1)--(3);
        \draw[Edge](10)--(3);
        \draw[Edge](11)--(12);
        \draw[Edge](12)--(10);
        \draw[Edge](13)--(12);
        \draw[Edge](14)--(15);
        \draw[Edge](15)--(10);
        \draw[Edge](16)--(15);
        \draw[Edge](17)--(15);
        \draw[Edge](2)--(1);
        \draw[Edge](4)--(6);
        \draw[Edge](5)--(4);
        \draw[Edge](6)--(3);
        \draw[Edge](7)--(6);
        \draw[Edge](8)--(6);
        \draw[Edge](9)--(10);
        \node(r)at(4.00,3.38){};
        \draw[Edge](r)--(3);
        \node[fit=(7),Box,inner sep=0pt,opacity=.4]{};
        \node[fit=(8),Box,inner sep=0pt,opacity=.4]{};
        \node[fit=(13),Box,inner sep=0pt,opacity=.4]{};
        \node[fit=(16),Box,inner sep=0pt,opacity=.4]{};
        \node[fit=(17),Box,inner sep=0pt,opacity=.4]{};
    \end{tikzpicture}.
\end{equation}
Moreover, let us define the \Def{non-first leaves} statistics $\TreeT \mapsto
\NonFirstLeaves{\TreeT}$ by setting $\NonFirstLeaves{\TreeT} = \# \LeavesNonFirst(\TreeT)$.
Immediately from the definition of non-first leaves, this statistics on
$\SyntaxTrees(\GeneratingSet)$ satisfies the recurrence
\begin{subequations}
\begin{equation}
    \NonFirstLeaves{\Leaf} = 1,
\end{equation}
\begin{equation}
    \NonFirstLeaves{\GenA \BPar{\TreeS, \Leaf, \dots, \Leaf}}  = |\GenA| - 1,
\end{equation}
\begin{equation} \label{equ:non_first_leaves_reccurrence_3}
    \NonFirstLeaves{\GenA \BPar{\TreeS_1, \dots, \TreeS_{|\GenA|}}}
    = \sum_{j \in [2, |\GenA|]} \NonFirstLeaves{\TreeS_j},
\end{equation}
\end{subequations}
for any $\GenA \in \GeneratingSet$ and any $\GeneratingSet$-trees $\TreeS$, $\TreeS_1$,
\dots, $\TreeS_{|\GenA|}$ such that there is at least a $j \in [2, |\GenA|]$
satisfying~$\TreeS_j \ne \Leaf$.
\medbreak

\subsubsection{Diagonal duality}

\begin{Theorem} \label{thm:prefix_tree_graphs_dual}
    For any finite alphabet $\GeneratingSet$, the pair of graded graphs
    $(\SyntaxTreesInternalNode(\GeneratingSet), \Up, \Vp)$ is $\phi$-diagonal dual for the
    linear map
    \begin{math}
        \phi : \K \Angle{\SyntaxTreesInternalNode(\GeneratingSet)}
        \to \K \Angle{\SyntaxTreesInternalNode(\GeneratingSet)}
    \end{math}
    satisfying
    \begin{equation}
        \phi(\TreeT) = \Par{\# \GeneratingSet} \NonFirstLeaves{\TreeT} \; \TreeT
    \end{equation}
    for any $\GeneratingSet$-tree $\TreeT$.
\end{Theorem}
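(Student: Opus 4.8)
The plan is to verify the operator identity $\Vp^\Dual \Up - \Up \Vp^\Dual = \phi$ by evaluating both composites on an arbitrary $\GeneratingSet$-tree $\TreeT$ and matching terms. Expanding with~\eqref{equ:definition_up_trees} and Proposition~\ref{prop:twisted_prefix_tree_graph_adjoint_direct}, $\Vp^\Dual\Up(\TreeT)$ is the sum, over all leaves $v$ of $\TreeT$, all letters $\GenA \in \GeneratingSet$, and all quasi-maximal nodes $u'$ of $\TreeT' := \TreeT \circ^v \GenA$, of the trees $\ContractNode{\TreeT'}{u'}$; symmetrically, $\Up\Vp^\Dual(\TreeT)$ sums $\ContractNode{\TreeT}{u} \circ^{v'} \GenB$ over quasi-maximal nodes $u$ of $\TreeT$, leaves $v'$ of $\ContractNode{\TreeT}{u}$, and letters $\GenB$. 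First I would determine $\InternalNodesQuasiMax(\TreeT')$ in terms of $\TreeT$: since grafting a corolla turns a leaf into an internal node, it can only destroy quasi-maximality, never create it, so $\InternalNodesQuasiMax(\TreeT')$ consists of the new node $v$ (present exactly when $v \in \LeavesNonFirst(\TreeT)$, since its address is unchanged and all its children are leaves) together with those old quasi-maximal nodes $u'$ of $\TreeT$ for which $v$ is not a non-first child of $u'$.

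This dichotomy splits $\Vp^\Dual\Up(\TreeT)$ into two parts. In the first part one contracts the freshly grafted node $u' = v$. Its first subtree is a leaf, so $\ContractNode{\TreeT'}{v} = \TreeT$, and this case occurs exactly once for each pair $(v, \GenA)$ with $v \in \LeavesNonFirst(\TreeT)$ and $\GenA \in \GeneratingSet$. Hence it contributes $\Par{\#\GeneratingSet} \NonFirstLeaves{\TreeT} \, \TreeT = \phi(\TreeT)$, which is precisely the claimed diagonal term.

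For the second part I would prove that contracting a surviving old quasi-maximal node $u'$ commutes with the grafting at $v$. Writing $\TreeT = \TreeS \circ^{u'} \Par{\GenC \BPar{\TreeS', \Leaf, \dots, \Leaf}}$ with $\TreeS'$ the first subtree of $u'$, the leaves of $\TreeT$ other than the non-first children of $u'$ are exactly the leaves of $\TreeS$ distinct from $u'$ together with the leaves of $\TreeS'$, and these are in natural bijection with the leaves of $\ContractNode{\TreeT}{u'} = \TreeS \circ^{u'} \TreeS'$. For such a leaf $v$ one checks directly, in the two cases $v \in \TreeS$ and $v \in \TreeS'$, that $\ContractNode{\TreeT \circ^v \GenA}{u'} = \ContractNode{\TreeT}{u'} \circ^{\tilde v} \GenA$, where $\tilde v$ is the image of $v$. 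This yields a bijection between the terms of the second part and all the terms of $\Up\Vp^\Dual(\TreeT)$, indexing each by the triple $u = u'$, $\GenB = \GenA$, $v' = \tilde v$, under which corresponding trees coincide. Consequently the second part equals $\Up\Vp^\Dual(\TreeT)$ and cancels in the difference, leaving $\Vp^\Dual\Up(\TreeT) - \Up\Vp^\Dual(\TreeT) = \phi(\TreeT)$.

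The main obstacle is the bookkeeping in this last bijection: one must confirm that the corolla grafted at $v$ never interferes with the contractibility of $u'$ (which is exactly why the non-first children of $u'$ must be excluded, these being precisely the leaves that disappear under the contraction and the positions $v$ for which $u'$ ceases to be quasi-maximal), and that every leaf of $\ContractNode{\TreeT}{u}$ is hit once and only once. An alternative route, avoiding the global bijection, is a structural induction on $\Deg(\TreeT)$ using the recursive formulas~\eqref{equ:twisted_prefix_tree_adjoint_recursive_1}--\eqref{equ:twisted_prefix_tree_adjoint_recursive_3} for $\Vp^\Dual$, the defining sum for $\Up$, and the recurrence~\eqref{equ:non_first_leaves_reccurrence_3} for $\NonFirstLeaves{-}$, distributing $\Up$ and $\Vp^\Dual$ over the root decomposition $\TreeT = \GenA\BPar{\TreeS_1, \dots, \TreeS_{|\GenA|}}$; I expect the term-matching argument to be cleaner to present.
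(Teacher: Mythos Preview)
Your argument is correct. The dichotomy on $\InternalNodesQuasiMax(\TreeT')$ is accurate, the identification of the diagonal contribution with $\phi(\TreeT)$ is right, and the bijection between the surviving triples $(u', v, \GenA)$ and the triples $(u, v', \GenB)$ indexing $\Up\Vp^\Dual(\TreeT)$ is well-defined: the leaves of $\TreeT$ excluded (the non-first children of $u'$) are exactly those that vanish under contraction, and the leaf count matches since contracting $u'$ removes $|\GenC|-1$ leaves.

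However, the paper does \emph{not} take this route; it follows precisely the alternative you sketch at the end. The paper's proof is a structural induction on $\TreeT$ using the recursive definition~\eqref{equ:twisted_prefix_tree_adjoint_recursive_1}--\eqref{equ:twisted_prefix_tree_adjoint_recursive_3} of $\Vp^\Dual$, splitting into the three cases $\TreeT = \Leaf$, $\TreeT = \GenA\BPar{\TreeS_1, \Leaf, \dots, \Leaf}$, and $\TreeT = \GenA\BPar{\TreeS_1, \dots, \TreeS_{|\GenA|}}$ with some $\TreeS_j \ne \Leaf$ for $j \geq 2$, and reducing the last case to the induction hypothesis on the $\TreeS_j$ via the recurrence~\eqref{equ:non_first_leaves_reccurrence_3}. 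Your bijective argument is more transparent about \emph{why} the diagonal coefficient is $(\#\GeneratingSet)\,\NonFirstLeaves{\TreeT}$ --- it exhibits the non-first leaves and letters directly as the source of the excess --- and it leverages Proposition~\ref{prop:twisted_prefix_tree_graph_adjoint_direct} in an essential way, whereas the paper's induction is more mechanical and avoids the address-tracking needed for the commutation $\ContractNode{\TreeT \circ^v \GenA}{u'} = \ContractNode{\TreeT}{u'} \circ^{\tilde v} \GenA$.
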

\begin{proof}
    Let us consider the $\GeneratingSet$-tree polynomial
    \begin{equation}
        f(\TreeT)
        :=
        \Par{\Vp^\Dual \Up - \Up \Vp^\Dual}\Par{\TreeT}
        =
        \sum_{\substack{
            \GenB \in \GeneratingSet \\
            i \geq 0
        }}
        \Vp^\Dual\Par{\TreeT \circ_i \GenB} - \Vp^\Dual(\TreeT) \circ_i \GenB.
    \end{equation}
    Notice that we use here the convention exposed in Section~\ref{subsubsec:linear_operads}
    about extension by linearity of the composition maps of operads in order to write the
    sum~\eqref{equ:definition_up_trees} without bounding $i$.  Nevertheless, this sum is
    finite.  We proceed by structural induction on $\GeneratingSet$-trees to show that
    $f(\TreeT) = \phi(\TreeT)$.
    \smallbreak

    We have to consider three cases following $\TreeT$.  First, when $\TreeT = \Leaf$, we
    immediately have $f(\TreeT) = \Par{\# \GeneratingSet} \Leaf$. Since
    $\NonFirstLeaves{\Leaf} = 1$, the property is satisfied.  Second, when $\TreeT$ is of
    the form $\TreeT = \GenA \BPar{\TreeS_1, \Leaf, \dots, \Leaf}$ for an $\GenA \in
    \GeneratingSet$ and a $\GeneratingSet$-tree $\TreeS_1$, we obtain
    \begin{equation} \begin{split}
        f(\TreeT) & =
        \sum_{\substack{
            \GenB \in \GeneratingSet \\
            i \geq 0
        }}
        \Vp^\Dual\Par{\GenA \BPar{\TreeS_1, \Leaf, \dots, \Leaf}
            \circ_i \GenB}
        - \TreeS_1 \circ_i \GenB \\
        & =
        \Par{
        \sum_{\substack{
            \GenB \in \GeneratingSet \\
            i \geq 0
        }}
        \Vp^\Dual\Par{\GenA \BPar{\TreeS_1 \circ_i \GenB, \Leaf, \dots, \Leaf}}
        - \TreeS_1 \circ_i \GenB}
        +
        \Par{
        \sum_{\GenB \in \GeneratingSet} \sum_{j \in [2, |\GenA|]}
        \Vp^\Dual\Par{\GenA \BPar{\TreeS_1,
        \underbrace{\Leaf, \dots, \Leaf}_{j - 2}, \GenB,
        \Leaf, \dots, \Leaf}}} \\
        & =
        \Par{
        \sum_{\substack{
            \GenB \in \GeneratingSet \\
            i \geq 0
        }}
        \TreeS_1 \circ_i \GenB - \TreeS_1 \circ_i \GenB}
        +
        \Par{
        \sum_{\GenB \in \GeneratingSet} \sum_{j \in [2, |\GenA|]}
        \GenA \BPar{\TreeS_1,
        \underbrace{\Leaf, \dots, \Leaf}_{j - 2}, \Vp^\Dual(\GenB),
        \Leaf, \dots, \Leaf}}\\
        & =
        \sum_{\GenB \in \GeneratingSet}
        \sum_{j \in [2, |\GenA|]}
        \GenA \BPar{\TreeS_1, \Leaf, \dots, \Leaf} \\
        & =  \Par{\# \GeneratingSet} (|\GenA| - 1) \TreeT.
    \end{split} \end{equation}
    Since $\TreeT$ has the considered form, $\NonFirstLeaves{\TreeT} = |\GenA| - 1$. Hence,
    $f(\TreeT) =\Par{\# \GeneratingSet} \NonFirstLeaves{\TreeT} \; \TreeT$, so that the
    property is satisfied. Finally, it remains to consider the case where $\TreeT$ is of the
    form
    \begin{math}
        \TreeT = \GenA \BPar{\TreeS_1, \dots, \TreeS_{|\GenA|}}
    \end{math}
    for an $\GenA \in \GeneratingSet$ and for $\GeneratingSet$-trees $\TreeS_1$, \dots,
    $\TreeS_{|\GenA|}$ where there is at least a $j \in [2, |\GenA|]$ such that
    $\TreeS_j \ne \Leaf$. In this case, we obtain
    \begin{equation}\begin{split}
    \label{equ:prefix_tree_graphs_dual_1}
        f(\TreeT) & =
        \sum_{\substack{
            \GenB \in \GeneratingSet \\
            i \geq 0
        }}
        \sum_{j \in [2, |\GenA|]}
        \GenA \BPar{\TreeS_1 \circ_i \GenB, \dots,
            \TreeS_{j - 1}, \Vp^\Dual\Par{\TreeS_j}, \TreeS_{j + 1},
            \dots, \TreeS_{|\GenA|}}
        + \cdots \\
        & \quad +
        \GenA \BPar{\TreeS_1, \dots,
            \TreeS_{j - 1}, \Vp^\Dual\Par{\TreeS_j \circ_i \GenB},
            \TreeS_{j + 1}, \dots, \TreeS_{|\GenA|}} + \cdots
        +
        \GenA \BPar{\TreeS_1, \dots,
            \TreeS_{j - 1}, \Vp^\Dual\Par{\TreeS_j},
            \TreeS_{j + 1}, \dots, \TreeS_{|\GenA|} \circ_i \GenB} \\
        & \quad -
        \GenA \BPar{\TreeS_1 \circ_i \GenB, \dots,
            \TreeS_{j - 1}, \Vp^\Dual\Par{\TreeS_j}, \TreeS_{j + 1},
            \dots, \TreeS_{|\GenA|}}
        - \cdots \\
        & \quad -
        \GenA \BPar{\TreeS_1, \dots,
            \TreeS_{j - 1}, \Vp^\Dual\Par{\TreeS_j} \circ_i \GenB,
            \TreeS_{j + 1}, \dots, \TreeS_{|\GenA|}}
        - \cdots
        -
        \GenA \BPar{\TreeS_1, \dots,
            \TreeS_{j - 1}, \Vp^\Dual\Par{\TreeS_j},
            \TreeS_{j + 1}, \dots, \TreeS_{|\GenA|} \circ_i \GenB} \\
        & =
        \sum_{\substack{
            \GenB \in \GeneratingSet \\
            i \geq 0
        }}
        \sum_{j \in [2, |\GenA|]}
        \GenA \BPar{\TreeS_1, \dots,
            \TreeS_{j - 1}, \Vp^\Dual\Par{\TreeS_j \circ_i \GenB},
            \TreeS_{j + 1}, \dots, \TreeS_{|\GenA|}}
        \\
        & \quad -
        \GenA \BPar{\TreeS_1, \dots,
            \TreeS_{j - 1}, \Vp^\Dual\Par{\TreeS_j} \circ_i \GenB,
            \TreeS_{j + 1}, \dots, \TreeS_{|\GenA|}} \\
        & =
        \sum_{\substack{
            \GenB \in \GeneratingSet \\
            i \geq 0
        }}
        \sum_{j \in [2, |\GenA|]}
        \GenA \BPar{\TreeS_1, \dots,
            \TreeS_{j - 1}, \Vp^\Dual\Par{\TreeS_j \circ_i \GenB}
            - \Vp^\Dual\Par{\TreeS_j} \circ_i \GenB,
            \TreeS_{j + 1}, \dots, \TreeS_{|\GenA|}}.
    \end{split}\end{equation}
    By induction hypothesis, we get
    \begin{equation} \label{equ:prefix_tree_graphs_dual_2}
        \sum_{\substack{
            \GenB \in \GeneratingSet \\
            i \geq 0
        }}
        \Vp^\Dual\Par{\TreeS_j \circ_i \GenB} - \Vp^\Dual\Par{\TreeS_j} \circ_i \GenB
        = \Par{\Vp^\Dual \Up - \Up \Vp^\Dual}\Par{\TreeS_j}
        = f\Par{\TreeS_j}
        = \phi\Par{\TreeS_j}.
    \end{equation}
    We now obtain from~\eqref{equ:prefix_tree_graphs_dual_1},
    \eqref{equ:prefix_tree_graphs_dual_2}, and~\eqref{equ:non_first_leaves_reccurrence_3}
    that
    \begin{equation}\begin{split}
        f(\TreeT)
        & =
        \sum_{j \in [2, |\GenA|]}
        \GenA \BPar{\TreeS_1, \dots, \TreeS_{j - 1},
            \phi\Par{\TreeS_j}, \TreeS_{j + 1},
            \dots, \TreeS_{|\GenA|}} \\
        & =
        \sum_{j \in [2, |\GenA|]}
        \Par{\# \GeneratingSet} \NonFirstLeaves{\TreeS_j} \;
        \GenA \BPar{\TreeS_1, \dots, \TreeS_{j - 1}, \TreeS_j,
            \TreeS_{j + 1}, \dots, \TreeS_{|\GenA|}} \\
        & =
        \Par{\# \GeneratingSet} \NonFirstLeaves{\TreeT} \; \TreeT.
    \end{split}\end{equation}
    Therefore,
    \begin{math}
        f(\TreeT) = \Par{\# \GeneratingSet} \NonFirstLeaves{\TreeT} \; \TreeT,
    \end{math}
    establishing the statement of theorem.
\end{proof}
\medbreak

Figure~\ref{fig:examples_dual_tree_graphs} shows an example of a pair of $\phi$-diagonal
dual graded graphs.
\begin{figure}[ht]
    \centering
    \subfloat[][The graph $(\SyntaxTreesInternalNode(\GeneratingSet), \Up)$ up to degree $2$
    and with some trees of degree~$3$.]{
    \centering
    \begin{tikzpicture}[Centering,xscale=1.2,yscale=1.6]
        \node(Unit)at(2,0){$\Leaf$};
        \node(a00)at(-.5,-.5){$\CorollaTwo{\GenA}$};
        \node(c000)at(4.5,-.5){$\CorollaThree{\GenC}$};
        \node(aa000)at(-2,-1.5){
        \begin{tikzpicture}[Centering,xscale=0.13,yscale=0.21]
            \node(0)at(0.00,-3.33){};
            \node(2)at(2.00,-3.33){};
            \node(4)at(4.00,-1.67){};
            \node[NodeST](1)at(1.00,-1.67){$\GenA$};
            \node[NodeST](3)at(3.00,0.00){$\GenA$};
            \draw[Edge](0)--(1);
            \draw[Edge](1)--(3);
            \draw[Edge](2)--(1);
            \draw[Edge](4)--(3);
            \node(r)at(3.00,1.75){};
            \draw[Edge](r)--(3);
        \end{tikzpicture}};
        \node(a0a00)at(-1,-1.5){
        \begin{tikzpicture}[Centering,xscale=0.13,yscale=0.21]
            \node(0)at(0.00,-1.67){};
            \node(2)at(2.00,-3.33){};
            \node(4)at(4.00,-3.33){};
            \node[NodeST](1)at(1.00,0.00){$\GenA$};
            \node[NodeST](3)at(3.00,-1.67){$\GenA$};
            \draw[Edge](0)--(1);
            \draw[Edge](2)--(3);
            \draw[Edge](3)--(1);
            \draw[Edge](4)--(3);
            \node(r)at(1.00,1.75){};
            \draw[Edge](r)--(1);
        \end{tikzpicture}};
        \node(ac0000)at(0,-1.5){
        \begin{tikzpicture}[Centering,xscale=0.13,yscale=0.18]
            \node(0)at(0.00,-4.00){};
            \node(2)at(1.00,-4.00){};
            \node(3)at(2.00,-4.00){};
            \node(5)at(4.00,-2.00){};
            \node[NodeST](1)at(1.00,-2.00){$\GenC$};
            \node[NodeST](4)at(3.00,0.00){$\GenA$};
            \draw[Edge](0)--(1);
            \draw[Edge](1)--(4);
            \draw[Edge](2)--(1);
            \draw[Edge](3)--(1);
            \draw[Edge](5)--(4);
            \node(r)at(3.00,2){};
            \draw[Edge](r)--(4);
        \end{tikzpicture}};
        \node(a0c000)at(1,-1.5){
        \begin{tikzpicture}[Centering,xscale=0.13,yscale=0.18]
            \node(0)at(0.00,-2.00){};
            \node(2)at(2.00,-4.00){};
            \node(4)at(3.00,-4.00){};
            \node(5)at(4.00,-4.00){};
            \node[NodeST](1)at(1.00,0.00){$\GenA$};
            \node[NodeST](3)at(3.00,-2.00){$\GenC$};
            \draw[Edge](0)--(1);
            \draw[Edge](2)--(3);
            \draw[Edge](3)--(1);
            \draw[Edge](4)--(3);
            \draw[Edge](5)--(3);
            \node(r)at(1.00,2){};
            \draw[Edge](r)--(1);
        \end{tikzpicture}};
        \node(ca0000)at(2,-1.5){
        \begin{tikzpicture}[Centering,xscale=0.13,yscale=0.18]
            \node(0)at(0.00,-4.00){};
            \node(2)at(2.00,-4.00){};
            \node(4)at(3.00,-2.00){};
            \node(5)at(4.00,-2.00){};
            \node[NodeST](1)at(1.00,-2.00){$\GenA$};
            \node[NodeST](3)at(3.00,0.00){$\GenC$};
            \draw[Edge](0)--(1);
            \draw[Edge](1)--(3);
            \draw[Edge](2)--(1);
            \draw[Edge](4)--(3);
            \draw[Edge](5)--(3);
            \node(r)at(3.00,2){};
            \draw[Edge](r)--(3);
        \end{tikzpicture}};
        \node(c0a000)at(3,-1.5){
        \begin{tikzpicture}[Centering,xscale=0.13,yscale=0.18]
            \node(0)at(0.00,-2.00){};
            \node(2)at(1.00,-4.00){};
            \node(4)at(3.00,-4.00){};
            \node(5)at(4.00,-2.00){};
            \node[NodeST](1)at(2.00,0.00){$\GenC$};
            \node[NodeST](3)at(2.00,-2.00){$\GenA$};
            \draw[Edge](0)--(1);
            \draw[Edge](2)--(3);
            \draw[Edge](3)--(1);
            \draw[Edge](4)--(3);
            \draw[Edge](5)--(1);
            \node(r)at(2.00,2){};
            \draw[Edge](r)--(1);
        \end{tikzpicture}};
        \node(c00a00)at(4,-1.5){
        \begin{tikzpicture}[Centering,xscale=0.13,yscale=0.18]
            \node(0)at(0.00,-2.00){};
            \node(2)at(1.00,-2.00){};
            \node(3)at(2.00,-4.00){};
            \node(5)at(4.00,-4.00){};
            \node[NodeST](1)at(1.00,0.00){$\GenC$};
            \node[NodeST](4)at(3.00,-2.00){$\GenA$};
            \draw[Edge](0)--(1);
            \draw[Edge](2)--(1);
            \draw[Edge](3)--(4);
            \draw[Edge](4)--(1);
            \draw[Edge](5)--(4);
            \node(r)at(1.00,2){};
            \draw[Edge](r)--(1);
        \end{tikzpicture}};
        \node(cc00000)at(5,-1.5){
        \begin{tikzpicture}[Centering,xscale=0.13,yscale=0.16]
            \node(0)at(0.00,-4.67){};
            \node(2)at(1.00,-4.67){};
            \node(3)at(2.00,-4.67){};
            \node(5)at(3.00,-2.33){};
            \node(6)at(4.00,-2.33){};
            \node[NodeST](1)at(1.00,-2.33){$\GenC$};
            \node[NodeST](4)at(3.00,0.00){$\GenC$};
            \draw[Edge](0)--(1);
            \draw[Edge](1)--(4);
            \draw[Edge](2)--(1);
            \draw[Edge](3)--(1);
            \draw[Edge](5)--(4);
            \draw[Edge](6)--(4);
            \node(r)at(3.00,2.25){};
            \draw[Edge](r)--(4);
        \end{tikzpicture}};
        \node(c0c0000)at(6,-1.5){
        \begin{tikzpicture}[Centering,xscale=0.13,yscale=0.16]
            \node(0)at(0.00,-2.33){};
            \node(2)at(1.00,-4.67){};
            \node(4)at(2.00,-4.67){};
            \node(5)at(3.00,-4.67){};
            \node(6)at(4.00,-2.33){};
            \node[NodeST](1)at(2.00,0.00){$\GenC$};
            \node[NodeST](3)at(2.00,-2.33){$\GenC$};
            \draw[Edge](0)--(1);
            \draw[Edge](2)--(3);
            \draw[Edge](3)--(1);
            \draw[Edge](4)--(3);
            \draw[Edge](5)--(3);
            \draw[Edge](6)--(1);
            \node(r)at(2.00,2.25){};
            \draw[Edge](r)--(1);
        \end{tikzpicture}};
        \node(c00c000)at(7,-1.5){
        \begin{tikzpicture}[Centering,xscale=0.13,yscale=0.16]
            \node(0)at(0.00,-2.33){};
            \node(2)at(1.00,-2.33){};
            \node(3)at(2.00,-4.67){};
            \node(5)at(3.00,-4.67){};
            \node(6)at(4.00,-4.67){};
            \node[NodeST](1)at(1.00,0.00){$\GenC$};
            \node[NodeST](4)at(3.00,-2.33){$\GenC$};
            \draw[Edge](0)--(1);
            \draw[Edge](2)--(1);
            \draw[Edge](3)--(4);
            \draw[Edge](4)--(1);
            \draw[Edge](5)--(4);
            \draw[Edge](6)--(4);
            \node(r)at(1.00,2.25){};
            \draw[Edge](r)--(1);
        \end{tikzpicture}};
        \node(c0a00a00)at(3.5,-2.75){
        \begin{tikzpicture}[Centering,xscale=0.16,yscale=0.14]
            \node(0)at(0.00,-2.67){};
            \node(2)at(1.00,-5.33){};
            \node(4)at(3.00,-5.33){};
            \node(5)at(4.00,-5.33){};
            \node(7)at(6.00,-5.33){};
            \node[NodeST](1)at(2.00,0.00){$\GenC$};
            \node[NodeST](3)at(2.00,-2.67){$\GenA$};
            \node[NodeST](6)at(5.00,-2.67){$\GenA$};
            \draw[Edge](0)--(1);
            \draw[Edge](2)--(3);
            \draw[Edge](3)--(1);
            \draw[Edge](4)--(3);
            \draw[Edge](5)--(6);
            \draw[Edge](6)--(1);
            \draw[Edge](7)--(6);
            \node(r)at(2.00,2.5){};
            \draw[Edge](r)--(1);
        \end{tikzpicture}};
        \node(cc0000c000)at(6,-2.75){
        \begin{tikzpicture}[Centering,xscale=0.16,yscale=0.12]
            \node(0)at(0.00,-6.67){};
            \node(2)at(1.00,-6.67){};
            \node(3)at(2.00,-6.67){};
            \node(5)at(3.00,-3.33){};
            \node(6)at(4.00,-6.67){};
            \node(8)at(5.00,-6.67){};
            \node(9)at(6.00,-6.67){};
            \node[NodeST](1)at(1.00,-3.33){$\GenC$};
            \node[NodeST](4)at(3.00,0.00){$\GenC$};
            \node[NodeST](7)at(5.00,-3.33){$\GenC$};
            \draw[Edge](0)--(1);
            \draw[Edge](1)--(4);
            \draw[Edge](2)--(1);
            \draw[Edge](3)--(1);
            \draw[Edge](5)--(4);
            \draw[Edge](6)--(7);
            \draw[Edge](7)--(4);
            \draw[Edge](8)--(7);
            \draw[Edge](9)--(7);
            \node(r)at(3.00,3){};
            \draw[Edge](r)--(4);
        \end{tikzpicture}};
        \draw[EdgeGraph](Unit)--(a00);
        \draw[EdgeGraph](Unit)--(c000);
        \draw[EdgeGraph](a00)--(aa000);
        \draw[EdgeGraph](a00)--(a0a00);
        \draw[EdgeGraph](a00)--(ac0000);
        \draw[EdgeGraph](a00)--(a0c000);
        \draw[EdgeGraph](c000)--(ca0000);
        \draw[EdgeGraph](c000)--(c0a000);
        \draw[EdgeGraph](c000)--(c00a00);
        \draw[EdgeGraph](c000)--(cc00000);
        \draw[EdgeGraph](c000)--(c0c0000);
        \draw[EdgeGraph](c000)--(c00c000);
        \draw[EdgeGraph](c0a000)--(c0a00a00);
        \draw[EdgeGraph](c00a00)--(c0a00a00);
        \draw[EdgeGraph](cc00000)--(cc0000c000);
        \draw[EdgeGraph](c00c000)--(cc0000c000);
    \end{tikzpicture}
    \label{subfig:tree_graph_a2c3_U}}
    \medbreak
    \subfloat[][The graph $(\SyntaxTreesInternalNode(\GeneratingSet), \Vp)$ up to degree $2$
    and with some trees of degree $3$.]{
    \centering
    \begin{tikzpicture}[Centering,xscale=1.2,yscale=1.6]
        \node(Unit)at(2,0){$\Leaf$};
        \node(a00)at(-.5,-.5){$\CorollaTwo{\GenA}$};
        \node(c000)at(4.5,-.5){$\CorollaThree{\GenC}$};
        \node(aa000)at(-2,-1.5){
        \begin{tikzpicture}[Centering,xscale=0.13,yscale=0.21]
            \node(0)at(0.00,-3.33){};
            \node(2)at(2.00,-3.33){};
            \node(4)at(4.00,-1.67){};
            \node[NodeST](1)at(1.00,-1.67){$\GenA$};
            \node[NodeST](3)at(3.00,0.00){$\GenA$};
            \draw[Edge](0)--(1);
            \draw[Edge](1)--(3);
            \draw[Edge](2)--(1);
            \draw[Edge](4)--(3);
            \node(r)at(3.00,1.75){};
            \draw[Edge](r)--(3);
        \end{tikzpicture}};
        \node(a0a00)at(-1,-1.5){
        \begin{tikzpicture}[Centering,xscale=0.13,yscale=0.21]
            \node(0)at(0.00,-1.67){};
            \node(2)at(2.00,-3.33){};
            \node(4)at(4.00,-3.33){};
            \node[NodeST](1)at(1.00,0.00){$\GenA$};
            \node[NodeST](3)at(3.00,-1.67){$\GenA$};
            \draw[Edge](0)--(1);
            \draw[Edge](2)--(3);
            \draw[Edge](3)--(1);
            \draw[Edge](4)--(3);
            \node(r)at(1.00,1.75){};
            \draw[Edge](r)--(1);
        \end{tikzpicture}};
        \node(ac0000)at(0,-1.5){
        \begin{tikzpicture}[Centering,xscale=0.13,yscale=0.18]
            \node(0)at(0.00,-4.00){};
            \node(2)at(1.00,-4.00){};
            \node(3)at(2.00,-4.00){};
            \node(5)at(4.00,-2.00){};
            \node[NodeST](1)at(1.00,-2.00){$\GenC$};
            \node[NodeST](4)at(3.00,0.00){$\GenA$};
            \draw[Edge](0)--(1);
            \draw[Edge](1)--(4);
            \draw[Edge](2)--(1);
            \draw[Edge](3)--(1);
            \draw[Edge](5)--(4);
            \node(r)at(3.00,2){};
            \draw[Edge](r)--(4);
        \end{tikzpicture}};
        \node(a0c000)at(1,-1.5){
        \begin{tikzpicture}[Centering,xscale=0.13,yscale=0.18]
            \node(0)at(0.00,-2.00){};
            \node(2)at(2.00,-4.00){};
            \node(4)at(3.00,-4.00){};
            \node(5)at(4.00,-4.00){};
            \node[NodeST](1)at(1.00,0.00){$\GenA$};
            \node[NodeST](3)at(3.00,-2.00){$\GenC$};
            \draw[Edge](0)--(1);
            \draw[Edge](2)--(3);
            \draw[Edge](3)--(1);
            \draw[Edge](4)--(3);
            \draw[Edge](5)--(3);
            \node(r)at(1.00,2){};
            \draw[Edge](r)--(1);
        \end{tikzpicture}};
        \node(ca0000)at(2,-1.5){
        \begin{tikzpicture}[Centering,xscale=0.13,yscale=0.18]
            \node(0)at(0.00,-4.00){};
            \node(2)at(2.00,-4.00){};
            \node(4)at(3.00,-2.00){};
            \node(5)at(4.00,-2.00){};
            \node[NodeST](1)at(1.00,-2.00){$\GenA$};
            \node[NodeST](3)at(3.00,0.00){$\GenC$};
            \draw[Edge](0)--(1);
            \draw[Edge](1)--(3);
            \draw[Edge](2)--(1);
            \draw[Edge](4)--(3);
            \draw[Edge](5)--(3);
            \node(r)at(3.00,2){};
            \draw[Edge](r)--(3);
        \end{tikzpicture}};
        \node(c0a000)at(3,-1.5){
        \begin{tikzpicture}[Centering,xscale=0.13,yscale=0.18]
            \node(0)at(0.00,-2.00){};
            \node(2)at(1.00,-4.00){};
            \node(4)at(3.00,-4.00){};
            \node(5)at(4.00,-2.00){};
            \node[NodeST](1)at(2.00,0.00){$\GenC$};
            \node[NodeST](3)at(2.00,-2.00){$\GenA$};
            \draw[Edge](0)--(1);
            \draw[Edge](2)--(3);
            \draw[Edge](3)--(1);
            \draw[Edge](4)--(3);
            \draw[Edge](5)--(1);
            \node(r)at(2.00,2){};
            \draw[Edge](r)--(1);
        \end{tikzpicture}};
        \node(c00a00)at(4,-1.5){
        \begin{tikzpicture}[Centering,xscale=0.13,yscale=0.18]
            \node(0)at(0.00,-2.00){};
            \node(2)at(1.00,-2.00){};
            \node(3)at(2.00,-4.00){};
            \node(5)at(4.00,-4.00){};
            \node[NodeST](1)at(1.00,0.00){$\GenC$};
            \node[NodeST](4)at(3.00,-2.00){$\GenA$};
            \draw[Edge](0)--(1);
            \draw[Edge](2)--(1);
            \draw[Edge](3)--(4);
            \draw[Edge](4)--(1);
            \draw[Edge](5)--(4);
            \node(r)at(1.00,2){};
            \draw[Edge](r)--(1);
        \end{tikzpicture}};
        \node(cc00000)at(5,-1.5){
        \begin{tikzpicture}[Centering,xscale=0.13,yscale=0.16]
            \node(0)at(0.00,-4.67){};
            \node(2)at(1.00,-4.67){};
            \node(3)at(2.00,-4.67){};
            \node(5)at(3.00,-2.33){};
            \node(6)at(4.00,-2.33){};
            \node[NodeST](1)at(1.00,-2.33){$\GenC$};
            \node[NodeST](4)at(3.00,0.00){$\GenC$};
            \draw[Edge](0)--(1);
            \draw[Edge](1)--(4);
            \draw[Edge](2)--(1);
            \draw[Edge](3)--(1);
            \draw[Edge](5)--(4);
            \draw[Edge](6)--(4);
            \node(r)at(3.00,2.25){};
            \draw[Edge](r)--(4);
        \end{tikzpicture}};
        \node(c0c0000)at(6,-1.5){
        \begin{tikzpicture}[Centering,xscale=0.13,yscale=0.16]
            \node(0)at(0.00,-2.33){};
            \node(2)at(1.00,-4.67){};
            \node(4)at(2.00,-4.67){};
            \node(5)at(3.00,-4.67){};
            \node(6)at(4.00,-2.33){};
            \node[NodeST](1)at(2.00,0.00){$\GenC$};
            \node[NodeST](3)at(2.00,-2.33){$\GenC$};
            \draw[Edge](0)--(1);
            \draw[Edge](2)--(3);
            \draw[Edge](3)--(1);
            \draw[Edge](4)--(3);
            \draw[Edge](5)--(3);
            \draw[Edge](6)--(1);
            \node(r)at(2.00,2.25){};
            \draw[Edge](r)--(1);
        \end{tikzpicture}};
        \node(c00c000)at(7,-1.5){
        \begin{tikzpicture}[Centering,xscale=0.13,yscale=0.16]
            \node(0)at(0.00,-2.33){};
            \node(2)at(1.00,-2.33){};
            \node(3)at(2.00,-4.67){};
            \node(5)at(3.00,-4.67){};
            \node(6)at(4.00,-4.67){};
            \node[NodeST](1)at(1.00,0.00){$\GenC$};
            \node[NodeST](4)at(3.00,-2.33){$\GenC$};
            \draw[Edge](0)--(1);
            \draw[Edge](2)--(1);
            \draw[Edge](3)--(4);
            \draw[Edge](4)--(1);
            \draw[Edge](5)--(4);
            \draw[Edge](6)--(4);
            \node(r)at(1.00,2.25){};
            \draw[Edge](r)--(1);
        \end{tikzpicture}};
        \node(c0a00a00)at(3.5,-2.75){
        \begin{tikzpicture}[Centering,xscale=0.16,yscale=0.14]
            \node(0)at(0.00,-2.67){};
            \node(2)at(1.00,-5.33){};
            \node(4)at(3.00,-5.33){};
            \node(5)at(4.00,-5.33){};
            \node(7)at(6.00,-5.33){};
            \node[NodeST](1)at(2.00,0.00){$\GenC$};
            \node[NodeST](3)at(2.00,-2.67){$\GenA$};
            \node[NodeST](6)at(5.00,-2.67){$\GenA$};
            \draw[Edge](0)--(1);
            \draw[Edge](2)--(3);
            \draw[Edge](3)--(1);
            \draw[Edge](4)--(3);
            \draw[Edge](5)--(6);
            \draw[Edge](6)--(1);
            \draw[Edge](7)--(6);
            \node(r)at(2.00,2.5){};
            \draw[Edge](r)--(1);
        \end{tikzpicture}};
        \node(cc0000c000)at(6,-2.75){
        \begin{tikzpicture}[Centering,xscale=0.16,yscale=0.12]
            \node(0)at(0.00,-6.67){};
            \node(2)at(1.00,-6.67){};
            \node(3)at(2.00,-6.67){};
            \node(5)at(3.00,-3.33){};
            \node(6)at(4.00,-6.67){};
            \node(8)at(5.00,-6.67){};
            \node(9)at(6.00,-6.67){};
            \node[NodeST](1)at(1.00,-3.33){$\GenC$};
            \node[NodeST](4)at(3.00,0.00){$\GenC$};
            \node[NodeST](7)at(5.00,-3.33){$\GenC$};
            \draw[Edge](0)--(1);
            \draw[Edge](1)--(4);
            \draw[Edge](2)--(1);
            \draw[Edge](3)--(1);
            \draw[Edge](5)--(4);
            \draw[Edge](6)--(7);
            \draw[Edge](7)--(4);
            \draw[Edge](8)--(7);
            \draw[Edge](9)--(7);
            \node(r)at(3.00,3){};
            \draw[Edge](r)--(4);
        \end{tikzpicture}};
        \draw[EdgeGraph](Unit)--(a00);
        \draw[EdgeGraph](Unit)--(c000);
        \draw[EdgeGraph](a00)--(aa000);
        \draw[EdgeGraph](a00)--(a0a00);
        \draw[EdgeGraph](a00)--(a0c000);
        \draw[EdgeGraph](a00)--(ca0000);
        \draw[EdgeGraph](c000)--(ac0000);
        \draw[EdgeGraph](c000)--(cc00000);
        \draw[EdgeGraph](c000)--(c0a000);
        \draw[EdgeGraph](c000)--(c00a00);
        \draw[EdgeGraph](c000)--(cc00000);
        \draw[EdgeGraph](c000)--(c0c0000);
        \draw[EdgeGraph](c000)--(c00c000);
        \draw[EdgeGraph](c0a000)--(c0a00a00);
        \draw[EdgeGraph](c00a00)--(c0a00a00);
        \draw[EdgeGraph](cc00000)--(cc0000c000);
    \end{tikzpicture}
    \label{subfig:tree_graph_a2c3_D}}
    \caption{The pair $(\SyntaxTreesInternalNode(\GeneratingSet), \Up, \Vp)$ of
    $\phi$-diagonal dual graded graphs where $\GeneratingSet := \Bra{\GenA, \GenC}$ with
    $|\GenA| = 2$ and $|\GenC| = 3$.}
    \label{fig:examples_dual_tree_graphs}
\end{figure}
\medbreak

\subsubsection{Bracket tree}
As already noticed before, when
\begin{math}
    \GeneratingSet = \GeneratingSet(2) = \Bra{\GenA},
\end{math}
the graph $\Par{\SyntaxTreesInternalNode(\GeneratingSet), \Vp}$ is a tree. Moreover,
$\Par{\SyntaxTreesInternalNode(\GeneratingSet), \Up, \Vp}$ is a pair of dual graded graphs
isomorphic to the pair consisting in the finite order ideals of the infinite binary tree and
the \Def{Bracket tree}, known from~\cite{Fom94} (see also~\cite{HNT05}). One can see
Theorem~\ref{thm:prefix_tree_graphs_dual} as a generalization of this prototypical instance
for the present case of $\GeneratingSet$-trees and $\phi$-diagonal duality.
\medbreak

\section{Posets of syntax trees} \label{sec:tree_prefix_posets}
We present here a combinatorial study of the posets of the $\GeneratingSet$-prefix graphs.
In particular we look at their lattice properties, the structure of their intervals,
enumerate the trees in a given interval, and enumerate all intervals with respect to the
degrees of theirs bounds.
\medbreak

\subsection{Posets and their intervals}
We begin by describing the order relation and covering relation of the posets of
$\GeneratingSet$-prefix graphs. We prove that any interval of these posets are distributive
lattices
\medbreak

\subsubsection{Prefix posets}
Let $\GeneratingSet$ be a finite alphabet. The \Def{$\GeneratingSet$-prefix poset} is the
poset $\Par{\SyntaxTreesInternalNode(\GeneratingSet), \OrderPrefixes}$ of
$\Par{\SyntaxTreesInternalNode(\GeneratingSet), \Up}$. Besides, for any
$\GeneratingSet$-trees $\TreeS$ and $\TreeT$, $\TreeS$ is a \Def{prefix} of $\TreeT$ if
there exist $\GeneratingSet$-trees $\TreeR_1$, \dots, $\TreeR_{|\TreeS|}$ such that
\begin{math}
    \TreeT = \TreeS \circ \Han{\TreeR_1, \dots, \TreeR_{|\TreeS|}}.
\end{math}
\medbreak

\begin{Lemma} \label{lem:prefix_recursive}
    Let $\GeneratingSet$ be an alphabet, and $\TreeS$ and $\TreeT$ be two
    $\GeneratingSet$-trees. Then, $\TreeS$ is a prefix of $\TreeT$ if and only if $\TreeS =
    \Leaf$, or the roots of $\TreeS$ and $\TreeT$ are both decorated by the same letter
    $\GenA \in \GeneratingSet$, and for all $i \in [|\GenA|]$, $\TreeS(i)$ is a prefix of
    $\TreeT(i)$.
\end{Lemma}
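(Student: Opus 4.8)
The plan is to prove the two implications of the equivalence separately, both resting on a single structural identity: that full composition distributes over the subtrees attached to the root. Concretely, if $\TreeS = \GenA \BPar{\TreeS(1), \dots, \TreeS(|\GenA|)}$ and $\TreeR_1, \dots, \TreeR_{|\TreeS|}$ are $\GeneratingSet$-trees, then writing $n_i := |\TreeS(i)|$ and grouping the $\TreeR_k$ into the consecutive blocks $B_1 := \Par{\TreeR_1, \dots, \TreeR_{n_1}}$, $B_2 := \Par{\TreeR_{n_1 + 1}, \dots, \TreeR_{n_1 + n_2}}$, and so on, one has
\begin{equation}
    \TreeS \circ \Han{\TreeR_1, \dots, \TreeR_{|\TreeS|}}
    = \GenA \BPar{\TreeS(1) \circ \Han{B_1}, \dots, \TreeS(|\GenA|) \circ \Han{B_{|\GenA|}}}.
\end{equation}
First I would establish this identity: it is exactly the associativity of full composition in an operad, obtained from the partial-composition axioms~\eqref{equ:operad_axiom_1} and~\eqref{equ:operad_axiom_2} by a routine induction via~\eqref{equ:full_composition_maps}, and in the free operad it merely records that grafting the $\TreeR_k$ onto the leaves of $\TreeS$ grafts onto the leaves of the subtrees $\TreeS(i)$ according to their leaf-indexing. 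Its immediate consequence is that the $i$-th subtree of $\TreeS \circ \Han{\TreeR_1, \dots, \TreeR_{|\TreeS|}}$ equals $\TreeS(i) \circ \Han{B_i}$, and in particular the root letter of the composite is again $\GenA$.

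\textbf{Backward implication.} Suppose first $\TreeS = \Leaf$. Since $\Leaf$ is the unit of the free operad, $\Leaf \circ \Han{\TreeT} = \TreeT$ by~\eqref{equ:operad_axiom_3}, so $\TreeS$ is a prefix of $\TreeT$ witnessed by $\TreeR_1 := \TreeT$. Otherwise the roots of $\TreeS$ and $\TreeT$ carry the same letter $\GenA$ and each $\TreeS(i)$ is a prefix of $\TreeT(i)$; write $\TreeT(i) = \TreeS(i) \circ \Han{B_i}$ for suitable tuples $B_i$. Using $\TreeT = \GenA \BPar{\TreeT(1), \dots, \TreeT(|\GenA|)}$ and the displayed identity read from right to left, with the concatenation of the $B_i$ playing the role of $\TreeR_1, \dots, \TreeR_{|\TreeS|}$, I obtain $\TreeT = \TreeS \circ \Han{\TreeR_1, \dots, \TreeR_{|\TreeS|}}$, so $\TreeS$ is a prefix of $\TreeT$.

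\textbf{Forward implication.} Assume $\TreeS$ is a prefix of $\TreeT$, say $\TreeT = \TreeS \circ \Han{\TreeR_1, \dots, \TreeR_{|\TreeS|}}$. If $\TreeS = \Leaf$ the first disjunct of the statement holds, so assume $\TreeS \neq \Leaf$ and write $\TreeS = \GenA \BPar{\TreeS(1), \dots, \TreeS(|\GenA|)}$. The displayed identity then shows that the root of $\TreeT$ is decorated by the same $\GenA$ and that $\TreeT(i) = \TreeS(i) \circ \Han{B_i}$ for each $i \in [|\GenA|]$, which is precisely the assertion that $\TreeS(i)$ is a prefix of $\TreeT(i)$. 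This closes the equivalence.

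\textbf{Main obstacle.} The only real work is the bookkeeping in the structural identity: matching the partition of $\Par{\TreeR_1, \dots, \TreeR_{|\TreeS|}}$ into the blocks $B_i$ against the leaf-indexing of the subtrees $\TreeS(i)$, and verifying that the root letter is genuinely preserved under full composition. Everything else follows directly once that identity is available; I would either cite the associativity of full composition from the operad background in~\cite{Gir18} or prove it inline by induction on $|\GenA|$ using~\eqref{equ:full_composition_maps}, \eqref{equ:operad_axiom_1}, and~\eqref{equ:operad_axiom_2}.
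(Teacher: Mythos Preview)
Your proposal is correct and follows essentially the same approach as the paper, which dismisses the lemma in one line as following ``directly from the definition of the notion of prefix just introduced.'' What you have done is spell out explicitly the structural identity (associativity of full composition at the root) that makes this immediate; this is exactly the content the paper leaves implicit, so there is no genuine difference in strategy, only in level of detail.
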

\begin{proof}
    This follows directly from the definition of the notion of prefix just introduced.
\end{proof}
\medbreak

\begin{Proposition} \label{prop:prefix_tree_poset}
    For any finite alphabet $\GeneratingSet$, the order relation $\OrderPrefixes$ of the
    $\GeneratingSet$-prefix poset satisfies $\TreeS \OrderPrefixes \TreeT$ if and only if
    the $\GeneratingSet$-tree $\TreeS$ is a prefix of the $\GeneratingSet$-tree $\TreeT$.
    Moreover, the covering relation $\Covered_\Up$ of the $\GeneratingSet$-prefix poset
    satisfies $\TreeS \Covered_\Up \TreeT$ for any $\GeneratingSet$-trees $\TreeS$ and
    $\TreeT$ if and only if there is $u \in \InternalNodesMax(\TreeT)$ such that $\TreeS =
    \DelNode{\TreeT}{u}$.
\end{Proposition}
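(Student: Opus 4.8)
The plan is to treat the two claims in turn, beginning with the covering relation since the description of $\OrderPrefixes$ will rely on it. Recall from the preliminaries that $\TreeS \Covered_\Up \TreeT$ holds precisely when $\TreeT$ appears in $\Up(\TreeS)$. By the adjunction between $\Up$ and $\Up^\Dual$ one has $\Angle{\TreeT, \Up(\TreeS)} = \Angle{\TreeS, \Up^\Dual(\TreeT)}$, so $\TreeT$ appears in $\Up(\TreeS)$ if and only if $\TreeS$ appears in $\Up^\Dual(\TreeT)$. The explicit expression~\eqref{equ:adjoint_up_trees}, namely $\Up^\Dual(\TreeT) = \sum_{u \in \InternalNodesMax(\TreeT)} \DelNode{\TreeT}{u}$, together with the fact that $\Par{\SyntaxTreesInternalNode(\GeneratingSet), \Up}$ is simple, then shows that $\TreeS$ appears in $\Up^\Dual(\TreeT)$ exactly when $\TreeS = \DelNode{\TreeT}{u}$ for some $u \in \InternalNodesMax(\TreeT)$. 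This settles the covering part.

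For the order relation, the key remark is that, the graph being graded, every edge raises the degree by exactly one, so a path from $\TreeS$ to $\TreeT$ is the same thing as a saturated chain $\TreeS = \TreeT_0 \Covered_\Up \TreeT_1 \Covered_\Up \cdots \Covered_\Up \TreeT_\ell = \TreeT$; in other words $\OrderPrefixes$ is the reflexive--transitive closure of $\Covered_\Up$. For the implication $\TreeS \OrderPrefixes \TreeT \Rightarrow$ prefix, I would first observe that a single cover $\TreeS \Covered_\Up \TreeT$ means $\TreeT = \TreeS \circ_i \GenA = \TreeS \circ \Han{\Leaf, \dots, \Corolla(\GenA), \dots, \Leaf}$ for some leaf index $i$ and letter $\GenA$, so $\TreeS$ is a prefix of $\TreeT$. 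Since being a prefix is reflexive (take all factors equal to $\Leaf$) and transitive (by associativity of the full composition, which follows from the operad axioms, or directly from Lemma~\ref{lem:prefix_recursive}), iterating along the chain gives that $\TreeS$ is a prefix of $\TreeT$ whenever $\TreeS \OrderPrefixes \TreeT$.

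For the converse, I would argue by induction on $\Deg(\TreeT) - \Deg(\TreeS)$, writing $\TreeT = \TreeS \circ \Han{\TreeR_1, \dots, \TreeR_{|\TreeS|}}$. If this difference is $0$, then every $\TreeR_j$ has degree $0$, hence equals $\Leaf$, so $\TreeT = \TreeS$ and they are related by the empty path. Otherwise some $\TreeR_i \ne \Leaf$; choosing a maximal internal node $u'$ of $\TreeR_i$ and letting $u$ be the corresponding node of $\TreeT$, the deletion gives $\DelNode{\TreeT}{u} = \TreeS \circ \Han{\TreeR_1, \dots, \DelNode{\TreeR_i}{u'}, \dots, \TreeR_{|\TreeS|}}$, which still admits $\TreeS$ as a prefix and has degree one less. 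By the induction hypothesis $\TreeS \OrderPrefixes \DelNode{\TreeT}{u}$, and by the covering part just proved $\DelNode{\TreeT}{u} \Covered_\Up \TreeT$, whence $\TreeS \OrderPrefixes \TreeT$.

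The main obstacle is the bookkeeping in this converse step: one must check that a maximal internal node chosen inside the grown factor $\TreeR_i$ really is maximal in the whole tree $\TreeT$ (grafting at a leaf of $\TreeS$ keeps the children of $u'$ as leaves) and that its deletion commutes with the prefix decomposition, altering only the factor $\TreeR_i$. This is intuitively clear but is exactly the point where the recursive description of prefixes in Lemma~\ref{lem:prefix_recursive} and the locality of the partial composition $\circ_i$ are needed to make the argument rigorous.
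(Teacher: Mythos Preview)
Your proof is correct, but the route differs from the paper's. The paper does not go through the covering relation to establish the order relation. Instead, it works directly with the definition of $\OrderPrefixes$ as the existence of a path: writing $\TreeT = \Par{\dots\Par{\TreeS \circ_{i_1} \GenA_1}\dots}\circ_{i_k}\GenA_k$, it shows by induction on $k$ that this can be rewritten as a full composition $\TreeS \circ \Han{\TreeR_1,\dots,\TreeR_{|\TreeS|}}$ (forward direction), and conversely expands each $\TreeR_i$ into partial compositions of letters and then uses the operad associativity axiom~\eqref{equ:operad_axiom_1} to rebracket everything into a left-nested chain starting from $\TreeS$ (backward direction). The covering description is then added as an immediate afterthought from~\eqref{equ:adjoint_up_trees}.

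Your approach, by contrast, pins down $\Covered_\Up$ first and then characterizes $\OrderPrefixes$ as its reflexive--transitive closure, handling the converse by an induction on $\Deg(\TreeT)-\Deg(\TreeS)$ that peels off one maximal internal node at a time. This is perfectly valid and arguably more combinatorial, but it does incur exactly the bookkeeping you flag at the end: you must verify that a maximal internal node of the grafted factor $\TreeR_i$ remains maximal in $\TreeT$ and that its deletion only affects that factor. The paper's argument sidesteps this entirely by staying at the level of operadic identities, never needing to locate or delete a specific node; the price is that its ``induction on $k$'' and the rebracketing step are stated but not written out in detail.
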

\begin{proof}
    Assume that $\TreeS \OrderPrefixes \TreeT$.  By definition of the
    $\GeneratingSet$-prefix poset, there exist an integer $k \geq 0$, letters $\GenA_1$,
    \dots, $\GenA_k$ of $\GeneratingSet$, and positive integers $i_1$, \dots, $i_k$ such
    that
    \begin{equation} \label{equ:prefix_tree_poset_1}
        \TreeT =
        \Par{\dots \Par{\Par{\TreeS \circ_{i_1} \GenA_1} \circ_{i_2}
        \GenA_2} \dots} \circ_{i_k} \GenA_k.
    \end{equation}
    It follows straightforwardly by induction on $k$ that there exist $\GeneratingSet$-trees
    $\TreeR_1$, \dots, $\TreeR_{|\TreeS|}$ such that
    \begin{math}
        \TreeT = \TreeS \circ \Han{\TreeR_1, \dots, \TreeR_{|\TreeS|}}.
    \end{math}
    Therefore, $\TreeS$ is a prefix of $\TreeT$.
    \smallbreak

    Conversely, assume that $\TreeS$ is a prefix of $\TreeT$. Hence, there exist
    $\GeneratingSet$-trees $\TreeR_1$, \dots, $\TreeR_{|\TreeS|}$ such that
    \begin{math}
        \TreeT = \TreeS \circ \Han{\TreeR_1, \dots, \TreeR_{|\TreeS|}}.
    \end{math}
    By~\eqref{equ:full_composition_maps},
    \begin{equation}
        \TreeT =
        \Par{\dots \Par{\Par{\TreeS \circ_{|\TreeS|} \TreeR_{|\TreeS|}}
        \circ_{|\TreeS| - 1} \TreeR_{|\TreeS| - 1}} \dots} \circ_1 \TreeR_1,
    \end{equation}
    and, by expressing now each tree $\TreeR_i$, $i \in [|\TreeS|]$, by means of partial
    compositions involving letters of $\GeneratingSet$, and  by arranging this expression so
    that it becomes bracketed from the left to the right by using
    Relation~\eqref{equ:operad_axiom_1}, one obtains an expression of the same form
    as~\eqref{equ:prefix_tree_poset_1} for $\TreeT$. Therefore, $\TreeS \OrderPrefixes
    \TreeT$.
    \smallbreak

    The second part of the statement is a direct consequence of the fact that the adjoint
    map $\Up^\Dual$ of the map $\Up$ of the $\GeneratingSet$-prefix graph
    satisfies~\eqref{equ:adjoint_up_trees}.
\end{proof}
\medbreak

\subsubsection{Distributive lattices}
Let $\GeneratingSet$ be a finite alphabet. Let $\Meet$ be the binary operation on
$\SyntaxTreesInternalNode(\GeneratingSet)$, called \Def{intersection}, defined recursively
by
\begin{subequations}
\begin{equation}
    \TreeT \Meet \Leaf := \Leaf =: \Leaf \Meet \TreeT,
\end{equation}
\begin{equation}
    \GenA \BPar{\TreeT_1, \dots, \TreeT_{|\GenA|}}
    \Meet
    \GenB \BPar{\TreeS_1, \dots, \TreeS_{|\GenB|}}
    := \Leaf,
\end{equation}
\begin{equation}
    \GenA \BPar{\TreeT_1, \dots, \TreeT_{|\GenA|}}
    \Meet
    \GenA \BPar{\TreeT'_1, \dots, \TreeT'_{|\GenA|}}
    :=
    \GenA \BPar{\TreeT_1 \Meet \TreeT'_1, \dots,
    \TreeT_{|\GenA|} \Meet \TreeT'_{|\GenA|}},
\end{equation}
\end{subequations}
for any $\GenA, \GenB \in \GeneratingSet$ such that $\GenA \ne \GenB$, and any
$\GeneratingSet$-trees $\TreeT$, $\TreeT_1$, \dots, $\TreeT_{|\GenA|}$, $\TreeT'_1$, \dots,
$\TreeT'_{|\GenA|}$, and $\TreeS_1$, \dots, $\TreeS_{|\GenB|}$. From an intuitive point of
view, $\TreeT \Meet \TreeT'$ is the tree obtained by considering the largest common part
between the $\GeneratingSet$-trees $\TreeT$ and $\TreeT'$ starting from their roots. For
instance,
\begin{equation}
    \begin{tikzpicture}[Centering,xscale=0.22,yscale=0.19]
        \node(0)at(0.00,-4.50){};
        \node(2)at(2.00,-4.50){};
        \node(4)at(3.00,-2.25){};
        \node(6)at(4.00,-6.75){};
        \node(8)at(6.00,-4.50){};
        \node[NodeST](1)at(1.00,-2.25){$\GenA$};
        \node[NodeST](3)at(3.00,0.00){$\GenC$};
        \node[NodeST](5)at(4.00,-4.50){$\GenE$};
        \node[NodeST](7)at(5.00,-2.25){$\GenA$};
        \draw[Edge](0)--(1);
        \draw[Edge](1)--(3);
        \draw[Edge](2)--(1);
        \draw[Edge](4)--(3);
        \draw[Edge](5)--(7);
        \draw[Edge](6)--(5);
        \draw[Edge](7)--(3);
        \draw[Edge](8)--(7);
        \node(r)at(3.00,2.25){};
        \draw[Edge](r)--(3);
    \end{tikzpicture}
    \enspace \Meet \enspace
    \begin{tikzpicture}[Centering,xscale=0.26,yscale=0.15]
        \node(1)at(0.00,-6.00){};
        \node(3)at(1.50,-6.00){};
        \node(5)at(2.50,-6.00){};
        \node(6)at(3.50,-6.00){};
        \node(8)at(4.50,-6.00){};
        \node[NodeST](0)at(0.00,-3.00){$\GenE$};
        \node[NodeST](2)at(2.00,0.00){$\GenC$};
        \node[NodeST](4)at(2.00,-3.00){$\GenA$};
        \node[NodeST](7)at(4.00,-3.00){$\GenA$};
        \draw[Edge](0)--(2);
        \draw[Edge](1)--(0);
        \draw[Edge](3)--(4);
        \draw[Edge](4)--(2);
        \draw[Edge](5)--(4);
        \draw[Edge](6)--(7);
        \draw[Edge](7)--(2);
        \draw[Edge](8)--(7);
        \node(r)at(2.00,2.5){};
        \draw[Edge](r)--(2);
    \end{tikzpicture}
    \enspace = \enspace
    \begin{tikzpicture}[Centering,xscale=0.18,yscale=0.21]
        \node(0)at(0.00,-2.00){};
        \node(2)at(1.00,-2.00){};
        \node(3)at(2.00,-4.00){};
        \node(5)at(4.00,-4.00){};
        \node[NodeST](1)at(1.00,0.00){$\GenC$};
        \node[NodeST](4)at(3.00,-2.00){$\GenA$};
        \draw[Edge](0)--(1);
        \draw[Edge](2)--(1);
        \draw[Edge](3)--(4);
        \draw[Edge](4)--(1);
        \draw[Edge](5)--(4);
        \node(r)at(1.00,2){};
        \draw[Edge](r)--(1);
    \end{tikzpicture}.
\end{equation}
\medbreak

\begin{Lemma} \label{lem:prefix_tree_poset_meet}
    For any finite alphabet $\GeneratingSet$ and any $\GeneratingSet$-trees $\TreeT$ and
    $\TreeT'$, $\TreeT \Meet \TreeT'$ is greatest lower bound of $\Bra{\TreeT, \TreeT'}$ in
    the $\GeneratingSet$-prefix poset.
\end{Lemma}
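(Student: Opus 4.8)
The plan is to prove simultaneously, by structural induction on the pair $\Par{\TreeT, \TreeT'}$ of $\GeneratingSet$-trees, the two defining properties of a greatest lower bound: first that $\TreeT \Meet \TreeT'$ is a common lower bound of $\TreeT$ and $\TreeT'$, and second that it dominates every common lower bound. Throughout, I would translate every occurrence of the order $\OrderPrefixes$ into the prefix relation by Proposition~\ref{prop:prefix_tree_poset}, and then handle the prefix relation itself through its recursive characterization given by Lemma~\ref{lem:prefix_recursive}. The three cases of the recursive definition of $\Meet$ match exactly the cases produced by these two results, which is what makes the induction go through.

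First I would dispatch the degenerate cases. When one of $\TreeT$, $\TreeT'$ is $\Leaf$, or when the roots of $\TreeT$ and $\TreeT'$ are decorated by distinct letters, we have $\TreeT \Meet \TreeT' = \Leaf$ by definition. Since $\Leaf$ is a prefix of every $\GeneratingSet$-tree (the first clause of Lemma~\ref{lem:prefix_recursive}), it is automatically a lower bound. For maximality, let $\TreeR$ be any common prefix; if $\TreeR \ne \Leaf$, Lemma~\ref{lem:prefix_recursive} forces the root of $\TreeR$ to coincide with the roots of both $\TreeT$ and $\TreeT'$, which is impossible when one of them is a leaf or when the two roots carry different letters. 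Hence $\TreeR = \Leaf = \TreeT \Meet \TreeT'$, and $\TreeR$ is trivially a prefix of the meet.

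The main case is when both roots carry the same letter $\GenA$, say $\TreeT = \GenA\BPar{\TreeT_1, \dots, \TreeT_{|\GenA|}}$ and $\TreeT' = \GenA\BPar{\TreeT'_1, \dots, \TreeT'_{|\GenA|}}$, so that $\TreeT \Meet \TreeT' = \GenA\BPar{\TreeT_1 \Meet \TreeT'_1, \dots, \TreeT_{|\GenA|} \Meet \TreeT'_{|\GenA|}}$. For the lower bound property, the induction hypothesis gives that each $\TreeT_i \Meet \TreeT'_i$ is a prefix of $\TreeT_i$ and of $\TreeT'_i$; feeding these component-wise prefixes into Lemma~\ref{lem:prefix_recursive} yields that $\TreeT \Meet \TreeT'$ is a prefix of $\TreeT$ and of $\TreeT'$. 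For maximality, let $\TreeR$ be a common prefix of $\TreeT$ and $\TreeT'$. If $\TreeR = \Leaf$ we are done; otherwise Lemma~\ref{lem:prefix_recursive} shows its root is $\GenA$, say $\TreeR = \GenA\BPar{\TreeR_1, \dots, \TreeR_{|\GenA|}}$, and that each $\TreeR_i$ is a common prefix of $\TreeT_i$ and $\TreeT'_i$. By the induction hypothesis each $\TreeR_i$ is then a prefix of $\TreeT_i \Meet \TreeT'_i$, and one last application of Lemma~\ref{lem:prefix_recursive} assembles these into the statement that $\TreeR$ is a prefix of $\TreeT \Meet \TreeT'$. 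This closes the induction.

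I do not anticipate a genuine obstacle here: the argument is essentially a bookkeeping exercise in which the recursive shape of $\Meet$ is made to mirror the recursive shape of the prefix relation. The one point that requires a little care is the maximality direction, where it is essential to use the \emph{recursive} characterization of prefixes from Lemma~\ref{lem:prefix_recursive} rather than the original global definition, since it is precisely the clause asserting that $\TreeR(i)$ is a prefix of $\TreeT(i)$ for all $i$ that lets the induction hypothesis be applied to each subtree independently.
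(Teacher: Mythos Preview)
Your proposal is correct and follows essentially the same approach as the paper: structural induction on the pair of trees, translating $\OrderPrefixes$ into the prefix relation via Proposition~\ref{prop:prefix_tree_poset} and handling each case through the recursive characterization of prefixes in Lemma~\ref{lem:prefix_recursive}. The paper phrases the induction hypothesis as ``$\max_{\OrderPrefixes} L\Par{\TreeT,\TreeT'}$ exists and equals $\TreeT \Meet \TreeT'$'' rather than splitting it into the two properties you state, but the actual case analysis and the use of Lemma~\ref{lem:prefix_recursive} in both directions are identical.
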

\begin{proof}
    We use here Proposition~\ref{prop:prefix_tree_poset} and its description of the order
    relation $\OrderPrefixes$ of the $\GeneratingSet$-prefix poset in terms of prefixes of
    $\GeneratingSet$-trees.  Let us denote by $L\Par{\TreeT, \TreeT'}$ the set all lower
    bounds of $\Bra{\TreeT, \TreeT'}$. By structural induction on $\TreeT$ and $\TreeT'$, we
    show that $\max_{\OrderPrefixes} L\Par{\TreeT, \TreeT'}$ exists and that
    $\max_{\OrderPrefixes} L\Par{\TreeT, \TreeT'} = \TreeT \Meet \TreeT'$.  First,
    immediately from the definition of $\Meet$, we have
    \begin{math}
        \Leaf \Meet \TreeT = \TreeT \Meet \Leaf = \Leaf
    \end{math}
    and
    \begin{math}
        L\Par{\Leaf, \TreeT} = L\Par{\TreeT, \Leaf} = \Bra{\Leaf}
    \end{math}
    so that the statement of the lemma holds in this case.  Assume now that $\TreeT$ and
    $\TreeT'$ are both different from the leaf so that $\TreeT = \GenA \BPar{\TreeT_1,
    \dots, \TreeT_{|\GenA|}}$ and
    \begin{math}
        \TreeT' =
        \GenA' \BPar{\TreeT'_1, \dots, \TreeT'_{\left|\GenA'\right|}}
    \end{math}
    where $\GenA, \GenA' \in \GeneratingSet$ and $\TreeT_1$, \dots $\TreeT_{|\GenA|}$,
    $\TreeT'_1$, \dots, $\TreeT'_{\left|\GenA'\right|}$ are $\GeneratingSet$-trees. If
    $\GenA \ne \GenA'$, we have $\TreeT \Meet \TreeT' = \Leaf$ and
    \begin{math}
        L\Par{\TreeT, \TreeT'} = \Bra{\Leaf}
    \end{math}
    since the leaf is the only $\GeneratingSet$-tree which is a common prefix of both
    $\TreeT$ and $\TreeT'$.  Hence, the statement of the lemma holds in this case.  For the
    last case to consider, one has $\GenA = \GenA'$, and it follows by induction hypothesis
    that
    \begin{equation}
        \TreeT \Meet \TreeT'
        =
        \GenA \BPar{\TreeT_1 \Meet \TreeT'_1, \dots,
        \TreeT_{|\GenA|} \Meet \TreeT'_{|\GenA|}}
        =
        \GenA \BPar{\TreeS_1, \dots, \TreeS_{|\GenA|}},
    \end{equation}
    where for any $i \in [|\GenA|]$,
    \begin{math}
        \TreeS_i := \max_{\OrderPrefixes} L\Par{\TreeT_i, \TreeT'_i}.
    \end{math}
    Now, since for any $i \in [|\GenA|]$, $\TreeS_i$ is a prefix of both $\TreeT_i$ and
    $\TreeT'_i$, and since all trees different from the leaf of $L\Par{\TreeT, \TreeT'}$
    have a root decorated by $\GenA$, by Lemma~\ref{lem:prefix_recursive}, $\TreeT \Meet
    \TreeT'$ is a prefix of both $\TreeT$ and $\TreeT'$ so that $\TreeT \Meet \TreeT' \in
    L\Par{\TreeT, \TreeT'}$.  To show finally that $\TreeT \Meet \TreeT'$ is the greatest
    element of $L\Par{\TreeT, \TreeT'}$, assume that $\TreeR$ is a $\GeneratingSet$-tree of
    $L\Par{\TreeT, \TreeT'}$. First, the root of $\TreeR$ is decorated by $\GenA$.  Second,
    since for any $i \in [|\GenA|]$, $\TreeR(i)$ is a prefix of both $\TreeT_i$ and
    $\TreeT'_i$, and since $\TreeS_i$ is the greatest $\GeneratingSet$-tree which is a
    common prefix of $\TreeT_i$ and $\TreeT'_i$, $\TreeR(i)$ is a prefix of $\TreeS_i$.
    Therefore, by Lemma~\ref{lem:prefix_recursive}, this implies that $\TreeR$ is a prefix
    of $\TreeT \Meet \TreeT'$ and establishes the statement of the lemma.
\end{proof}
\medbreak

In the same way, let $\JJoin$ be the partial binary operation on
$\SyntaxTreesInternalNode(\GeneratingSet)$, called \Def{union}, defined recursively by
\begin{subequations}
    \begin{equation}
        \TreeT \JJoin \Leaf := \TreeT =: \Leaf \JJoin \TreeT,
    \end{equation}
    \begin{equation}
        \GenA \BPar{\TreeT_1, \dots, \TreeT_{|\GenA|}}
        \JJoin
        \GenA \BPar{\TreeT'_1, \dots, \TreeT'_{|\GenA|}}
        :=
        \GenA \BPar{\TreeT_1 \JJoin \TreeT'_1, \dots,
        \TreeT_{|\GenA|} \JJoin \TreeT'_{|\GenA|}},
    \end{equation}
\end{subequations}
and where
\begin{equation}
    \GenA \BPar{\TreeT_1, \dots, \TreeT_{|\GenA|}}
    \JJoin
    \GenB \BPar{\TreeS_1, \dots, \TreeS_{|\GenB|}}
\end{equation}
is not defined, for any $\GenA, \GenB \in \GeneratingSet$ such that $\GenA \ne \GenB$, and
any $\GeneratingSet$-trees $\TreeT$, $\TreeT_1$, \dots, $\TreeT_{|\GenA|}$, $\TreeT'_1$,
\dots, $\TreeT'_{|\GenA|}$, and $\TreeS_1$, \dots, $\TreeS_{|\GenB|}$. From an intuitive
point of view, $\TreeT \JJoin \TreeT'$ is the tree obtained by superimposing $\TreeT$ and
$\TreeT'$. For instance,
\begin{equation}
    \begin{tikzpicture}[Centering,xscale=0.18,yscale=0.24]
        \node(0)at(0.00,-1.67){};
        \node(2)at(2.00,-3.33){};
        \node(4)at(4.00,-3.33){};
        \node[NodeST](1)at(1.00,0.00){$\GenA$};
        \node[NodeST](3)at(3.00,-1.67){$\GenA$};
        \draw[Edge](0)--(1);
        \draw[Edge](2)--(3);
        \draw[Edge](3)--(1);
        \draw[Edge](4)--(3);
        \node(r)at(1.00,1.75){};
        \draw[Edge](r)--(1);
    \end{tikzpicture}
    \enspace \JJoin \enspace
    \begin{tikzpicture}[Centering,xscale=0.17,yscale=0.2]
        \node(0)at(0.00,-4.00){};
        \node(2)at(1.00,-4.00){};
        \node(3)at(2.00,-6.00){};
        \node(5)at(4.00,-6.00){};
        \node(7)at(6.00,-2.00){};
        \node[NodeST](1)at(1.00,-2.00){$\GenC$};
        \node[NodeST](4)at(3.00,-4.00){$\GenA$};
        \node[NodeST](6)at(5.00,0.00){$\GenA$};
        \draw[Edge](0)--(1);
        \draw[Edge](1)--(6);
        \draw[Edge](2)--(1);
        \draw[Edge](3)--(4);
        \draw[Edge](4)--(1);
        \draw[Edge](5)--(4);
        \draw[Edge](7)--(6);
        \node(r)at(5.00,2){};
        \draw[Edge](r)--(6);
    \end{tikzpicture}
    \enspace = \enspace
    \begin{tikzpicture}[Centering,xscale=0.17,yscale=0.17]
        \node(0)at(0.00,-5.00){};
        \node(2)at(1.00,-5.00){};
        \node(3)at(2.00,-7.50){};
        \node(5)at(4.00,-7.50){};
        \node(7)at(6.00,-5.00){};
        \node(9)at(8.00,-5.00){};
        \node[NodeST](1)at(1.00,-2.50){$\GenC$};
        \node[NodeST](4)at(3.00,-5.00){$\GenA$};
        \node[NodeST](6)at(5.00,0.00){$\GenA$};
        \node[NodeST](8)at(7.00,-2.50){$\GenA$};
        \draw[Edge](0)--(1);
        \draw[Edge](1)--(6);
        \draw[Edge](2)--(1);
        \draw[Edge](3)--(4);
        \draw[Edge](4)--(1);
        \draw[Edge](5)--(4);
        \draw[Edge](7)--(8);
        \draw[Edge](8)--(6);
        \draw[Edge](9)--(8);
        \node(r)at(5.00,2){};
        \draw[Edge](r)--(6);
    \end{tikzpicture}.
\end{equation}
\medbreak

\begin{Lemma} \label{lem:prefix_tree_poset_join}
    For any finite alphabet $\GeneratingSet$ and any $\GeneratingSet$-trees $\TreeT$ and
    $\TreeT'$ such that $\Bra{\TreeT, \TreeT'}$ admits an upper bound in the
    $\GeneratingSet$-prefix poset, $\TreeT \JJoin \TreeT'$ is well-defined and is the least
    upper bound of~$\Bra{\TreeT, \TreeT'}$.
\end{Lemma}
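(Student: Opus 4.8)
The plan is to mirror the structural induction used for the intersection in Lemma~\ref{lem:prefix_tree_poset_meet}, again translating everything into the language of prefixes by means of Proposition~\ref{prop:prefix_tree_poset} and the recursive characterization of Lemma~\ref{lem:prefix_recursive}. The new ingredient, absent from the meet case, is that $\JJoin$ is only a partial operation, so before identifying $\TreeT \JJoin \TreeT'$ with a least upper bound I must first use the standing hypothesis — that $\Bra{\TreeT, \TreeT'}$ admits some upper bound $\TreeU$ — to certify that $\TreeT \JJoin \TreeT'$ is defined at all.

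I would proceed by structural induction on $\TreeT$ and $\TreeT'$. When either tree is the leaf, say $\TreeT = \Leaf$, the definition gives $\TreeT \JJoin \TreeT' = \TreeT'$, and since $\Leaf$ is a prefix of every $\GeneratingSet$-tree, $\TreeT'$ is plainly the least $\GeneratingSet$-tree admitting both $\TreeT$ and $\TreeT'$ as prefixes; this settles the base case. Otherwise write $\TreeT = \GenA \BPar{\TreeT_1, \dots, \TreeT_{|\GenA|}}$ and $\TreeT' = \GenA' \BPar{\TreeT'_1, \dots, \TreeT'_{\left|\GenA'\right|}}$. By Proposition~\ref{prop:prefix_tree_poset}, both $\TreeT$ and $\TreeT'$ are prefixes of $\TreeU$; since neither is the leaf, $\TreeU$ is not the leaf either, and Lemma~\ref{lem:prefix_recursive} forces the roots of $\TreeT$, $\TreeT'$, and $\TreeU$ to carry the same letter, whence $\GenA = \GenA'$. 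The same lemma shows that for each $i \in [|\GenA|]$ both $\TreeT_i$ and $\TreeT'_i$ are prefixes of $\TreeU(i)$, so $\Bra{\TreeT_i, \TreeT'_i}$ admits $\TreeU(i)$ as an upper bound. The induction hypothesis then applies to each pair $\Par{\TreeT_i, \TreeT'_i}$, making every $\TreeT_i \JJoin \TreeT'_i$ well-defined, and therefore $\TreeT \JJoin \TreeT' = \GenA \BPar{\TreeT_1 \JJoin \TreeT'_1, \dots, \TreeT_{|\GenA|} \JJoin \TreeT'_{|\GenA|}}$ is well-defined as well.

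It then remains to check the two halves of the universal property. That $\TreeT \JJoin \TreeT'$ is an upper bound follows from Lemma~\ref{lem:prefix_recursive}: its root is decorated by $\GenA$, and by the induction hypothesis each $\TreeT_i \JJoin \TreeT'_i$ is an upper bound of $\Bra{\TreeT_i, \TreeT'_i}$, so $\TreeT_i$ is a prefix of the $i$-th subtree of $\TreeT \JJoin \TreeT'$; hence $\TreeT$ is a prefix of $\TreeT \JJoin \TreeT'$, and symmetrically for $\TreeT'$. For minimality, let $\TreeU$ be any upper bound of $\Bra{\TreeT, \TreeT'}$. As above $\TreeU$ is not the leaf and its root carries $\GenA$, and each $\TreeU(i)$ is an upper bound of $\Bra{\TreeT_i, \TreeT'_i}$; the induction hypothesis gives that $\TreeT_i \JJoin \TreeT'_i$ is a prefix of $\TreeU(i)$, so one last application of Lemma~\ref{lem:prefix_recursive} yields that $\TreeT \JJoin \TreeT'$ is a prefix of $\TreeU$, that is, $\TreeT \JJoin \TreeT' \OrderPrefixes \TreeU$.

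The only genuine subtlety — the step I would treat most carefully — is the well-definedness argument at the root: one must argue that the mere existence of an upper bound propagates downward, so that the recursive hypothesis built into $\JJoin$ (matching root letters at every level where the two trees both branch) is automatically met whenever $\Bra{\TreeT, \TreeT'}$ is bounded above. Everything past that point is the same bookkeeping with prefixes that drives the companion Lemma~\ref{lem:prefix_tree_poset_meet}, now run with the order reversed.
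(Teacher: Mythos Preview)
Your proposal is correct and follows essentially the same approach as the paper: structural induction on $\TreeT$ and $\TreeT'$, using the assumed upper bound together with Lemma~\ref{lem:prefix_recursive} to force matching root letters and to propagate the upper-bound hypothesis to the subtrees, then verifying the universal property componentwise. The paper organizes the argument around the set $U\Par{\TreeT, \TreeT'}$ of all upper bounds and its minimum, but the logical content is identical to what you wrote.
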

\begin{proof}
    We use here Proposition~\ref{prop:prefix_tree_poset} and its description of the order
    relation $\OrderPrefixes$ of the $\GeneratingSet$-prefix poset in terms of prefixes of
    $\GeneratingSet$-trees.  Let us denote by $U\Par{\TreeT, \TreeT'}$ the set of all upper
    bounds of $\Bra{\TreeT, \TreeT'}$. By structural induction on $\TreeT$ and $\TreeT'$, we
    show that $\min_{\OrderPrefixes} U\Par{\TreeT, \TreeT'}$ exists and that
    $\min_{\OrderPrefixes} U\Par{\TreeT, \TreeT'} = \TreeT \JJoin \TreeT'$.  First,
    immediately from the definition of $\JJoin$, we have
    \begin{math}
        \Leaf \JJoin \TreeT = \TreeT \JJoin \Leaf = \TreeT
    \end{math}
    and
    \begin{math}
        U\Par{\Leaf, \TreeT} = U\Par{\TreeT, \Leaf} = \Bra{\TreeT}
    \end{math}
    so that the statement of the lemma holds in this case.  Assume now that $\TreeT$ and
    $\TreeT'$ are both different from the leaf so that
    \begin{math}
        \TreeT = \GenA \BPar{\TreeT_1, \dots, \TreeT_{|\GenA|}}
    \end{math}
    and
    \begin{math}
        \TreeT' = \GenA' \BPar{\TreeT'_1, \dots, \TreeT'_{\left|\GenA'\right|}}
    \end{math}
    where $\GenA, \GenA' \in \GeneratingSet$ and $\TreeT_1$, \dots $\TreeT_{|\GenA|}$,
    $\TreeT'_1$, \dots, $\TreeT'_{\left|\GenA'\right|}$ are $\GeneratingSet$-trees.  Since
    $\Bra{\TreeT, \TreeT'}$ admits, by hypothesis, an upper bound, both $\TreeT$ and
    $\TreeT'$ have to be prefixes of a same $\GeneratingSet$-tree. This implies that $\GenA
    = \GenA'$.  It follows by induction hypothesis that
    \begin{equation} \label{equ:prefix_tree_poset_join_1}
        \TreeT \JJoin \TreeT'
        = \GenA \BPar{\TreeT_1 \JJoin \TreeT'_1, \dots,
        \TreeT_{|\GenA|} \JJoin \TreeT'_{|\GenA|}}
        = \GenA \BPar{\TreeS_1, \dots, \TreeS_{|\GenA|}},
    \end{equation}
    where for any $i \in [|\GenA|]$, $\TreeS_i := \min_{\OrderPrefixes} U\Par{\TreeT_i,
    \TreeT'_i}$.  Observe that the $\GeneratingSet$-tree specified
    by~\eqref{equ:prefix_tree_poset_join_1} is well-defined by induction hypothesis. Indeed,
    by calling $\TreeR$ an upper bound of $\Bra{\TreeT, \TreeT'}$, for any $i \in
    [|\GenA|]$, $\TreeR_i$ is an upper bound of $\Bra{\TreeT_i, \TreeT'_i}$.  Now, since for
    any $i \in [|\GenA|]$, both $\TreeT_i$ and $\TreeT'_i$ are prefixes of $\TreeS_i$, and
    since all trees of $U\Par{\TreeT, \TreeT'}$ have a root decorated by $\GenA$, by
    Lemma~\ref{lem:prefix_recursive} both $\TreeT$ and $\TreeT'$ are prefixes of $\TreeT
    \JJoin \TreeT'$ so that $\TreeT \JJoin \TreeT' \in U\Par{\TreeT, \TreeT'}$.  To show
    finally that $\TreeT \JJoin \TreeT'$ is the least element of $U\Par{\TreeT, \TreeT'}$,
    assume that $\TreeR$ is a $\GeneratingSet$-tree to $U\Par{\TreeT, \TreeT'}$.  Since for
    any $i \in [|\GenA|]$, both $\TreeT_i$ and $\TreeT'_i$ are prefixes of $\TreeR(i)$, and
    since $\TreeS_i$ is the smallest $\GeneratingSet$-tree admitting both $\TreeT_i$ and
    $\TreeT'_i$ as prefixes, $\TreeS_i$ is a prefix of $\TreeR(i)$.  Therefore, by
    Lemma~\ref{lem:prefix_recursive}, this implies that $\TreeT \JJoin \TreeT'$ is a prefix
    of $\TreeR$ and establishes the statement of the lemma.
\end{proof}
\medbreak

By seeing $\GeneratingSet$-trees as terms, the term encoded by $\TreeT \JJoin \TreeT'$ is in
fact, if it exists, the unification of the terms encoded by the $\GeneratingSet$-trees
$\TreeT$ and $\TreeT'$ (see~\cite{BN98,Ter03}).
\medbreak

\begin{Proposition} \label{prop:prefix_tree_poset_meet_semilattice}
    For any finite alphabet $\GeneratingSet$, the $\GeneratingSet$-prefix poset is a
    meet-semilattice for the operation~$\Meet$. Moreover, each interval $\Han{\TreeS,
    \TreeT}$ of this poset is a distributive lattice for the operations $\Meet$
    and~$\JJoin$.
\end{Proposition}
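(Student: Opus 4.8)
The plan is to deduce everything from the two preceding lemmas together with a classical fact about order ideals. First, the meet-semilattice assertion is immediate: by Lemma~\ref{lem:prefix_tree_poset_meet}, the tree $\TreeT \Meet \TreeT'$ is the greatest lower bound of $\Bra{\TreeT, \TreeT'}$ for \emph{any} pair of $\GeneratingSet$-trees, so every pair admits a meet and $\Par{\SyntaxTreesInternalNode(\GeneratingSet), \OrderPrefixes}$ is a meet-semilattice.

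Next I would show that an arbitrary interval $\Han{\TreeS, \TreeT}$ is closed under $\Meet$ and $\JJoin$, hence is a lattice. Let $\TreeR_1, \TreeR_2 \in \Han{\TreeS, \TreeT}$. Since $\TreeS$ is a lower bound of $\Bra{\TreeR_1, \TreeR_2}$ and $\TreeR_1 \Meet \TreeR_2$ is, by Lemma~\ref{lem:prefix_tree_poset_meet}, their greatest lower bound, we get $\TreeS \OrderPrefixes \TreeR_1 \Meet \TreeR_2 \OrderPrefixes \TreeR_1 \OrderPrefixes \TreeT$, so $\TreeR_1 \Meet \TreeR_2 \in \Han{\TreeS, \TreeT}$. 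Dually, $\TreeT$ is an upper bound of $\Bra{\TreeR_1, \TreeR_2}$, so by Lemma~\ref{lem:prefix_tree_poset_join} the join $\TreeR_1 \JJoin \TreeR_2$ is well-defined and is the least upper bound; hence $\TreeS \OrderPrefixes \TreeR_1 \OrderPrefixes \TreeR_1 \JJoin \TreeR_2 \OrderPrefixes \TreeT$ and $\TreeR_1 \JJoin \TreeR_2 \in \Han{\TreeS, \TreeT}$. Thus $\Han{\TreeS, \TreeT}$ is a lattice whose operations are the restrictions of $\Meet$ and $\JJoin$.

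For distributivity, I would identify the principal interval $\Han{\Leaf, \TreeT}$ with the lattice $\JLattice{\TreePoset{\TreeT}}$ of order ideals of the poset $\TreePoset{\TreeT}$ of internal nodes of $\TreeT$ ordered by the ancestor relation. The map $\TreeR \mapsto \InternalNodes(\TreeR)$ sends a prefix $\TreeR$ of $\TreeT$ to a subset of $\InternalNodes(\TreeT)$ which, by Lemma~\ref{lem:prefix_recursive}, is closed downwards for the ancestor order (keeping a node forces keeping its ancestors); conversely every order ideal arises this way, and by Proposition~\ref{prop:prefix_tree_poset} this bijection is an order isomorphism onto $\JLattice{\TreePoset{\TreeT}}$ carrying $\Meet$ to intersection and $\JJoin$ to union of ideals. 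Since the lattice of order ideals of any poset is distributive, $\Han{\Leaf, \TreeT}$ is a distributive lattice. Finally, as $\TreeS \OrderPrefixes \TreeT$, the interval $\Han{\TreeS, \TreeT}$ is an interval of $\Han{\Leaf, \TreeT}$ with the same meet and join; being a sublattice of a distributive lattice, it is itself distributive, which finishes the proof.

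The main obstacle is the correctness of the order-ideal identification: one must check carefully, using Lemma~\ref{lem:prefix_recursive} and the prefix description of $\OrderPrefixes$ from Proposition~\ref{prop:prefix_tree_poset}, that $\TreeR \mapsto \InternalNodes(\TreeR)$ is a well-defined bijection between prefixes of $\TreeT$ and order ideals of $\TreePoset{\TreeT}$ and that it intertwines $\Meet$ and $\JJoin$ with $\cap$ and $\cup$. An alternative, avoiding this identification, is a direct structural induction proving the distributive law $\TreeR_1 \Meet \Par{\TreeR_2 \JJoin \TreeR_3} = \Par{\TreeR_1 \Meet \TreeR_2} \JJoin \Par{\TreeR_1 \Meet \TreeR_3}$ from the componentwise recursive definitions of $\Meet$ and $\JJoin$; there the delicate points are the base cases where one of the trees is the leaf and ensuring that the relevant joins are defined, which is guaranteed precisely by working inside the interval $\Han{\TreeS, \TreeT}$.
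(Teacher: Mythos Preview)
Your proof is correct. The meet-semilattice and lattice parts match the paper exactly; the difference lies in how distributivity is established.

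The paper proves distributivity by a direct structural induction on the three trees $\TreeR_1, \TreeR_2, \TreeR_3$ in $\Han{\TreeS, \TreeT}$, verifying the identity $\TreeR_1 \Meet \Par{\TreeR_2 \JJoin \TreeR_3} = \Par{\TreeR_1 \Meet \TreeR_2} \JJoin \Par{\TreeR_1 \Meet \TreeR_3}$ componentwise from the recursive definitions of $\Meet$ and $\JJoin$ --- exactly the alternative you sketch at the end. Your primary route instead identifies $\Han{\Leaf, \TreeT}$ with $\JLattice{\TreePoset{\TreeT}}$ via $\TreeR \mapsto \InternalNodes(\TreeR)$ and imports distributivity from the order-ideal lattice. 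This is sound: since every prefix of $\TreeT$ inherits its node decorations from $\TreeT$, a prefix is determined by its set of internal node addresses, and Lemma~\ref{lem:prefix_recursive} makes this set an order ideal of $\TreePoset{\TreeT}$; the recursive definitions of $\Meet$ and $\JJoin$ then translate to intersection and union. Your approach is arguably cleaner here and in fact anticipates the content of the paper's later Proposition~\ref{prop:shadow_intervals}, which the paper proves in the opposite direction (first distributivity by induction, then the order-ideal description via the Fundamental Theorem for finite distributive lattices). The paper's inductive proof, on the other hand, is entirely self-contained and avoids having to verify the bijection and its compatibility with $\Meet$ and $\JJoin$.
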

\begin{proof}
    Lemma~\ref{lem:prefix_tree_poset_meet} says that the $\GeneratingSet$-prefix poset is a
    meet-semilattice for the operation $\Meet$. Moreover, by
    Lemma~\ref{lem:prefix_tree_poset_join}, the operation $\JJoin$ is well-defined for any
    pair of elements of the interval $I := \Han{\TreeS, \TreeT}$ since $\TreeT$ is an upper
    bound of any pair of trees of $I$. Hence, $I$ is a join-semilattice and thus also a
    lattice for the operations $\Meet$ and~$\JJoin$.
    \smallbreak

    Let us now prove that $I$ is a distributive lattice. We proceed by structural induction
    on the three $\GeneratingSet$-trees $\TreeR_1$, $\TreeR_2$, and $\TreeR_3$ of $I$ to
    show that
    \begin{math}
        \alpha_{\TreeR_1, \TreeR_2, \TreeR_3} = \beta_{\TreeR_1, \TreeR_2, \TreeR_3}
    \end{math}
    where
    \begin{math}
        \alpha_{\TreeR_1, \TreeR_2, \TreeR_3}
        := \TreeR_1 \Meet \Par{\TreeR_2 \JJoin \TreeR_3}
    \end{math}
    and
    \begin{math}
        \beta_{\TreeR_1, \TreeR_2, \TreeR_3}
        := \Par{\TreeR_1 \Meet \TreeR_2} \JJoin \Par{\TreeR_1 \Meet \TreeR_3}.
    \end{math}
    First, we have
    \begin{equation}
        \alpha_{\Leaf, \TreeR_2, \TreeR_3}
        = \Leaf \Meet \Par{\TreeR_2 \JJoin \TreeR_3}
        = \Leaf,
    \end{equation}
    and
    \begin{equation}
        \beta_{\Leaf, \TreeR_2, \TreeR_3}
        = \Par{\Leaf \Meet \TreeR_2} \JJoin \Par{\Leaf \Meet \TreeR_3}
        = \Leaf \JJoin \Leaf
        = \Leaf.
    \end{equation}
    Second, we have
    \begin{equation}
        \alpha_{\TreeR_1, \Leaf, \TreeR_3}
        = \TreeR_1 \Meet \Par{\Leaf \JJoin \TreeR_3}
        = \TreeR_1 \Meet \TreeR_3,
    \end{equation}
    and
    \begin{equation}
        \beta_{\TreeR_1, \Leaf, \TreeR_3}
        = \Par{\TreeR_1 \Meet \Leaf} \JJoin \Par{\TreeR_1 \Meet \TreeR_3}
        = \Leaf \JJoin \Par{\TreeR_1 \Meet \TreeR_3}
        = \TreeR_1 \Meet \TreeR_3.
    \end{equation}
    Similarly, the relation
    \begin{math}
        \alpha_{\TreeR_1, \TreeR_2, \Leaf}
        = \TreeR_1 \Meet \TreeR_2
        = \beta_{\TreeR_1, \TreeR_2, \Leaf}
    \end{math}
    holds. We can now assume that $\TreeR_1$, $\TreeR_2$, and $\TreeR_3$ are different from
    the leaf. Moreover, since $\TreeR_1 \OrderPrefixes \TreeT$, $\TreeR_2 \OrderPrefixes
    \TreeT$, and $\TreeR_3 \OrderPrefixes \TreeT$, the roots of $\TreeR_1$, $\TreeR_2$, and
    $\TreeR_3$ are decorated by the same letter $\GenA$ of $\GeneratingSet$. Therefore,
    \begin{equation}\begin{split}
        \alpha_{\TreeR_1, \TreeR_2, \TreeR_3}
        & = \TreeR_1 \Meet
        \GenA \BPar{\TreeR_2(1) \JJoin \TreeR_3(1), \dots
            \TreeR_2(|\GenA|) \JJoin \TreeR_3(|\GenA|)} \\
        & = \GenA \BPar{\TreeR_1(1) \Meet
            \Par{\TreeR_2(1) \JJoin \TreeR_3(1)}, \dots,
            \TreeR_1(|\GenA|) \Meet
            \Par{\TreeR_2(|\GenA|) \JJoin \TreeR_3(|\GenA|)}} \\
        & =
        \GenA \BPar{\alpha_{\TreeR_1(1), \TreeR_2(1), \TreeR_3(1)},
        \dots, \alpha_{\TreeR_1(|\GenA|), \TreeR_2(|\GenA|), \TreeR_3(|\GenA|)}},
    \end{split}\end{equation}
    and
    \begin{equation}\begin{split}
        \beta_{\TreeR_1, \TreeR_2, \TreeR_3}
        & =
        \GenA \BPar{\TreeR_1(1) \Meet \TreeR_2(1), \dots,
            \TreeR_1(|\GenA|) \Meet \TreeR_2(|\GenA|)}
        \JJoin
        \GenA \BPar{\TreeR_1(1) \Meet \TreeR_3(1), \dots,
            \TreeR_1(|\GenA|) \Meet \TreeR_3(|\GenA|)} \\
        & =
        \GenA \BPar{
            \Par{\TreeR_1(1) \Meet \TreeR_2(1)}
                \JJoin \Par{\TreeR_1(1) \Meet \TreeR_3(1)},
            \dots,
            \Par{\TreeR_1(|\GenA|) \Meet \TreeR_2(|\GenA|)}
                \JJoin \Par{\TreeR_1(|\GenA|) \Meet \TreeR_3(|\GenA|)}} \\
        & =
        \GenA \BPar{\beta_{\TreeR_1(1), \TreeR_2(1), \TreeR_3(1)},
        \dots, \beta_{\TreeR_1(|\GenA|), \TreeR_2(|\GenA|), \TreeR_3(|\GenA|)}}.
    \end{split}\end{equation}
    By induction hypothesis, the relation
    \begin{math}
        \alpha_{\TreeR_1, \TreeR_2, \TreeR_3} = \beta_{\TreeR_1, \TreeR_2, \TreeR_3}
    \end{math}
    follows.
\end{proof}
\medbreak

\subsubsection{Structure of the intervals}
A $\GeneratingSet$-tree $\TreeT$ is \Def{stringy} if any internal node of $\TreeT$ has at
most one child which is an internal node.  A $\GeneratingSet$-tree $\TreeT$ is
\Def{co-irreducible} in $\Par{\SyntaxTreesInternalNode(\GeneratingSet), \OrderPrefixes}$ if
$\TreeT$ covers at most one element.
\medbreak

\begin{Proposition} \label{prop:prefix_tree_poset_co_irreducible}
    For any finite alphabet $\GeneratingSet$, the set of co-irreducible elements of the
    $\GeneratingSet$-prefix poset is the set of all stringy $\GeneratingSet$-trees.
    Moreover, the number of such elements of degree $d \geq 1$ is
    \begin{math}
        \RankSeries_\GeneratingSet(1) \RankSeries_\GeneratingSet'(1)^{d - 1},
    \end{math}
    where $\RankSeries_\GeneratingSet'$ is the derivative of $\RankSeries_\GeneratingSet(t)$
    with respect to~$t$.
\end{Proposition}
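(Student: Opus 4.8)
The plan is to recast co-irreducibility as a condition on the number of maximal internal nodes, identify that condition with being stringy, and finally count the stringy trees of each degree by exploiting their chain structure.

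First I would use Proposition~\ref{prop:prefix_tree_poset}, which says that $\TreeT$ covers exactly the trees of the form $\DelNode{\TreeT}{u}$ with $u \in \InternalNodesMax(\TreeT)$. The map $u \mapsto \DelNode{\TreeT}{u}$ is injective: if $u \ne u'$ are two distinct maximal internal nodes, then $u'$ is not a descendant of $u$ (the proper descendants of the maximal node $u$ are leaves, whereas $u'$ is internal), so $u'$ survives as an internal node of $\DelNode{\TreeT}{u}$ while being turned into a leaf in $\DelNode{\TreeT}{u'}$; hence the two trees differ. Consequently the number of elements covered by $\TreeT$ equals $\# \InternalNodesMax(\TreeT)$, and $\TreeT$ is co-irreducible if and only if $\# \InternalNodesMax(\TreeT) \leq 1$. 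It then remains to show that this last condition characterizes stringy trees.

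Next I would establish the structural equivalence. Observe that $\InternalNodes(\TreeT)$ is closed under taking parents, since any node admitting a child is internal; thus, equipped with the induced parent--child relation, $\InternalNodes(\TreeT)$ is a rooted subtree containing the root $\epsilon$ (when $\TreeT \ne \Leaf$). If $\TreeT$ is stringy, every internal node has at most one internal child, so this subtree is a path, whose unique endpoint is the unique node with no internal child, that is, the unique maximal internal node; when $\TreeT = \Leaf$ there are none. Conversely, if $\TreeT$ is not stringy, some internal node has two distinct internal children $vi$ and $vj$; each of the subtrees $\TreeT(vi)$ and $\TreeT(vj)$ contains a maximal internal node of $\TreeT$ (every internal node of a finite tree has a maximal internal descendant), and these lie in disjoint subtrees, so $\# \InternalNodesMax(\TreeT) \geq 2$. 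Combined with the previous paragraph, this proves that $\TreeT$ is co-irreducible if and only if $\TreeT$ is stringy, which is the first assertion.

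For the enumeration I would use that the internal nodes of a stringy tree of degree $d \geq 1$ form a chain $\epsilon = v_1, v_2, \dots, v_d$, where $v_{k+1}$ is the unique internal child of $v_k$. Such a tree is specified uniquely by a decorating letter $\GenA_k \in \GeneratingSet$ for each $v_k$, together with, for each $k \in [d-1]$, the position $p_k \in [|\GenA_k|]$ of the internal child $v_{k+1}$ among the $|\GenA_k|$ children of $v_k$ (the children of the terminal node $v_d$ being all leaves). Since $\RankSeries_\GeneratingSet(t) = \sum_{\GenA \in \GeneratingSet} t^{|\GenA|}$, we have $\RankSeries_\GeneratingSet(1) = \# \GeneratingSet$ and $\RankSeries_\GeneratingSet'(1) = \sum_{\GenA \in \GeneratingSet} |\GenA|$, so the number of choices for each pair $(\GenA_k, p_k)$ with $k < d$ is $\RankSeries_\GeneratingSet'(1)$, while the number of choices of the single letter $\GenA_d$ is $\RankSeries_\GeneratingSet(1)$; multiplying gives $\RankSeries_\GeneratingSet(1)\, \RankSeries_\GeneratingSet'(1)^{d-1}$. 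The one point demanding care is the structural lemma that stringy trees are precisely those with a single maximal internal node, in particular the verification that the internal nodes form a path, since this is what simultaneously licenses the co-irreducibility characterization and supplies the chain structure underlying the count; the injectivity of the deletion map and the product decomposition of the enumeration are then immediate.
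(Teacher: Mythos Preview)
Your proof is correct and follows essentially the same approach as the paper: both identify co-irreducibility with having at most one maximal internal node via the description of $\Covered_\Up$ from Proposition~\ref{prop:prefix_tree_poset}, characterize this as the stringy condition, and then count stringy trees by encoding them as a chain of letters with attachment positions. You are slightly more explicit than the paper in justifying the injectivity of $u \mapsto \DelNode{\TreeT}{u}$ and the equivalence between ``stringy'' and ``at most one maximal internal node'', but the argument is the same.
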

\begin{proof}
    We use here Proposition~\ref{prop:prefix_tree_poset} and its description of the covering
    relation $\Covered_\Up$ of the $\GeneratingSet$-prefix poset in terms of deletion of
    maximal nodes.  First, if $\TreeT$ is a stringy $\GeneratingSet$-tree different from the
    leaf, by definition of stringy trees, $\TreeT$ admits exactly one maximal internal node
    $u$.  Therefore, $\DelNode{\TreeT}{u}$ is the only tree covered by $\TreeT$. Conversely,
    if $\TreeT$ covers exactly one $\GeneratingSet$-tree $\TreeT'$, then $\TreeT$ has only
    one maximal internal node.  This implies that $\TreeT$ is stringy. This establishes the
    first part of the statement.
    \smallbreak

    By definition, a stringy $\GeneratingSet$-tree $\TreeT$ decomposes as
    \begin{equation}
        \TreeT =
        \GenA_1 \circ_{i_1} \Par{\GenA_2 \circ_{i - 2} \Par {\dots
         \Par{\GenA_{d - 1} \circ_{i_{d - 1}} \GenA_d} \dots}}
    \end{equation}
    where $\Par{\GenA_1, \dots, \GenA_d}$ is a sequence of elements of $\GeneratingSet$ and
    $\Par{i_1, \dots, i_{d - 1}}$ is a sequence of indices satisfying $i_j \in
    \Han{\Brr{\GenA_j}}$ for any $j \in [d - 1]$.  This tree $\TreeT$ is moreover entirely
    specified by these two sequences.  For this reason, by denoting by $\theta(d)$ the
    number of stringy $\GeneratingSet$-trees of degree $d \geq 0$, we have
    \begin{equation} \begin{split}
        \theta(d)
        & =
        \sum_{\Par{\GenA_1, \dots, \GenA_d} \in \GeneratingSet^d}
        \Brr{\GenA_1} \dots \Brr{\GenA_{d - 1}} \\
        & =
        \Par{\sum_{\GenA \in \GeneratingSet} |\GenA|}^{d- 1}
        (\# \GeneratingSet) \\
        & =
        \RankSeries_\GeneratingSet'(1)^{d - 1}
        \RankSeries_\GeneratingSet(1).
    \end{split} \end{equation}
    This shows the second part of the statement.
\end{proof}
\medbreak

Here are the sequences of the first numbers of stringy $\GeneratingSet$-trees:
\begin{subequations}
\begin{equation}
    1, 1, 2, 4, 8, 16, 32, 64,
    \qquad \mbox{ for } \GeneratingSet = \{\GenA\} \mbox{ with } |\GenA| = 2,
\end{equation}
\begin{equation}
    1, 1, 3, 9, 27, 81, 243, 729,
    \qquad \mbox{ for } \GeneratingSet = \{\GenC\} \mbox{ with } |\GenC| = 3,
\end{equation}
\begin{equation}
    1, 2, 8, 32, 128, 512, 2048, 8192,
    \qquad \mbox{ for } \GeneratingSet = \{\GenA, \GenB\} \mbox{ with } |\GenA| = |\GenB|
    = 2,
\end{equation}
\begin{equation}
    1, 2, 10, 50, 250, 1250, 6250, 31250,
    \qquad \mbox{ for } \GeneratingSet = \{\GenA, \GenC\} \mbox{ with } |\GenA| =2, |\GenC|
    = 3,
\end{equation}
\end{subequations}
\medbreak

A \Def{$\GeneratingSet$-forest} is a nonempty word of $\GeneratingSet$-trees. The
\Def{length} of a $\GeneratingSet$-forest is the number of trees it contains. If $\TreeS
\OrderPrefixes \TreeT$, the \Def{difference} between $\TreeT$ and $\TreeS$ is the
$\GeneratingSet$-forest $\TreeT \setminus \TreeS := (\TreeR_1, \dots, \TreeR_{|\TreeS|})$
such that $\TreeR_1$, \dots, $\TreeR_{|\TreeS|}$ are the unique $\GeneratingSet$-trees such
that
\begin{math}
    \TreeT = \TreeS \circ \Han{\TreeR_1, \dots, \TreeR_{|\TreeS|}}.
\end{math}
Moreover, from any $\GeneratingSet$-forest $\Par{\TreeR_1, \dots, \TreeR_k}$, we denote by
$\lozenge_k \BPar{\TreeR_1, \dots, \TreeR_k}$ the $\GeneratingSet_{\lozenge_k}$-tree
obtained by grafting the $\GeneratingSet$-trees $\TreeR_1$, \dots, $\TreeR_k$ to a root
decorated by the letter $\lozenge_k$ of arity $k$, where $\GeneratingSet_{\lozenge_k}$ is the
alphabet~$\GeneratingSet \sqcup \Bra{\lozenge_k}$.
\medbreak

\begin{Proposition} \label{prop:prefix_tree_poset_interval_decomposition}
    Let $\GeneratingSet$ be a finite alphabet, and $\TreeS$ and $\TreeT$ be two
    $\GeneratingSet$-trees such that $\TreeS \OrderPrefixes \TreeT$. As subposets of
    $\Par{\SyntaxTreesInternalNode(\GeneratingSet), \OrderPrefixes}$, one has the poset
    isomorphisms
    \begin{equation}
        \Han{\TreeS, \TreeT}
        \simeq \Han{\Leaf, \TreeR_1} \times \dots \times \Han{\Leaf, \TreeR_{|\TreeS|}}
        \simeq \Han{\Corolla\Par{\lozenge_{|\TreeS|}}, \lozenge_{|\TreeS|}
        \BPar{\TreeR_1, \dots, \TreeR_{|\TreeS|}}},
    \end{equation}
    where $\Par{\TreeR_1, \dots, \TreeR_{|\TreeS|}}$ is the $\GeneratingSet$-forest
    $\TreeT \setminus \TreeS$.
\end{Proposition}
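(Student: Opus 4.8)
The plan is to reduce both claimed isomorphisms to a single componentwise principle: grafting a fixed common prefix $\TreeS$ onto a tuple of trees turns the prefix order into the product of the prefix orders on the grafted pieces. First I would set up the candidate map $\Phi \colon \Han{\TreeS, \TreeT} \to \Han{\Leaf, \TreeR_1} \times \dots \times \Han{\Leaf, \TreeR_{|\TreeS|}}$. For $\TreeU \in \Han{\TreeS, \TreeT}$, since $\TreeS \OrderPrefixes \TreeU$, Proposition~\ref{prop:prefix_tree_poset} together with the definition of the difference forest provides a unique $\GeneratingSet$-forest $\TreeU \setminus \TreeS = \Par{\TreeU_1, \dots, \TreeU_{|\TreeS|}}$ with $\TreeU = \TreeS \circ \Han{\TreeU_1, \dots, \TreeU_{|\TreeS|}}$, and I set $\Phi(\TreeU) := \Par{\TreeU_1, \dots, \TreeU_{|\TreeS|}}$. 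The candidate inverse sends $\Par{\TreeV_1, \dots, \TreeV_{|\TreeS|}}$ to $\TreeS \circ \Han{\TreeV_1, \dots, \TreeV_{|\TreeS|}}$.

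The heart of the argument is the following claim, proved by structural induction on $\TreeS$ using the recursive description of $\OrderPrefixes$ from Lemma~\ref{lem:prefix_recursive}: for any $\GeneratingSet$-trees $\TreeU_1, \dots, \TreeU_{|\TreeS|}$ and $\TreeU'_1, \dots, \TreeU'_{|\TreeS|}$, one has $\TreeS \circ \Han{\TreeU_1, \dots, \TreeU_{|\TreeS|}} \OrderPrefixes \TreeS \circ \Han{\TreeU'_1, \dots, \TreeU'_{|\TreeS|}}$ if and only if $\TreeU_i \OrderPrefixes \TreeU'_i$ for every $i \in [|\TreeS|]$. When $\TreeS = \Leaf$ both sides reduce to $\TreeU_1 \OrderPrefixes \TreeU'_1$. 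When $\TreeS = \GenA \BPar{\TreeS_1, \dots, \TreeS_{|\GenA|}}$, the leaves of $\TreeS$ split into consecutive blocks according to the subtrees $\TreeS_j$, so that $\TreeS \circ \Han{\TreeU_1, \dots, \TreeU_{|\TreeS|}} = \GenA \BPar{\TreeS_1 \circ \Han{\dots}, \dots, \TreeS_{|\GenA|} \circ \Han{\dots}}$, the $j$-th argument grafting the $j$-th block; both trees then have root $\GenA$, so Lemma~\ref{lem:prefix_recursive} reduces the prefix relation to the relations between the $j$-th subtrees, and the induction hypothesis applied to each $\TreeS_j$ yields exactly the componentwise condition on the blocks.

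Granting the claim, $\Phi$ is well-defined and an order isomorphism: applying the claim with the second tuple equal to $\TreeT \setminus \TreeS = \Par{\TreeR_1, \dots, \TreeR_{|\TreeS|}}$ shows that $\TreeU \OrderPrefixes \TreeT$ forces $\TreeU_i \OrderPrefixes \TreeR_i$, so $\Phi(\TreeU)$ lands in the product of intervals; the candidate inverse lands back in $\Han{\TreeS, \TreeT}$ for the same reason; the two maps are mutually inverse by uniqueness of the difference forest; and the biconditional of the claim is precisely the statement that $\Phi$ and its inverse are order-preserving. This gives the first isomorphism. The second isomorphism is then just the first applied to the augmented alphabet $\GeneratingSet_{\lozenge_{|\TreeS|}}$, taking $\Corolla\Par{\lozenge_{|\TreeS|}}$ as lower bound and $\lozenge_{|\TreeS|} \BPar{\TreeR_1, \dots, \TreeR_{|\TreeS|}} = \Corolla\Par{\lozenge_{|\TreeS|}} \circ \Han{\TreeR_1, \dots, \TreeR_{|\TreeS|}}$ as upper bound, whose difference forest is again $\Par{\TreeR_1, \dots, \TreeR_{|\TreeS|}}$; here one also notes that, since each $\TreeR_i$ is a $\GeneratingSet$-tree, no prefix of it can use the letter $\lozenge_{|\TreeS|}$, so the intervals $\Han{\Leaf, \TreeR_i}$ agree in the $\GeneratingSet$- and $\GeneratingSet_{\lozenge_{|\TreeS|}}$-prefix posets.

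The main obstacle is the order-reflecting half of the induction claim, namely deducing componentwise prefix relations from a single prefix relation between fully grafted trees; this is where the bookkeeping of how the global leaf indexing of $\TreeS$ distributes into the subtrees $\TreeS_j$ must be tracked carefully, although Lemma~\ref{lem:prefix_recursive} makes the reduction essentially automatic once the block decomposition of the full composition is recorded.
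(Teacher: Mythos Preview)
Your proof is correct and follows essentially the same approach as the paper: both define the map $\TreeU \mapsto \TreeU \setminus \TreeS$ with inverse $\Par{\TreeV_i} \mapsto \TreeS \circ \Han{\TreeV_1, \dots, \TreeV_{|\TreeS|}}$, and both rely on Lemma~\ref{lem:prefix_recursive} to reduce the prefix relation between grafted trees to componentwise prefix relations. Your version is actually more explicit in spelling out the structural induction on $\TreeS$ behind that reduction, which the paper compresses into a bare citation of Lemma~\ref{lem:prefix_recursive}.
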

\begin{proof}
    We use here Proposition~\ref{prop:prefix_tree_poset} and its description of the order
    relation $\OrderPrefixes$ of the $\GeneratingSet$-prefix poset in terms of prefixes of
    $\GeneratingSet$-trees.  Let us call $\PosetP$ the poset
    \begin{math}
        \Han{\Leaf, \TreeR_1} \times \dots \times \Han{\Leaf, \TreeR_{|\TreeS|}}
    \end{math}
    and let us denote by $\Leq$ its partial order relation. Let
    \begin{math}
        \psi : \Han{\TreeS, \TreeT} \to \PosetP
    \end{math}
    be the map defined for any $\GeneratingSet$-tree $\TreeU \in \Han{\TreeS, \TreeT}$ by
    $\psi(\TreeU) := \Par{\TreeU_1, \dots, \TreeU_{|\TreeS|}}$ where
    \begin{math}
        \Par{\TreeU_1, \dots, \TreeU_{|\TreeS|}} = \TreeU \setminus \TreeS.
    \end{math}
    This map is well-defined because by Lemma~\ref{lem:prefix_recursive}, $\TreeU_i$ is a
    prefix of $\TreeR_i$ for any $i \in \Han{|\TreeS|}$. Since $\psi$ admits as inverse the
    map $\psi^{-1}$ satisfying
    \begin{math}
        \psi^{-1}\Par{\Par{\TreeU_1, \dots, \TreeU_{|\TreeS|}}}
        = \TreeS \circ \Han{\TreeU_1, \dots, \TreeU_{|\TreeS|}},
    \end{math}
    $\psi$ is a bijection.  Assume that $x := \Par{\TreeU_1, \dots, \TreeU_{|\TreeS|}}$ and
    $y := \Par{\TreeV_1, \dots, \TreeV_{|\TreeS|}}$ are elements of $\PosetP$. Now, $x \Leq
    y$ is equivalent to the fact that $\TreeU_i \OrderPrefixes \TreeV_i$ for all $i \in
    [|\TreeS|]$. This, again by Lemma~\ref{lem:prefix_recursive}, is in turn equivalent to
    the fact that
    \begin{math}
        \TreeS \circ \Han{\TreeU_1, \dots, \TreeU_{|\TreeS|}}
        \OrderPrefixes
        \TreeS \circ \Han{\TreeV_1, \dots, \TreeV_{|\TreeS|}},
    \end{math}
    that is
    \begin{math}
        \psi^{-1}(x) \OrderPrefixes \psi^{-1}(y).
    \end{math}
    Therefore, this establishes the first isomorphism of the statement of the proposition.
    \smallbreak

    Let us call $\PosetQ$ the poset
    \begin{math}
        \Han{\Corolla\Par{\lozenge_{|\TreeS|}},
        \lozenge_{|\TreeS|} \BPar{\TreeR_1, \dots, \TreeR_{|\TreeS|}}}
    \end{math}
    and let $\psi' : \PosetP \to \PosetQ$ be the map defined for any $\Par{\TreeU_1, \dots,
    \TreeU_{|\TreeS|}} \in \PosetP$ by
    \begin{math}
        \psi'\Par{\Par{\TreeU_1, \dots, \TreeU_{|\TreeS|}}}
        := \lozenge_{|\TreeS|} \BPar{\TreeU_1, \dots, \TreeU_{|\TreeS|}}.
    \end{math}
    Again by Lemma~\ref{lem:prefix_recursive}, it follows that $\psi'$ is a well-defined
    map, which is additionally  a bijection, and  a poset embedding.
\end{proof}
\medbreak

For instance, by considering the same alphabet $\GeneratingSet$ as in the previous examples,
Proposition~\ref{prop:prefix_tree_poset_interval_decomposition} says that one has the
isomorphism
\begin{equation}
    \Han{
    \begin{tikzpicture}[Centering,xscale=0.18,yscale=0.2]
        \node(0)at(0.00,-4.00){};
        \node(2)at(2.00,-6.00){};
        \node(4)at(4.00,-6.00){};
        \node(6)at(5.00,-2.00){};
        \node(7)at(6.00,-2.00){};
        \node[NodeST](1)at(1.00,-2.00){$\GenA$};
        \node[NodeST](3)at(3.00,-4.00){$\GenA$};
        \node[NodeST](5)at(5.00,0.00){$\GenC$};
        \draw[Edge](0)--(1);
        \draw[Edge](1)--(5);
        \draw[Edge](2)--(3);
        \draw[Edge](3)--(1);
        \draw[Edge](4)--(3);
        \draw[Edge](6)--(5);
        \draw[Edge](7)--(5);
        \node(r)at(5.00,2){};
        \draw[Edge](r)--(5);
    \end{tikzpicture},
    \begin{tikzpicture}[Centering,xscale=0.19,yscale=0.12]
        \node(0)at(0.00,-10.80){};
        \node(10)at(7.00,-3.60){};
        \node(11)at(8.00,-10.80){};
        \node(13)at(9.00,-10.80){};
        \node(14)at(10.00,-10.80){};
        \node(17)at(12.00,-10.80){};
        \node(2)at(1.00,-10.80){};
        \node(3)at(2.00,-10.80){};
        \node(5)at(4.00,-10.80){};
        \node(8)at(6.00,-14.40){};
        \node[NodeST](1)at(1.00,-7.20){$\GenC$};
        \node[NodeST](12)at(9.00,-7.20){$\GenC$};
        \node[NodeST](15)at(11.00,-3.60){$\GenA$};
        \node[NodeST](16)at(12.00,-7.20){$\GenE$};
        \node[NodeST](4)at(3.00,-3.60){$\GenA$};
        \node[NodeST](6)at(5.00,-7.20){$\GenA$};
        \node[NodeST](7)at(6.00,-10.80){$\GenE$};
        \node[NodeST](9)at(7.00,0.00){$\GenC$};
        \draw[Edge](0)--(1);
        \draw[Edge](1)--(4);
        \draw[Edge](10)--(9);
        \draw[Edge](11)--(12);
        \draw[Edge](12)--(15);
        \draw[Edge](13)--(12);
        \draw[Edge](14)--(12);
        \draw[Edge](15)--(9);
        \draw[Edge](16)--(15);
        \draw[Edge](17)--(16);
        \draw[Edge](2)--(1);
        \draw[Edge](3)--(1);
        \draw[Edge](4)--(9);
        \draw[Edge](5)--(6);
        \draw[Edge](6)--(4);
        \draw[Edge](7)--(6);
        \draw[Edge](8)--(7);
        \node(r)at(7.00,3.25){};
        \draw[Edge](r)--(9);
    \end{tikzpicture}}
    \simeq
    \Han{
    \begin{tikzpicture}[Centering,xscale=0.19,yscale=0.17]
        \node(0)at(0.00,-3.00){};
        \node(1)at(1.00,-3.00){};
        \node(3)at(2.00,-3.00){};
        \node(4)at(3.00,-3.00){};
        \node(5)at(4.00,-3.00){};
        \node[NodeST](2)at(2.00,0.00){$\lozenge_5$};
        \draw[Edge](0)--(2);
        \draw[Edge](1)--(2);
        \draw[Edge](3)--(2);
        \draw[Edge](4)--(2);
        \draw[Edge](5)--(2);
        \node(r)at(2.00,2.5){};
        \draw[Edge](r)--(2);
    \end{tikzpicture},
    \begin{tikzpicture}[Centering,xscale=0.2,yscale=0.12]
        \node(0)at(0.00,-8.00){};
        \node(11)at(7.00,-12.00){};
        \node(12)at(8.00,-12.00){};
        \node(15)at(10.00,-12.00){};
        \node(2)at(1.00,-8.00){};
        \node(3)at(2.00,-8.00){};
        \node(4)at(3.00,-4.00){};
        \node(7)at(4.00,-8.00){};
        \node(8)at(5.00,-4.00){};
        \node(9)at(6.00,-12.00){};
        \node[NodeST](1)at(1.00,-4.00){$\GenC$};
        \node[NodeST](10)at(7.00,-8.00){$\GenC$};
        \node[NodeST](13)at(9.00,-4.00){$\GenA$};
        \node[NodeST](14)at(10.00,-8.00){$\GenE$};
        \node[NodeST](5)at(4.00,0.00){$\lozenge_5$};
        \node[NodeST](6)at(4.00,-4.00){$\GenE$};
        \draw[Edge](0)--(1);
        \draw[Edge](1)--(5);
        \draw[Edge](10)--(13);
        \draw[Edge](11)--(10);
        \draw[Edge](12)--(10);
        \draw[Edge](13)--(5);
        \draw[Edge](14)--(13);
        \draw[Edge](15)--(14);
        \draw[Edge](2)--(1);
        \draw[Edge](3)--(1);
        \draw[Edge](4)--(5);
        \draw[Edge](6)--(5);
        \draw[Edge](7)--(6);
        \draw[Edge](8)--(5);
        \draw[Edge](9)--(10);
        \node(r)at(4.00,4){};
        \draw[Edge](r)--(5);
    \end{tikzpicture}}
\end{equation}
between respectively an interval of $\Par{\SyntaxTreesInternalNode(\GeneratingSet),
\OrderPrefixes}$ and an interval of
$\Par{\SyntaxTreesInternalNode\Par{\GeneratingSet_{\lozenge_5}}, \OrderPrefixes}$.
\medbreak

A \Def{shadow} is defined recursively as being a (possibly empty) finite multiset $\lbag
s_1, \dots, s_k \rbag$ of shadows. A shadow encodes hence a nonplanar undecorated rooted
tree. For any $\GeneratingSet$-tree $\TreeT$ different from the leaf, we construct the
shadow $\Shadow(\TreeT)$ recursively by
\begin{equation}
    \Shadow(\TreeT) :=
    \lbag \Shadow(\TreeT(i)) : i \in [k] \mbox{ and } \TreeT(i) \ne \Leaf \rbag,
\end{equation}
where $k$ is the arity of the root of $\TreeT$. For instance,
\begin{equation} \label{equ:example_shadow}
    \Shadow\Par{
    \begin{tikzpicture}[Centering,xscale=0.19,yscale=0.12]
        \node(0)at(0.00,-7.00){};
        \node(10)at(7.00,-10.50){};
        \node(11)at(7.00,-7.00){};
        \node(13)at(9.00,-7.00){};
        \node(2)at(2.00,-7.00){};
        \node(5)at(3.00,-10.50){};
        \node(7)at(4.50,-7.00){};
        \node(8)at(5.00,-10.50){};
        \node[NodeST](1)at(1.00,-3.50){$\GenA$};
        \node[NodeST](12)at(8.00,-3.50){$\GenA$};
        \node[NodeST](3)at(4.50,0.00){$\GenC$};
        \node[NodeST](4)at(3.00,-7.00){$\GenE$};
        \node[NodeST](6)at(4.50,-3.50){$\GenC$};
        \node[NodeST](9)at(6.00,-7.00){$\GenA$};
        \draw[Edge](0)--(1);
        \draw[Edge](1)--(3);
        \draw[Edge](10)--(9);
        \draw[Edge](11)--(12);
        \draw[Edge](12)--(3);
        \draw[Edge](13)--(12);
        \draw[Edge](2)--(1);
        \draw[Edge](4)--(6);
        \draw[Edge](5)--(4);
        \draw[Edge](6)--(3);
        \draw[Edge](7)--(6);
        \draw[Edge](8)--(9);
        \draw[Edge](9)--(6);
        \node(r)at(4.50,3.5){};
        \draw[Edge](r)--(3);
    \end{tikzpicture}}
    = \lbag \lbag \emptyset, \emptyset\rbag, \emptyset, \emptyset \rbag
    =
    \begin{tikzpicture}[Centering,xscale=0.17,yscale=0.19]
        \node[Node](0)at(0.00,-4.00){};
        \node[Node](2)at(2.00,-4.00){};
        \node[Node](4)at(3.00,-2.00){};
        \node[Node](5)at(5.00,-2.00){};
        \node[Node](1)at(1.00,-2.00){};
        \node[Node](3)at(3.00,0.00){};
        \draw[Edge](0)--(1);
        \draw[Edge](1)--(3);
        \draw[Edge](2)--(1);
        \draw[Edge](4)--(3);
        \draw[Edge](5)--(3);
        \node(r)at(3,1.75){};
        \draw[Edge](r)--(3);
    \end{tikzpicture}.
\end{equation}
\medbreak

Given a shadow $s$, the \Def{poset induced} by $s$ is the poset $\ShadowPoset{s}$ on its set
of nodes different from the root wherein a node $u$ is smaller than a node $v$ if $u$ is an
ancestor of $v$. In other words, $\ShadowPoset{s}$ is the poset having as Hasse diagram the
nonplanar rooted tree $s$ without its root.  Besides, we say that a poset $(\PosetP, \Leq)$
is a \Def{forest poset} if $x \Leq y$ and $x' \Leq y$ imply $x \Leq x'$ or $x' \Leq x$ for
all $x, x', y \in \PosetP$. Observe that any poset induced by a shadow is a forest poset and
conversely, for any forest poset $\PosetP$, there is a shadow $s$ such that $\PosetP$ and
$\ShadowPoset{s}$ are isomorphic.
\medbreak

For any poset $(\PosetP, \Leq)$, let $(\JLattice{\PosetP}, \Meet, \JJoin)$ be the lattice
wherein
\begin{equation}
    \JLattice{\PosetP} := \Bra{X \subseteq \PosetP :
    x \in X \mbox{ and } y \in \PosetP \mbox{ and }y \Leq x \mbox{ imply } y \in X},
\end{equation}
and the operation $\Meet$ (resp. $\JJoin$) is the intersection (resp. the union) of the
sets. In other words, $\JLattice{\PosetP}$ is the set of all order ideals of $\PosetP$
ordered by inclusion. The \Def{Fundamental theorem for distributive lattices}
(see~\cite{Sta11}) states that for any finite distributive lattice $\LatticeL$, there exists
a unique finite poset $\PosetP$ such that $\LatticeL$ and $\JLattice{\PosetP}$ are
isomorphic as lattices.  An element $x$ of a finite lattice $\LatticeL$ is
\Def{join-irreducible} if $x$ covers exactly one element. It is known that the set of all
join-irreducible elements of $\JLattice{\PosetP}$ forms a subposet of $\JLattice{\PosetP}$
which is isomorphic as a poset to~$\PosetP$.
\medbreak

\begin{Lemma} \label{lem:join_irreducible_shadow_lattices}
    Let $s$ be a shadow. The set of join-irreducible elements of the lattice
    $\JLattice{\ShadowPoset{s}}$ is the set of the nonempty saturated chains of
    $\ShadowPoset{s}$.
\end{Lemma}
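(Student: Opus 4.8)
The plan is to first recall the standard identification of the join-irreducible elements of a lattice of order ideals, and then to read it off through the forest structure of $\ShadowPoset{s}$.

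For an arbitrary finite poset $\Par{\PosetP, \Leq}$ I would begin by showing that an order ideal $I \in \JLattice{\PosetP}$ covers, in $\JLattice{\PosetP}$, exactly the ideals of the form $I \setminus \Bra{m}$ where $m$ is a maximal element of $I$. Indeed $I \setminus \Bra{m}$ is again down-closed, since no element of $I$ lies strictly above a maximal $m$; and conversely if $I' \subsetneq I$, then any maximal element $m$ of $I$ missing from $I'$ yields $I' \subseteq I \setminus \Bra{m} \subsetneq I$, so the only ideals covered by $I$ are the $I \setminus \Bra{m}$. Hence $I$ covers exactly one element, that is, $I$ is join-irreducible, precisely when $I$ has a unique maximal element $x$; and an order ideal with a single maximal element $x$ must coincide with the principal ideal $\Bra{y \in \PosetP : y \Leq x}$. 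This establishes that the join-irreducible elements of $\JLattice{\PosetP}$ are exactly the principal ideals, which recovers the general fact recalled just before the statement.

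It remains to translate principal ideals of $\ShadowPoset{s}$ into saturated chains, which is where the tree structure enters and which I expect to be the delicate point. As recalled before the statement, $\ShadowPoset{s}$ is a forest poset, so $y \Leq x$ and $y' \Leq x$ force $y$ and $y'$ to be comparable; consequently the principal ideal $\Bra{y : y \Leq x}$ is totally ordered. Using that the Hasse diagram of $\ShadowPoset{s}$ is the tree $s$ deprived of its root, this principal ideal is precisely the set of ancestors of $x$ together with $x$, namely the unique path descending from a minimal element of $\ShadowPoset{s}$ down to $x$, which is a nonempty saturated chain. Conversely, a nonempty saturated chain issuing from a minimal element is a down-set with a unique maximal element, hence a principal ideal and thus join-irreducible; and in the forest poset every order ideal that happens to be a chain is of exactly this form, since its bottom element must be minimal in $\ShadowPoset{s}$ (otherwise a strict ancestor would lie in the ideal while being comparable to, and distinct from, every chain element) and no covering gap can occur (otherwise an intermediate element would again lie in the ideal). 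Combining the two inclusions identifies the join-irreducible elements of $\JLattice{\ShadowPoset{s}}$ with the nonempty saturated chains of $\ShadowPoset{s}$, as claimed. The main obstacle is precisely this last identification, i.e. verifying that in the forest poset the notions ``principal ideal'', ``order ideal that is a chain'', and ``nonempty saturated chain issuing from a minimal element'' all coincide.
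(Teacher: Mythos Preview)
Your proof is correct and essentially matches the paper's argument: both identify the join-irreducible order ideals as those with a unique maximal element and then use the forest property of $\ShadowPoset{s}$ to see that such ideals are exactly the chains (necessarily saturated and starting from a minimal element). Your version is a bit more explicit, factoring through the general fact that the join-irreducibles of $\JLattice{\PosetP}$ are the principal ideals and carefully spelling out why in this forest poset ``principal ideal'', ``order ideal that is a chain'', and ``nonempty saturated chain from a minimal element'' all coincide; the paper's proof does the same case analysis (chain vs.\ non-chain) more directly but leaves this identification implicit.
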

\begin{proof}
    Let us denote by $\Covered$ the covering relation of the poset $\ShadowPoset{s}$. First,
    $\emptyset$ is not a join-irreducible element of $\JLattice{\ShadowPoset{s}}$ since
    $\emptyset$ covers no elements.  Any nonempty saturated chain
    \begin{math}
        x_1 \Covered \cdots \Covered x_{\ell - 1} \Covered x_\ell
    \end{math}
    of $\ShadowPoset{s}$ covers exactly the chain
    \begin{math}
        x_1 \Covered \cdots \Covered x_{\ell - 1},
    \end{math}
    so that any nonempty saturated chain is join-irreducible. Finally, if $X$ is an element
    of $\JLattice{\ShadowPoset{s}}$ which is not a chain, there are $x, x' \in X$ such that
    $x$ and $x'$ are incomparable in $\ShadowPoset{s}$. Since $\ShadowPoset{s}$ is a forest
    poset, we can assume that $x$ and $x'$ are maximal elements of $X$. Hence, $X$ covers
    the elements $X \setminus \{x\}$ and $X \setminus \Bra{x'}$ of
    $\JLattice{\ShadowPoset{s}}$. This shows that $X$ is not join-irreducible and
    establishes the statement of the lemma.
\end{proof}
\medbreak

\begin{Lemma} \label{lem:join_irreducible_intervals}
    For any finite alphabet $\GeneratingSet$ and any $\GeneratingSet$-trees $\TreeR_1$,
    \dots, $\TreeR_k$, the set of join-irreducible elements of the lattice
    \begin{math}
        \Han{\Corolla\Par{\lozenge_k}, \lozenge_k \BPar{\TreeR_1, \dots, \TreeR_k}}
    \end{math}
    is the set of all $\GeneratingSet_{\lozenge_k}$-trees of the form
    $\Corolla\Par{\lozenge_k} \circ_i \TreeR'_i$ where $i \in [k]$ and $\TreeR'_i$ is a
    stringy tree, different from the leaf, and a prefix of $\TreeR_i$.
\end{Lemma}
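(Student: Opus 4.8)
The claim is about identifying the join-irreducible elements of the interval $\Han{\Corolla\Par{\lozenge_k}, \lozenge_k \BPar{\TreeR_1, \dots, \TreeR_k}}$ in the prefix poset. The plan is to combine two facts already established. First, by Proposition~\ref{prop:prefix_tree_poset_interval_decomposition}, this interval is isomorphic as a poset to the product $\Han{\Leaf, \TreeR_1} \times \dots \times \Han{\Leaf, \TreeR_k}$. Second, by Proposition~\ref{prop:prefix_tree_poset_meet_semilattice}, each factor $\Han{\Leaf, \TreeR_i}$ is a distributive lattice, so the whole interval is a distributive lattice and the Fundamental theorem for distributive lattices applies. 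Thus I would work through the correspondence with order ideals and the shadow machinery.

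**Reducing to shadows and a single factor.** The key observation is that, for a fixed $\GeneratingSet$-tree $\TreeR$, the interval $\Han{\Leaf, \TreeR}$ should be isomorphic as a lattice to $\JLattice{\ShadowPoset{\Shadow(\TreeR)}}$: an element $\TreeU$ of the interval is a prefix of $\TreeR$, which is exactly the data of an order ideal of the poset of internal nodes of $\TreeR$ (equivalently, of the shadow $\Shadow(\TreeR)$), via the meet and join operations $\Meet$ and $\JJoin$ computed componentwise as in Proposition~\ref{prop:prefix_tree_poset_meet_semilattice}. I would state and briefly justify this isomorphism, which matches order ideals of $\ShadowPoset{\Shadow(\TreeR)}$ with subtrees of $\TreeR$ sharing its root. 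Under this identification, Lemma~\ref{lem:join_irreducible_shadow_lattices} tells us that the join-irreducible elements of $\Han{\Leaf, \TreeR}$ are exactly the nonempty saturated chains of $\ShadowPoset{\Shadow(\TreeR)}$, which correspond precisely to the stringy $\GeneratingSet$-trees, different from the leaf, that are prefixes of $\TreeR$.

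**Transferring join-irreducibility through the product.** The next step handles the product structure. In a product of lattices $\LatticeL_1 \times \dots \times \LatticeL_k$, an element is join-irreducible if and only if it has the form $\Par{\hat 0, \dots, \hat 0, x, \hat 0, \dots, \hat 0}$ with a single non-minimal coordinate $x$ that is itself join-irreducible in its factor $\LatticeL_i$. Applying this to the isomorphism of Proposition~\ref{prop:prefix_tree_poset_interval_decomposition}, a join-irreducible element of the full interval is a tuple which is the minimal element $\Leaf$ in all but one coordinate $i$, where it equals a join-irreducible element of $\Han{\Leaf, \TreeR_i}$, namely a stringy prefix $\TreeR'_i$ of $\TreeR_i$ different from the leaf. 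Translating such a tuple back under the inverse isomorphism $\psi'^{-1} \circ \psi^{-1}$, grafting a single stringy subtree in position $i$ and leaves elsewhere yields precisely the tree $\Corolla\Par{\lozenge_k} \circ_i \TreeR'_i$, which gives the stated description.

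**Expected main obstacle.** The routine verification that $\Han{\Leaf, \TreeR} \simeq \JLattice{\ShadowPoset{\Shadow(\TreeR)}}$ as lattices is the technical heart, since it requires carefully matching prefixes of $\TreeR$ with order ideals and checking that $\Meet$, $\JJoin$ agree with intersection and union of ideals; the componentwise recursion in the definitions of $\Meet$ and $\JJoin$ makes this an induction on tree structure. The genuinely delicate point, and the one I would be most careful about, is verifying that the saturated chains of $\ShadowPoset{\Shadow(\TreeR)}$ correspond exactly to the stringy prefixes: a saturated chain in the shadow poset is a root-to-node path where each step descends to a single child, and I must confirm that this matches the definition of a stringy tree (every internal node having at most one internal-node child) rather than merely a path in $\TreeR$ itself. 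Once that correspondence is pinned down, the product argument is formal and the conclusion follows directly.
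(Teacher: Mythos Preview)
Your strategy is sound and genuinely different from the paper's. The paper argues directly: it checks, by a short case analysis on the shape of a tree in the interval, when such a tree covers exactly one element (stringy single branch), zero elements (the bottom $\Corolla(\lozenge_k)$), or at least two (non-stringy branch, or two non-leaf branches). You instead route through the product decomposition of Proposition~\ref{prop:prefix_tree_poset_interval_decomposition}, reduce to identifying the join-irreducibles of each factor $\Han{\Leaf,\TreeR_i}$, and then use the general fact about join-irreducibles in a product. Your approach is more structural and reuses Lemma~\ref{lem:join_irreducible_shadow_lattices}; the paper's is more elementary and self-contained.

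There is, however, a genuine glitch in your factor step. The claimed isomorphism $\Han{\Leaf,\TreeR} \simeq \JLattice{\ShadowPoset{\Shadow(\TreeR)}}$ is false as stated: by definition $\ShadowPoset{\Shadow(\TreeR)}$ is the poset on the \emph{non-root} internal nodes of $\TreeR$, so its lattice of ideals is isomorphic to $\Han{\Corolla(\GenA),\TreeR}$ (where $\GenA$ is the root label), not to $\Han{\Leaf,\TreeR}$. Concretely, for $\TreeR = \Corolla(\GenA)$ one gets a two-element interval but a one-element ideal lattice. If you carry this through Lemma~\ref{lem:join_irreducible_shadow_lattices}, the nonempty saturated chains of $\ShadowPoset{\Shadow(\TreeR)}$ miss exactly the corolla $\Corolla(\GenA)$, which \emph{is} join-irreducible in $\Han{\Leaf,\TreeR}$. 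The fix is easy: match prefixes of $\TreeR$ with order ideals of $\TreePoset{\TreeR}$ (the poset on \emph{all} internal nodes), equivalently with $\JLattice{\ShadowPoset{\Shadow(\lozenge_1 \circ_1 \TreeR)}}$; then Lemma~\ref{lem:join_irreducible_shadow_lattices} yields exactly the stringy non-leaf prefixes of $\TreeR$, and your product argument goes through. Alternatively, skip shadows for this step and observe directly (as in Proposition~\ref{prop:prefix_tree_poset_co_irreducible}) that an element of $\Han{\Leaf,\TreeR}$ covers exactly one element iff it has a unique maximal internal node, i.e., is stringy and not the leaf. One last caution: do not invoke Proposition~\ref{prop:shadow_intervals} for the factor isomorphism, since in the paper that proposition is proved \emph{using} the present lemma; your proposed direct verification (prefixes $\leftrightarrow$ order ideals, with $\Meet$ and $\JJoin$ matching $\cap$ and $\cup$) is the right way to stay non-circular.
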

\begin{proof}
    We use here Proposition~\ref{prop:prefix_tree_poset} and its descriptions of the order
    relation $\OrderPrefixes$ and of the covering relation $\Covered_\Up$ of the
    $\GeneratingSet$-prefix poset respectively in terms of prefixes of
    $\GeneratingSet$-trees and of deletion of maximal nodes.  Let $\TreeT :=
    \Corolla\Par{\lozenge_k} \circ_i \TreeR'_i$.  Since $\TreeR'_i$ is stringy, $\TreeT$
    also is. For this reason, $\TreeT$ covers at most one element in
    \begin{math}
        \Han{\Corolla\Par{\lozenge_k}, \lozenge_k \BPar{\TreeR_1, \dots, \TreeR_k}}.
    \end{math} 
    Moreover, due to the fact that $\TreeR'_i$ is by hypothesis different from the leaf,
    there is a $\GeneratingSet$-tree $\TreeR''$, a $j \in [|\TreeR''|]$, and a letter $\GenA
    \in \GeneratingSet$ such that $\TreeR'_i = \TreeR'' \circ_j \GenA$. Now, by using
    Relation~\eqref{equ:operad_axiom_1} satisfied by the partial composition maps of
    $\SyntaxTreesInternalNode\Par{\GeneratingSet_{\lozenge_k}}$, we have
    \begin{equation}
        \TreeT
        = \Corolla\Par{\lozenge_k} \circ_i \TreeR'_i
        = \Corolla\Par{\lozenge_k} \circ_i \Par{\TreeR'' \circ_j \GenA}
        = \Par{\Corolla\Par{\lozenge_k} \circ_i \TreeR''} \circ_{i + j - 1} \GenA.
    \end{equation}
    This shows that $\TreeT$ covers only $\Corolla\Par{\lozenge_k} \circ_i \TreeR''$. It
    remains to show that when $\TreeT$ is a tree different from the description of the
    statement of the lemma, $\TreeT$ covers zero or two or more elements. First, if $\TreeT
    = \Corolla\Par{\lozenge_k}$, $\TreeT$ covers no elements. Second, if $\TreeT =
    \Corolla\Par{\lozenge_k} \circ_i \TreeR'_i$ where $\TreeR'_i$ is not stringy, there are
    at least two maximal internal nodes in $\TreeR'_i$. By removing one of these internal
    nodes, one obtains at least two different trees $\TreeR''_1$ and $\TreeR''_2$ covered by
    $\TreeR'_i$. Thus, $\Corolla\Par{\lozenge_k} \circ_i \TreeR''_1$ and
    $\Corolla\Par{\lozenge_k} \circ_i \TreeR''_2$ are both covered by $\TreeT$. Finally, it
    remains to consider the case where $\TreeT = \Corolla\Par{\lozenge_k} \circ
    \Han{\TreeR'_1, \dots, \TreeR'_k}$ where $\TreeR'_1$, \dots, $\TreeR'_k$ are
    $\GeneratingSet$-trees such that for any $i \in [k]$, $\TreeR'_i \OrderPrefixes
    \TreeR_i$, and there are at least two indices $j, \ell \in [k]$ such that $j \ne \ell$,
    $\TreeR'_j \ne \Leaf$, and $\TreeR'_\ell \ne \Leaf$.  By
    Lemma~\ref{lem:prefix_recursive},
    \begin{math}
        \TreeT \in
        \Han{\Corolla\Par{\lozenge_k}, \lozenge_k \BPar{\TreeR_1, \dots, \TreeR_k}}.
    \end{math}
    These assumptions on $\TreeT$ lead to the fact that $\TreeT$ is covered by two different
    trees, respectively obtained by replacing $\TreeR'_j$ (resp.  $\TreeR'_\ell$) by any
    tree covered by $\TreeR'_j$ (resp.  $\TreeR'_\ell$). All this establishes the statement
    of the lemma.
\end{proof}
\medbreak

\begin{Proposition} \label{prop:shadow_intervals}
    For any finite alphabet $\GeneratingSet$ and any $\GeneratingSet$-trees $\TreeS$,
    $\TreeT$ such that $\TreeS \OrderPrefixes \TreeT$, the interval $[\TreeS, \TreeT]$ of
    $\Par{\SyntaxTreesInternalNode(\GeneratingSet), \OrderPrefixes}$ is isomorphic as a
    lattice to $\JLattice{\ShadowPoset{s}}$ where
    $s := \Shadow\Par{\lozenge_{|\TreeS|} \BPar{\TreeT \setminus \TreeS}}$.
\end{Proposition}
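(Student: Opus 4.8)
The plan is to reduce to a single interval of standard shape, recognize both sides as finite distributive lattices, and then match their posets of join-irreducible elements so that the Fundamental theorem for distributive lattices forces the two lattices to coincide. Write $k := |\TreeS|$ and $\Par{\TreeR_1, \dots, \TreeR_k} := \TreeT \setminus \TreeS$, so that $s = \Shadow\Par{\lozenge_k \BPar{\TreeR_1, \dots, \TreeR_k}}$. By the second isomorphism of Proposition~\ref{prop:prefix_tree_poset_interval_decomposition}, the interval $\Han{\TreeS, \TreeT}$ is isomorphic to $I := \Han{\Corolla\Par{\lozenge_k}, \lozenge_k \BPar{\TreeR_1, \dots, \TreeR_k}}$, so it suffices to prove $I \simeq \JLattice{\ShadowPoset{s}}$ as lattices.

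Next I would observe that both $I$ and $\JLattice{\ShadowPoset{s}}$ are finite distributive lattices: the former by Proposition~\ref{prop:prefix_tree_poset_meet_semilattice} (every interval of the prefix poset is a distributive lattice), the latter by its construction as a lattice of order ideals. By the Fundamental theorem for distributive lattices, to establish $I \simeq \JLattice{\ShadowPoset{s}}$ it is enough to produce an isomorphism between their subposets of join-irreducible elements. Here Lemma~\ref{lem:join_irreducible_intervals} and Lemma~\ref{lem:join_irreducible_shadow_lattices} provide explicit descriptions of these two subposets: the join-irreducibles of $I$ are the trees $\Corolla\Par{\lozenge_k} \circ_i \TreeR'_i$ with $i \in [k]$ and $\TreeR'_i$ a nonleaf stringy prefix of $\TreeR_i$, while the join-irreducibles of $\JLattice{\ShadowPoset{s}}$ are the nonempty saturated chains of $\ShadowPoset{s}$.

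The crux is the bijection between these two descriptions. A nonleaf stringy prefix $\TreeR'_i$ of $\TreeR_i$ is exactly a tree whose internal nodes form a saturated chain of $\InternalNodes\Par{\TreeR_i}$ starting at the root of $\TreeR_i$; passing to shadows, this is precisely a saturated chain in the subtree of $\ShadowPoset{s}$ rooted at the node corresponding to the root of $\TreeR_i$. Since the overall root $\lozenge_k$ is deleted when forming $\ShadowPoset{s}$, these roots become the minimal elements of $\ShadowPoset{s}$, so such a chain is exactly a principal order ideal, that is, a nonempty saturated chain of $\ShadowPoset{s}$; conversely every nonempty saturated chain arises this way for a unique pair $\Par{i, \TreeR'_i}$. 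I would then check that this correspondence is an order isomorphism: by Lemma~\ref{lem:prefix_recursive} (equivalently Proposition~\ref{prop:prefix_tree_poset}), one has $\Corolla\Par{\lozenge_k} \circ_i \TreeR'_i \OrderPrefixes \Corolla\Par{\lozenge_k} \circ_{i'} \TreeR''_{i'}$ if and only if $i = i'$ and $\TreeR'_i$ is a prefix of $\TreeR''_i$, which corresponds exactly to one saturated chain being an initial segment, hence a subset as an order ideal, of the other.

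Putting these together yields an isomorphism of the posets of join-irreducibles, and the Fundamental theorem then gives $I \simeq \JLattice{\ShadowPoset{s}}$, whence $\Han{\TreeS, \TreeT} \simeq \JLattice{\ShadowPoset{s}}$. I expect the main obstacle to be the identification in the previous paragraph: one must verify carefully that the shadow map turns a nonleaf stringy prefix of $\TreeR_i$ into a saturated chain issuing from a minimal element of $\ShadowPoset{s}$, keeping track of the deleted root $\lozenge_k$ and of the fact that stringyness of $\TreeR'_i$ forces its internal nodes to be consecutive (a saturated, not merely a general, chain), and that the prefix order on such trees matches initial-segment inclusion of the corresponding order ideals.
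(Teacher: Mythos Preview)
Your proposal is correct and follows essentially the same route as the paper: reduce to the standard interval via Proposition~\ref{prop:prefix_tree_poset_interval_decomposition}, identify the join-irreducibles on both sides using Lemmas~\ref{lem:join_irreducible_shadow_lattices} and~\ref{lem:join_irreducible_intervals}, and invoke the Fundamental theorem for distributive lattices (together with Proposition~\ref{prop:prefix_tree_poset_meet_semilattice}). If anything, you are more explicit than the paper about why the bijection of join-irreducibles is order-preserving, which the paper merely asserts.
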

\begin{proof}
    By Proposition~\ref{prop:prefix_tree_poset_interval_decomposition}, the interval
    $\Han{\TreeS, \TreeT}$ is isomorphic as a poset to the interval
    \begin{math}
        \Han{\Corolla\Par{\lozenge_{|\TreeS|}},
        \lozenge_{|\TreeS|} \BPar{\TreeR_1, \dots, \TreeR_{|\TreeS|}}}
    \end{math}
    where $\Par{\TreeR_1, \dots, \TreeR_{|\TreeS|}}$ is the $\GeneratingSet$-forest $\TreeT
    \setminus \TreeS$. Therefore, the statement of the proposition is equivalent to saying
    that
    \begin{equation}
        \Han{\Corolla\Par{\lozenge_{|\TreeS|}},
        \lozenge_{|\TreeS|} \BPar{\TreeR_1, \dots, \TreeR_{|\TreeS|}}}
        \simeq
        \JLattice{\ShadowPoset{\Shadow\Par{\lozenge_{|\TreeS|}
        \BPar{\TreeR_1, \dots, \TreeR_{|\TreeS|}}}}}.
    \end{equation}
    By Lemmas~\ref{lem:join_irreducible_shadow_lattices}
    and~\ref{lem:join_irreducible_intervals}, the respective sets of join-irreducible
    elements of the two lattices
    \begin{math}
        \JLattice{\ShadowPoset{\Shadow\Par{\lozenge_{|\TreeS|}
        \BPar{\TreeR_1, \dots, \TreeR_{|\TreeS|}}}}}
    \end{math}
    and
    \begin{math}
        \Han{\Corolla\Par{\lozenge_{|\TreeS|}},
        \lozenge_{|\TreeS|} \BPar{\TreeR_1, \dots, \TreeR_{|\TreeS|}}}
    \end{math}
    are in one-to-one correspondence. They also preserves the ordering so that these two
    subposets are isomorphic as posets. Now, since by
    Proposition~\ref{prop:prefix_tree_poset_meet_semilattice},
    \begin{math}
        \Han{\Corolla\Par{\lozenge_{|\TreeS|}},
        \lozenge_{|\TreeS|} \BPar{\TreeR_1, \dots, \TreeR_{|\TreeS|}}}
    \end{math}
    is a finite distributive lattice, by the Fundamental theorem for finite distributive
    lattices, the statement of the proposition follows.
\end{proof}
\medbreak

\begin{Theorem} \label{thm:prefix_tree_poset_interval_isomorphism}
    For any finite alphabet $\GeneratingSet$ and any $\GeneratingSet$-trees $\TreeS$,
    $\TreeT$, $\TreeS'$, and $\TreeT'$ such that $\TreeS \OrderPrefixes \TreeT$ and $\TreeS'
    \OrderPrefixes \TreeT'$, the intervals $[\TreeS, \TreeT]$ and $\Han{\TreeS', \TreeT'}$
    of $\Par{\SyntaxTreesInternalNode(\GeneratingSet), \OrderPrefixes}$ are isomorphic as
    posets if and only if
    \begin{equation}
        \Shadow\Par{\lozenge_{|\TreeS|} \BPar{\TreeT \setminus \TreeS}}
        = \Shadow\Par{\lozenge_{\Brr{\TreeS'}} \BPar{\TreeT' \setminus \TreeS'}}.
    \end{equation}
\end{Theorem}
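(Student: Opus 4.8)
The plan is to reduce the whole statement to Proposition~\ref{prop:shadow_intervals} together with the Fundamental theorem for finite distributive lattices. Set $s := \Shadow\Par{\lozenge_{|\TreeS|} \BPar{\TreeT \setminus \TreeS}}$ and $s' := \Shadow\Par{\lozenge_{\Brr{\TreeS'}} \BPar{\TreeT' \setminus \TreeS'}}$. By Proposition~\ref{prop:shadow_intervals}, the interval $\Han{\TreeS, \TreeT}$ is isomorphic as a lattice to $\JLattice{\ShadowPoset{s}}$, and likewise $\Han{\TreeS', \TreeT'}$ is isomorphic as a lattice to $\JLattice{\ShadowPoset{s'}}$. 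Since both intervals are finite distributive lattices and the meet and join of a lattice are recovered from its order relation alone (as greatest lower bounds and least upper bounds), any poset isomorphism between two such intervals is automatically a lattice isomorphism. Hence it suffices to prove that $\JLattice{\ShadowPoset{s}}$ and $\JLattice{\ShadowPoset{s'}}$ are isomorphic (as posets, equivalently as lattices) if and only if $s = s'$.

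For the backward implication this is trivial: if $s = s'$ then $\ShadowPoset{s} = \ShadowPoset{s'}$, so the two lattices coincide. For the forward implication I would invoke the fact, recalled just before Lemma~\ref{lem:join_irreducible_shadow_lattices}, that the join-irreducible elements of $\JLattice{\PosetP}$ form a subposet isomorphic to $\PosetP$; combined with the Fundamental theorem for finite distributive lattices, this yields that $\JLattice{\ShadowPoset{s}} \simeq \JLattice{\ShadowPoset{s'}}$ if and only if $\ShadowPoset{s} \simeq \ShadowPoset{s'}$ as posets. Thus the theorem is equivalent to the assertion that $\ShadowPoset{s} \simeq \ShadowPoset{s'}$ if and only if $s = s'$.

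This last assertion is the heart of the argument: I must show the assignment $s \mapsto \ShadowPoset{s}$ is injective on shadows, i.e.\ that a shadow can be reconstructed from its induced forest poset. I would argue by induction on the number of nodes of $s$. The minimal elements of $\ShadowPoset{s}$ are exactly the children of the root of $s$, that is, the roots of the subtrees $s_1, \dots, s_k$ when $s = \lbag s_1, \dots, s_k \rbag$; since $\ShadowPoset{s}$ is a forest poset, its connected components are the principal filters above these minimal elements, and the component above the $i$-th minimal element is $\ShadowPoset{\lbag s_i \rbag}$. An isomorphism $\ShadowPoset{s} \simeq \ShadowPoset{s'}$ carries minimal elements to minimal elements and hence matches components bijectively, giving isomorphisms $\ShadowPoset{\lbag s_i \rbag} \simeq \ShadowPoset{\lbag s'_j \rbag}$ between matched pairs. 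Removing the (unique) minimum of each such component leaves $\ShadowPoset{s_i}$ and $\ShadowPoset{s'_j}$ respectively, and these remain isomorphic; as $s_i$ has strictly fewer nodes than $s$, the induction hypothesis forces $s_i = s'_j$. Therefore the multisets $\lbag s_1, \dots, s_k \rbag$ and $\lbag s'_1, \dots, s'_{k'} \rbag$ coincide, i.e.\ $s = s'$.

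The main obstacle is precisely this reconstruction step, where the delicate bookkeeping is that $\ShadowPoset{s}$ lives on the nodes of $s$ \emph{distinct from the root}: one must check that the roots of the subtrees $s_i$ survive as the minimal elements, and the induction has to be carried on the components $\lbag s_i \rbag$ (and their minima stripped) rather than on the $s_i$ directly. Everything else is a formal chain of equivalences flowing from Proposition~\ref{prop:shadow_intervals} and the Fundamental theorem, so no further computation is needed.
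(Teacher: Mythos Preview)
Your argument is correct and follows the same route as the paper: reduce via Proposition~\ref{prop:shadow_intervals} to $\JLattice{\ShadowPoset{s}} \simeq \JLattice{\ShadowPoset{s'}}$, then invoke the Fundamental theorem for finite distributive lattices. The paper's own proof is extremely terse---it simply asserts that Proposition~\ref{prop:shadow_intervals} yields a one-to-one correspondence between shadows and interval isomorphism classes---whereas you make explicit the one point the paper leaves implicit, namely the injectivity of $s \mapsto \ShadowPoset{s}$ up to poset isomorphism, which you handle cleanly by the inductive reconstruction of the shadow from its induced forest poset.
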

\begin{proof}
    Proposition~\ref{prop:shadow_intervals} brings a one-to-one correspondence between
    shadows and intervals of $\Par{\SyntaxTreesInternalNode(\GeneratingSet),
    \OrderPrefixes}$ up to lattice isomorphism. This is equivalent to the statement of the
    theorem.
\end{proof}
\medbreak

For instance, by considering the same alphabet $\GeneratingSet$ as in the previous example,
Theorem~\ref{thm:prefix_tree_poset_interval_isomorphism} says that since
\begin{equation}
    \Shadow\Par{
    \lozenge_3
    \BPar{
    \begin{tikzpicture}[Centering,xscale=0.2,yscale=0.17]
        \node(1)at(0.00,-7.50){};
        \node(3)at(2.00,-7.50){};
        \node(5)at(4.00,-7.50){};
        \node(7)at(5.00,-2.50){};
        \node(9)at(6.00,-5.00){};
        \node[NodeST](0)at(0.00,-5.00){$\GenE$};
        \node[NodeST](2)at(1.00,-2.50){$\GenA$};
        \node[NodeST](4)at(3.00,-5.00){$\GenA$};
        \node[NodeST](6)at(5.00,0.00){$\GenC$};
        \node[NodeST](8)at(6.00,-2.50){$\GenE$};
        \draw[Edge](0)--(2);
        \draw[Edge](1)--(0);
        \draw[Edge](2)--(6);
        \draw[Edge](3)--(4);
        \draw[Edge](4)--(2);
        \draw[Edge](5)--(4);
        \draw[Edge](7)--(6);
        \draw[Edge](8)--(6);
        \draw[Edge](9)--(8);
        \node(r)at(5.00,2.25){};
        \draw[Edge](r)--(6);
    \end{tikzpicture}
    \setminus
    \begin{tikzpicture}[Centering,xscale=0.24,yscale=0.24]
        \node(0)at(0.00,-1.67){};
        \node(2)at(1.00,-1.67){};
        \node(4)at(2.00,-3.33){};
        \node[NodeST](1)at(1.00,0.00){$\GenC$};
        \node[NodeST](3)at(2.00,-1.67){$\GenE$};
        \draw[Edge](0)--(1);
        \draw[Edge](2)--(1);
        \draw[Edge](3)--(1);
        \draw[Edge](4)--(3);
        \node(r)at(1.00,1.75){};
        \draw[Edge](r)--(1);
    \end{tikzpicture}}}
    =
    \begin{tikzpicture}[Centering,xscale=0.2,yscale=0.19]
        \node[Node](1)at(0.00,-2.67){};
        \node[Node](3)at(2.00,-2.67){};
        \node[Node](0)at(1.00,0.00){};
        \node[Node](2)at(1.00,-1.33){};
        \draw[Edge](1)--(2);
        \draw[Edge](2)--(0);
        \draw[Edge](3)--(2);
        \node(r)at(1.00,1.75){};
        \draw[Edge](r)--(0);
    \end{tikzpicture}
    =
    \Shadow\Par{
    \lozenge_2
    \BPar{
    \begin{tikzpicture}[Centering,xscale=0.2,yscale=0.19]
        \node(0)at(0.00,-2.25){};
        \node(3)at(2.00,-6.75){};
        \node(5)at(4.00,-6.75){};
        \node(7)at(5.00,-6.75){};
        \node(8)at(6.00,-6.75){};
        \node[NodeST](1)at(1.00,0.00){$\GenA$};
        \node[NodeST](2)at(2.00,-4.50){$\GenE$};
        \node[NodeST](4)at(3.00,-2.25){$\GenA$};
        \node[NodeST](6)at(5.00,-4.50){$\GenC$};
        \draw[Edge](0)--(1);
        \draw[Edge](2)--(4);
        \draw[Edge](3)--(2);
        \draw[Edge](4)--(1);
        \draw[Edge](5)--(6);
        \draw[Edge](6)--(4);
        \draw[Edge](7)--(6);
        \draw[Edge](8)--(6);
        \node(r)at(1.00,2){};
        \draw[Edge](r)--(1);
    \end{tikzpicture}
    \setminus
    \begin{tikzpicture}[Centering,xscale=0.2,yscale=0.27]
        \node(0)at(0.00,-1.50){};
        \node(2)at(2.00,-1.50){};
        \node[NodeST](1)at(1.00,0.00){$\GenA$};
        \draw[Edge](0)--(1);
        \draw[Edge](2)--(1);
        \node(r)at(1.00,1.5){};
        \draw[Edge](r)--(1);
    \end{tikzpicture}}},
\end{equation}
one has the isomorphism
\begin{equation}
    \Han{
    \begin{tikzpicture}[Centering,xscale=0.24,yscale=0.24]
        \node(0)at(0.00,-1.67){};
        \node(2)at(1.00,-1.67){};
        \node(4)at(2.00,-3.33){};
        \node[NodeST](1)at(1.00,0.00){$\GenC$};
        \node[NodeST](3)at(2.00,-1.67){$\GenE$};
        \draw[Edge](0)--(1);
        \draw[Edge](2)--(1);
        \draw[Edge](3)--(1);
        \draw[Edge](4)--(3);
        \node(r)at(1.00,1.75){};
        \draw[Edge](r)--(1);
    \end{tikzpicture},
    \begin{tikzpicture}[Centering,xscale=0.2,yscale=0.17]
        \node(1)at(0.00,-7.50){};
        \node(3)at(2.00,-7.50){};
        \node(5)at(4.00,-7.50){};
        \node(7)at(5.00,-2.50){};
        \node(9)at(6.00,-5.00){};
        \node[NodeST](0)at(0.00,-5.00){$\GenE$};
        \node[NodeST](2)at(1.00,-2.50){$\GenA$};
        \node[NodeST](4)at(3.00,-5.00){$\GenA$};
        \node[NodeST](6)at(5.00,0.00){$\GenC$};
        \node[NodeST](8)at(6.00,-2.50){$\GenE$};
        \draw[Edge](0)--(2);
        \draw[Edge](1)--(0);
        \draw[Edge](2)--(6);
        \draw[Edge](3)--(4);
        \draw[Edge](4)--(2);
        \draw[Edge](5)--(4);
        \draw[Edge](7)--(6);
        \draw[Edge](8)--(6);
        \draw[Edge](9)--(8);
        \node(r)at(5.00,2.25){};
        \draw[Edge](r)--(6);
    \end{tikzpicture}}
    \simeq
    \Han{
    \begin{tikzpicture}[Centering,xscale=0.2,yscale=0.27]
        \node(0)at(0.00,-1.50){};
        \node(2)at(2.00,-1.50){};
        \node[NodeST](1)at(1.00,0.00){$\GenA$};
        \draw[Edge](0)--(1);
        \draw[Edge](2)--(1);
        \node(r)at(1.00,1.5){};
        \draw[Edge](r)--(1);
    \end{tikzpicture},
    \begin{tikzpicture}[Centering,xscale=0.2,yscale=0.19]
        \node(0)at(0.00,-2.25){};
        \node(3)at(2.00,-6.75){};
        \node(5)at(4.00,-6.75){};
        \node(7)at(5.00,-6.75){};
        \node(8)at(6.00,-6.75){};
        \node[NodeST](1)at(1.00,0.00){$\GenA$};
        \node[NodeST](2)at(2.00,-4.50){$\GenE$};
        \node[NodeST](4)at(3.00,-2.25){$\GenA$};
        \node[NodeST](6)at(5.00,-4.50){$\GenC$};
        \draw[Edge](0)--(1);
        \draw[Edge](2)--(4);
        \draw[Edge](3)--(2);
        \draw[Edge](4)--(1);
        \draw[Edge](5)--(6);
        \draw[Edge](6)--(4);
        \draw[Edge](7)--(6);
        \draw[Edge](8)--(6);
        \node(r)at(1.00,2){};
        \draw[Edge](r)--(1);
    \end{tikzpicture}}
\end{equation}
between these two intervals of $\Par{\SyntaxTreesInternalNode(\GeneratingSet),
\OrderPrefixes}$.
\medbreak

\subsection{Enumerative properties}
We end the study of the $\GeneratingSet$-prefix posets by describing a way to count the
elements of a given interval and by enumerating all its intervals with respect to the
degrees of their minimal and maximal elements.
\medbreak

\subsubsection{Cardinalities of intervals}
The \Def{load} $\Load(s)$ of a shadow $s := \lbag s_1, \dots, s_k \rbag$ is the integer
$\Load(s)$ recursively defined by
\begin{equation}
    \Load\Par{\lbag s_1, \dots, s_k \rbag} := \prod_{i \in [k]} \Par{1 + \Load\Par{s_i}}.
\end{equation}
For instance, the load of the shadow $s$ appearing in~\eqref{equ:example_shadow} is~$20$.
Indeed, by labeling each node $u$ of $s$ by the load of the subtree of $s$ rooted at $u$, we
have
\begin{equation}
    \begin{tikzpicture}[Centering,xscale=0.23,yscale=0.24]
        \node[Node](0)at(0.00,-4.00){$1$};
        \node[Node](2)at(2.00,-4.00){$1$};
        \node[Node](4)at(3.00,-2.00){$1$};
        \node[Node](5)at(5.00,-2.00){$1$};
        \node[Node](1)at(1.00,-2.00){$4$};
        \node[Node](3)at(3.00,0.00){$20$};
        \draw[Edge](0)--(1);
        \draw[Edge](1)--(3);
        \draw[Edge](2)--(1);
        \draw[Edge](4)--(3);
        \draw[Edge](5)--(3);
        \node(r)at(3,1.75){};
        \draw[Edge](r)--(3);
    \end{tikzpicture}.
\end{equation}
\medbreak

\begin{Proposition} \label{prop:prefix_tree_poset_interval_cardinality}
    For any finite alphabet $\GeneratingSet$ and any $\GeneratingSet$-trees $\TreeS$ and
    $\TreeT$ such that $\TreeS \OrderPrefixes \TreeT$, in
    $\Par{\SyntaxTreesInternalNode(\GeneratingSet), \OrderPrefixes}$,
    \begin{equation} \label{equ:prefix_tree_poset_interval_cardinality}
        \# \Han{\TreeS, \TreeT}
        = \Load\Par{\Shadow\Par{\lozenge_{|\TreeS|} \BPar{\TreeT \setminus \TreeS}}}.
    \end{equation}
\end{Proposition}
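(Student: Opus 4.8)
The plan is to reduce the count to a purely order-theoretic computation and then match it term by term with the recursive definition of $\Load$. By Proposition~\ref{prop:shadow_intervals}, the interval $\Han{\TreeS, \TreeT}$ is isomorphic, as a lattice, to $\JLattice{\ShadowPoset{s}}$ where $s := \Shadow\Par{\lozenge_{|\TreeS|} \BPar{\TreeT \setminus \TreeS}}$. Since isomorphic lattices have the same cardinality, it suffices to prove that $\# \JLattice{\ShadowPoset{s}} = \Load(s)$ for every shadow $s$, which I would establish by structural induction on $s$.

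For the inductive step, write $s = \lbag s_1, \dots, s_k \rbag$ and recall that $\ShadowPoset{s}$ is the forest poset carried by the nodes of $s$ other than its root, ordered by ancestrality. Its connected components are indexed by the children $c_1, \dots, c_k$ of the root of $s$: the component attached to $c_i$ has $c_i$ as its unique minimal element and carries $\ShadowPoset{s_i}$ above it. Because the order ideals of a disjoint union of posets are exactly the unions of order ideals of the components, $\JLattice{\ShadowPoset{s}}$ is the Cartesian product of the $\JLattice{-}$ of the components, so its cardinality is the product of their cardinalities.

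It then remains to count the order ideals of a single component. Such an ideal is either empty or, being a down-set containing some element, must contain the minimum $c_i$; removing $c_i$ yields an arbitrary order ideal of $\ShadowPoset{s_i}$, and conversely every order ideal of $\ShadowPoset{s_i}$ extended by $c_i$ is an order ideal of the component. Hence the $i$-th component has $1 + \# \JLattice{\ShadowPoset{s_i}}$ order ideals, giving
\[
    \# \JLattice{\ShadowPoset{s}}
    = \prod_{i \in [k]} \Par{1 + \# \JLattice{\ShadowPoset{s_i}}}.
\]
By the induction hypothesis $\# \JLattice{\ShadowPoset{s_i}} = \Load(s_i)$, and this is precisely the defining recurrence $\Load\Par{\lbag s_1, \dots, s_k \rbag} = \prod_{i \in [k]} \Par{1 + \Load(s_i)}$; the base case is the empty shadow, for which $\ShadowPoset{\lbag \rbag}$ is the empty poset with a single (empty) order ideal, matching $\Load(\lbag \rbag) = 1$.

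The only delicate point is the bookkeeping at the root of $s$, which is deliberately excluded from $\ShadowPoset{s}$: the children of that root become the minimal elements of the components, and the factor $1 +$ in the recurrence records exactly the binary choice of whether a given minimal element is taken into the ideal. Once this correspondence is seen, matching the order-ideal count with the definition of $\Load$ is routine, and combining with Proposition~\ref{prop:shadow_intervals} finishes the proof.
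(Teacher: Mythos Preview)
Your proof is correct, but it takes a different route from the paper. You invoke Proposition~\ref{prop:shadow_intervals} to identify $\Han{\TreeS, \TreeT}$ with $\JLattice{\ShadowPoset{s}}$ and then count order ideals of the forest poset $\ShadowPoset{s}$ by structural induction on the shadow~$s$, matching the recurrence for $\Load$. The paper instead uses only the more elementary Proposition~\ref{prop:prefix_tree_poset_interval_decomposition} to factor $\#\Han{\TreeS, \TreeT}$ as $\prod_i \#\Han{\Leaf, \TreeR_i}$, then counts prefixes of a single tree directly via Lemma~\ref{lem:prefix_recursive}, obtaining the recursion $\theta(\TreeT) = 1 + \prod_i \theta(\TreeT(i))$, which it identifies with $\Load\circ\Shadow$ applied to $\lozenge_1 \circ_1 \TreeT$.

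What each approach buys: the paper's argument is lighter on prerequisites, since Proposition~\ref{prop:shadow_intervals} already relies on the Fundamental theorem for distributive lattices and on Lemmas~\ref{lem:join_irreducible_shadow_lattices} and~\ref{lem:join_irreducible_intervals}, none of which are needed merely to count elements. Your argument, on the other hand, cleanly separates the tree-to-shadow reduction from the purely order-theoretic count, and the lemma you prove along the way, $\#\JLattice{\ShadowPoset{s}} = \Load(s)$, is a tidy standalone fact. Either way the induction is the same in spirit; you just run it on shadows while the paper runs it on trees.
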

\begin{proof}
    Let $\theta(\TreeT) := \# \Han{\Leaf, \TreeT}$. By
    Proposition~\ref{prop:prefix_tree_poset}, $\theta(\TreeT)$ is the number of prefixes of
    $\TreeT$. By Lemma~\ref{lem:prefix_recursive}, any prefix $\TreeR$ of $\TreeT$ is either
    the leaf, or when $\TreeT$ is not the leaf, the roots of $\TreeR$ and $\TreeT$ have the
    same label $\GenA \in \GeneratingSet$ and each $\TreeR(i)$ is a prefix of $\TreeT(i)$
    for all $i \in [|\GenA|]$. Hence,
    \begin{equation}
        \theta(\TreeT) = 1 + \prod_{i \in [|\GenA|]} \theta\Par{\TreeT(i)}.
    \end{equation}
    Moreover, by definition of the maps $\Shadow$ and $\Load$, we have
    \begin{equation}
        \Load\Par{\Shadow\Par{\lozenge_1 \BPar{\TreeT \setminus \Leaf}}}
        = \Load\Par{\Shadow\Par{\lozenge_1 \circ_1 \TreeT}}
        = \theta(\TreeT).
    \end{equation}
    Finally, by Proposition~\ref{prop:prefix_tree_poset_interval_decomposition},
    Equation~\eqref{equ:prefix_tree_poset_interval_cardinality} is a consequence of the fact
    the cardinality of $\Han{\TreeS, \TreeT}$ is the product of the cardinalities of
    $\Han{\Leaf, \TreeR_1}$, \dots, $\Han{\Leaf, \TreeR_{|\TreeS|}}$ where $\Par{\TreeR_1,
    \dots, \TreeR_{|\TreeS|}}$ is the forest~$\TreeT \setminus \TreeS$.
\end{proof}
\medbreak

\subsubsection{Generating series of the intervals}
Let us consider now the generating series
\begin{equation}
    \IntervalSeries_{\SyntaxTreesInternalNode(\GeneratingSet)}(q, t) :=
    \sum_{\substack{
        \TreeS, \TreeT \in \SyntaxTreesInternalNode(\GeneratingSet) \\
        \TreeS \OrderPrefixes \TreeT
    }}
    q^{\Deg(\TreeS)} t^{\Deg(\TreeT)}
\end{equation}
enumerating all intervals $\Han{\TreeS, \TreeT}$ of
$\Par{\SyntaxTreesInternalNode(\GeneratingSet), \OrderPrefixes}$ with respect to the degree
of $\TreeS$ (parameter $q$) and the degree of $\TreeT$ (parameter $t$).
\medbreak

\begin{Proposition} \label{prop:prefix_tree_poset_interval_enumeration}
    For any finite alphabet $\GeneratingSet$, the series
    $\IntervalSeries_{\SyntaxTreesInternalNode(\GeneratingSet)}(q, t)$ satisfies
    \begin{equation}
        \IntervalSeries_{\SyntaxTreesInternalNode(\GeneratingSet)}(q, t) =
        1
        + t \; \RankSeries_\GeneratingSet\Par{
        \IntervalSeries_{\SyntaxTreesInternalNode(\GeneratingSet)}(q, t)
        - q t \; \RankSeries_\GeneratingSet\Par{
        \IntervalSeries_{\SyntaxTreesInternalNode(\GeneratingSet)}(q, t)
        }}
        + q t \; \RankSeries_\GeneratingSet\Par{
        \IntervalSeries_{\SyntaxTreesInternalNode(\GeneratingSet)}(q, t)
        }.
    \end{equation}
\end{Proposition}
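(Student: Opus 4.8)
The plan is to set up a combinatorial recursion for $\IntervalSeries_{\SyntaxTreesInternalNode(\GeneratingSet)}(q, t)$ directly from the structure of intervals, and then to massage the resulting functional equation into the stated shape using the defining equation of $\RankSeries_{\SyntaxTreesInternalNode(\GeneratingSet)}(t)$. Write $F := \IntervalSeries_{\SyntaxTreesInternalNode(\GeneratingSet)}(q, t)$ for brevity. By Proposition~\ref{prop:prefix_tree_poset}, an interval $\Han{\TreeS, \TreeT}$ is the same datum as a pair of $\GeneratingSet$-trees with $\TreeS$ a prefix of $\TreeT$, and Lemma~\ref{lem:prefix_recursive} describes this prefix relation recursively on the roots. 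So the first step is to split the sum defining $F$ according to whether $\TreeS = \Leaf$ or not.

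When $\TreeS = \Leaf$, the tree $\TreeT$ ranges over all $\GeneratingSet$-trees without constraint, and since $\Deg(\TreeS) = 0$ the total contribution is $\sum_{\TreeT} t^{\Deg(\TreeT)} = \RankSeries_{\SyntaxTreesInternalNode(\GeneratingSet)}(t)$. When $\TreeS \ne \Leaf$, Lemma~\ref{lem:prefix_recursive} forces the roots of $\TreeS$ and $\TreeT$ to carry the same letter $\GenA \in \GeneratingSet$ and $\TreeS(i) \OrderPrefixes \TreeT(i)$ for each $i \in \Han{|\GenA|}$; conversely any such family of sub-intervals glues back to a valid interval. The key bookkeeping point, which I would emphasize, is that the common root is an internal node of both $\TreeS$ and $\TreeT$, hence contributes a factor $q\,t$, while each child index $i$ contributes an independent interval $\Han{\TreeS(i), \TreeT(i)}$ weighted exactly by $F$. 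Summing over the letters $\GenA$ and using $\sum_{\GenA \in \GeneratingSet} F^{|\GenA|} = \RankSeries_\GeneratingSet(F)$, this part contributes $q t \, \RankSeries_\GeneratingSet(F)$. Collecting both cases yields the intermediate equation
\begin{equation}
    F = \RankSeries_{\SyntaxTreesInternalNode(\GeneratingSet)}(t) + q t \, \RankSeries_\GeneratingSet(F).
\end{equation}

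To conclude, I would eliminate $\RankSeries_{\SyntaxTreesInternalNode(\GeneratingSet)}(t)$ in favor of $F$. The intermediate equation gives $\RankSeries_{\SyntaxTreesInternalNode(\GeneratingSet)}(t) = F - q t \, \RankSeries_\GeneratingSet(F)$, while the defining functional equation of $\RankSeries_{\SyntaxTreesInternalNode(\GeneratingSet)}(t)$ recalled earlier reads $\RankSeries_{\SyntaxTreesInternalNode(\GeneratingSet)}(t) = 1 + t \, \RankSeries_\GeneratingSet\Par{\RankSeries_{\SyntaxTreesInternalNode(\GeneratingSet)}(t)}$. Substituting the former expression into the inner occurrence of the latter gives $\RankSeries_{\SyntaxTreesInternalNode(\GeneratingSet)}(t) = 1 + t \, \RankSeries_\GeneratingSet\Par{F - q t \, \RankSeries_\GeneratingSet(F)}$, and plugging this back into the intermediate equation produces exactly the claimed identity.

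The computational content is routine; the step needing care is the middle one, namely checking that the root-based decomposition really is a bijection between intervals with $\TreeS \ne \Leaf$ and families $\Par{\GenA, \Han{\TreeS(1), \TreeT(1)}, \dots, \Han{\TreeS(|\GenA|), \TreeT(|\GenA|)}}$, together with the additivity of the degree statistics, in particular that the shared root is counted once in $\Deg(\TreeS)$ and once in $\Deg(\TreeT)$ and hence contributes the factor $q t$. I expect this bookkeeping to be the only genuine obstacle; everything downstream of the intermediate equation is a purely algebraic manipulation of the two functional equations.
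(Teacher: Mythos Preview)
Your proof is correct and follows essentially the same route as the paper: the paper encodes an interval $\Han{\TreeS,\TreeT}$ as the tree $\TreeT$ with the internal nodes of $\TreeS$ marked, then writes the same pair of equations $F = G + qt\,\RankSeries_\GeneratingSet(F)$ and $G = 1 + t\,\RankSeries_\GeneratingSet(G)$ (with $G$ playing the role of your $\RankSeries_{\SyntaxTreesInternalNode(\GeneratingSet)}(t)$) and eliminates $G$ exactly as you do. Your direct recursion on whether $\TreeS = \Leaf$ is just the unpacked version of the paper's ``is the root marked or not'' split.
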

\begin{proof}
    Let $\Han{\TreeS, \TreeT}$ be an interval of
    $\Par{\SyntaxTreesInternalNode(\GeneratingSet), \OrderPrefixes}$.  By
    Proposition~\ref{prop:prefix_tree_poset}, $\TreeS$ is a prefix of $\TreeT$.  Moreover,
    Proposition~\ref{prop:prefix_tree_poset_interval_decomposition} implies that this
    interval $\Han{\TreeS, \TreeT}$ can be encoded as the tree obtained by marking in
    $\TreeT$ the common internal nodes between $\TreeS$ and $\TreeT$. Since $\TreeS$ is a
    prefix of $\TreeT$, if a node different from the root is marked, its father is also
    marked.  Now, let $F(q, t)$ and $G(q, t)$ be the two series satisfying
    \begin{subequations}
    \begin{equation}
        F(q, t) = G(q, t) + q t \sum_{\GenA \in \GeneratingSet} F(q, t)^{|\GenA|},
    \end{equation}
    \begin{equation}
        G(q, t) = 1 + t \sum_{\GenA \in \GeneratingSet} G(q, t)^{|\GenA|}.
    \end{equation}
    \end{subequations}
    The series $G(q, t)$ enumerates the $\GeneratingSet$-trees with respect to their degree
    by the parameter $t$, and due to the previous description of the encoding of intervals,
    $F(q, t)$ enumerates the marked trees with respect to their degree by the parameter $t$
    and their number of marked nodes by the parameter $q$.  Now, by bringing in play the
    series $\RankSeries_\GeneratingSet(t)$, these two series express as
    \begin{subequations}
    \begin{equation}
        F(q, t) = G(q, t) + q t \; \RankSeries_\GeneratingSet\Par{F(q, t)},
    \end{equation}
    \begin{equation}
        G(q, t) = 1 + t \; \RankSeries_\GeneratingSet\Par{G(q, t)}.
    \end{equation}
    \end{subequations}
    This implies that
    \begin{subequations}
    \begin{equation}
        F(q, t) =
        1 + t \; \RankSeries_\GeneratingSet\Par{G(q, t)}
        + q t \; \RankSeries_\GeneratingSet\Par{F(q, t)},
    \end{equation}
    \begin{equation}
        G(q, t) = F(q, t) - q t \; \RankSeries_\GeneratingSet\Par{F(q, t)},
    \end{equation}
    \end{subequations}
    and since by construction
    \begin{math}
        \IntervalSeries_{\SyntaxTreesInternalNode(\GeneratingSet)}(q, t) = F(q, t),
    \end{math}
    the stated relation for $\IntervalSeries_{\SyntaxTreesInternalNode(\GeneratingSet)}(q,
    t)$ follows.
\end{proof}
\medbreak

For instance, when $\GeneratingSet$ consists in one binary letter,
$\RankSeries_\GeneratingSet(t) = t^2$ and
$\IntervalSeries_{\SyntaxTreesInternalNode(\GeneratingSet)}(q, t)$ satisfies
\begin{equation}
    1 - \IntervalSeries_{\SyntaxTreesInternalNode(\GeneratingSet)}(q, t)
    + t \IntervalSeries_{\SyntaxTreesInternalNode(\GeneratingSet)}(q, t)^2
    + q t \IntervalSeries_{\SyntaxTreesInternalNode(\GeneratingSet)}(q, t)^2
    - 2 q t^2 \IntervalSeries_{\SyntaxTreesInternalNode(\GeneratingSet)}(q, t)^3
    + q^2 t^3 \IntervalSeries_{\SyntaxTreesInternalNode(\GeneratingSet)}(q, t)^4
    = 0
\end{equation}
and
\begin{equation} \begin{split}
    \IntervalSeries_{\SyntaxTreesInternalNode(\GeneratingSet)}(q, t)
    & = 1 + (1 + q)t + 2\Par{1 + q + q^2}t^2 + \Par{5 + 6q + 5q^2 + 5q^3}t^3
    \\
    & \quad + 2\Par{7 + 10q + 9q^2 + 7q^3 + 7q^4}t^4
    \\
    & \quad + 14\Par{3 + 5q + 5q^2 + 4q^3 + 3q^4 + 3q^5}t^5 + \cdots.
\end{split} \end{equation}
The coefficients of $\IntervalSeries_{\SyntaxTreesInternalNode(\GeneratingSet)}(1, t)$ are
\begin{equation}
    1, 2, 6, 21, 80, 322, 1348, 5814
\end{equation}
and form Sequence~\OEIS{A121988} of~\cite{Slo}, enumerating the vertices of the
multiplihedra, which form a sequence of posets~\cite{Sta70}.
\medbreak

\section{Graded graphs from operads} \label{sec:operad_prefix_graphs}
The aim of this section is to extend the previous definitions of $\GeneratingSet$-prefix
graded graphs and $\GeneratingSet$-twisted prefix graded graphs so that vertices of the
graphs can be any combinatorial objects endowed with the structure of an operad subjected to
some conditions.  This generalization, applied on free operads ---that are operad of
$\GeneratingSet$-trees endowed with the partial composition of trees--- gives back the
previous graded graphs. As we shall see, the pairs of graded graphs thus obtained are not
always $\phi$-diagonal dual.  We end this section by presenting some examples of such pairs
of graded graphs.
\medbreak

\subsection{Prefix and twisted prefix graded graphs}
We start by introducing the notion of homogeneous and finitely generated operad. We then
describe the construction of a pair of graded graphs from any homogeneous and finitely
generated operad.
\medbreak

\subsubsection{Homogeneous operads and degrees}
Let us begin by an elementary result on the minimal generating sets of operads satisfying
some conditions.
\medbreak

\begin{Lemma} \label{lem:unique_minimal_generating_set}
    If $\Operad$ is an operad such that $\Operad(0) = \emptyset$ and $\Operad(1) =
    \Bra{\Unit}$, then $\Operad$ admits a unique minimal generating set.
\end{Lemma}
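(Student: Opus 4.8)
The plan is to show that the set of all generators (elements that cannot be written as a nontrivial composition of smaller elements) is itself a generating set, and moreover that it is contained in every generating set, so it is the unique minimal one. First I would define, for an operad $\Operad$ with $\Operad(0) = \emptyset$ and $\Operad(1) = \Bra{\Unit}$, the set
\begin{equation}
    \GeneratingSet := \Bra{x \in \Operad : |x| \geq 2
    \mbox{ and } x \ne y \circ_i z \mbox{ for all } y, z \in \Operad
    \mbox{ with } |y|, |z| \geq 2}.
\end{equation}
The key structural fact I would exploit is that the hypotheses force the unit $\Unit$ to be the only element of arity $1$, so that any partial composition $y \circ_i z$ with $|y| \geq 2$ and $|z| \geq 2$ lands in arity $|y| + |z| - 1 > \max\Par{|y|, |z|}$; hence every composition strictly increases arity. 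This makes the arity a legitimate induction parameter with no degenerate cases coming from unary elements.

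Next I would prove by induction on the arity $|x|$ that every $x \in \Operad$ with $|x| \geq 2$ lies in the suboperad generated by $\GeneratingSet$. In the base case $|x| = 2$, such an $x$ is automatically in $\GeneratingSet$, since writing $x = y \circ_i z$ with both factors of arity at least $2$ would force arity at least $3$. For the inductive step, if $x \notin \GeneratingSet$ then $x = y \circ_i z$ with $|y|, |z| \geq 2$, and since composition strictly increases arity we have $|y|, |z| < |x|$; the induction hypothesis then places both $y$ and $z$ in the suboperad generated by $\GeneratingSet$, whence so is $x$. Combined with $\Operad(1) = \Bra{\Unit}$ (the unit belongs to every suboperad), this shows $\Operad^{\GeneratingSet} = \Operad$, so $\GeneratingSet$ generates.

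For uniqueness and minimality, I would argue that $\GeneratingSet$ is contained in every generating set. Let $\GeneratingSet'$ be any generating set and take $x \in \GeneratingSet$. Since $\GeneratingSet'$ generates, $x$ can be obtained from elements of $\GeneratingSet'$ by partial compositions; taking such an expression and looking at the outermost composition, either $x$ is itself in $\GeneratingSet'$, or $x = y \circ_i z$ is a nontrivial composition. The latter is impossible for $x \in \GeneratingSet$ unless one factor is the unit, but composition with $\Unit$ returns the other factor unchanged by~\eqref{equ:operad_axiom_3}, so this case collapses and forces $x \in \GeneratingSet'$. Hence $\GeneratingSet \subseteq \GeneratingSet'$ for every generating set $\GeneratingSet'$, which simultaneously shows that $\GeneratingSet$ is the smallest generating set and that it is the unique minimal one. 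The main obstacle I anticipate is handling the bookkeeping of the outermost-composition argument cleanly: one must be careful that in any expression of $x$ as iterated compositions of generators, the elements glued in at arity $1$ are only the unit, so that a genuinely nontrivial top-level composition cannot produce an element that is indecomposable in the sense defining $\GeneratingSet$. The strict arity increase established at the outset is exactly what rules out the problematic unary interferences.
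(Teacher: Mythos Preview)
Your proof is correct and follows essentially the same approach as the paper: both exploit the strict arity increase of nontrivial compositions (forced by $\Operad(1) = \Bra{\Unit}$) to identify the unique minimal generating set as the set of indecomposable elements. The paper phrases this as an inductive algorithm computing $\GeneratingSet(m+1)$ as the arity-$(m+1)$ elements not reachable from smaller-arity generators, while you define $\GeneratingSet$ globally and argue directly that it generates and is contained in every generating set; these are the same argument in different clothing.
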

\begin{proof}
    This follows from the following algorithm to compute a minimal set $\GeneratingSet$ of
    generators of $\Operad$  up to a given arity. First, since $\Operad(0) = \emptyset$ and
    $\Operad(1) = \Bra{\Unit}$, then $\GeneratingSet(1) = \emptyset$.  Now, assume that
    there is an $m \geq 1$ such that we know the sets $\GeneratingSet(n)$ for all
    $n \in [m]$. A candidate for $\GeneratingSet(m + 1)$ is the set
    \begin{math}
        \Operad(m + 1) \setminus \Operad^{\GeneratingSet'}
    \end{math}
    where $\GeneratingSet'$ is the graded set consisting exactly in the elements of
    $\GeneratingSet$ up to arity $m$. In other terms, this candidate for $\GeneratingSet(m +
    1)$ contains the elements of arity $m + 1$ of $\Operad$ which cannot be obtained by
    composing elements of arity $k \leq m$ of $\GeneratingSet$.  The fact that
    $\GeneratingSet(1) = \emptyset$ ensures that the arity of any partial composition $x
    \circ_i y$ where $x \in \Operad$ and $y \in \GeneratingSet$ (resp.  $x \in
    \GeneratingSet$ and $y \in \Operad$) is greater than the arity of $x$ (resp. $y$). For
    this reason, the set $\GeneratingSet(m + 1)$ is unique, so that the given candidate for
    this set is the only possible one.  Finally, since by construction $\GeneratingSet$ is
    minimal, the statement of lemma follows.
\end{proof}
\medbreak

By Lemma~\ref{lem:unique_minimal_generating_set}, we shall denote by
$\GeneratingSet_\Operad$ the unique minimal generating set of any operad $\Operad$
satisfying $\Operad(0) = \emptyset$ and $\Operad(1) = \Bra{\Unit}$.  Moreover, given the
alphabet $\GeneratingSet_\Operad$, there is a unique operad congruence $\Congr_\Operad$ such
that
\begin{math}
    \SyntaxTreesLeaf\Par{\GeneratingSet_\Operad}/_{\Congr_\Operad} \simeq \Operad.
\end{math}
Indeed, $\Congr_\Operad$ satisfies necessarily $\TreeT \Congr_\Operad \TreeT'$ for all
$\TreeT, \TreeT' \in \SyntaxTreesLeaf\Par{\GeneratingSet_\Operad}$ such that $\Eval(\TreeT)
= \Eval\Par{\TreeT'}$. For this reason, $\Operad$ admits $\Par{\GeneratingSet_\Operad,
\Congr_\Operad}$ as unique presentation. These properties arising from the fact that
$\Operad(0) = \emptyset$ and $\Operad(1) = \Bra{\Unit}$ are consequences of the previous
lemma, used implicitly in the sequel.
\medbreak

Let $\Operad$ be an operad such that $\Operad(0) = \emptyset$ and $\Operad(1) =
\Bra{\Unit}$. If the presentation $\Par{\GeneratingSet_\Operad, \Congr_\Operad}$ of
$\Operad$ is such that for any $\TreeT, \TreeT' \in
\SyntaxTreesLeaf\Par{\GeneratingSet_\Operad}$, $\TreeT \Congr_\Operad \TreeT'$ implies
$\Deg(\TreeT) = \Deg\Par{\TreeT'}$, then $\Operad$ is \Def{homogeneous}. Besides, if
$\GeneratingSet_\Operad$ is finite, then $\Operad$ is \Def{finitely generated}. In the
sequel, we shall mainly consider homogeneous and finitely generated operads.
\medbreak

Given an homogeneous operad $\Operad$, the \Def{degree} $\Deg(x)$ of $x \in \Operad$ is the
degree of a treelike expression of $x$ on $\GeneratingSet_\Operad$. Observe that since
$\Operad$ is homogeneous, if $x$ admits two treelike expressions $\TreeT$ and $\TreeT'$, we
necessarily have $\Deg(\TreeT) = \Deg\Par{\TreeT'}$ so that $\Deg(x)$ is well-defined.
Moreover, we denote by $\OperadDegree$ the graded set wherein for any $d \geq 0$,
$\OperadDegree(d)$ is the set of all elements of $\Operad$ having $d$ as degree. Remark that
if $\Operad$ is finitely generated, since $\Operad$ is by definition a quotient of
$\SyntaxTreesLeaf\Par{\GeneratingSet_\Operad}$, and since
$\SyntaxTreesInternalNode\Par{\GeneratingSet_\Operad}$ is combinatorial, $\OperadDegree$ is
also combinatorial.
\medbreak

\subsubsection{Prefix graded graphs}
For any homogeneous and finitely generated operad $\Operad$, let $\Par{\OperadDegree, \Up}$
be the graded graph wherein, for any $x \in \OperadDegree$,
\begin{equation} \label{equ:up_operads}
    \Up(x) :=
    \sum_{\substack{
        \GenA \in \GeneratingSet_\Operad \\
        i \in [|x|]}}
    x \circ_i \GenA.
\end{equation}
In words, this says that $y \in \OperadDegree$ appears in $\Up(x)$ with a coefficient
$\lambda$ where $\lambda$ is the number of pairs $\Par{\GenA, i} \in \GeneratingSet_\Operad
\times \N$ such that $y = x \circ_i \GenA$.  We call $\Par{\OperadDegree, \Up}$ the
\Def{prefix graph} of~$\Operad$. As a side remark, this graph is the derivation graph of the
so-called monochrome bud generating system of $\Operad$ introduced in~\cite{Gir19}.
\medbreak

\begin{Proposition} \label{prop:prefix_graph_operad}
    Let $\Operad$ be an homogeneous and finitely generated operad. The prefix graph of
    $\Operad$ is a natural rooted graded graph.
\end{Proposition}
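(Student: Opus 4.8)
The plan is to check in turn the three conditions packaged into the statement: that $\Par{\OperadDegree, \Up}$ is a well-defined graded graph, that it is natural, and that it is rooted.

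\textbf{Well-definedness.} Since $\Operad$ is finitely generated, $\GeneratingSet_\Operad$ is finite, and since $i$ ranges over the finite set $[|x|]$, the sum defining $\Up(x)$ in~\eqref{equ:up_operads} is finite, so $\Up(x)$ is an $\OperadDegree$-polynomial; moreover $\OperadDegree$ is combinatorial, as already observed. It remains to see that $\Up$ raises the degree by exactly one. I would fix $x \in \OperadDegree(d)$ and a treelike expression $\TreeT$ of $x$ on $\GeneratingSet_\Operad$, which has degree $d$. For any $\GenA \in \GeneratingSet_\Operad$ and $i \in [|x|]$, grafting $\Corolla(\GenA)$ onto the $i$-th leaf of $\TreeT$ yields a $\GeneratingSet_\Operad$-tree all of whose internal nodes are decorated by generators, and which evaluates, since $\Eval$ is an operad morphism, to $x \circ_i \GenA$; it is therefore a treelike expression of $x \circ_i \GenA$ of degree $d + 1$. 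As $\Operad$ is homogeneous, $\Deg$ does not depend on the chosen treelike expression, whence $\Deg\Par{x \circ_i \GenA} = d + 1$ and $\Up(x) \in \K \Angle{\OperadDegree(d + 1)}$.

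\textbf{Naturality.} By the description following~\eqref{equ:up_operads}, $\Weight{\Up}{x, y} = \Angle{y, \Up(x)}$ equals the number of pairs $\Par{\GenA, i} \in \GeneratingSet_\Operad \times \N$ such that $y = x \circ_i \GenA$, which is a nonnegative integer. Hence $\Par{\OperadDegree, \Up}$ is natural.

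\textbf{Rootedness.} The candidate root is $\Unit$: a degree-$0$ treelike expression has no internal node, so it is the leaf $\Leaf$, giving $\OperadDegree(0) = \Bra{\Unit}$ with $\Eval(\Leaf) = \Unit$. To reach an arbitrary $x \in \OperadDegree$, I would use that the $\GeneratingSet_\Operad$-prefix graph $\Par{\SyntaxTreesInternalNode(\GeneratingSet_\Operad), \Up}$ is rooted with root $\Leaf$, as noted in Section~\ref{subsubsec:definitions_prefix_graded_graphs}. Choosing a treelike expression $\TreeT$ of $x$, there is a path $\Leaf = \TreeT_0, \dots, \TreeT_d = \TreeT$ in that prefix graph with $\TreeT_{k + 1} = \TreeT_k \circ_{i_k} \GenA_k$ for some $\GenA_k \in \GeneratingSet_\Operad$ and $i_k \in [|\TreeT_k|]$. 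Applying $\Eval$ and using $\Eval(\Corolla(\GenA_k)) = \GenA_k$ gives $\Eval\Par{\TreeT_{k + 1}} = \Eval\Par{\TreeT_k} \circ_{i_k} \GenA_k$, so $\Eval\Par{\TreeT_{k + 1}}$ appears in $\Up\Par{\Eval\Par{\TreeT_k}}$; thus $\Par{\Eval(\TreeT_0), \dots, \Eval(\TreeT_d)} = \Par{\Unit, \dots, x}$ is a path from $\Unit$ to $x$. The only delicate point is precisely this last push-forward: it relies on $\Eval$ commuting with partial compositions and preserving arities, so that each $i_k \in [|\TreeT_k|] = \Han{\Brr{\Eval\Par{\TreeT_k}}}$ stays admissible in~\eqref{equ:up_operads} and each elementary step remains an edge after evaluation.
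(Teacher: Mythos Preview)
Your proof is correct and follows essentially the same approach as the paper: both check well-definedness via finiteness of $\GeneratingSet_\Operad$ and the degree shift via homogeneity, naturality from nonnegativity of the coefficients, and rootedness by exhibiting a path from the unit to any $x$ built out of iterated compositions with generators. Your rootedness argument routes through the $\GeneratingSet_\Operad$-prefix graph of syntax trees and pushes forward via $\Eval$, whereas the paper writes the composition expression directly using~\eqref{equ:operad_axiom_1}; these are the same argument in different packaging.
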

\begin{proof}
    Since $\Operad$ is finitely generated, for any $x \in \OperadDegree$, $\Up(x)$ is an
    $\OperadDegree$-polynomial, so that $\Up$ is a well-defined map from $\K
    \Angle{\OperadDegree}$ to $\K \Angle{\OperadDegree}$.  Moreover, since $\Operad$ is
    homogeneous, each element of $\Operad$ has a well-defined degree. For any $x \in
    \OperadDegree$, the degree of $x \circ_i \GenA$ where $i \in [|x|]$ and $\GenA \in
    \GeneratingSet_\Operad$ is $\Deg(x) + 1$. Therefore, $\Up$ sends any element of
    $\OperadDegree$ of degree $d \geq 0$ to a sum of elements of degrees $d + 1$. This shows
    that $\Par{\OperadDegree, \Up}$ is a graded graph.  Moreover, since
    by~\eqref{equ:operad_axiom_1}, any $x \in \Operad$ writes as
    \begin{equation}
        x = \Par{\dots \Par{\Par{\Unit \circ_{i_1} \GenA_1}
        \circ_{i_2} \GenA_2} \dots} \circ_{i_d} \GenA_d
    \end{equation}
    where $d \geq 0$, $\GenA_1, \dots, \GenA_d \in \GeneratingSet_\Operad$, and $i_1, i_2,
    \dots, i_d \in \N$, there is at least a path from $\Unit$ to $x$ in $\Par{\OperadDegree,
    \Up}$.  Hence, this graded graph admits $\Leaf$ as root.  Finally, since all
    coefficients of $\Up(x)$ are obviously nonnegative, the statement of the proposition is
    established.
\end{proof}
\medbreak

Observe that when $\Operad$ is free, then $\Operad \simeq
\SyntaxTreesLeaf\Par{\GeneratingSet_\Operad}$, and $\Par{\OperadDegree, \Up}$ and
$\Par{\SyntaxTreesInternalNode\Par{\GeneratingSet_\Operad}, \Up}$ coincide as graded graphs.
\medbreak

\subsubsection{Twisted prefix graded graphs}
For any homogeneous and finitely generated operad $\Operad$, let $\Par{\OperadDegree, \Vp}$
be the graded graph wherein, for any $x \in \OperadDegree$,
\begin{equation}
    \Vp(x) := \CharacteristicSeries{\Support{\Vp'(x)}},
\end{equation}
where $\Vp' : \K \Angle{\OperadDegree} \to \K \Angle{\OperadDegree}$ is the linear map
defined recursively by
\begin{equation} \label{equ:twisted_prefix_operads}
    \Vp'(x) := 
    \Par{\sum_{\GenA \in \GeneratingSet_\Operad} \GenA \circ_1 x}
    +
    \Par{
    \sum_{\substack{
        \GenB \in \GeneratingSet_\Operad \\
        y_1, \dots, y_{|\GenB|} \in \Operad \\
        x = \GenB \circ \Han{y_1, \dots, y_{|\GenB|}}
    }}
    \sum_{j \in \Han{2, |\GenB|}}
    \GenB \circ \Han{y_1, \dots, y_{j - 1}, \Vp'\Par{y_j}, y_{j + 1}, \dots, y_{|\GenB|}}}.
\end{equation}
In words, this says that $y \in \OperadDegree$ appears in $\Vp(x)$ if there is a treelike
expression $\TreeT_y$ on $\GeneratingSet_\Operad$ of $y$ and a treelike expression
$\TreeT_x$ on $\GeneratingSet_\Operad$ of $x$ such that $\TreeT_y$ appears in
$\Vp\Par{\TreeT_x}$, where this last occurrence of $\Vp$ is the linear map of the
$\GeneratingSet_\Operad$-twisted prefix graph
$\Par{\SyntaxTreesInternalNode\Par{\GeneratingSet_\Operad}, \Vp}$ (see
Section~\ref{subsubsec:twisted_prefix_graphs}).  We call $\Par{\OperadDegree, \Vp}$ the
\Def{twisted prefix graph} of~$\Operad$.
\medbreak

\begin{Proposition} \label{prop:twisted_prefix_graph_operad}
    Let $\Operad$ be an homogeneous and finitely generated operad. The twisted prefix graph
    of $\Operad$ is a simple rooted graded graph.
\end{Proposition}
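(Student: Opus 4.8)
The plan is to verify the three required properties — simplicity, gradedness, and rootedness — by transporting the corresponding facts about the tree-level twisted prefix graph $\Par{\SyntaxTreesInternalNode\Par{\GeneratingSet_\Operad}, \Vp}$ of Section~\ref{subsubsec:twisted_prefix_graphs} along the evaluation morphism $\Eval : \SyntaxTreesLeaf\Par{\GeneratingSet_\Operad} \to \Operad$. First I would observe that the recursion defining $\Vp'$ is well-founded: in~\eqref{equ:twisted_prefix_operads} the recursive calls $\Vp'\Par{y_j}$ occur only for $j \in \Han{2, |\GenB|}$ with $x = \GenB \circ \Han{y_1, \dots, y_{|\GenB|}}$, so $\Deg\Par{y_j} < \Deg(x)$, and each sum is finite because $\GeneratingSet_\Operad$ is finite and $\Operad$ is combinatorial. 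Simplicity is then immediate from the definition $\Vp(x) = \CharacteristicSeries{\Support{\Vp'(x)}}$, all of whose coefficients lie in $\Bra{0, 1}$.

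For gradedness I would argue by induction on $\Deg(x)$ that $\Vp'(x)$ is a combination of elements of degree $\Deg(x) + 1$. The first summand of~\eqref{equ:twisted_prefix_operads} contributes the elements $\GenA \circ_1 x$, of degree $1 + \Deg(x)$; for the second summand, since $\Deg(x) = 1 + \sum_i \Deg\Par{y_i}$ whenever $x = \GenB \circ \Han{y_1, \dots, y_{|\GenB|}}$, the induction hypothesis gives that every element of $\GenB \circ \Han{y_1, \dots, \Vp'\Par{y_j}, \dots, y_{|\GenB|}}$ has degree $1 + \Par{\Deg(x) - 1 - \Deg\Par{y_j}} + \Par{\Deg\Par{y_j} + 1} = \Deg(x) + 1$ (the base case $x = \Unit$, where the second summand is empty, being clear). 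Hence $\Vp(x)$ is supported on $\OperadDegree\Par{\Deg(x) + 1}$, which is finite since $\OperadDegree$ is combinatorial; so $\Vp$ is a well-defined linear endomorphism of $\K \Angle{\OperadDegree}$ raising the degree by one, and $\Par{\OperadDegree, \Vp}$ is a graded graph.

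Rootedness is the crux, and I would handle it through a compatibility lemma between the two maps $\Vp$: for every $\GeneratingSet_\Operad$-tree $\TreeT$ and every $\TreeT'$ appearing in $\Vp(\TreeT)$ (the tree-level map), the element $\Eval\Par{\TreeT'}$ appears in $\Vp\Par{\Eval(\TreeT)}$ (the operad-level map). This is proved by induction on $\Deg(\TreeT)$, matching~\eqref{equ:twisted_prefix_operads} term by term with the tree-level recursion for $\Vp$ recalled in Section~\ref{subsubsec:twisted_prefix_graphs}, using that $\Eval$ is an operad morphism (so it commutes with partial and full compositions and sends $\Leaf$ to $\Unit$) and that there is no cancellation, all coefficients being nonnegative. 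Granting this, rootedness follows: given $x \in \Operad$, choose a treelike expression $\TreeT \in \TreeLikeExpressions_{\GeneratingSet_\Operad}(x)$, which exists because $\Eval$ is surjective; since $\Par{\SyntaxTreesInternalNode\Par{\GeneratingSet_\Operad}, \Vp}$ is rooted at $\Leaf$, there is a path $\Leaf = \TreeT_0, \TreeT_1, \dots, \TreeT_n = \TreeT$ in it, and the lemma turns each step into an occurrence of $\Eval\Par{\TreeT_{k + 1}}$ in $\Vp\Par{\Eval\Par{\TreeT_k}}$. Because $\Eval$ preserves degrees (by homogeneity of $\Operad$) while the tree graph is graded, consecutive elements $\Eval\Par{\TreeT_k}$ and $\Eval\Par{\TreeT_{k + 1}}$ have distinct degrees, hence are distinct, so this sequence is a genuine path from $\Unit = \Eval(\Leaf)$ to $x = \Eval(\TreeT)$ in $\Par{\OperadDegree, \Vp}$. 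Thus $\Unit$ is a root, and $\Par{\OperadDegree, \Vp}$ is a simple rooted graded graph. The main obstacle is precisely this compatibility lemma: one must check that although $\Vp'(x)$ sums over all treelike decompositions of $x$ whereas $\Vp(\TreeT)$ uses only the single decomposition recorded by the root of $\TreeT$, the contribution of each individual decomposition still lands inside $\Support{\Vp'(x)}$, which is exactly the inclusion the induction delivers.
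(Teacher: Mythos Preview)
Your proof is correct and follows essentially the same strategy as the paper's: transfer the three properties from the tree-level twisted prefix graph $\Par{\SyntaxTreesInternalNode\Par{\GeneratingSet_\Operad}, \Vp}$ to $\Par{\OperadDegree, \Vp}$ via the evaluation morphism. The differences are in execution rather than idea. For gradedness, the paper invokes the ``in words'' characterization stated just before the proposition (that $y$ appears in $\Vp(x)$ iff some treelike expressions $\TreeT_y$, $\TreeT_x$ satisfy the tree-level relation) and then uses gradedness at the tree level; you instead run a direct induction on the recursion~\eqref{equ:twisted_prefix_operads}, which is self-contained and avoids appealing to that characterization. For rootedness, both arguments need the ``if'' direction of that same characterization --- your compatibility lemma --- to push a tree-level path down to $\OperadDegree$; the paper uses it implicitly (``for this reason''), whereas you isolate it and sketch its inductive proof. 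Your version is thus slightly more explicit about what actually requires verification, at the cost of a bit more length; the paper's version is terser but leans on the unproven ``in words'' claim.
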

\begin{proof}
    Since $\Operad$ is finitely generated, $\GeneratingSet_\Operad$ is finite and thus, the
    sum appearing in~\eqref{equ:twisted_prefix_operads} is finite. Therefore,
    $\Par{\SyntaxTreesInternalNode\Par{\GeneratingSet_\Operad}, \Vp}$ is a well-defined
    graded graph. Let $x, y \in \OperadDegree$ such that $y$ appears in $\Vp(x)$. Then,
    there are $\TreeS \in \TreeLikeExpressions_{\GeneratingSet_\Operad}(x)$, $\TreeT \in
    \TreeLikeExpressions_{\GeneratingSet_\Operad}(y)$ such that $\TreeT$ appears in
    $\Vp(\TreeS)$. Since $\GeneratingSet_\Operad$-twisted prefix graphs are ranked by the
    degrees of the $\GeneratingSet_\Operad$-trees, we have $\Deg(\TreeT) = \Deg(\TreeS) +
    1$. Therefore, and since $\Operad$ is homogeneous so that each element of
    $\OperadDegree$ as a well-defined degree, $\Deg(y) = \Deg(x) + 1$. Hence, $\Vp$ sends
    any element of $\OperadDegree$ of degree $d \geq 0$ to a sum of elements of degrees $d +
    1$. This implies that $\Par{\OperadDegree, \Vp}$ is a graded graph.  Moreover, as
    noticed in Section~\ref{subsubsec:twisted_prefix_graphs}, for each
    $\GeneratingSet_\Operad$-tree $\TreeT$ different from the leaf, $\Vp^\Dual(\TreeT) \ne
    0$. This implies that for any $\GeneratingSet_\Operad$-tree $\TreeS$ different from the
    leaf, there is a $\GeneratingSet_\Operad$-tree $\TreeT$ such that $\TreeS$ appears in
    $\Vp(\TreeT)$. For this reason, for any $y \in \OperadDegree$ such that $y \ne \Unit$,
    there is an $x \in \OperadDegree$ such that $\Vp(x) = y$. Therefore, and because $\Unit$
    is the only element of $\Operad$ of degree $0$, $\Par{\OperadDegree, \Vp}$ admits
    $\Unit$ as root.  Finally, directly by definition, all coefficients of $\Vp(x)$ are $0$
    or $1$.  This establishes the statement of the proposition.
\end{proof}
\medbreak

Observe that when $\Operad$ is free, $\Par{\OperadDegree, \Vp}$ and
$\Par{\SyntaxTreesInternalNode\Par{\GeneratingSet_\Operad}, \Vp}$ coincide as graded graphs.
Besides, when $\Operad$ is an homogeneous and finitely generated operad, by
Propositions~\ref{prop:prefix_graph_operad} and~\ref{prop:twisted_prefix_graph_operad}, the
two graded graphs $\Par{\OperadDegree, \Up}$ and $\Par{\OperadDegree, \Vp}$ are both ranked
by the degrees of their elements. Hence, $\Par{\OperadDegree, \Up, \Vp}$ is a pair of graded
graphs.
\medbreak

\subsection{Posets from operads} \label{subsec:operad_prefix_posets}
Let us study the posets of the prefix graphs of prefix graphs built from operads.
\medbreak

\subsubsection{Prefix posets}
Let $\Operad$ be a homogeneous and finitely generated operad. The \Def{prefix poset} of
$\Operad$ is the poset $\Par{\OperadDegree, \OrderPrefixes}$ of $\Par{\OperadDegree, \Up}$.
Observe that $\GeneratingSet_\Operad$ is the set of the atoms of the prefix poset
of~$\Operad$.
\medbreak

\begin{Proposition} \label{prop:prefix_poset_operad_treelike_expressions}
    Let $\Operad$ be an homogeneous and finitely generated operad.  For any $x, y \in
    \OperadDegree$, we have $x \OrderPrefixes y$ if and only if there exist $\TreeS \in
    \TreeLikeExpressions_{\GeneratingSet_\Operad}(x)$ and $\TreeT \in
    \TreeLikeExpressions_{\GeneratingSet_\Operad}(y)$ such that $\TreeS \OrderPrefixes
    \TreeT$ in the $\GeneratingSet$-prefix poset
    $\Par{\SyntaxTreesInternalNode(\GeneratingSet), \OrderPrefixes}$.
\end{Proposition}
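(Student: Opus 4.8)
The plan is to transport the two prefix orders through the evaluation morphism $\Eval : \SyntaxTreesLeaf\Par{\GeneratingSet_\Operad} \to \Operad$, which is a surjective operad morphism and therefore commutes with partial compositions, hence with full compositions. Recall that, by definition, $\TreeLikeExpressions_{\GeneratingSet_\Operad}(x)$ is exactly the set of $\GeneratingSet_\Operad$-trees $\TreeS$ such that $\Eval\Par{\TreeS} = x$, and that this set is nonempty for every $x \in \OperadDegree$ since $\GeneratingSet_\Operad$ generates $\Operad$. The order $\OrderPrefixes$ on $\OperadDegree$ is the poset of $\Par{\OperadDegree, \Up}$, so $x \OrderPrefixes y$ means there is a path from $x$ to $y$ for the map $\Up$ of \eqref{equ:up_operads}, that is, a sequence of partial compositions with generators; on the tree side, Proposition~\ref{prop:prefix_tree_poset} rephrases $\TreeS \OrderPrefixes \TreeT$ as ``$\TreeS$ is a prefix of $\TreeT$'', i.e. a single full composition. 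Both implications will reduce to moving between these two descriptions.

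For the forward implication, assume $x \OrderPrefixes y$. Then there are $k \geq 0$, generators $\GenA_1, \dots, \GenA_k \in \GeneratingSet_\Operad$, and indices $i_1, \dots, i_k$ with $y = \Par{\dots \Par{x \circ_{i_1} \GenA_1} \dots} \circ_{i_k} \GenA_k$. I would pick any $\TreeS \in \TreeLikeExpressions_{\GeneratingSet_\Operad}(x)$ and set $\TreeT := \Par{\dots \Par{\TreeS \circ_{i_1} \GenA_1} \dots} \circ_{i_k} \GenA_k$, computed in $\SyntaxTreesLeaf\Par{\GeneratingSet_\Operad}$. Since $\Eval$ preserves arities, each $i_j$ is a legitimate leaf index at the tree level, so $\TreeT$ is a well-defined $\GeneratingSet_\Operad$-tree; and since $\Eval$ intertwines partial compositions with $\Eval\Par{\TreeS} = x$ and $\Eval\Par{\Corolla\Par{\GenA_j}} = \GenA_j$, an immediate induction yields $\Eval\Par{\TreeT} = y$, so $\TreeT \in \TreeLikeExpressions_{\GeneratingSet_\Operad}(y)$. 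The sequence from $\TreeS$ to $\TreeT$ is by construction a path in $\Par{\SyntaxTreesInternalNode\Par{\GeneratingSet_\Operad}, \Up}$, whence $\TreeS \OrderPrefixes \TreeT$.

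For the converse, assume there are $\TreeS \in \TreeLikeExpressions_{\GeneratingSet_\Operad}(x)$ and $\TreeT \in \TreeLikeExpressions_{\GeneratingSet_\Operad}(y)$ with $\TreeS \OrderPrefixes \TreeT$. By Proposition~\ref{prop:prefix_tree_poset} there are $\GeneratingSet_\Operad$-trees $\TreeR_1, \dots, \TreeR_{|\TreeS|}$ with $\TreeT = \TreeS \circ \Han{\TreeR_1, \dots, \TreeR_{|\TreeS|}}$. Applying $\Eval$ and using that it commutes with full compositions gives $y = x \circ \Han{\Eval\Par{\TreeR_1}, \dots, \Eval\Par{\TreeR_{|x|}}}$. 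Exactly as in the converse part of the proof of Proposition~\ref{prop:prefix_tree_poset}, each $\Eval\Par{\TreeR_i}$ decomposes into partial compositions with generators of $\GeneratingSet_\Operad$, and rearranging the resulting expression into a left-to-right bracketing by means of \eqref{equ:operad_axiom_1} produces a sequence $y = \Par{\dots \Par{x \circ_{i_1} \GenA_1} \dots} \circ_{i_k} \GenA_k$ with all $\GenA_j \in \GeneratingSet_\Operad$. This is a path from $x$ to $y$ in $\Par{\OperadDegree, \Up}$, so $x \OrderPrefixes y$.

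I expect the only delicate point to be the lifting in the forward implication: one must be sure that replaying, at the level of free-operad trees, the partial compositions witnessing $x \OrderPrefixes y$ yields a tree still evaluating to $y$. This is precisely what the operad-morphism property of $\Eval$ guarantees (it intertwines the $\circ_i$), together with arity preservation (each index $i_j$ legitimate for $\Operad$ remains legitimate for the corresponding tree). Homogeneity of $\Operad$ then ensures $k = \Deg(y) - \Deg(x)$ on both sides, and the uniqueness of $\GeneratingSet_\Operad$ from Lemma~\ref{lem:unique_minimal_generating_set} makes all the treelike expressions involved unambiguous; everything else is the routine translation between path form and full-composition form already carried out in Proposition~\ref{prop:prefix_tree_poset}.
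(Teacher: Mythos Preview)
Your proof is correct and follows the same approach as the paper's: both lift the chain of partial compositions witnessing $x \OrderPrefixes y$ to the free operad via $\Eval$, using that $\Eval$ is a surjective operad morphism and hence intertwines partial compositions and preserves arities. Your argument is in fact more complete than the paper's, which treats the forward implication explicitly but leaves the converse implicit; your converse (via the full-composition form of Proposition~\ref{prop:prefix_tree_poset} and a re-bracketing by~\eqref{equ:operad_axiom_1}) is exactly what the paper carries out in the subsequent Proposition~\ref{prop:prefix_poset_operad}.
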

\begin{proof}
    By definition of the prefix poset of $\Operad$, $x \OrderPrefixes y$ is equivalent to
    the fact that there exist an integer $k \geq 0$, generators $\GenA_1$, \dots, $\GenA_k$
    of $\GeneratingSet_\Operad$, and positive integers $i_1$, \dots, $i_k$ such that
    \begin{equation} \label{equ:prefix_poset_operad_treelike_expressions}
        y = \Par{\dots \Par{\Par{x \circ_{i_1} \GenA_1} \circ_{i_2}
        \GenA_2} \dots } \circ_{i_k} \GenA_k.
    \end{equation}
    Let $\TreeS$ be any treelike expression on $\GeneratingSet_\Operad$ of $x$.
    From~\eqref{equ:prefix_poset_operad_treelike_expressions}, the tree
    \begin{equation}
        \TreeT := \Par{\dots \Par{\Par{\TreeS \circ_{i_1}
        \GenA_1} \circ_{i_2} \GenA_2} \dots } \circ_{i_k} \GenA_k,
    \end{equation}
    is a treelike expression on $\GeneratingSet_\Operad$ of $y$.  Moreover, by definition of
    the $\GeneratingSet_\Operad$-prefix graded graph, this is equivalent to the fact that
    there is a path from $\TreeS$ to $\TreeT$ in
    $\Par{\SyntaxTreesInternalNode\Par{\GeneratingSet_\Operad}, \Up}$.  Therefore,~$\TreeS
    \OrderPrefixes \TreeT$.
\end{proof}
\medbreak

For any $x, y \in \OperadDegree$, $x$ is a \Def{prefix} of $y$ if there exist some elements
$z_1$, \dots, $z_{|x|}$ of $\OperadDegree$ such that
\begin{math}
    y = x \circ \Han{z_1, \dots, z_{|x|}}.
\end{math}
\medbreak

\begin{Proposition} \label{prop:prefix_poset_operad}
    Let $\Operad$ be an homogeneous and finitely generated operad. The order relation
    $\OrderPrefixes$ of the prefix poset of $\Operad$ satisfies $x \OrderPrefixes y$ if and
    only if $x$ is a prefix of $y$ for any $x, y \in \OperadDegree$.  Moreover, the covering
    relation $\Covered_\Up$ of the prefix poset of $\Operad$ satisfies $x \Covered_\Up y$
    for any $x, y \in \Operad_\InternalNode$ if and only if there is an $\GenA \in
        \GeneratingSet_\Operad$ and an $i \in [|x|]$ such that $y = x \circ_i \GenA$.
\end{Proposition}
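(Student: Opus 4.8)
The plan is to establish both assertions by reducing them to the already-proved facts about the $\GeneratingSet_\Operad$-prefix poset of syntax trees, using the treelike-expression characterization of the order relation from Proposition~\ref{prop:prefix_poset_operad_treelike_expressions}. For the first assertion, I would argue both implications directly from the definition of prefix in $\OperadDegree$. Assuming $x \OrderPrefixes y$, by the very definition of the prefix poset there exist generators $\GenA_1, \dots, \GenA_k$ and indices $i_1, \dots, i_k$ with $y = \Par{\dots \Par{\Par{x \circ_{i_1} \GenA_1} \circ_{i_2} \GenA_2} \dots} \circ_{i_k} \GenA_k$; an easy induction on $k$, mirroring the one in the proof of Proposition~\ref{prop:prefix_tree_poset}, rewrites this iterated partial composition as a single full composition $y = x \circ \Han{z_1, \dots, z_{|x|}}$ for suitable $z_1, \dots, z_{|x|} \in \OperadDegree$, which is exactly the statement that $x$ is a prefix of $y$. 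Conversely, if $x$ is a prefix of $y$, then $y = x \circ \Han{z_1, \dots, z_{|x|}}$, and by expanding each $z_i$ into partial compositions of generators and then reassociating from left to right using the associativity axiom~\eqref{equ:operad_axiom_1}, one recovers an expression of the iterated form above, giving $x \OrderPrefixes y$.

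The second assertion about the covering relation $\Covered_\Up$ is immediate from the definition of the poset of a graded graph. Recall that in $\Par{\OperadDegree, \Up}$ the covering relation satisfies $x \Covered_\Up y$ if and only if $y$ appears in $\Up(x)$. By the defining formula~\eqref{equ:up_operads}, namely $\Up(x) = \sum_{\GenA \in \GeneratingSet_\Operad,\, i \in [|x|]} x \circ_i \GenA$, the element $y$ appears in $\Up(x)$ precisely when there exist $\GenA \in \GeneratingSet_\Operad$ and $i \in [|x|]$ with $y = x \circ_i \GenA$. This is exactly the claimed characterization, so no further work is needed here.

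The main point to handle carefully is the reassociation argument in the converse direction of the first assertion, where partial compositions encoding the $z_i$ must be rearranged into the strictly left-bracketed iterated form; here one relies on the operad axioms~\eqref{equ:operad_axiom_1} and~\eqref{equ:operad_axiom_2} and should track the index shifts correctly, exactly as in the syntax-tree case of Proposition~\ref{prop:prefix_tree_poset}. A clean alternative that avoids repeating this bookkeeping is to transport everything through Proposition~\ref{prop:prefix_poset_operad_treelike_expressions}: one shows that $x$ is a prefix of $y$ in $\Operad$ if and only if there are treelike expressions $\TreeS$ of $x$ and $\TreeT$ of $y$ with $\TreeS$ a prefix of $\TreeT$ in $\SyntaxTreesInternalNode(\GeneratingSet_\Operad)$, and then invokes Proposition~\ref{prop:prefix_tree_poset} to identify the latter with $\TreeS \OrderPrefixes \TreeT$, which by Proposition~\ref{prop:prefix_poset_operad_treelike_expressions} is equivalent to $x \OrderPrefixes y$. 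I expect the compatibility of the full-composition notion of prefix with the tree-level notion of prefix (i.e.\ checking that $\Eval$ sends a prefix-related pair of trees to a prefix-related pair of operad elements and admits a lift in the reverse direction) to be the only genuinely delicate verification, and it follows from the fact that $\Eval$ is a surjective operad morphism commuting with full composition.
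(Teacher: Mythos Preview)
Your proposal is correct, and in fact you describe two valid routes. The ``clean alternative'' you mention at the end---transporting through Proposition~\ref{prop:prefix_poset_operad_treelike_expressions}, using Proposition~\ref{prop:prefix_tree_poset} to identify $\TreeS \OrderPrefixes \TreeT$ with $\TreeS$ being a prefix of $\TreeT$, and then pushing forward via the operad morphism $\Eval$---is exactly the paper's proof. Your first route, the direct argument that mimics the proof of Proposition~\ref{prop:prefix_tree_poset} at the level of $\Operad$ using the operad axioms~\eqref{equ:operad_axiom_1} and~\eqref{equ:operad_axiom_2}, is equally valid and avoids the detour through syntax trees; it is arguably more self-contained, while the paper's approach has the virtue of reusing work already done. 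Your handling of the covering relation is identical to the paper's.
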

\begin{proof}
    By Proposition~\ref{prop:prefix_poset_operad_treelike_expressions}, we have $x
    \OrderPrefixes y$ if and only if $\TreeS \OrderPrefixes \TreeT$ where $\TreeS$ (resp.
    $\TreeT$) is a treelike expression on $\GeneratingSet_\Operad$ of $x$ (resp. $y$). By
    Proposition~\ref{prop:prefix_tree_poset}, this is equivalent to the fact that $\TreeS$
    is a prefix of $\TreeT$. Hence, we have
    \begin{math}
        \TreeT = \TreeS \circ \Han{\TreeR_1, \dots, \TreeR_{|\TreeS|}}
    \end{math}
    for some $\Operad_\GeneratingSet$-trees $\TreeR_1$, \dots, $\TreeR_{|\TreeS|}$. This
        says that
    \begin{math}
        \Eval\Par{\TreeT} = \Eval\Par{\TreeS \circ \Han{\TreeR_1, \dots, \TreeR_{|\TreeS|}}}
    \end{math}
    and, since $\Eval$ is an operad morphism, that
    \begin{math}
        y = x \circ \Han{\Eval\Par{\TreeR_1}, \dots, \Eval\Par{\TreeR_{|\TreeS|}}}.
    \end{math}
    This is, by definition of the order relation $\OrderPrefixes$ on
    $\Operad_\InternalNode$, equivalent to the fact that~$x \OrderPrefixes y$.
    \smallbreak

    The second part of the statement is a direct consequence of the definition of the
    map~$\Up$.
\end{proof}
\medbreak

\subsubsection{Functorial construction}
\begin{Theorem} \label{thm:functor_operads_to_posets}
    The construction sending any homogeneous and finitely generated operad $\Operad$ to its
    prefix poset $\Par{\OperadDegree, \OrderPrefixes}$ and sending any morphism $\psi :
    \Operad \to \Operad'$ of homogeneous and finitely generated operads $\Operad$ and
    $\Operad'$ to the same map between $\Operad_\InternalNode$ and $\Operad'_\InternalNode$,
    is a functor preserving injections and surjections from the category of homogeneous and
    finitely generated operads to the category of posets.
\end{Theorem}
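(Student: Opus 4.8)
The plan is to verify the three defining features of the asserted functor $F$ one at a time: that $F$ is well-defined on morphisms (sending each operad morphism to a morphism of posets), that it respects identities and composition, and that it carries injective (resp.\ surjective) operad morphisms to injective (resp.\ surjective) poset morphisms. On objects nothing new is needed: for a homogeneous and finitely generated $\Operad$, the pair $\Par{\OperadDegree, \Up}$ is a graded graph by Proposition~\ref{prop:prefix_graph_operad}, so its prefix poset $\Par{\OperadDegree, \OrderPrefixes}$ is a genuine poset. Throughout I write $F(\Operad) := \Par{\OperadDegree, \OrderPrefixes}$ and let $F(\psi)$ be the underlying set map of an operad morphism $\psi : \Operad \to \Operad'$, now viewed as a map from $\OperadDegree$ to $\Operad'_\InternalNode$.

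The one substantive point is to show that $F(\psi)$ is monotone. First I would record that an operad morphism is a graded set morphism for the arity grading, so $\Brr{\psi(x)} = \Brr{x}$ for all $x$, and that, since $\psi$ commutes with the partial compositions, it also commutes with the full composition maps of~\eqref{equ:full_composition_maps}. Now take $x, y \in \OperadDegree$ with $x \OrderPrefixes y$. By Proposition~\ref{prop:prefix_poset_operad}, $x$ is a prefix of $y$, so there are $z_1, \dots, z_{\Brr{x}} \in \OperadDegree$ with $y = x \circ \Han{z_1, \dots, z_{\Brr{x}}}$. Applying $\psi$ and using its compatibility with full composition gives $\psi(y) = \psi(x) \circ \Han{\psi(z_1), \dots, \psi(z_{\Brr{x}})}$, which exhibits $\psi(x)$ as a prefix of $\psi(y)$; invoking Proposition~\ref{prop:prefix_poset_operad} a second time, now for $\Operad'$, yields $\psi(x) \OrderPrefixes \psi(y)$. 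Hence $F(\psi)$ is order-preserving.

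Functoriality is then immediate, since $F$ never alters the underlying set map: $F(\mathrm{id}_\Operad)$ is the identity of $\OperadDegree$, and $F(\psi' \circ \psi)$ is the set-theoretic composite $F(\psi') \circ F(\psi)$. For the same reason the preservation of injections and surjections demands no extra argument. In the category of posets a monomorphism is exactly an injective monotone map and an epimorphism is exactly a surjective monotone map, and $F(\psi)$ has the same underlying set map as $\psi$; thus $F(\psi)$ is injective (resp.\ surjective) precisely when $\psi$ is, and it is already known to be monotone.

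The hard part, such as it is, lies entirely in the monotonicity step: one must make sure the prefix characterization of $\OrderPrefixes$ from Proposition~\ref{prop:prefix_poset_operad} is applied on both sides and that a prefix decomposition in $\Operad$ is transported by $\psi$ to a bona fide prefix decomposition in $\Operad'$. This is where the standing hypotheses are used, since homogeneity and finite generation are what make $\OperadDegree$ and $\Operad'_\InternalNode$ well-defined combinatorial posets. I would stress that the fact that $\psi$ need neither map $\GeneratingSet_\Operad$ into $\GeneratingSet_{\Operad'}$ nor preserve degrees is irrelevant, because the argument only exploits compatibility with composition; and I would note explicitly that $F$ does not in general reflect the order, so the statement is one of preservation, not of order-embedding or full faithfulness.
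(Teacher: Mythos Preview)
Your argument is correct, and the functoriality and preservation of injections and surjections are handled in the same way as in the paper. The one genuine difference lies in the monotonicity step: you appeal to Proposition~\ref{prop:prefix_poset_operad} and transport a full-composition decomposition $y = x \circ \Han{z_1, \dots, z_{\Brr{x}}}$ directly through $\psi$, whereas the paper goes through Proposition~\ref{prop:prefix_poset_operad_treelike_expressions}, lifts $x$ and $y$ to treelike expressions $\TreeS \OrderPrefixes \TreeT$ on $\GeneratingSet_\Operad$, invokes the universal property of the free operad to rewrite each generator of $\GeneratingSet_\Operad$ as a $\GeneratingSet_{\Operad'}$-tree, and then argues that the induced trees $\TreeS'$ and $\TreeT'$ still satisfy $\TreeS' \OrderPrefixes \TreeT'$. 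Your route is shorter and more elementary because it stays inside $\Operad$ and $\Operad'$ and uses only that operad morphisms commute with full composition; the paper's route is more explicit about the syntax-tree layer and makes visible how prefixes are carried across, but at the cost of building the intermediate map $\bar f : \GeneratingSet_\Operad \to \SyntaxTreesLeaf\Par{\GeneratingSet_{\Operad'}}$ and checking that node-by-node substitution preserves the prefix relation.
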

\begin{proof}
    By Proposition~\ref{prop:prefix_graph_operad}, this construction produces from an
    homogeneous and finitely presented operad $\Operad$ a well-defined poset
    $\Par{\OperadDegree, \OrderPrefixes}$. It remains to prove that this construction sends
    operad morphisms to poset morphisms and preserves their injectivity and surjectivity.
    For this, let $\Operad$ and $\Operad'$ be two homogeneous and finitely presented
    operads, and $\psi : \Operad \to \Operad'$ be an operad morphism.  Let also $x, y \in
    \Operad_\InternalNode$ and assume that $x \OrderPrefixes y$. By
    Proposition~\ref{prop:prefix_poset_operad_treelike_expressions}, there are $\TreeS \in
    \TreeLikeExpressions_{\GeneratingSet_\Operad}(x)$ and $\TreeT \in
    \TreeLikeExpressions_{\GeneratingSet_\Operad}(y)$ such that $\TreeS \OrderPrefixes
    \TreeT$.  By the universality property of free operads (See
    Section~\ref{subsubsec:free_operads}), the map $\psi$ is entirely specified by the map
    $f : \GeneratingSet_\Operad \to \Operad'$ satisfying $f(\GenA) = \psi(\GenA)$ for all
    $\GenA \in \GeneratingSet_\Operad$.  Let
    \begin{math}
        \bar{f} : \GeneratingSet_\Operad \to \SyntaxTreesLeaf\Par{\GeneratingSet_{\Operad'}}
    \end{math}
    be a map sending any $\GenA \in \GeneratingSet_\Operad$ to a treelike expression on
    $\GeneratingSet_{\Operad'}$ of $f(\GenA)$. Let also $\TreeS'$ (resp. $\TreeT'$) be the
    $\GeneratingSet_{\Operad'}$-tree obtained by replacing each internal node $\GenA \in
    \GeneratingSet_\Operad$ of $\TreeS$ (resp. $\TreeT$) by $\bar{f}(\GenA)$.  By
    construction, we have $\Eval\Par{\TreeS'} = \psi(x)$ and $\Eval\Par{\TreeT'} = \psi(y)$.
    Moreover, since $\TreeS \OrderPrefixes \TreeT$, by
    Proposition~\ref{prop:prefix_tree_poset}, $\TreeS$ is a prefix of $\TreeT$.  This
    implies  by construction of $\TreeS'$ and $\TreeT'$ that $\TreeS' \OrderPrefixes
    \TreeT'$. Hence, by Proposition~\ref{prop:prefix_poset_operad_treelike_expressions}, we
    have $\psi(x) \OrderPrefixes \psi(y)$. Therefore, $\psi$ is a poset morphism.  Finally,
    injections and surjections are preserved since operad morphisms are sent to poset
    morphisms without any change.
\end{proof}
\medbreak

As a consequence of Theorem~\ref{thm:functor_operads_to_posets}, if $\Operad$ is an
homogeneous and finitely generated operad, then the operad surjection $\Eval :
\SyntaxTreesLeaf\Par{\GeneratingSet_\Operad} \to \Operad$ is a surjective poset morphism
from $\Par{\SyntaxTreesInternalNode\Par{\GeneratingSet_\Operad}, \OrderPrefixes}$ to
$\Par{\OperadDegree, \OrderPrefixes}$.
\medbreak

\subsection{Examples} \label{subsec:examples_graph_operads}
Before ending this paper, we consider here some examples of pairs of graded graphs
constructed from some homogeneous and finitely generated operads. Some of these pairs of
graded graphs are $\phi$-diagonal dual and some other not.  Most of the considered operads
arise in a combinatorial context.
\medbreak

\subsubsection{Associative operad}
The \Def{associative operad} $\As$ is the operad wherein $\As(n) := \Bra{\Product_n}$ for
all $n \geq 1$, and $\Product_n \circ_i \Product_m := \Product_{n + m - 1}$ for all $n \geq
1$, $m \geq 1$, and $i \in [n]$. This operad admits the presentation
$\Par{\GeneratingSet_\As, \Congr_\As}$ where
\begin{equation}
    \GeneratingSet_\As := \Bra{\Product_2},
\end{equation}
and  $\Congr_\As$ is the smallest operad congruence of
$\SyntaxTreesLeaf\Par{\GeneratingSet_\As}$ satisfying
\begin{equation}
    \Corolla\Par{\Product_2} \circ_1 \Corolla\Par{\Product_2}
    \Congr_\As
    \Corolla\Par{\Product_2} \circ_2 \Corolla\Par{\Product_2}.
\end{equation}
Therefore, $\As$ is homogeneous and finitely presented.
\medbreak

The pair $\Par{\As_\InternalNode, \Up, \Vp}$ of graded graphs satisfies $\Up\Par{\Product_n}
= n\, \Product_{n + 1}$ and $\Vp\Par{\Product_n} = \Product_{n + 1}$ for any $n \geq 1$ (see
Figure~\ref{fig:operad_graphs_As}).
\begin{figure}[ht]
    \centering
    \subfloat[][The graph $\Par{\As_\InternalNode, \Up}$.]{
    \begin{minipage}[c]{.4\textwidth}
        \centering
        \begin{tikzpicture}[Centering,xscale=1,yscale=.8]
            \tikzset{every node}=[font=\scriptsize]
            \node(Unit)at(0,0){$\Product_1$};
            \node(a2)at(0,-1){$\Product_2$};
            \node(a3)at(0,-2){$\Product_3$};
            \node(a4)at(0,-3){$\Product_4$};
            \draw[EdgeGraph](Unit)--(a2);
            \draw[EdgeGraph](a2)edge[]node[EdgeLabel]{$2$}(a3);
            \draw[EdgeGraph](a3)edge[]node[EdgeLabel]{$3$}(a4);
            \draw[EdgeGraph,dotted,shorten >=5mm](a4)--(0,-4);
        \end{tikzpicture}
    \end{minipage}
    \label{subfig:operad_graph_As_U}}
    \hfill
    \subfloat[][The graph $\Par{\As_\InternalNode, \Vp}$.]{
    \begin{minipage}[c]{.4\textwidth}
        \centering
        \begin{tikzpicture}[Centering,xscale=1,yscale=.8]
            \tikzset{every node}=[font=\scriptsize]
            \node(Unit)at(0,0){$\Product_1$};
            \node(a2)at(0,-1){$\Product_2$};
            \node(a3)at(0,-2){$\Product_3$};
            \node(a4)at(0,-3){$\Product_4$};
            \draw[EdgeGraph](Unit)--(a2);
            \draw[EdgeGraph](a2)--(a3);
            \draw[EdgeGraph](a3)--(a4);
            \draw[EdgeGraph,dotted,shorten >=5mm](a4)--(0,-4);
        \end{tikzpicture}
    \end{minipage}
    \label{subfig:operad_graph_As_V}}
    \caption{The pair $\Par{\As_\InternalNode, \Up, \Vp}$ of graded graphs.}
    \label{fig:operad_graphs_As}
\end{figure}
This very elementary example of pair of graded graphs is dual and is known as the
\Def{chain} in~\cite{Fom94}. The hook series of $\Par{\As_\InternalNode, \Up}$ satisfies
\begin{equation}
    \HookSeries{\Up} = \sum_{n \geq 1} n! \Product_n.
\end{equation}
\medbreak

\subsubsection{Diassociative operad}
The \Def{diassociative operad} $\Dias$ is the operad wherein $\Dias(n)$ is the set of all
words of length $n \geq 1$ on the alphabet $\{\mathtt{0}, \mathtt{1}\}$ having exactly one
occurrence of $\mathtt{0}$. The partial composition $u \circ_i v$ of two such words $u$ and
$v$ consists in replacing the $i$-th letter of $u$ by $v'$, where $v'$ is the word obtained
from $v$ by replacing all its letters $a$ by $\max \Bra{u_i, a}$. This operad has been
introduced in~\cite{Lod01} under a slightly different form (see also~\cite{Cha05,Gir16}).
This operad admits the presentation $\Par{\GeneratingSet_\Dias, \Congr_\Dias}$ where
\begin{equation}
    \GeneratingSet_\Dias = \Bra{\mathtt{01}, \mathtt{10}}
\end{equation}
and $\Congr_\Dias$ is the smallest operad congruence of
$\SyntaxTreesLeaf\Par{\GeneratingSet_\Dias}$ satisfying
\begin{subequations}
\begin{equation}
    \Corolla\Par{\mathtt 01} \circ_1 \Corolla\Par{\mathtt 01}
    \Congr_\Dias
    \Corolla\Par{\mathtt 01} \circ_2 \Corolla\Par{\mathtt 01}
    \Congr_\Dias
    \Corolla\Par{\mathtt 01} \circ_2 \Corolla\Par{\mathtt 10},
\end{equation}
\begin{equation}
    \Corolla\Par{\mathtt 01} \circ_1 \Corolla\Par{\mathtt 10}
    \Congr_\Dias
    \Corolla\Par{\mathtt 10} \circ_2 \Corolla\Par{\mathtt 01},
\end{equation}
\begin{equation}
    \Corolla\Par{\mathtt 10} \circ_1 \Corolla\Par{\mathtt 01}
    \Congr_\Dias
    \Corolla\Par{\mathtt 10} \circ_1 \Corolla\Par{\mathtt 10}
    \Congr_\Dias
    \Corolla\Par{\mathtt 10} \circ_2 \Corolla\Par{\mathtt 10}.
\end{equation}
\end{subequations}
Observe that these relations describe respectively the treelike expressions for the elements
$011$, $101$, and $110$ of $\Dias$. Therefore, $\Dias$ is homogeneous and finitely generated.
\medbreak

The pair of graded graphs $\Par{\Dias_\InternalNode, \Up, \Vp}$
satisfies
\begin{equation} \label{equ:up_dias}
    \Up\Par{\mathtt{1}^k \mathtt{0} \mathtt{1}^\ell} =
    (2k + 1) \, \mathtt{1}^{k + 1} \mathtt{0} \mathtt{1}^\ell
    + (2\ell + 1) \, \mathtt{1}^k \mathtt{0} \mathtt{1}^{\ell + 1},
\end{equation}
and
\begin{equation}
    \Vp\Par{\mathtt{1}^k \mathtt{0} \mathtt{1}^\ell} =
    \mathtt{1}^k \mathtt{0} \mathtt{1}^{\ell + 1}
    + \mathtt{1}^{k + 1 + \ell} \mathtt{0},
\end{equation}
for any $k, \ell \in \N$ (see Figure~\ref{fig:operad_graphs_Dias}).
\begin{figure}[ht]
    \centering
    \subfloat[][The graph $\Par{\Dias_\InternalNode, \Up}$ up to elements of degree~$4$.]{
    \begin{minipage}[c]{.45\textwidth}
        \centering
        \begin{tikzpicture}[Centering,xscale=.7,yscale=.7]
            \tikzset{every node}=[font=\scriptsize]
            \node(0)at(0,0){$\mathtt{0}$};
            \node(01)at(-1,-1){$\mathtt{01}$};
            \node(10)at(1,-1){$\mathtt{10}$};
            \node(011)at(-2,-2){$\mathtt{011}$};
            \node(101)at(0,-2){$\mathtt{101}$};
            \node(110)at(2,-2){$\mathtt{110}$};
            \node(0111)at(-3,-3){$\mathtt{0111}$};
            \node(1011)at(-1,-3){$\mathtt{1011}$};
            \node(1101)at(1,-3){$\mathtt{1101}$};
            \node(1110)at(3,-3){$\mathtt{1110}$};
            \node(01111)at(-4,-4){$\mathtt{01111}$};
            \node(10111)at(-2,-4){$\mathtt{10111}$};
            \node(11011)at(0,-4){$\mathtt{11011}$};
            \node(11101)at(2,-4){$\mathtt{11101}$};
            \node(11110)at(4,-4){$\mathtt{11110}$};
            \draw[EdgeGraph](0)--(01);
            \draw[EdgeGraph](0)--(10);
            \draw[EdgeGraph](01)edge[]node[EdgeLabel]{$3$}(011);
            \draw[EdgeGraph](01)--(101);
            \draw[EdgeGraph](10)--(101);
            \draw[EdgeGraph](10)edge[]node[EdgeLabel]{$3$}(110);
            \draw[EdgeGraph](011)edge[]node[EdgeLabel]{$5$}(0111);
            \draw[EdgeGraph](011)--(1011);
            \draw[EdgeGraph](101)edge[]node[EdgeLabel]{$3$}(1011);
            \draw[EdgeGraph](101)edge[]node[EdgeLabel]{$3$}(1101);
            \draw[EdgeGraph](110)--(1101);
            \draw[EdgeGraph](110)edge[]node[EdgeLabel]{$5$}(1110);
            \draw[EdgeGraph](0111)edge[]node[EdgeLabel]{$7$}(01111);
            \draw[EdgeGraph](0111)--(10111);
            \draw[EdgeGraph](1011)edge[]node[EdgeLabel]{$5$}(10111);
            \draw[EdgeGraph](1011)edge[]node[EdgeLabel]{$3$}(11011);
            \draw[EdgeGraph](1101)edge[]node[EdgeLabel]{$3$}(11011);
            \draw[EdgeGraph](1101)edge[]node[EdgeLabel]{$5$}(11101);
            \draw[EdgeGraph](1110)--(11101);
            \draw[EdgeGraph](1110)edge[]node[EdgeLabel]{$7$}(11110);
        \end{tikzpicture}
    \end{minipage}
    \label{subfig:operad_graph_Dias_U}}
    \hfill
    \subfloat[][The graph $\Par{\Dias_\InternalNode, \Vp}$ up to elements of degree~$4$.]{
    \begin{minipage}[c]{.45\textwidth}
        \centering
        \begin{tikzpicture}[Centering,xscale=.7,yscale=.7]
            \tikzset{every node}=[font=\scriptsize]
            \node(0)at(0,0){$\mathtt{0}$};
            \node(01)at(-1,-1){$\mathtt{01}$};
            \node(10)at(1,-1){$\mathtt{10}$};
            \node(011)at(-2,-2){$\mathtt{011}$};
            \node(101)at(0,-2){$\mathtt{101}$};
            \node(110)at(2,-2){$\mathtt{110}$};
            \node(0111)at(-3,-3){$\mathtt{0111}$};
            \node(1011)at(-1,-3){$\mathtt{1011}$};
            \node(1101)at(1,-3){$\mathtt{1101}$};
            \node(1110)at(3,-3){$\mathtt{1110}$};
            \node(01111)at(-4,-4){$\mathtt{01111}$};
            \node(10111)at(-2,-4){$\mathtt{10111}$};
            \node(11011)at(0,-4){$\mathtt{11011}$};
            \node(11101)at(2,-4){$\mathtt{11101}$};
            \node(11110)at(4,-4){$\mathtt{11110}$};
            \draw[EdgeGraph](0)--(01);
            \draw[EdgeGraph](0)--(10);
            \draw[EdgeGraph](10)--(101);
            \draw[EdgeGraph](10)--(110);
            \draw[EdgeGraph](01)--(110);
            \draw[EdgeGraph](01)--(011);
            \draw[EdgeGraph](101)--(1011);
            \draw[EdgeGraph](101)--(1110);
            \draw[EdgeGraph](110)--(1101);
            \draw[EdgeGraph](110)--(1110);
            \draw[EdgeGraph](011)--(1110);
            \draw[EdgeGraph](011)--(0111);
            \draw[EdgeGraph](1011)--(10111);
            \draw[EdgeGraph](1011)--(11110);
            \draw[EdgeGraph](1101)--(11011);
            \draw[EdgeGraph](1101)--(11110);
            \draw[EdgeGraph](1110)--(11110);
            \draw[EdgeGraph](1110)--(11101);
            \draw[EdgeGraph](0111)--(11110);
            \draw[EdgeGraph](0111)--(01111);
        \end{tikzpicture}
    \end{minipage}
    \label{subfig:operad_graph_Dias_V}}
    \caption{The pair $\Par{\Dias_\InternalNode, \Up, \Vp}$ of graded graphs.}
    \label{fig:operad_graphs_Dias}
\end{figure}
This pair of graded graphs is not $\phi$-diagonal dual since for instance,
\begin{math}
    (\Vp^\Dual \Up - \Up \Vp^\Dual)(\mathtt{10}) = 3\, (\mathtt{10}) + 2\, (\mathtt{01}).
\end{math}
Nevertheless, we have the following property.
\medbreak

\begin{Proposition} \label{prop:prefix_operad_graphs_UU_duality}
    The graded graph $\Par{\Dias_\InternalNode, \Up}$ is $\phi$-diagonal self-dual for the
    linear map $\phi : \K \Angle{\Dias_\InternalNode} \to \K \Angle{\Dias_\InternalNode}$
    satisfying
    \begin{equation}
        \phi\Par{\mathtt{1}^k \mathtt{0} \mathtt{1}^\ell}
        =
        \Par{\Han{k = 0} + \Han{\ell = 0} + 8 \Han{k \geq 1} k + 8 \Han{\ell \geq 1} \ell}
        \, \mathtt{1}^k \mathtt{0} \mathtt{1}^\ell
    \end{equation}
    for any $k, \ell \in \N$.
\end{Proposition}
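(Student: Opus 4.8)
The plan is to verify the identity $\Up^\Dual \Up - \Up \Up^\Dual = \phi$ directly, by evaluating both sides on a generic basis element $w := \mathtt{1}^k \mathtt{0} \mathtt{1}^\ell$ of $\Dias_\InternalNode$, with $k, \ell \in \N$. First I would read off the adjoint map $\Up^\Dual$ from the explicit description of $\Up$ in~\eqref{equ:up_dias}. The only elements of degree $\Deg(w) - 1$ whose image under $\Up$ contains $w$ are $\mathtt{1}^{k-1}\mathtt{0}\mathtt{1}^\ell$ (present when $k \geq 1$) and $\mathtt{1}^k\mathtt{0}\mathtt{1}^{\ell-1}$ (present when $\ell \geq 1$), so extracting their coefficients from~\eqref{equ:up_dias} gives
\[
    \Up^\Dual(w) = \Han{k \geq 1}(2k-1)\,\mathtt{1}^{k-1}\mathtt{0}\mathtt{1}^\ell + \Han{\ell \geq 1}(2\ell-1)\,\mathtt{1}^k\mathtt{0}\mathtt{1}^{\ell-1}.
\]

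Next I would compute the two composites. Using~\eqref{equ:up_dias} together with this formula, both $\Up^\Dual \Up(w)$ and $\Up \Up^\Dual(w)$ expand as linear combinations of $w$, of $\mathtt{1}^{k+1}\mathtt{0}\mathtt{1}^{\ell-1}$, and of $\mathtt{1}^{k-1}\mathtt{0}\mathtt{1}^{\ell+1}$. The key observation is that the two off-diagonal contributions coincide term by term: in each composite the coefficient of $\mathtt{1}^{k+1}\mathtt{0}\mathtt{1}^{\ell-1}$ equals $\Han{\ell \geq 1}(2k+1)(2\ell-1)$, and symmetrically the coefficient of $\mathtt{1}^{k-1}\mathtt{0}\mathtt{1}^{\ell+1}$ equals $\Han{k \geq 1}(2k-1)(2\ell+1)$. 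Hence they cancel in the difference, and the whole computation collapses to the coefficient of $w$, namely
\[
    \big[(2k+1)^2 + (2\ell+1)^2\big] - \big[\Han{k \geq 1}(2k-1)^2 + \Han{\ell \geq 1}(2\ell-1)^2\big].
\]

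Finally I would evaluate this diagonal coefficient by splitting off the $k$-part and the $\ell$-part and distinguishing the boundary case from the generic one. For $k = 0$ the $k$-part is simply $(2\cdot 0 + 1)^2 = 1$, whereas for $k \geq 1$ the difference of squares factors as $(2k+1)^2 - (2k-1)^2 = 8k$; thus the $k$-part equals $\Han{k = 0} + 8\Han{k \geq 1}k$, and the $\ell$-part equals $\Han{\ell = 0} + 8\Han{\ell \geq 1}\ell$ by the same argument. Adding the two parts recovers exactly the stated value of $\phi(w)$, which completes the verification.

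The argument is essentially a bookkeeping calculation, so I do not expect a genuine structural obstacle; the only point requiring care is the handling of the Iverson brackets at the boundaries $k = 0$ and $\ell = 0$. Indeed, it is precisely the asymmetry between the boundary cases (contributing $1$) and the generic cases (contributing $8k$ or $8\ell$) that produces the two distinct coefficient patterns in the definition of $\phi$, so the main thing to get right is tracking which predecessor terms actually occur. Once the off-diagonal cancellation is observed, no information about $\Dias$ beyond the explicit formula~\eqref{equ:up_dias} is needed.
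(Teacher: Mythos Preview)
Your proposal is correct and follows exactly the approach the paper takes: the paper's proof simply asserts that a straightforward computation using~\eqref{equ:up_dias} establishes the identity, and you have carried out that computation in full detail, including the off-diagonal cancellation and the boundary case analysis.
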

\begin{proof}
    By a straightforward computation, by using~\eqref{equ:up_dias}, we can show that the
    relation
    \begin{math}
        \Par{\Up^\Dual \Up - \Up \Up^\Dual}\Par{\mathtt{1}^k \mathtt{0}\mathtt{1}^\ell}
        = \phi\Par{\mathtt{1}^k \mathtt{0}\mathtt{1}^\ell}
    \end{math}
    holds for all $k, \ell \geq 0$, establishing the statement of the proposition.
\end{proof}
\medbreak

The hook series of $\Par{\Dias_\InternalNode, \Up}$ satisfies
\begin{equation} \begin{split}
    \HookSeries{\Up}
    & =
    (\mathtt{0}) + (\mathtt{01}) + (\mathtt{10}) + 3\, (\mathtt{011}) + 2\, (\mathtt{101})
    + 3\, (\mathtt{110}) + 15\, (\mathtt{0111}) + 9\, (\mathtt{1011})
    \\
    & \quad + 9\, (\mathtt{1101}) + 15\, (\mathtt{1110}) + 105\, (\mathtt{01111})
    + 60\, (\mathtt{10111}) + 54\, (\mathtt{11011})
    \\
    & \quad + 60\, (\mathtt{11101}) + 105\, (\mathtt{11110}) + \cdots.
\end{split} \end{equation}
These coefficients form Triangle~\OEIS{A059366} of~\cite{Slo}.
\medbreak

\subsubsection{Operad of integer compositions}
The \Def{operad of integer compositions} $\Comp$ is the operad wherein $\Comp(n)$ is the set
of all words of length $n \geq 1$ on the alphabet $\Bra{\mathtt{0}, \mathtt{1}}$ beginning
by $\mathtt{0}$. The partial composition $u \circ_i v$ of two such words $u$ and $v$
consists in replacing the $i$-th letter of $u$ by $v$ if $u_i = \mathtt{0}$ and by $\bar{v}$
if $u_i = \mathtt{1}$ where $\bar{v}$ is the one complement of $v$. This operad has been
    introduced in~\cite{Gir15} and admits the presentation
\begin{math}
    \Par{\GeneratingSet_\Comp, \Congr_\Comp}
\end{math}
where
\begin{equation}
    \GeneratingSet_\Comp = \Bra{\mathtt{00}, \mathtt{01}}
\end{equation}
and $\Congr_\Comp$ is the smallest operad congruence of
$\SyntaxTreesLeaf\Par{\GeneratingSet_\Comp}$ satisfying
\begin{subequations}
    \begin{equation}
        \Corolla\Par{\mathtt 00} \circ_1 \Corolla\Par{\mathtt 00}
        \Congr_\Comp
        \Corolla\Par{\mathtt 00} \circ_2 \Corolla\Par{\mathtt 00},
    \end{equation}
    \begin{equation}
        \Corolla\Par{\mathtt 01} \circ_1 \Corolla\Par{\mathtt 00}
        \Congr_\Comp
        \Corolla\Par{\mathtt 00} \circ_2 \Corolla\Par{\mathtt 01},
    \end{equation}
    \begin{equation}
        \Corolla\Par{\mathtt 01} \circ_1 \Corolla\Par{\mathtt 01}
        \Congr_\Comp
        \Corolla\Par{\mathtt 01} \circ_2 \Corolla\Par{\mathtt 00},
    \end{equation}
    \begin{equation}
        \Corolla\Par{\mathtt 00} \circ_1 \Corolla\Par{\mathtt 01}
        \Congr_\Comp
        \Corolla\Par{\mathtt 01} \circ_2 \Corolla\Par{\mathtt 01}.
    \end{equation}
\end{subequations}
Observe that these relations describe respectively the treelike expressions for the elements
$000$, $001$, $011$, and $010$ of $\Comp$.  Therefore, $\Comp$ is homogeneous and finitely
generated.
\medbreak

The pair of graded graphs $\Par{\Comp_\InternalNode, \Up, \Vp}$ satisfies
\begin{equation} \label{equ:up_comp}
    \Up(u) =
    \sum_{i \in [|u|]}
    u_1 \dots u_i \, \mathtt{0} \, u_{i + 1} \dots u_{|u|}
    +
    u_1 \dots u_i \, \mathtt{1} \, u_{i + 1} \dots u_{|u|},
\end{equation}
and
\begin{equation} \label{equ:vp_comp}
    \Vp(u) = u \mathtt{0} + u \mathtt{1},
\end{equation}
for any $u \in \Comp$ (see  Figure~\ref{fig:operad_graphs_Comp}).
\begin{figure}[ht]
    \centering
    \subfloat[][The graph $\Par{\Comp_\InternalNode, \Up}$ up to elements of degree~$4$.]{
    \begin{minipage}[c]{.47\textwidth}
        \centering
        \begin{tikzpicture}[Centering,xscale=.85,yscale=1]
            \tikzset{every node}=[font=\scriptsize]
            \node(0)at(0,0){$\mathtt{0}$};
            \node(00)at(-1,-1){$\mathtt{00}$};
            \node(01)at(1,-1){$\mathtt{01}$};
            \node(000)at(-2.25,-2){$\mathtt{000}$};
            \node(001)at(-.75,-2){$\mathtt{001}$};
            \node(010)at(.75,-2){$\mathtt{010}$};
            \node(011)at(2.25,-2){$\mathtt{011}$};
            \node(0000)at(-3.5,-3){$\mathtt{0000}$};
            \node(0001)at(-2.5,-3){$\mathtt{0001}$};
            \node(0010)at(-1.5,-3){$\mathtt{0010}$};
            \node(0011)at(-.5,-3){$\mathtt{0011}$};
            \node(0100)at(.5,-3){$\mathtt{0100}$};
            \node(0101)at(1.5,-3){$\mathtt{0101}$};
            \node(0110)at(2.5,-3){$\mathtt{0110}$};
            \node(0111)at(3.5,-3){$\mathtt{0111}$};
            \draw[EdgeGraph](0)--(00);
            \draw[EdgeGraph](0)--(01);
            \draw[EdgeGraph](00)edge[]node[EdgeLabel]{$2$}(000);
            \draw[EdgeGraph](00)--(001);
            \draw[EdgeGraph](00)--(010);
            \draw[EdgeGraph](01)--(001);
            \draw[EdgeGraph](01)--(010);
            \draw[EdgeGraph](01)edge[]node[EdgeLabel]{$2$}(011);
            \draw[EdgeGraph](000)edge[]node[EdgeLabel]{$3$}(0000);
            \draw[EdgeGraph](000)--(0001);
            \draw[EdgeGraph](000)--(0010);
            \draw[EdgeGraph](000)--(0100);
            \draw[EdgeGraph](001)edge[]node[EdgeLabel]{$2$}(0001);
            \draw[EdgeGraph](001)--(0010);
            \draw[EdgeGraph](001)--(0101);
            \draw[EdgeGraph](001)edge[]node[EdgeLabel,near start]{$2$}(0011);
            \draw[EdgeGraph](010)--(0010);
            \draw[EdgeGraph](010)edge[]node[EdgeLabel,near start]{$2$}(0100);
            \draw[EdgeGraph](010)--(0101);
            \draw[EdgeGraph](010)edge[]node[EdgeLabel]{$2$}(0110);
            \draw[EdgeGraph](011)--(0011);
            \draw[EdgeGraph](011)--(0101);
            \draw[EdgeGraph](011)--(0110);
            \draw[EdgeGraph](011)edge[]node[EdgeLabel]{$3$}(0111);
        \end{tikzpicture}
    \end{minipage}
    \label{subfig:operad_graph_Comp_U}}
    \hfill
    \subfloat[][The graph $\Par{\Comp_\InternalNode, \Vp}$ up to elements of degree~$4$.]{
    \begin{minipage}[c]{.47\textwidth}
        \centering
        \begin{tikzpicture}[Centering,xscale=.85,yscale=1]
            \tikzset{every node}=[font=\scriptsize]
            \node(0)at(0,0){$\mathtt{0}$};
            \node(00)at(-1,-1){$\mathtt{00}$};
            \node(01)at(1,-1){$\mathtt{01}$};
            \node(000)at(-2.25,-2){$\mathtt{000}$};
            \node(001)at(-.75,-2){$\mathtt{001}$};
            \node(010)at(.75,-2){$\mathtt{010}$};
            \node(011)at(2.25,-2){$\mathtt{011}$};
            \node(0000)at(-3.5,-3){$\mathtt{0000}$};
            \node(0001)at(-2.5,-3){$\mathtt{0001}$};
            \node(0010)at(-1.5,-3){$\mathtt{0010}$};
            \node(0011)at(-.5,-3){$\mathtt{0011}$};
            \node(0100)at(.5,-3){$\mathtt{0100}$};
            \node(0101)at(1.5,-3){$\mathtt{0101}$};
            \node(0110)at(2.5,-3){$\mathtt{0110}$};
            \node(0111)at(3.5,-3){$\mathtt{0111}$};
            \draw[EdgeGraph](0)--(00);
            \draw[EdgeGraph](0)--(01);
            \draw[EdgeGraph](00)--(000);
            \draw[EdgeGraph](00)--(001);
            \draw[EdgeGraph](01)--(010);
            \draw[EdgeGraph](01)--(011);
            \draw[EdgeGraph](000)--(0000);
            \draw[EdgeGraph](000)--(0001);
            \draw[EdgeGraph](001)--(0010);
            \draw[EdgeGraph](001)--(0011);
            \draw[EdgeGraph](010)--(0100);
            \draw[EdgeGraph](010)--(0101);
            \draw[EdgeGraph](011)--(0110);
            \draw[EdgeGraph](011)--(0111);
        \end{tikzpicture}
    \end{minipage}
    \label{subfig:operad_graph_Comp_V}}
    \caption{The pair $\Par{\Comp_\InternalNode, \Up, \Vp}$ of graded graphs.}
    \label{fig:operad_graphs_Comp}
\end{figure}
The poset of $\Par{\Comp_\InternalNode, \Up}$ is the \Def{composition poset} introduced and
studied in~\cite{BS05}.
\medbreak

\begin{Proposition} \label{prop:prefix_operad_graphs_Comp_duality}
    The pair $(\Comp, \Up, \Vp)$ of graded graphs is $2$-dual.
\end{Proposition}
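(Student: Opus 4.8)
The plan is to verify directly the defining relation of $\phi$-diagonal duality with $\phi = 2I$, since $2$-duality is exactly the case $\phi(x) = 2x$ of $r_d$-duality with constant $r_d = 2$ (see the discussion following~\eqref{equ:phi_diagonal_duality}). Concretely, I would show that $\Vp^\Dual \Up - \Up \Vp^\Dual = 2I$ on $\K \Angle{\Comp_\InternalNode}$, from which the proposition follows.

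First I would make the adjoint map $\Vp^\Dual$ explicit. From~\eqref{equ:vp_comp}, a word $v \in \Comp$ of length at least $2$ appears in $\Vp(u)$, with coefficient $1$, if and only if $u$ is obtained from $v$ by deleting its last letter, while $\mathtt 0$ appears in no $\Vp(u)$. Hence $\Vp^\Dual(v)$ is the word obtained from $v$ by erasing its last letter when $|v| \geq 2$, and $\Vp^\Dual(\mathtt 0) = 0$.

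Next, writing $n := |u|$ and applying $\Vp^\Dual$ to the expansion~\eqref{equ:up_comp} of $\Up(u)$, I would split the sum according to the insertion position $i$. For $i < n$, the inserted letter is interior, so erasing the last letter $u_n$ commutes with the insertion and yields the insertion of the same letter after position $i$ into the word $u_1 \cdots u_{n-1} = \Vp^\Dual(u)$. Summing these contributions over $i \in [n-1]$ reconstitutes exactly $\Up\Par{\Vp^\Dual(u)}$ by~\eqref{equ:up_comp}. For the boundary term $i = n$, both words $u\mathtt 0$ and $u\mathtt 1$ carry the freshly inserted letter in last position, so $\Vp^\Dual$ returns $u$ in each case, contributing $2u$. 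Therefore $\Vp^\Dual\Par{\Up(u)} = \Up\Par{\Vp^\Dual(u)} + 2u$, which is precisely the claimed identity $\Par{\Vp^\Dual \Up - \Up \Vp^\Dual}(u) = 2u$.

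The computation is elementary, so the only point requiring care---and the main obstacle---is the treatment of the boundary term $i = n$, which is the sole source of the diagonal coefficient $2$, together with the degenerate case $u = \mathtt 0$ (the unit, of length $1$); there the $i < n$ part is empty, $\Vp^\Dual(u) = 0$, and one checks that $\Vp^\Dual\Par{\Up(\mathtt 0)} = \Vp^\Dual(\mathtt{00} + \mathtt{01}) = 2\,\mathtt 0$, so the formula still holds. Collecting these cases gives $\Vp^\Dual \Up - \Up \Vp^\Dual = 2I$, that is, $(\Comp, \Up, \Vp)$ is $2$-dual.
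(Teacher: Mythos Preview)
Your proof is correct and follows exactly the approach the paper indicates: the paper's own proof merely states that a straightforward computation using~\eqref{equ:up_comp} and~\eqref{equ:vp_comp} yields $\Par{\Vp^\Dual \Up - \Up \Vp^\Dual}(u) = 2u$, and you have carried out precisely that computation in detail.
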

\begin{proof}
    By a straightforward computation, by using~\eqref{equ:up_comp} and~\eqref{equ:vp_comp},
    we can infer the relation $\Par{\Vp^\Dual \Up - \Up \Vp^\Dual}(u) = 2u$ for all
    $u \in \Comp$, establishing the statement of the proposition.
\end{proof}
\medbreak

The hook series of $\Par{\Comp_\InternalNode, \Up}$ satisfies
\begin{equation}
    \HookSeries{\Up} = \sum_{u \in \Comp} (|u| - 1)! \, u.
\end{equation}
\medbreak

\subsubsection{Operad of Motzkin paths}
The \Def{operad of Motzkin} paths $\Motz$ is the operad wherein $\Motz(n)$ is the set of all
words of length $n \geq 1$ of nonnegative integers starting and finishing by $\mathtt{0}$
and such that the absolute difference between two consecutive letters is at most $1$. The
partial composition $u \circ_i v$ of two such words consists in replacing the $i$-th letter
of $u$ by $v'$ where $v'$ is the word obtained by incrementing by $u_i$ all its letters.
This operad has been introduced in~\cite{Gir15} and admits the presentation
$\Par{\GeneratingSet_\Motz, \Congr_\Motz}$ where
\begin{equation}
    \GeneratingSet_\Motz := \Bra{\mathtt{00}, \mathtt{010}}
\end{equation}
and $\Congr_\Motz$ is the smallest operad congruence of
$\SyntaxTreesLeaf\Par{\GeneratingSet_\Motz}$ satisfying
\begin{subequations}
    \begin{equation}
        \Corolla\Par{\mathtt 00} \circ_1 \Corolla\Par{\mathtt 00}
        \Congr_\Motz
        \Corolla\Par{\mathtt 00} \circ_2 \Corolla\Par{\mathtt 00},
    \end{equation}
    \begin{equation}
        \Corolla\Par{\mathtt 010} \circ_1 \Corolla\Par{\mathtt 00}
        \Congr_\Motz
        \Corolla\Par{\mathtt 00} \circ_2 \Corolla\Par{\mathtt 010},
    \end{equation}
    \begin{equation}
        \Corolla\Par{\mathtt 00} \circ_1 \Corolla\Par{\mathtt 010}
        \Congr_\Motz
        \Corolla\Par{\mathtt 010} \circ_3 \Corolla\Par{\mathtt 00},
    \end{equation}
    \begin{equation}
        \Corolla\Par{\mathtt 010} \circ_1 \Corolla\Par{\mathtt 010}
        \Congr_\Motz
        \Corolla\Par{\mathtt 010} \circ_3 \Corolla\Par{\mathtt 010}.
    \end{equation}
\end{subequations}
Observe that these relations describe respectively the treelike expressions for the elements
$000$, $0010$, $0100$, and $01010$ of $\Motz$.  Therefore, $\Motz$ is homogeneous and
finitely generated. Observe moreover that, since the generator $\mathtt{010}$ is ternary,
$\Motz$ is not a binary operad.
\medbreak

The pair of graded graphs $\Par{\Motz_\InternalNode, \Up, \Vp}$ satisfies
\begin{equation} \label{equ:up_motz}
    \Up(u) =
    \sum_{i \in [|u|]}
    u_1 \dots u_i u_i u_{i + 1} \dots u_{|u|}
    +
    u_1 \dots u_i \Par{u_i + 1} u_i u_{i + 1} \dots u_{|u|},
\end{equation}
and
\begin{equation} \label{equ:vp_motz}
    \Vp(u) =
    \sum_{\substack{
        i \in [|u|] \\
        i = |u| \mbox{ \footnotesize or } u_i > u_{i + 1}
    }}
    u_1 \dots u_i u_i u_{i + 1} \dots u_{|u|}
    +
    u_1 \dots u_i \Par{u_i + 1} u_i u_{i + 1} \dots u_{|u|},
\end{equation}
for any $u \in \Motz$ (see Figure~\ref{fig:operad_graphs_Motz}).
\begin{figure}[ht]
    \centering
    \subfloat[][The graph $\Par{\Motz_\InternalNode, \Up}$ up to elements of degree~$3$.]{
    \begin{minipage}[c]{.46\textwidth}
        \centering
        \begin{tikzpicture}[Centering,xscale=1.9,yscale=.82,rotate=90]
            \tikzset{every node}=[font=\scriptsize]
            \node(0)at(0,0){$\mathtt{0}$};
            \node(00)at(-2,-1){$\mathtt{00}$};
            \node(010)at(2,-1){$\mathtt{010}$};
            \node(000)at(-7.5,-2){$\mathtt{000}$};
            \node(0010)at(-4.5,-2){$\mathtt{0010}$};
            \node(0100)at(-1.5,-2){$\mathtt{0100}$};
            \node(01010)at(4.5,-2){$\mathtt{01010}$};
            \node(0110)at(1.5,-2){$\mathtt{0110}$};
            \node(01210)at(7.5,-2){$\mathtt{01210}$};
            \node(0000)at(-10.5,-3){$\mathtt{0000}$};
            \node(00010)at(-9.5,-3){$\mathtt{00010}$};
            \node(00100)at(-8.5,-3){$\mathtt{00100}$};
            \node(01000)at(-7.5,-3){$\mathtt{01000}$};
            \node(001010)at(-6.5,-3){$\mathtt{001010}$};
            \node(00110)at(-5.5,-3){$\mathtt{00110}$};
            \node(001210)at(-4.5,-3){$\mathtt{001210}$};
            \node(010010)at(-3.5,-3){$\mathtt{010010}$};
            \node(010100)at(-2.5,-3){$\mathtt{010100}$};
            \node(01100)at(-1.5,-3){$\mathtt{01100}$};
            \node(012100)at(-.5,-3){$\mathtt{012100}$};
            \node(0101010)at(.5,-3){$\mathtt{0101010}$};
            \node(010110)at(1.5,-3){$\mathtt{010110}$};
            \node(0101210)at(2.5,-3){$\mathtt{0101210}$};
            \node(011010)at(3.5,-3){$\mathtt{011010}$};
            \node(0121010)at(4.5,-3){$\mathtt{0121010}$};
            \node(01110)at(5.5,-3){$\mathtt{01110}$};
            \node(011210)at(6.5,-3){$\mathtt{011210}$};
            \node(012110)at(7.5,-3){$\mathtt{012110}$};
            \node(0121210)at(8.5,-3){$\mathtt{0121210}$};
            \node(012210)at(9.5,-3){$\mathtt{012210}$};
            \node(0123210)at(10.5,-3){$\mathtt{0123210}$};
            \draw[EdgeGraph](0)--(00);
            \draw[EdgeGraph](0)--(010);
            \draw[EdgeGraph](00)edge[]node[EdgeLabel]{$2$}(000);
            \draw[EdgeGraph](00)--(0010);
            \draw[EdgeGraph](00)--(0100);
            \draw[EdgeGraph](010)--(0010);
            \draw[EdgeGraph](010)--(0100);
            \draw[EdgeGraph](010)--(0110);
            \draw[EdgeGraph](010)edge[]node[EdgeLabel]{$2$}(01010);
            \draw[EdgeGraph](010)--(01210);
            \draw[EdgeGraph](000)edge[]node[EdgeLabel]{$3$}(0000);
            \draw[EdgeGraph](000)--(00010);
            \draw[EdgeGraph](000)--(00100);
            \draw[EdgeGraph](000)--(01000);
            \draw[EdgeGraph](0010)edge[]node[EdgeLabel]{$2$}(00010);
            \draw[EdgeGraph](0010)--(00100);
            \draw[EdgeGraph](0010)edge[]node[EdgeLabel,near start]{$2$}(001010);
            \draw[EdgeGraph](0010)--(00110);
            \draw[EdgeGraph](0010)--(001210);
            \draw[EdgeGraph](0010)--(010010);
            \draw[EdgeGraph](0100)--(00100);
            \draw[EdgeGraph](0100)edge[]node[EdgeLabel,near start]{$2$}(01000);
            \draw[EdgeGraph](0100)--(010010);
            \draw[EdgeGraph](0100)edge[]node[EdgeLabel,near start]{$2$}(010100);
            \draw[EdgeGraph](0100)--(01100);
            \draw[EdgeGraph](0100)--(012100);
            \draw[EdgeGraph](01010)--(001010);
            \draw[EdgeGraph](01010)--(010010);
            \draw[EdgeGraph](01010)--(010100);
            \draw[EdgeGraph](01010)edge[]node[EdgeLabel,very near start]{$3$}(0101010);
            \draw[EdgeGraph](01010)--(010110);
            \draw[EdgeGraph](01010)--(0101210);
            \draw[EdgeGraph](01010)--(011010);
            \draw[EdgeGraph](01010)--(0121010);
            \draw[EdgeGraph](0110)--(00110);
            \draw[EdgeGraph](0110)--(01100);
            \draw[EdgeGraph](0110)--(010110);
            \draw[EdgeGraph](0110)--(011010);
            \draw[EdgeGraph](0110)edge[]node[EdgeLabel,very near end]{$2$}(01110);
            \draw[EdgeGraph](0110)--(011210);
            \draw[EdgeGraph](0110)--(012110);
            \draw[EdgeGraph](01210)--(001210);
            \draw[EdgeGraph](01210)--(012100);
            \draw[EdgeGraph](01210)--(0101210);
            \draw[EdgeGraph](01210)--(0121010);
            \draw[EdgeGraph](01210)--(011210);
            \draw[EdgeGraph](01210)--(012110);
            \draw[EdgeGraph](01210)edge[]node[EdgeLabel]{$2$}(0121210);
            \draw[EdgeGraph](01210)--(012210);
            \draw[EdgeGraph](01210)--(0123210);
        \end{tikzpicture}
    \end{minipage}
    \label{subfig:operad_graph_Motz_U}}
    \hfill
    \subfloat[][The graph $\Par{\Motz_\InternalNode, \Vp}$ up to elements of degree~$3$.]{
    \begin{minipage}[c]{.46\textwidth}
        \centering
        \begin{tikzpicture}[Centering,xscale=1.9,yscale=.82,rotate=90]
            \tikzset{every node}=[font=\scriptsize]
            \node(0)at(0,0){$\mathtt{0}$};
            \node(00)at(-2,-1){$\mathtt{00}$};
            \node(010)at(2,-1){$\mathtt{010}$};
            \node(000)at(-7.5,-2){$\mathtt{000}$};
            \node(0010)at(-4.5,-2){$\mathtt{0010}$};
            \node(0100)at(-1.5,-2){$\mathtt{0100}$};
            \node(01010)at(4.5,-2){$\mathtt{01010}$};
            \node(0110)at(1.5,-2){$\mathtt{0110}$};
            \node(01210)at(7.5,-2){$\mathtt{01210}$};
            \node(0000)at(-10.5,-3){$\mathtt{0000}$};
            \node(00010)at(-9.5,-3){$\mathtt{00010}$};
            \node(00100)at(-8.5,-3){$\mathtt{00100}$};
            \node(01000)at(-7.5,-3){$\mathtt{01000}$};
            \node(001010)at(-6.5,-3){$\mathtt{001010}$};
            \node(00110)at(-5.5,-3){$\mathtt{00110}$};
            \node(001210)at(-4.5,-3){$\mathtt{001210}$};
            \node(010010)at(-3.5,-3){$\mathtt{010010}$};
            \node(010100)at(-2.5,-3){$\mathtt{010100}$};
            \node(01100)at(-1.5,-3){$\mathtt{01100}$};
            \node(012100)at(-.5,-3){$\mathtt{012100}$};
            \node(0101010)at(.5,-3){$\mathtt{0101010}$};
            \node(010110)at(1.5,-3){$\mathtt{010110}$};
            \node(0101210)at(2.5,-3){$\mathtt{0101210}$};
            \node(011010)at(3.5,-3){$\mathtt{011010}$};
            \node(0121010)at(4.5,-3){$\mathtt{0121010}$};
            \node(01110)at(5.5,-3){$\mathtt{01110}$};
            \node(011210)at(6.5,-3){$\mathtt{011210}$};
            \node(012110)at(7.5,-3){$\mathtt{012110}$};
            \node(0121210)at(8.5,-3){$\mathtt{0121210}$};
            \node(012210)at(9.5,-3){$\mathtt{012210}$};
            \node(0123210)at(10.5,-3){$\mathtt{0123210}$};
            \draw[EdgeGraph](0)--(00);
            \draw[EdgeGraph](0)--(010);
            \draw[EdgeGraph](00)--(000);
            \draw[EdgeGraph](00)--(0010);
            \draw[EdgeGraph](010)--(0100);
            \draw[EdgeGraph](010)--(0110);
            \draw[EdgeGraph](010)--(01010);
            \draw[EdgeGraph](010)--(01210);
            \draw[EdgeGraph](000)--(0000);
            \draw[EdgeGraph](000)--(00010);
            \draw[EdgeGraph](0010)--(00110);
            \draw[EdgeGraph](0010)--(001210);
            \draw[EdgeGraph](0010)--(00100);
            \draw[EdgeGraph](0010)--(001010);
            \draw[EdgeGraph](0110)--(011210);
            \draw[EdgeGraph](0110)--(01110);
            \draw[EdgeGraph](0110)--(01100);
            \draw[EdgeGraph](0110)--(011010);
            \draw[EdgeGraph](01210)--(012210);
            \draw[EdgeGraph](01210)--(0123210);
            \draw[EdgeGraph](01210)--(012110);
            \draw[EdgeGraph](01210)--(0121210);
            \draw[EdgeGraph](01210)--(012100);
            \draw[EdgeGraph](01210)--(0121010);
            \draw[EdgeGraph](0100)--(01100);
            \draw[EdgeGraph](0100)--(012100);
            \draw[EdgeGraph](0100)--(01000);
            \draw[EdgeGraph](0100)--(010010);
            \draw[EdgeGraph](01010)--(011010);
            \draw[EdgeGraph](01010)--(0121010);
            \draw[EdgeGraph](01010)--(0101010);
            \draw[EdgeGraph](01010)--(010110);
            \draw[EdgeGraph](01010)--(0101210);
            \draw[EdgeGraph](01010)--(010100);
        \end{tikzpicture}
    \end{minipage}
    \label{subfig:operad_graph_Motz_V}}
    \caption{The pair $\Par{\Motz_\InternalNode, \Up, \Vp}$ of graded graphs.}
    \label{fig:operad_graphs_Motz}
\end{figure}
\medbreak

\begin{Proposition} \label{prop:prefix_operad_graphs_Motz_duality}
    The pair $\Par{\Motz_\InternalNode, \Up, \Vp}$ of graded graphs is $\phi$-diagonal
    dual for the linear map
    $\phi : \K \Angle{\Motz_\InternalNode} \to \K \Angle{\Motz_\InternalNode}$ satisfying
    \begin{equation}
        \phi(u) = \Par{2 + \# \Bra{i \in [|u| - 1] : u_i \ne u_{i + 1}}} \, u
    \end{equation}
    for any $u \in \Motz$.
\end{Proposition}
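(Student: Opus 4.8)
The plan is to verify Relation~\eqref{equ:phi_diagonal_duality}, namely $\Vp^\Dual \Up - \Up \Vp^\Dual = \phi$, by a direct computation on an arbitrary Motzkin word $u \in \Motz_\InternalNode$, in the spirit of the proofs of Propositions~\ref{prop:prefix_operad_graphs_UU_duality} and~\ref{prop:prefix_operad_graphs_Comp_duality}. The difference with the fully uniform case of $\Comp$ is that here the diagonal coefficient depends on the internal structure of $u$, so the computation genuinely requires a case analysis on consecutive letters. Note that one cannot simply invoke Theorem~\ref{thm:prefix_tree_graphs_dual}, since $\Motz$ is a proper quotient of $\SyntaxTreesLeaf\Par{\GeneratingSet_\Motz}$ and $\phi$-diagonal duality need not descend to quotients, as the example of $\Dias$ shows.

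First I would make the adjoint map $\Vp^\Dual$ explicit. Since $\Par{\Motz_\InternalNode, \Vp}$ is simple (Proposition~\ref{prop:twisted_prefix_graph_operad}), transposing~\eqref{equ:vp_motz} gives that $\Vp^\Dual(w)$ is the sum of the words $u$ obtained from $w$ either by deleting one letter of a pair of equal consecutive letters $w_j = w_{j + 1}$, or by deleting the two last letters of a peak $w_j \, \Par{w_j + 1} \, w_j$, in both cases subject to the constraint that the position of the deletion is a descent of the resulting word $u$ or its last position. This mirrors, on Motzkin words, the contraction of quasi-maximal nodes of Proposition~\ref{prop:twisted_prefix_tree_graph_adjoint_direct}. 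In parallel, by~\eqref{equ:up_motz}, $\Up$ inserts, at every position $i \in [|u|]$, either a flat copy of $u_i$ or the peak $\Par{u_i + 1} u_i$.

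Then I would expand $\Vp^\Dual \Up(u)$ and $\Up \Vp^\Dual(u)$ by composing these two operations and compare the two resulting multisets of words term by term. The crux is to separate the diagonal contributions (the words equal to $u$) from the off-diagonal ones. For the off-diagonal terms, each word of $\Vp^\Dual \Up(u)$ arising from an insertion at position $i$ followed by a deletion at a distinct, non-interacting position should be matched with exactly one term of $\Up \Vp^\Dual(u)$ obtained by performing the same deletion first and the same insertion afterwards; checking that this matching is a bijection, and that the overlapping insert-then-delete and delete-then-insert configurations cancel as well, is where the boundary cases (last position, runs of equal letters, ascents versus descents) must be treated carefully. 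I expect this off-diagonal cancellation to be the main obstacle, as the descent-or-end constraint built into $\Vp$ and $\Vp^\Dual$ breaks the symmetry that makes $\Comp$ uniform.

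Finally I would count the diagonal coefficient. A copy of $u$ reappears in $\Vp^\Dual \Up(u)$ precisely when an insertion is undone by a compatible deletion; collecting these occurrences and subtracting the analogous contributions of $\Up \Vp^\Dual(u)$, the net count should reduce to a fixed contribution of $2$, coming from the two ever-available configurations tied to the last position and to the two generators $\mathtt{00}$ and $\mathtt{010}$, together with one extra unit for each position $i \in [|u| - 1]$ with $u_i \ne u_{i + 1}$, that is, each ascent or descent of $u$. This yields $\Par{2 + \# \Bra{i \in [|u| - 1] : u_i \ne u_{i + 1}}} \, u$, and hence the announced $\phi$-diagonal duality.
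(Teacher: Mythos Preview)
Your proposal is correct and takes essentially the same approach as the paper: a direct verification of $\Vp^\Dual \Up - \Up \Vp^\Dual = \phi$ using the explicit formulas~\eqref{equ:up_motz} and~\eqref{equ:vp_motz}. The paper's proof simply asserts that this computation goes through, whereas you have supplied a more detailed outline of how the off-diagonal cancellation and the diagonal count should be organized; your identification of the descent-or-end constraint in $\Vp$ as the source of the nontrivial case analysis is accurate.
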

\begin{proof}
    By a straightforward computation, by using~\eqref{equ:up_motz} and~\eqref{equ:vp_motz},
    we can infer the relation $\Par{\Vp^\Dual \Up - \Up \Vp^\Dual}(u) = \phi(u)$ for all
    $u \in \Motz$, establishing the statement of the proposition.
\end{proof}
\medbreak

The hook series of $\Par{\Motz_\InternalNode, \Up}$ satisfies
\begin{equation} \begin{split}
    \HookSeries{\Up}
    & =
    (\mathtt{0}) + (\mathtt{00}) + 2\, (\mathtt{000}) + (\mathtt{010}) + 6\, (\mathtt{0000})
    + 2\, (\mathtt{0010}) + 2\, (\mathtt{0100}) + (\mathtt{0110})
    \\
    & \quad + 6\, (\mathtt{00010}) + 6\, (\mathtt{00100}) + 3\, (\mathtt{00110})
    + 6\, (\mathtt{01000}) + 2\, (\mathtt{01010}) + 3\, (\mathtt{01100})
    \\
    & \quad + 2\, (\mathtt{01110}) + (\mathtt{01210}) + 6\, (\mathtt{001010})
    + 3\, (\mathtt{001210}) + 6\, (\mathtt{010010}) + 6\, (\mathtt{010100})
    \\
    & \quad + 3\, (\mathtt{010110}) + 3\, (\mathtt{011010}) + 2\, (\mathtt{011210})
    + 3\, (\mathtt{012100}) + 2\, (\mathtt{012110})  + (\mathtt{012210})
    \\
    & \quad + 6\, (\mathtt{0101010}) + 3\, (\mathtt{0101210}) + 3\, (\mathtt{0121010})
    + 2\, (\mathtt{0121210}) + (\mathtt{0123210}) + \cdots.
\end{split} \end{equation}
We do not have a concise combinatorial description for these hook coefficients.
\medbreak

Besides, by representing any element of $\Motz$ as a path in the quarter plane (that is, by
drawing points $\Par{i - 1, u_i}$ for all $i \in [|u|]$ and by connecting all pairs of
adjacent points by segments), in the prefix poset of $\Motz$, one has $u \OrderPrefixes v$
if and only if $u$ can be obtained from $v$ by collapsing into points some factors of $v$
that are Motzkin paths. For instance, one has
\begin{equation}
    \begin{tikzpicture}[Centering,scale=.3]
        \draw[Grid](0,0)grid(8,2);
        \node[PathNode](0)at(0,0){};
        \node[PathNode](1)at(1,0){};
        \node[PathNode](2)at(2,1){};
        \node[PathNode](3)at(3,1){};
        \node[PathNode](4)at(4,0){};
        \node[PathNode](5)at(5,1){};
        \node[PathNode](6)at(6,2){};
        \node[PathNode](7)at(7,1){};
        \node[PathNode](8)at(8,0){};
        \draw[PathStep](0)--(1);
        \draw[PathStep](1)--(2);
        \draw[PathStep](2)--(3);
        \draw[PathStep](3)--(4);
        \draw[PathStep](4)--(5);
        \draw[PathStep](5)--(6);
        \draw[PathStep](6)--(7);
        \draw[PathStep](7)--(8);
    \end{tikzpicture}
    \OrderPrefixes
    \begin{tikzpicture}[Centering,scale=.4]
        \draw[Grid](0,0)grid(17,2);
        \node[PathNode](0)at(0,0){};
        \node[PathNode](1)at(1,0){};
        \node[PathNode](2)at(2,0){};
        \node[PathNode](3)at(3,1){};
        \node[PathNode](4)at(4,1){};
        \node[PathNode](5)at(5,2){};
        \node[PathNode](6)at(6,1){};
        \node[PathNode](7)at(7,0){};
        \node[PathNode](8)at(8,1){};
        \node[PathNode](9)at(9,2){};
        \node[PathNode](10)at(10,2){};
        \node[PathNode](11)at(11,1){};
        \node[PathNode](12)at(12,2){};
        \node[PathNode](13)at(13,2){};
        \node[PathNode](14)at(14,1){};
        \node[PathNode](15)at(15,0){};
        \node[PathNode](16)at(16,1){};
        \node[PathNode](17)at(17,0){};
        \draw[PathStep](0)--(1);
        \draw[PathStep](1)--(2);
        \draw[PathStep](2)--(3);
        \draw[PathStep](3)--(4);
        \draw[PathStep](4)--(5);
        \draw[PathStep](5)--(6);
        \draw[PathStep](6)--(7);
        \draw[PathStep](6)--(7);
        \draw[PathStep](7)--(8);
        \draw[PathStep](8)--(9);
        \draw[PathStep](9)--(10);
        \draw[PathStep](10)--(11);
        \draw[PathStep](11)--(12);
        \draw[PathStep](12)--(13);
        \draw[PathStep](13)--(14);
        \draw[PathStep](14)--(15);
        \draw[PathStep](15)--(16);
        \draw[PathStep](16)--(17);
        \node[fit=(0)(1),Box,opacity=.6]{};
        \node[fit=(4)(5)(6),Box,opacity=.6]{};
        \node[fit=(9)(10),Box,opacity=.6]{};
        \node[fit=(11)(12)(13)(14),Box,opacity=.6]{};
        \node[fit=(15)(16)(17),Box,opacity=.6]{};
    \end{tikzpicture}
\end{equation}
where the factors to collapse are framed. Observe also that the prefix poset of $\Motz$ is
not a meet-semilattice since $00 \OrderPrefixes 0010$, $00 \OrderPrefixes 0100$, $010
\OrderPrefixes 0010$, $010 \OrderPrefixes 0100$, $00$ and $010$ are incomparable, and $0010$
and $0100$ are incomparable.
\medbreak

\subsubsection{Operads of $m$-trees}
For any integer $m \geq 0$, the \Def{operad of $m$-trees} $\FCat{m}$ is the operad wherein
$\FCat{m}(n)$ is the set of all words $u$ of length $n \geq 1$ of nonnegative integers such
that $u_1 = 0$ and, for all $i \in [n - 1]$, $u_{i + 1} \leq u_i + m$. The partial
composition $u \circ_i v$ of two words $u$ and $v$ consists in replacing the $i$-th letter
of $u$ by $v'$ where $v'$ is the word obtained by incrementing by $u_i$ all its letters.
This operad has been introduced in~\cite{Gir15} and admits the presentation
$\Par{\GeneratingSet_{\FCat{m}}, \Congr_{\FCat{m}}}$ where
\begin{equation}
    \GeneratingSet_{\FCat{m}} := \Bra{00, 01, \dots, 0m}
\end{equation}
and $\Congr_{\FCat{m}}$ is the smallest operad congruence of
$\SyntaxTreesLeaf\Par{\GeneratingSet_{\FCat{m}}}$ satisfying
\begin{equation}
    \Corolla\Par{{\mathtt 0} k_3} \circ_1 \Corolla\Par{{\mathtt 0} k_1}
    \Congr_{\FCat{m}}
    \Corolla\Par{{\mathtt 0} k_1} \circ_2 \Corolla\Par{{\mathtt 0} k_2},
    \qquad
    k_3 = k_1 + k_2.
\end{equation}
Observe that this relation describes the treelike expressions for the element $0 k_1
\Par{k_1 + k_2}$ of $\FCat{m}$.  Therefore, $\FCat{m}$ is homogeneous and finitely
generated.
\medbreak

The pair of graded graphs $\Par{{\FCat{m}}_\InternalNode, \Up, \Vp}$ satisfies
\begin{equation} \label{equ:up_fcat}
    \Up(u) =
    \sum_{i \in [|u|]} \sum_{a \in [0, m]}
    u_1 \dots u_i \Par{u_i + a} u_{i + 1} \dots u_{|u|},
\end{equation}
and
\begin{equation} \label{equ:vp_fcat}
    \Vp(u) = \sum_{a \in \Han{0, u_{|u|} + m}} u a,
\end{equation}
for any $u \in \FCat{m}$ (see Figure~\ref{fig:operad_graphs_FCat_1}).
\begin{figure}[ht]
    \centering
    \subfloat[][The graph $\Par{{\FCat{1}}_\InternalNode, \Up}$ up to elements of
    degree~$3$.]{
    \begin{minipage}[c]{.46\textwidth}
        \centering
        \begin{tikzpicture}[Centering,xscale=1.4,yscale=1.1,rotate=90]
            \tikzset{every node}=[font=\scriptsize]
            \node(0)at(0,0){$0$};
            \node(00)at(-1,-1){$00$};
            \node(01)at(1,-1){$01$};
            \node(000)at(-4,-2){$000$};
            \node(010)at(-2,-2){$010$};
            \node(001)at(0,-2){$001$};
            \node(011)at(2,-2){$011$};
            \node(012)at(4,-2){$012$};
            \node(0000)at(-6.5,-3){$0000$};
            \node(0100)at(-5.5,-3){$0100$};
            \node(0010)at(-4.5,-3){$0010$};
            \node(0001)at(-3.5,-3){$0001$};
            \node(0110)at(-2.5,-3){$0110$};
            \node(0120)at(-1.5,-3){$0120$};
            \node(0101)at(-.5,-3){$0101$};
            \node(0011)at(.5,-3){$0011$};
            \node(0012)at(1.5,-3){$0012$};
            \node(0121)at(2.5,-3){$0121$};
            \node(0111)at(3.5,-3){$0111$};
            \node(0112)at(4.5,-3){$0112$};
            \node(0122)at(5.5,-3){$0122$};
            \node(0123)at(6.5,-3){$0123$};
            \draw[EdgeGraph](0)--(00);
            \draw[EdgeGraph](0)--(01);
            \draw[EdgeGraph](00)edge[]node[EdgeLabel]{$2$}(000);
            \draw[EdgeGraph](00)--(001);
            \draw[EdgeGraph](01)--(010);
            \draw[EdgeGraph](01)edge[]node[EdgeLabel]{$2$}(011);
            \draw[EdgeGraph](01)--(012);
            \draw[EdgeGraph](000)edge[]node[EdgeLabel]{$3$}(0000);
            \draw[EdgeGraph](000)--(0100);
            \draw[EdgeGraph](000)--(0010);
            \draw[EdgeGraph](000)--(0001);
            \draw[EdgeGraph](010)--(0100);
            \draw[EdgeGraph](010)--(0010);
            \draw[EdgeGraph](010)edge[]node[EdgeLabel]{$2$}(0110);
            \draw[EdgeGraph](010)--(0120);
            \draw[EdgeGraph](010)--(0101);
            \draw[EdgeGraph](001)edge[]node[EdgeLabel,near start]{$2$}(0001);
            \draw[EdgeGraph](001)--(0101);
            \draw[EdgeGraph](001)edge[]node[EdgeLabel]{$2$}(0011);
            \draw[EdgeGraph](001)--(0012);
            \draw[EdgeGraph](011)--(0011);
            \draw[EdgeGraph](011)--(0121);
            \draw[EdgeGraph](011)edge[]node[EdgeLabel,near start]{$3$}(0111);
            \draw[EdgeGraph](011)--(0112);
            \draw[EdgeGraph](012)--(0012);
            \draw[EdgeGraph](012)edge[]node[EdgeLabel]{$2$}(0112);
            \draw[EdgeGraph](012)edge[]node[EdgeLabel]{$2$}(0122);
            \draw[EdgeGraph](012)--(0123);
        \end{tikzpicture}
    \end{minipage}
    \label{subfig:operad_graph_FCat_1_U}}
    \hfill
    \subfloat[][The graph $\Par{{\FCat{1}}_\InternalNode, \Vp}$ up to elements of
    degree~$3$.]{
    \begin{minipage}[c]{.46\textwidth}
        \centering
        \begin{tikzpicture}[Centering,xscale=1.4,yscale=1.1,rotate=90]
            \tikzset{every node}=[font=\scriptsize]
            \node(0)at(0,0){$0$};
            \node(00)at(-1,-1){$00$};
            \node(01)at(1,-1){$01$};
            \node(000)at(-4,-2){$000$};
            \node(010)at(-2,-2){$010$};
            \node(001)at(0,-2){$001$};
            \node(011)at(2,-2){$011$};
            \node(012)at(4,-2){$012$};
            \node(0000)at(-6.5,-3){$0000$};
            \node(0100)at(-5.5,-3){$0100$};
            \node(0010)at(-4.5,-3){$0010$};
            \node(0001)at(-3.5,-3){$0001$};
            \node(0110)at(-2.5,-3){$0110$};
            \node(0120)at(-1.5,-3){$0120$};
            \node(0101)at(-.5,-3){$0101$};
            \node(0011)at(.5,-3){$0011$};
            \node(0012)at(1.5,-3){$0012$};
            \node(0121)at(2.5,-3){$0121$};
            \node(0111)at(3.5,-3){$0111$};
            \node(0112)at(4.5,-3){$0112$};
            \node(0122)at(5.5,-3){$0122$};
            \node(0123)at(6.5,-3){$0123$};
            \draw[EdgeGraph](0)--(00);
            \draw[EdgeGraph](0)--(01);
            \draw[EdgeGraph](00)--(000);
            \draw[EdgeGraph](00)--(001);
            \draw[EdgeGraph](01)--(010);
            \draw[EdgeGraph](01)--(011);
            \draw[EdgeGraph](01)--(012);
            \draw[EdgeGraph](000)--(0000);
            \draw[EdgeGraph](000)--(0001);
            \draw[EdgeGraph](001)--(0010);
            \draw[EdgeGraph](001)--(0011);
            \draw[EdgeGraph](001)--(0012);
            \draw[EdgeGraph](010)--(0100);
            \draw[EdgeGraph](010)--(0101);
            \draw[EdgeGraph](011)--(0110);
            \draw[EdgeGraph](011)--(0111);
            \draw[EdgeGraph](011)--(0112);
            \draw[EdgeGraph](012)--(0120);
            \draw[EdgeGraph](012)--(0121);
            \draw[EdgeGraph](012)--(0122);
            \draw[EdgeGraph](012)--(0123);
        \end{tikzpicture}
    \end{minipage}
    \label{subfig:operad_graph_FCat_1_V}}
    \caption{The pair $\Par{{\FCat{1}}_\InternalNode, \Up, \Vp}$ of graded graphs.}
    \label{fig:operad_graphs_FCat_1}
\end{figure}
\medbreak

\begin{Proposition} \label{prop:prefix_operad_graphs_FCat_duality}
    For any $m \geq 0$, the pair $\Par{{\FCat{m}}_\InternalNode, \Up, \Vp}$ of graded graphs
    is $\phi$-diagonal dual for the linear map $\phi : \K \Angle{{\FCat{m}}_\InternalNode}
    \to \K \Angle{{\FCat{m}}_\InternalNode}$ satisfying
    \begin{equation}
        \phi(u) = (m + 1)\, u
    \end{equation}
    for any $u \in \FCat{m}$.
\end{Proposition}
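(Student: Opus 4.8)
The plan is to make the adjoint $\Vp^\Dual$ explicit and then verify the commutator relation \eqref{equ:phi_diagonal_duality} directly on a single basis element $u \in \FCat{m}$, extending by linearity. Note that the identity to be proved, $\Vp^\Dual \Up - \Up \Vp^\Dual = \phi$, involves only $\Vp$ and $\Up$, so $\Up^\Dual$ is not needed. First I would determine $\Vp^\Dual$. By \eqref{equ:vp_fcat}, for $u \in \FCat{m}$ the series $\Vp(u)$ is the sum, each with coefficient $1$, of all words $ua$ with $a \in \Han{0, u_{|u|} + m}$. Hence, for $v \in \FCat{m}$ with $|v| \geq 2$, the only element $u$ whose image $\Vp(u)$ contains $v$ is the prefix $u = v_1 \dots v_{|v| - 1}$, and the required membership $v_{|v|} \in \Han{0, v_{|v| - 1} + m}$ holds automatically since $v \in \FCat{m}$; for $v$ of length $1$ there is no such $u$. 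Therefore $\Vp^\Dual$ is the linear map deleting the last letter of a word of length at least $2$ and sending the single-letter word $\mathtt{0}$ to $0$.

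Next I would apply $\Vp^\Dual$ to $\Up(u)$. Write $n := |u|$ and, following \eqref{equ:up_fcat}, set $w^{(i,a)} := u_1 \dots u_i \Par{u_i + a} u_{i + 1} \dots u_n$ for the word obtained by inserting $\Par{u_i + a}$ just after position $i$, where $i \in [n]$ and $a \in \Han{0, m}$. Linearity gives $\Vp^\Dual \Up(u) = \sum_{i \in [n]} \sum_{a \in \Han{0, m}} \Vp^\Dual\Par{w^{(i,a)}}$, and the key point is to locate the last letter of $w^{(i,a)}$. When $i = n$, the inserted letter $u_n + a$ occupies the last position of $w^{(n,a)}$, so deleting it recovers $u$; this contributes $\sum_{a \in \Han{0, m}} u = (m + 1)\, u$. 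When $i < n$, the inserted letter sits in position $i + 1 \leq n$, so the last letter of $w^{(i,a)}$ is the original $u_n$, and $\Vp^\Dual\Par{w^{(i,a)}} = u_1 \dots u_i \Par{u_i + a} u_{i + 1} \dots u_{n - 1}$.

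This last word is exactly the insertion of $\Par{u_i + a}$ after position $i$ into the prefix $u' := u_1 \dots u_{n - 1} = \Vp^\Dual(u)$, using $u'_i = u_i$ for $i \leq n - 1$. Summing these terms over $i \in [n - 1]$ and $a \in \Han{0, m}$ reassembles \eqref{equ:up_fcat} for $u'$, namely $\Up\Par{\Vp^\Dual(u)}$. Combining the two cases gives $\Vp^\Dual \Up(u) = (m + 1)\, u + \Up\Par{\Vp^\Dual(u)}$, that is $\Par{\Vp^\Dual \Up - \Up \Vp^\Dual}(u) = (m + 1)\, u = \phi(u)$. The boundary case $n = 1$ should be treated separately: for $u = \mathtt{0}$ one has $\Up(\mathtt{0}) = \sum_{a \in \Han{0, m}} \mathtt{0} a$, whence $\Vp^\Dual \Up(\mathtt{0}) = (m + 1)\, \mathtt{0}$, while $\Vp^\Dual(\mathtt{0}) = 0$ makes $\Up\Par{\Vp^\Dual(\mathtt{0})} = 0$, so the identity persists.

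I do not anticipate a genuine obstacle: the argument is bookkeeping, and the only points requiring care are that every insertion in \eqref{equ:up_fcat} and every deletion remains inside $\FCat{m}$ (both follow from the defining inequality $u_{i + 1} \leq u_i + m$, since a prefix of a valid word is valid and inserting $u_i + a$ with $a \leq m$ preserves all adjacency constraints) and that for $i < n$ the deleted letter is $u_n$ rather than the inserted one, which occupies position $i + 1 \leq n$. This makes precise the ``straightforward computation'' invoked in the statement.
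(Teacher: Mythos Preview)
Your proposal is correct and is precisely the ``straightforward computation'' the paper alludes to: you make $\Vp^\Dual$ explicit as deletion of the last letter, split the terms of $\Up(u)$ according to whether the insertion occurs at the final position or not, and observe that the non-final insertions reproduce $\Up\Par{\Vp^\Dual(u)}$ while the final ones yield $(m+1)\,u$. The membership checks in $\FCat{m}$ and the boundary case $|u|=1$ are handled correctly, so there is nothing to add.
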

\begin{proof}
    By a straightforward computation, by using~\eqref{equ:up_fcat} and~\eqref{equ:vp_fcat},
    we can infer the relation $\Par{\Vp^\Dual \Up - \Up \Vp^\Dual}(u) = \phi(u)$ for all
    $u \in \FCat{m}$, establishing the statement of the proposition.
\end{proof}
\medbreak

The hook series of $\Par{{\FCat{1}}_\InternalNode, \Up}$ satisfies
\begin{equation} \begin{split}
    \HookSeries{\Up}
    & =
    (\mathtt{0}) + (\mathtt{00}) + (\mathtt{01}) + 2\, (\mathtt{000}) + 2\, (\mathtt{001})
    + (\mathtt{010}) + 2\, (\mathtt{011}) + (\mathtt{012}) + 6\, (\mathtt{0000})
    \\
    & \quad + 6\, (\mathtt{0001}) + 3\, (\mathtt{0010}) + 6\, (\mathtt{0011})
    + 3\, (\mathtt{0012})+ 3\, (\mathtt{0100}) + 3\, (\mathtt{0101}) + 2\, (\mathtt{0110})
    \\
    & \quad + 6\, (\mathtt{0111}) + 4\, (\mathtt{0112}) + (\mathtt{0120})
    + 2\, (\mathtt{0121}) + 2\, (\mathtt{0122}) + (\mathtt{0123}) + \cdots,
\end{split} \end{equation}
and the one of $\Par{{\FCat{2}}_\InternalNode, \Up}$ satisfies
\begin{equation} \begin{split}
    \HookSeries{\Up}
    & =
    (\mathtt{0}) + (\mathtt{00}) + (\mathtt{01}) + (\mathtt{02}) + 2\, (\mathtt{000})
    + 2\, (\mathtt{001}) + 2\, (\mathtt{002}) + (\mathtt{010}) + 2\, (\mathtt{011})
    + 2\, (\mathtt{012})
    \\
    & \quad + (\mathtt{013}) + (\mathtt{020}) + (\mathtt{021}) + 2\, (\mathtt{022})
    + (\mathtt{023}) + (\mathtt{024}) + 6\, (\mathtt{0000}) + 6\, (\mathtt{0001})
    \\
    & \quad + 6\, (\mathtt{0002}) + 3\, (\mathtt{0010}) + 6\, (\mathtt{0011})
    + 6\, (\mathtt{0012}) + 3\, (\mathtt{0013}) + 3\, (\mathtt{0020}) + 3\, (\mathtt{0021})
    \\
    & \quad + 6\, (\mathtt{0022}) + 3\, (\mathtt{0023}) + 3\, (\mathtt{0024})
    + 3\, (\mathtt{0100}) + 3\, (\mathtt{0101}) + 3\, (\mathtt{0102})  + 2\, (\mathtt{0110})
    \\
    & \quad + 6\, (\mathtt{0111}) + 6\, (\mathtt{0112}) + 4\, (\mathtt{0113})
    + 2\, (\mathtt{0120}) + 3\, (\mathtt{0121}) + 6\, (\mathtt{0122}) + 4\, (\mathtt{0123})
    \\
    & \quad
    + 3\, (\mathtt{0124}) + (\mathtt{0130}) + 2\, (\mathtt{0131}) + 2\, (\mathtt{0132})
    + 2\, (\mathtt{0133}) + (\mathtt{0134}) + (\mathtt{0135})
    \\
    & \quad + 3\, (\mathtt{0200}) + 3\, (\mathtt{0201}) + 3\, (\mathtt{0202})
    + (\mathtt{0210}) + 3\, (\mathtt{0211}) + 3\, (\mathtt{0212}) + 2\, (\mathtt{0213})
    \\
    & \quad + 2\, (\mathtt{0220}) + 2\, (\mathtt{0221}) + 6\, (\mathtt{0222})
    + 4\, (\mathtt{0223}) + 4\, (\mathtt{0224}) + (\mathtt{0230}) + (\mathtt{0231})
    \\
    & \quad + 2\, (\mathtt{0232}) + 2\, (\mathtt{0233}) + 2\, (\mathtt{0234})
    + (\mathtt{0235}) + (\mathtt{0240}) + (\mathtt{0241}) + 2\, (\mathtt{0242})
    + (\mathtt{0243})
    \\
    & \quad + 2\, (\mathtt{0244}) + (\mathtt{0245}) + (\mathtt{0246}) + \cdots.
\end{split} \end{equation}
We do not have a concise combinatorial description for these hook coefficients.
\medbreak

\section*{Perspectives and open questions}
We finish this work by presenting three open questions and research directions.
\medbreak

As seen in Section~\ref{subsec:examples_graph_operads}, the pair of graded graphs associated
with the operads $\As$, $\Comp$, $\Motz$, and $\FCat{m}$, $m \geq 0$, are $\phi$-diagonal
dual, while the pair of graded graphs of $\Dias$ is not.  By computer exploration, we
conjecture that some classical operads appearing in the literature have also this property
of $\phi$-diagonal duality for their pair of graded graphs. This is the case for the
$2$-associative operad $\TwoAs$~\cite{LR06}, for the operad $\AsTwo$~\cite{Dot09}, for the
dipterous operad $\Dip$~\cite{LR03}, and for the duplicial operad $\Dup$~\cite{Lod08}.  The
first question is to obtain in general a necessary and sufficient condition for an
homogeneous and finitely presented operad $\Operad$ for the $\phi$-diagonal duality of its
pair of graded graphs $\Par{\Operad_\InternalNode, \Up, \Vp}$. Ideally, this condition
should relate to the presentation of $\Operad$.
\medbreak

The second question concerns applications of $\phi$-diagonal duality to enumerative
problems. We propose to understand to what extent this generalized version of graph duality
helps to obtain enumerative formulas. Recall that classical graph duality~\cite{Fom94}
leads, from the identity $\Vp^\Dual \Up^n = \Up^n \Vp^\Dual + n \Up^{n - 1}$, $n \geq 0$, to
a proof of~\eqref{equ:sum_square_Young_lattice} relating numbers of standard Young tableaux
and numbers of permutations. A starting point is to use
Proposition~\ref{prop:relation_U_V_phi_diagonal_duality} and the
Relation~\eqref{equ:relation_U_V_phi_diagonal_duality}, which is a generalization of the
previous identity, in order to relate other families of combinatorial objects in a similar
way.
\medbreak

A last research direction consists in, rather than considering operads to construct graded
graphs, use operad to construct trees. Roughly speaking, a tree can be built from a graded
graph by deleting some of its edges. To obtain such a tree, we can consider a variant of the
map $\Up$ (see~\eqref{equ:up_operads}) wherein the apparitions of certain terms are
forbidden.  This could be achieved by the use of colored operads~\cite{Yau16} since the
partial composition maps of these structures is restricted due to the use of colors. A
similar mechanism is used in~\cite{Gir19} wherein graphs of some colored versions of
combinatorial objects are built.  The main interest to search for trees instead of graded
graphs relies on the fact that trees can be thought as generating trees. These structures
can be used to design efficient algorithms for the exhaustive generation of the objects or
for random generation.  The aim is to build a framework leading to such generating trees
from any family of combinatorial objects endowed with the structure of a homogeneous and
finitely presented operad.
\medbreak

\begin{footnotesize}
\bibliographystyle{alpha}
\bibliography{Bibliography}

\begin{thebibliography}{HNT05}

\bibitem[BN98]{BN98}
F.~Baader and T.~Nipkow.
\newblock {\em Term rewriting and all that}.
\newblock Cambridge University Press, 1998.
\newblock xii+301.

\bibitem[BP05]{BS05}
A.~Bjöner and Stanley~R. P.
\newblock {An analogue of Young’s lattice for compositions}.
\newblock {\em \Arxiv{math/0508043v4}}, 2005.

\bibitem[Cha05]{Cha05}
F.~Chapoton.
\newblock {On some anticyclic operads}.
\newblock {\em Algebr. Geom. Topol.}, 5:53--69, 2005.

\bibitem[Dot09]{Dot09}
V.~Dotsenko.
\newblock {Compatible associative products and trees}.
\newblock {\em Algebr. Number Theory}, 3(5):567--586, 2009.

\bibitem[Fom94]{Fom94}
S.~Fomin.
\newblock {Duality of graded graphs}.
\newblock {\em J. Algebr. Comb.}, 3(4):357--404, 1994.

\bibitem[FRT54]{FRT54}
J.~S. Frame, G.~de~B. Robinson, and R.~M. Thrall.
\newblock {The hook graphs of the symmetric groups}.
\newblock {\em Canadian J. Math.}, 6:316--324, 1954.

\bibitem[Gir15]{Gir15}
S.~Giraudo.
\newblock {Combinatorial operads from monoids}.
\newblock {\em J. Algebr. Comb.}, 41(2):493--538, 2015.

\bibitem[Gir16]{Gir16}
S.~Giraudo.
\newblock {Pluriassociative algebras I: The pluriassociative operad}.
\newblock {\em Adv. Appli. Math.}, 77:1--42, 2016.

\bibitem[Gir18]{Gir18}
S.~Giraudo.
\newblock {\em {Nonsymmetric Operads in Combinatorics}}.
\newblock Springer Nature Switzerland AG, 2018.
\newblock ix+172.

\bibitem[Gir19]{Gir19}
S.~Giraudo.
\newblock {Colored operads, series on colored operads, and combinatorial
  generating systems}.
\newblock {\em Discrete Math.}, 342(6):1624--1657, 2019.

\bibitem[HNT05]{HNT05}
F.~Hivert, J.-C. Novelli, and J.-Y. Thibon.
\newblock {The Algebra of Binary Search Trees}.
\newblock {\em Theor. Comput. Sci.}, 339(1):129--165, 2005.

\bibitem[Knu97]{Knu97}
D.~Knuth.
\newblock {\em {The Art of Computer Programming, volume 1: Fundamental
  Algorithms}}.
\newblock Addison Wesley Longman, 3rd edition, 1997.
\newblock xx+650.

\bibitem[Knu98]{Knu98}
D.~Knuth.
\newblock {\em {The Art of Computer Programming, Sorting and Searching}},
  volume~3.
\newblock Addison Wesley, 2nd edition, 1998.

\bibitem[Lod01]{Lod01}
J.-L. Loday.
\newblock {Dialgebras}.
\newblock In {\em Dialgebras and related operads}, volume 1763 of {\em Lect.
  Notes Math.}, pages 7--66. Springer, Berlin, 2001.

\bibitem[Lod08]{Lod08}
J.-L. Loday.
\newblock {Generalized Bialgebras and Triples of Operads}.
\newblock {\em Ast{\'e}risque}, 320:1--114, 2008.

\bibitem[LR03]{LR03}
J.-L. Loday and M.~Ronco.
\newblock {Alg{\`e}bres de Hopf colibres}.
\newblock {\em C. R. Math.}, 337(3):153--158, 2003.

\bibitem[LR06]{LR06}
J.-L. Loday and M.~Ronco.
\newblock {On the structure of cofree Hopf algebras}.
\newblock {\em J. Reine Angew. Math.}, 592:123--155, 2006.

\bibitem[LS07]{LS07}
T.~F. Lam and M.~Shimozono.
\newblock {Dual graded graphs for Kac-Moody algebras}.
\newblock {\em Algebr. Number Theory}, 1(4):451--488, 2007.

\bibitem[LV12]{LV12}
J.-L. Loday and B.~Vallette.
\newblock {\em {Algebraic Operads}}, volume 346 of {\em Grundlehren der
  mathematischen Wissenschaften}.
\newblock Springer, Heidelberg, 2012.
\newblock xxiv+634.

\bibitem[Mé15]{Men15}
M.. Méndez.
\newblock {\em {Set Operads in Compbinatorics and Computer Science}}.
\newblock Springer International Publishing, 2015.
\newblock xv+129.

\bibitem[Nze06]{Nze06}
J.~Nzeutchap.
\newblock {Graded Graphs and Fomin's $r$-correspondences associated to the Hopf
  Algebras of Planar Binary Trees, Quasi-symmetric Functions and Noncommutative
  Symmetric Functions}.
\newblock {\em Formal Power Series and Algebraic Combinatorics}, 2006.

\bibitem[Sag01]{Sag01}
B.~E. Sagan.
\newblock {\em {The Symmetric Group}}, volume 203 of {\em Graduate Texts in
  Mathematics}.
\newblock Springer-Verlag New York, second edition, 2001.
\newblock {Representations, Combinatorial Algorithms, and Symmetric Functions}.

\bibitem[Slo]{Slo}
N.~J.~A. Sloane.
\newblock {The On-Line Encyclopedia of Integer Sequences}.
\newblock \url{https://oeis.org/}.

\bibitem[Sta70]{Sta70}
J.~Stasheff.
\newblock {\em {$H$-spaces from a homotopy point of view}}, volume 161 of {\em
  Lect. Notes in Math.}
\newblock Springer-Verlag, Berlin-New York, 1970.
\newblock v+95.

\bibitem[Sta88]{Sta88}
R.~P. Stanley.
\newblock {Differential posets}.
\newblock {\em J. Am. Math. Soc.}, 1(4):919--961, 1988.

\bibitem[Sta11]{Sta11}
R.~P. Stanley.
\newblock {\em {Enumerative Combinatorics}}, volume~1.
\newblock Cambridge University Press, second edition, 2011.

\bibitem[Ter03]{Ter03}
Terese.
\newblock {\em {Term Rewriting Systems}}.
\newblock Cambridge University Press, 2003.
\newblock 884.

\bibitem[Yau16]{Yau16}
D.~Yau.
\newblock {\em {Colored Operads}}.
\newblock Graduate Studies in Mathematics. American Mathematical Society, 2016.

\end{thebibliography}
\end{footnotesize}

\end{document}